\documentclass[11pt,a4paper]{article}
\usepackage{amssymb,amsmath,amsthm}
\usepackage{enumitem}
\usepackage{mathtools}
\numberwithin{equation}{section}
\newtheorem{thm}{Theorem}[section]
\newtheorem{df}[thm]{Definition}
\newtheorem{prop}[thm]{Proposition}
\newtheorem{lem}[thm]{Lemma}
\newtheorem{rem}[thm]{Remark}

\newtheorem{cor}[thm]{Corollary}
\let\oldproofname=\proofname
\renewcommand{\proofname}{\rm\bf{\oldproofname}}

\newcommand{\N}{\mathbb{N}}
\newcommand{\Z}{\mathbb{Z}}

\newcommand{\R}{\mathbb{R}}
\newcommand{\C}{\mathbb{C}}

\newcommand{\cB}{\mathcal{B}}

\newcommand{\cE}{\mathcal{E}}
\newcommand{\cF}{\mathcal{F}}

\newcommand{\cK}{\mathcal{K}}
\newcommand{\cL}{\mathcal{L}}
\newcommand{\cM}{\mathcal{M}}

\newcommand{\cO}{\mathcal{O}}

\newcommand{\cS}{\mathcal{S}}
\newcommand{\cT}{\mathcal{T}}
\newcommand{\cU}{\mathcal{U}}

\newcommand{\cX}{\mathcal{X}}
\newcommand{\cY}{\mathcal{Y}}
\newcommand{\cZ}{\mathcal{Z}}

\renewcommand{\Re}{\mathop{\mathrm{Re}}}
\renewcommand{\Im}{\mathop{\mathrm{Im}}}
\newcommand{\dd}{\,{\rm d}}
\newcommand{\D}{{\rm d}}
\renewcommand{\div}{\mathop{\mathrm{div}}\nolimits}
\newcommand{\curl}{\mathop{\mathrm{curl}}}

\newcommand{\supp}{\mathop{\mathrm{supp}}}
\newcommand{\1}{\mathbf{1}}
\newcommand{\weakto}{\rightharpoonup}

\newcommand{\DS}{\displaystyle}
\newcommand{\BS}{\mathrm{BS}_+}
\newcommand{\BSS}{\mathrm{BS}}
\newcommand{\QED}{\mbox{}\hfill$\Box$}
\renewcommand{\:}{\thinspace :}

\newcommand{\Rey}{\mathrm{Re}}
\newcommand{\bom}{{\boldsymbol\omega}}
\newcommand{\bw}{{\boldsymbol w}}

\begin{document}

\title{Viscous evolution of a point vortex in a half-plane}

\author{Anne-Laure Dalibard and Thierry Gallay}

\maketitle

\begin{abstract}
As a model for vortex-wall interactions, we consider the two-dimensional
incompressible Navier--Stokes equations in the half-plane $\R^2_+$ with no-slip
boundary condition and point vortices as initial data. We focus
on the paradigmatic example of a single vortex in an otherwise stagnant fluid,
which is already quite challenging from a mathematical point of view. We prove
that this system has a unique global solution for all values of the Reynolds
number $\Rey = |\Gamma|/\nu$, where $\Gamma$ is the circulation of the vortex
and $\nu$ the kinematic viscosity of the fluid. The solution we construct has
finite energy for positive times and converges to zero in energy norm as
$t \to +\infty$. Uniqueness holds under the assumption that the solution is
close to a Lamb--Oseen vortex for small times. To our knowledge, all
previous results in domains with boundaries assume that the initial vorticity
has small or zero atomic part. In our particular situation, we remove the
smallness condition by decomposing the solution into a vortex and a boundary
layer term, so that we can apply the techniques developed in the whole plane
$\R^2$ to avoid the difficulties related to the large circulation of
the vortex. 
\end{abstract}

\section{Introduction and main results}\label{sec1}

Interactions of vortices with rigid walls play an important role in fluid
dynamics, and have serious consequences in real-world applications \cite{DSW}. A
typical example is the {\em rebound effect} that is observed when a pair of
trailing vortices in the wake of a landing aircraft bounce up near the ground
due to their interaction with secondary vortical structures created in the boundary
layer \cite{PSW1,PSW2}.  A similar phenomenon occurs during the head-on
collision of a vortex ring with a rigid wall that is normal to the symmetry axis
\cite{LNC}.  Although three-dimensional effects such as the Crow or the elliptic
instability have to be taken into account in real experiments, the rebound is a
genuinely two-dimensional phenomenon, in the sense that it can be observed
already when a vortex dipole approaches the boundary in a two-dimensional domain
\cite{Or}. In the perspective of obtaining rigorous mathematical results, an
even simpler situation that is worth studying is the {\em self-induced motion}
of a single vortex in the presence of a boundary, which results from the
interaction with the boundary layer created by the vortex itself \cite{DSW}.

The goal of this paper is to provide a mathematical framework in which the
above-mentioned phenomena could potentially be studied. To keep the situation as
simple as possible, we suppose that the fluid domain is the upper half-plane
$\R^2_+ := \R \times (0,+\infty)$, and that the fluid velocity vanishes on the
boundary $\partial\R^2_+ = \R \times \{0\}$. It is well-known that the
initial-boundary value problem for the incompressible Navier--Stokes equations
in $\R^2_+$, with no-slip boundary condition for the velocity, is globally
well-posed in the energy class; see, for instance, \cite{KO}. More generally, a
recent result of Abe \cite{Abe} shows the existence of a unique global solution
when the initial vorticity is a Radon measure whose atomic part is small
compared to the kinematic viscosity of the fluid, which we denote by $\nu >
0$. In particular, the initial vorticity can be a finite collection of point
vortices of circulations $\Gamma_1, \dots, \Gamma_N$ provided
$|\Gamma_1| + \dots + |\Gamma_N| \le c_0\nu$ for some universal constant
$c_0 > 0$. Unfortunately, this last restriction is not appropriate for our
purposes, because the self-induced motion of a vortex or the rebound of a vortex
dipole only occur in the opposite regime where the circulation of the vortices
is large compared to the viscosity.  It thus appears necessary to extend the
result of \cite{Abe} so as to remove the smallness assumption on the atomic part
of the initial vorticity.

In this work, we make an important step in this direction by showing that
initial-boundary value problem for the two-dimensional Navier--Stokes equations
in the half-plane $\R^2_+$, with no-slip boundary condition, is globally
well-posed when the initial vorticity is a single point vortex of {\em
  arbitrary} circulation $\Gamma \in \R$, located at any point $z \in
\R^2_+$. This is the analogue of the well-posedness result established in
\cite{GW2} for the Cauchy problem in the whole plane $\R^2$, which opened the
way to the general case considered in \cite{GG}. An important ingredient of the
proof is a semi-explicit expression for the Stokes semigroup $S(t)$ in $\R^2_+$,
in vorticity formulation, which was derived by Maekawa \cite{Mae} and Abe
\cite{Abe} following previous work by Solonnikov \cite{So1,So2} and Ukai
\cite{Ukai}. That expression suggests a decomposition of the form
$S(t) = S_1(t) + S_2(t)$, where $S_1(t)$ is essentially the heat semigroup in
$\R^2$, and $S_2(t)$ is a correction that takes into account the creation
of vorticity within the boundary layer. Using the integral formulation of the
vorticity equation and a cut-off function, we obtain a similar decomposition of
the solution $\omega(t)$ into a concentrated vortex $\omega_1(t)$ and a boundary
layer correction $\omega_2(t)$. Roughly speaking, the vortex can be treated as in
the whole plane $\R^2$, see \cite{GW2,GG}, whereas the boundary term is smaller
and can be dealt with using a fixed point argument. As we shall see, it is
useful to further decompose $\omega_1(t)$ into a leading order term $\bar \omega_1(t)$,
which is the self-similar Lamb--Oseen vortex, and a remainder $\hat \omega_1(t)$
that can be treated perturbatively. Understanding the interactions between the various
terms in our decomposition is an important part of our analysis. 

At a more conceptual level, the difficulty of understanding the interaction
between a vortex and a solid wall, in the regime where the Reynolds number
$\Rey := \Gamma/\nu$ is large, is related to the development of instabilities
within the boundary layer. The unstable modes constructed in recent works such
as \cite{GGNg,GNg,BG} are responsible for the creation of vorticity near the
wall. They correspond to high tangential wave numbers, of order $\Rey^\beta$ for
some $\beta > 0$, and are typically located within a small sub-layer of the
Prandtl layer, which is of size $\Rey^{1/2}$.  These small scale structures,
which are believed to play an important role in the transition towards
turbulence, are sometimes deemed as ``vortices'', but they still lack a precise
description in physical space.  We hope that the model case considered in the
present paper will contribute to further developments in boundary layer theory.

\subsection{Formulation of the problem}\label{ss:statement}

We work in the upper half-plane $\R^2_+ := \bigl\{x = (x_1,x_2) \in \R^2\,;\,
x_2 > 0\bigr\}$. If $x = (x_1,x_2) \in \R^2_+$, we denote $x^\perp
= (-x_2,x_1)$, $x^* = (x_1,-x_2)$, and $|x|^2 = x_1^2 + x_2^2$. We consider
the vorticity equation
\begin{equation}\label{2Dvort}
  \partial_t \omega(x,t) + u(x,t)\cdot\nabla \omega(x,t) \,=\,
  \nu \Delta\omega(x,t)\,, \qquad x \in \R^2_+\,, \quad t \ge 0\,,
\end{equation}
where $u(\cdot,t)$ is the velocity field obtained from $\omega(\cdot,t)$
via the Biot--Savart formula in $\R^2_+$. At any fixed time, we have the
explicit expression
\begin{equation}\label{BSlaw}
  u(x) \,=\, \BS[\omega](x) \,:=\, \frac{1}{2\pi}\int_{\R^2_+}\biggl(
  \frac{(x-y)^\perp}{|x-y|^2} - \frac{(x-y^*)^\perp}{|x-y^*|^2}\biggr)
  \omega(y)\dd y\,,
\end{equation}
which makes sense for instance if $\omega \in L^p(\R^2_+)$ for some
$p \in (1,2)$. The velocity field $u = (u_1,u_2)$ satisfies $\div u :=
\partial_1 u_1 + \partial_2 u_2 = 0$ and $\curl u := \partial_1 u_2 -
\partial_2 u_1 = \omega$. In addition, the vertical component $u_2(x)$
vanishes when $x = (x_1,0) \in \partial\R^2_+$.

Assuming that the kinematic viscosity $\nu$ is positive, we need an
additional boundary condition to obtain a unique solution to the
parabolic equation \eqref{2Dvort}. We impose the {\em no-slip boundary condition},
which asserts that the horizontal velocity $u_1(x)$ also vanishes
when $x = (x_1,0) \in \partial\R^2_+$. In view of \eqref{BSlaw}, this
is equivalent to the following integral relation for the vorticity $\omega$\:
\begin{equation}\label{noslip}
  \int_{\R^2_+} \frac{y_2}{(x_1-y_1)^2 + y_2^2}\,\omega(y,t)
  \dd y \,=\, 0\,, \qquad \forall\,x_1 \in \R\,,\,~\forall\, t > 0\,.
\end{equation}
This condition means that $\omega$ is orthogonal in $L^2(\R^2_+)$ to a large
family of harmonic functions, see \cite{LM} for a detailed discussion of the
no-slip boundary condition in bounded domains. 

Our goal is to solve the initial-boundary value problem
\eqref{2Dvort}--\eqref{noslip} with singular initial data of the form
$\omega_0 \,=\, \Gamma \,\delta_z$ for some $\Gamma \in \R$ and some
$z \in \R^2_+$. In that case, the general result of Abe \cite{Abe} asserts the
existence of a unique solution to \eqref{2Dvort}--\eqref{noslip} {\em provided}
$|\Gamma|\le c_0\nu$, where $c_0 > 0$ is a universal constant. However, the
approach of \cite{Abe} relies on a fixed point argument which inevitably fails
when $|\Gamma|/\nu$ is large. It is interesting to note that existence
of solutions with general measures as initial data can be established, by an
approximation argument, if the no-slip condition \eqref{noslip} is replaced by
the homogeneous Dirichlet condition for the vorticity $\omega$, see \cite{MY}.
However, even in that situation, uniqueness of the solution has been established
only if the atomic part of the initial vorticity is small compared to the viscosity.

In what follows, we find it convenient to use dimensionless (dependent
and independent) variables for which $\nu = 1$ and $z = (0,1)$. The
initial data become
\begin{equation}\label{initdata}
  \omega_0 \,=\, \alpha\,\delta_z\,, \qquad \mbox{where}\quad
  \alpha \,=\, \frac{\Gamma}{\nu}\,.
\end{equation}
Without loss of generality, we may assume that $\Gamma > 0$. The
dimensionless quantity $\alpha = \Gamma/\nu$ is called the
{\em circulation Reynolds number}. 

\subsection{The Stokes semigroup in vorticity formulation}\label{ss:linear}

We will construct local solutions of \eqref{2Dvort} by means of a fixed point
argument based on the Duhamel formula. Therefore, our analysis relies
crucially on the properties of the Stokes equation, which is obtained by
linearizing \eqref{2Dvort} near $\omega = 0$. We introduce here the notation
that will be necessary to state our main result in Section~\ref{ss:results},
together with important estimates which will be used throughout the paper.

We consider the linear equation $\partial_t \omega = \Delta \omega$
in $\R^2_+$ with no-slip boundary condition \eqref{noslip} and initial data
$\omega_0 \in L^1(\R^2_+)$. As established in \cite{Abe}, see also
\cite{Mae,Ukai}, the solution takes the semi-explicit form
\begin{equation}\label{Stokes}
  \bigl(S(t)\omega_0\bigr)(x) \,=\, \int_{\R^2_+} K(x,y,t)\,\omega_0(y)\dd y\,,
  \qquad x \in \R^2_+\,,\quad t > 0\,,
\end{equation}
where the integral kernel $K(x,y,t)$ can be expressed in terms of the
Gaussian function $G_t$ and of the Poisson function $P$\:
\begin{equation}\label{GPdef}
  G_t(x) \,=\, \frac{1}{4\pi t}\,e^{-|x|^2/(4t)}\,, \qquad
  P(x) \,=\, \frac{1}{\pi}\,\frac{x_2}{x_1^2 + x_2^2}\,, \qquad
  x \in \R^2_+, \quad t > 0\,.
\end{equation}
More precisely, we have
\begin{equation}\label{Kdecomp}
  K(x,y,t) \,=\, G_t(x-y) \,-\, G_t(x-y^*) \,-\, K_0(x,y,t)\,,
\end{equation}
for all $x,y \in \R^2_+$ and all $t > 0$, where
\begin{equation}\label{K0def}
  K_0(x,y,t) \,=\, -2\partial_{x_2} \int_{\R}\int_0^{y_2} G_t(x-z^*)\,
  P(y-z)\dd z_2 \dd z_1\,.
\end{equation}
For the reader's convenience, we give a short proof of formula \eqref{Stokes}
in Appendix~\ref{appA}. 

As is well known the expression $G_t(x-y) - G_t(x-y^*)$ is the integral kernel
of the heat semigroup in $\R^2_+$ with Dirichlet boundary condition, which we
will denote by $e^{t\Delta_D}$. The additional term $K_0$ in \eqref{Kdecomp} is
a {\em boundary layer correction} which is needed to enforce the no-slip
boundary condition \eqref{noslip}. We thus have the decomposition
\begin{equation}\label{Sdecomp1}
  S(t) \,=\, e^{t\Delta_D} - S_0(t)\,, \qquad t > 0\,,
\end{equation}
where $S_0(t)$ is the linear operator with kernel $K_0(x,y,t)$\: 
\begin{equation}\label{SOdef}
  \bigl(S_0(t)\omega_0\bigr)(x) \,=\, \int_{\R^2_+} K_0(x,y,t)\,\omega_0(y)\dd y\,,
  \qquad x \in \R^2_+\,,\quad t > 0\,.
\end{equation}

It is important to observe that formula \eqref{Stokes} does not define a strongly continuous
semigroup in the whole space $L^1(\R^2)$. Indeed, if $\omega_0 \in L^1(\R^2)$ and if the
associated velocity $u_0 = \BS[\omega_0]$ does not satisfy \eqref{noslip}, then
$\|S(t)\omega_0 - \omega_0\|_{L^1}$ does not converge to zero as $t \to 0$ due to the
boundary layer term, see \cite[Lemma~4.1]{Abe}. For this reason, we introduce
the closed subspace 
\begin{equation}\label{L1perp}
  L^1_\perp(\R^2_+) \,=\, \bigl\{\omega \in L^1(\R^2_+)\,;\, \gamma[\omega] = 0
  \bigr\}\,,
\end{equation}
where $\gamma : L^1(\R^2_+) \to L^1(\R)$ is the trace operator defined by
\begin{equation}\label{gammadef}
  \bigl(\gamma[\omega]\bigr)(x_1) \,=\, \frac{1}{\pi}\,\int_{\R^2_+} \frac{y_2}{(x_1-y_1)^2 + y_2^2}
  \,\omega(y)\dd y\,, \qquad x_1 \in \R\,.
\end{equation}
As is easily verified, the linear operator $\gamma : L^1(\R^2_+) \to L^1(\R)$ is
bounded, see Appendix~\ref{appA}. The main properties of the Stokes semigroup
are collected in the following statement, which is essentially taken from
\cite{Abe}. 

\begin{prop}\label{prop:Stokes} {\rm (Properties of the Stokes semigroup)}\\[1mm]
1) The family $\bigl(S(t)\bigr)_{t \ge 0}$ defined by \eqref{Stokes} is a $C_0$-semigroup
in the space $L^1_\perp(\R^2_+)$. \\[1mm]
2) If $\omega_0 \in L^1(\R^2)$ the maps $t \mapsto S(t)\omega_0$ and
$t \mapsto S(t)\nabla\omega_0$ belong to $C^0((0,\infty),L^p(\R^2_+)\cap L^1_\perp(\R^2_+))$\\
\null\hspace{11pt} for any $p \in [1,+\infty]$, and there exists a constant $C > 0$ such that
\begin{equation}\label{Stokes2}
  \|S(t)\omega_0\|_{L^p} + t^{1/2}\|S(t)\nabla\omega_0\|_{L^p} \,\le\,
  C\,\frac{\|\omega_0\|_{L^1}}{t^{1-1/p}}\,, \qquad t > 0\,.
\end{equation}
\end{prop}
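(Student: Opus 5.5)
The plan is to work directly from the explicit kernel \eqref{Kdecomp}. The Dirichlet part $G_t(x-y)-G_t(x-y^*)$ is harmless, so the main task is a pointwise Gaussian-type bound on the boundary layer kernel $K_0$ and its $y$-gradient.

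\textbf{Kernel bounds.} Starting from \eqref{K0def}, I will integrate in $z_1$ first. The $z_1$-convolution of $G_t$ (in its first variable) with the Poisson kernel $P(\cdot,y_2-z_2)$ is a one-dimensional heat/Poisson convolution. After differentiating in $x_2$, I expect to obtain
\[
  |K_0(x,y,t)| \,\le\, \frac{C}{t}\,e^{-c x_2^2/t}\,\Phi\Bigl(\frac{x-y^*}{\sqrt t}\Bigr)\,,
\]
where $\Phi$ is an integrable profile in $\R^2$ behaving like a Poisson kernel $(1+|\xi|)^{-2}$ horizontally. This is essentially the estimate of Maekawa and Abe, and I would cite \cite{Abe,Mae}, sketching the computation. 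In the same way I expect $|\nabla_y K_0|$ to obey the same bound with an extra factor $t^{-1/2}$, or $(t^{1/2}+y_2)^{-1}$.

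\textbf{Estimate \eqref{Stokes2}.} Granted these kernel bounds, I will show $\sup_y\|K(\cdot,y,t)\|_{L^1_x}\le C$ and $\sup_{x,y}|K|\le C/t$. Interpolation then gives $\sup_y\|K(\cdot,y,t)\|_{L^p_x}\le Ct^{-(1-1/p)}$, and Minkowski's inequality yields the first term of \eqref{Stokes2}. For $S(t)\nabla\omega_0$ I will integrate by parts in $y$. In the horizontal direction there is no boundary term. In the vertical direction the boundary term at $y_2=0$ vanishes, since $K(x,y,t)=0$ when $y_2=0$: the Dirichlet part cancels there, and $K_0$ is an integral over $(0,y_2)$. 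This reduces the second term to the same bounds applied to $\nabla_y K$. Continuity in $t>0$ with values in $L^p$ follows by dominated convergence, because the kernel is smooth in $t>0$.

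\textbf{Membership in $L^1_\perp$.} To show $\gamma[S(t)\omega_0]=0$ for $t>0$, I will compute $\gamma$ applied to the kernel, i.e.\ verify $\int P(x_1-\cdot,\cdot)K(\cdot,y,t)\,dx=0$ for every $y$ and $t>0$. This is exactly how $K_0$ was designed, and it is verified most easily via the derivation in Appendix~\ref{appA}. Equivalently, $S(t)\omega_0$ is the vorticity of a velocity satisfying the no-slip condition.

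\textbf{$C_0$-semigroup on $L^1_\perp$.} The semigroup property should come from uniqueness for the Stokes problem, through the formula in Appendix~\ref{appA} or a resolvent computation. Uniform boundedness is the $p=1$ estimate. The main obstacle I expect is strong continuity at $t=0$. The Dirichlet part converges to the identity in $L^1$, so I must show $\|S_0(t)\omega_0\|_{L^1}\to0$ when $\gamma[\omega_0]=0$. My approach will be a density argument. For $\omega_0\in L^1_\perp$ that is smooth and compactly supported, I will use the identity $\gamma[\omega_0]=0$ to rewrite $S_0(t)\omega_0$ as an integral of $\nabla_y K_0$ against $\omega_0$ weighted by $y_2$, or as a term concentrated in a layer of width $\sqrt t$. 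I expect this to give $\|S_0(t)\omega_0\|_{L^1}\lesssim t^{1/2}\|\,\cdot\,\|$. The delicate point is that the needed dense class must lie inside $L^1_\perp$. I will construct it by correcting smooth approximants with a fixed family of compactly supported functions whose Poisson traces span, so that $\gamma$ vanishes on the approximants. The uniform $L^1$ bound then extends the convergence to all of $L^1_\perp$.
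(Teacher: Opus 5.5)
Your handling of \eqref{Stokes2}, of the semigroup law, and of $\gamma[S(t)\omega_0]=0$ via Appendix~\ref{appA} follows the paper. You also correctly reduce part~1) to proving $\|S_0(t)\omega_0\|_{L^1}\to0$ for $\omega_0\in L^1_\perp(\R^2_+)$.

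However, the kernel bound you anticipate is false, and it hides exactly the difficulty. There is no estimate $|K_0(x,y,t)|\le Ct^{-1}e^{-cx_2^2/t}\Phi\bigl((x-y^*)/\sqrt t\bigr)$ with $\Phi\in L^1(\R^2)$. Such a bound would give $\int|K_0(x,y,t)|\dd x\le C\int_{\zeta_2>y_2/\sqrt t}\Phi(\zeta)\dd\zeta\to0$, hence $\|S_0(t)\omega_0\|_{L^1}\to0$ for \emph{every} $\omega_0\in L^1(\R^2_+)$. In fact $\int K_0(x,y,t)\dd x\to1$ for each fixed $y$. The true bound (Lemma~\ref{lem:K0bounds}) is $0<K_0\le Ct^{-1/2}e^{-x_2^2/(5t)}\,y_2/\bigl((x_1-y_1)^2+(y_2+\sqrt t)^2\bigr)$. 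This is a Poisson tail in $y_2$, and its $x$-integral does not decay as $t\to0$. The consequences you actually use ($\sup_y\|K(\cdot,y,t)\|_{L^1}\le C$, $\sup|K|\le C/t$, and the gradient analogues) do follow from it, so \eqref{Stokes2} is fine.

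The genuine gap is strong continuity at $t=0$. You never say how $\gamma[\omega_0]=0$ enters. Rewriting $S_0(t)\omega_0$ ``as an integral of $\nabla_yK_0$ weighted by $y_2$'' does not identify what cancels. The gradient bound alone only gives $\|\nabla_yK_0(\cdot,y,t)\|_{L^1}\lesssim(y_2+\sqrt t)^{-1}$, with no decay in $t$. Your dense class is not constructed either. The condition $\gamma[\omega]=0$ is an infinite set of constraints, so a fixed family of correctors cannot restore it. An infinite family would need a quantitative bounded right inverse of $\gamma$, which you do not provide.

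The missing idea is Lemma~\ref{lem:K0decomp}. Integrating by parts in $z_2$ in \eqref{K0rep} splits off the non-decaying part exactly: $K_0(x,y,t)=\frac{2}{\sqrt t}\,g(x_2/\sqrt t)\,P(x_1-y_1,y_2)+t^{-1}\widetilde K_0(x/\sqrt t,y/\sqrt t,1)$.
\begin{itemize}
\item The first term is the kernel of $\gamma$ times a boundary-layer profile, so it is annihilated when $\gamma[\omega_0]=0$.
\item The remainder satisfies $\sup_{y_1}\|\widetilde K_0(\cdot,y,1)\|_{L^1}\lesssim(1+y_2)^{-1}$.
\end{itemize}
Dominated convergence then gives $\|S_0(t)\omega_0\|_{L^1}\le\int\varphi(y_2/\sqrt t)|\omega_0(y)|\dd y\to0$ for all $\omega_0\in L^1_\perp$ at once, with no density argument.

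A density route can be rescued with the class $\{\Delta\psi:\psi\in C_c^\infty(\R^2_+)\}$. It is dense in $L^1_\perp$ by Hahn--Banach, because bounded harmonic functions on $\R^2_+$ are Poisson integrals of $L^\infty(\R)$ data. On this class $S_0(t)\Delta\psi=\int\Delta_yK_0\,\psi\dd y$. Harmonicity of the Voigt function gives $\Delta_yK_0(x,y,1)=-2g''(x_2+y_2)g(x_1-y_1)-2g'(x_2+y_2)\partial_2V(x_1-y_1,0)$, which is Gaussian-small for large $y_2$. This is not what you wrote, though.
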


\noindent Here the operator $S(t)\nabla$ is defined by the integral formula
\begin{equation}\label{StokesDer}
  \bigl(S(t)\nabla\omega_0\bigr)(x) \,=\, -\int_{\R^2_+} \nabla_y K(x,y,t)\,
  \omega_0(y)\dd y\,, \qquad x \in \R^2_+\,,\quad t > 0\,.
\end{equation}

Let $C_0(\R^2_+)$ be the (Banach) space of all continuous functions on $\R^2_+$
which vanish at infinity and on the boundary $\partial\R^2_+$, equipped with the
$L^\infty$ norm. Its topological dual $\cM(\R^2_+) = C_0(\R^2_+)'$ is the space
of all finite (Radon) measures on the half-plane $\R^2_+$, equipped with the
total variation norm $\|\cdot\|_{\cM}$. Another topology on $\cM(\R^2_+)$
is also useful\: we say that a sequence $(\mu_n)_{n \in \N}$ in $\cM(\R^2_+)$ converges
weakly to $\mu \in \cM(\R^2_+)$ if $\langle \mu_n,\phi\rangle \to \langle \mu,\phi\rangle$
for all $\phi \in C_0(\R^2_+)$, where $\langle \cdot,\cdot\rangle$ denotes the
duality pairing. We then write $\mu_n \weakto \mu$.

As is easily verified, for any $x \in \R^2_+$ and any $t > 0$, the function
$y \mapsto K(x,y,t)$ belongs to $C_0(\R^2_+)$. This allows us to define
the action of the semigroup $S(t)$ on any measure $\mu \in \cM(\R^2_+)$
by the formula $\bigl(S(t)\mu\bigr)(x) = \langle \mu\,,K(x,\cdot,t)\rangle$. 
Then $t \mapsto S(t)\mu \in C^0((0,\infty),L^p(\R^2_+)\cap L^1_\perp(\R^2_+))$
and in analogy with \eqref{Stokes2} we have the estimate
\begin{equation}\label{Stokes3}
  t^{1-1/p}\|S(t)\mu\|_{L^p} \,\le\, C\,\|\mu\|_{\cM}\,, \qquad \forall\, t > 0\,.
\end{equation}

\subsection{Statement of the main results}\label{ss:results}

We first specify what we mean by a solution of \eqref{2Dvort} in the space
$L^1(\R^2_+)$. 

\begin{df}\label{def:sol}
Let $T > 0$. We say that a function $\omega \in C^0\bigl((0,T),L^1_\perp(\R^2_+)
\cap L^{4/3}(\R^2_+)\bigr)$ is a (mild) solution of equation \eqref{2Dvort} on
the interval $(0,T)$ if
\begin{equation}\label{IntEq}
  \omega(t) \,=\, S(t-t_0)\omega(t_0) - \int_{t_0}^t S(t-s)\div\bigl(
  u(s)\omega(s)\bigr)\dd s\,,
\end{equation}
whenever $0 < t_0 < t < T$, where $u(s) = \BS[\omega(s)]$ is the velocity 
field associated with $\omega(s)$. 
\end{df}

\begin{rem}\label{rem:mild} The assumption that $t \mapsto \omega(t)
\in C^0\bigl((0,T),L^{4/3}(\R^2_+)\bigr)$ is sufficient to give a meaning to 
the integral term in \eqref{IntEq}. Indeed, if $p = 1$ or $p = 4/3$, we have
\[
  \bigl\|S(t-s)\div \bigl(u(s)\omega(s)\bigr)\bigr\|_{L^p} \,\lesssim\,
  \frac{\|u(s)\omega(s)\|_{L^1}}{(t-s)^{3/2-1/p}} \,\lesssim\,
  \frac{\|u(s)\|_{L^4}\|\omega(s)\|_{L^{4/3}}}{(t-s)^{3/2-1/p}} \,\lesssim\,
  \frac{\|\omega(s)\|_{L^{4/3}}^2}{(t-s)^{3/2-1/p}}\,,
\]
where the first inequality follows from \eqref{Stokes2}, the second
from H\"older's inequality, and the last one from standard properties
of the Biot--Savart law, see Lemma~\ref{lem:HLS}. In fact, by parabolic 
smoothing, any solution in the sense of Definition~\ref{def:sol} is 
regular on $\R^2_+ \times (0,T)$ and satisfies the PDE \eqref{2Dvort}
in the classical sense. Moreover the associated velocity field given 
by the Biot--Savart formula \eqref{BSlaw} vanishes on the boundary 
$\partial \R^2_+$. 
\end{rem}

In what follows, we are interested in solutions of \eqref{2Dvort} for which
$\|\omega(t)\|_{L^1}$ stays bounded as $t \to 0$. This is typically the case
when $\omega(t) \weakto \mu$ as $t \to 0$ for some finite measure $\mu \in \cM(\R^2)$.
As already mentioned, we focus on the particular case $\mu = \alpha\,\delta_z$,
where $\alpha \in \R$ and $z = (0,1)$. In that situation, we expect that the solution
will be close for short times to the Lamb--Oseen vortex $\omega(x,t) =
\alpha G_t(x-z)$, where $G_t(x)$ is the Gaussian function defined in \eqref{GPdef}. 

The main result of this paper can be stated as follows\: 

\begin{thm}\label{thm:main} {\rm (Existence, uniqueness, and scale invariant
bounds)}\\
Fix $\alpha \in \R$ and $z = (0,1)$. For any sufficiently small $T > 0$, equation 
\eqref{2Dvort} has a unique mild solution $\omega \in C^0\bigl((0,T),L^1_\perp(\R^2_+)
\cap L^{4/3}(\R^2_+)\bigr)$ such that $\omega(t) \weakto \alpha\delta_z$ as
$t \to 0$ and
\begin{equation}\label{maincond}
  \sup_{0 < t < T}\|\omega(t)\|_{L^1} \,<\, \infty\,, \qquad 
  \sup_{0 < t < T}t^\beta\,\|\omega(t) - \alpha G_t(\cdot-z)\|_{L^{4/3}} \,<\, \infty\,,
\end{equation}
for some $\beta < 1/4$. Furthermore, estimate \eqref{maincond} actually holds with
$\beta = 1/8$, and we have the following bounds for the vorticity $\omega(t)$
and the associated velocity $u(t) = \BS[\omega(t)]$\:
\begin{equation}\label{mainest}
  \sup_{0 < t < T}t^{1-1/p}\|\omega(t)\|_{L^p} \,<\, \infty\,, \qquad
  \sup_{0 < t < T}t^{1/2-1/q}\|u(t)\|_{L^q} \,<\, \infty\,,
\end{equation}
for all $p \in [1,\infty]$ and all $q \in (2,+\infty]$. 
\end{thm}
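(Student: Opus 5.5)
We decompose the solution as $\omega = \omega_1 + \omega_2$, where $\omega_1$ is a vortex part living in the whole plane (localized near $z$ by a cut-off) and $\omega_2$ is a boundary-layer correction, as suggested by the splitting \eqref{Sdecomp1} of the Stokes semigroup. Fix a smooth cut-off $\chi$ equal to $1$ on a neighborhood of $z=(0,1)$ and supported in $\{x_2>1/2\}$. The vortex part $\omega_1$ is sought in the form $\omega_1 = \bar\omega_1 + \hat\omega_1$, with $\bar\omega_1(t) = \alpha G_t(\cdot-z)$ the Lamb--Oseen vortex. Here $\hat\omega_1$ solves a whole-plane advection--diffusion equation driven by the full velocity $u = \BS[\omega]$, with sources coming from the cut-off, and $\omega_2$ solves the Duhamel equation in $\R^2_+$ with the semigroup $S(t)$. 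The key structural fact is that for $x_2$ near $0$ and $t$ small, $\bar\omega_1$ is exponentially small ($e^{-1/(16t)}$). Two consequences follow. First, the defect $\gamma[\chi\bar\omega_1]$ and the term $S_0(t)\bar\omega_1$ are $O(e^{-c/t})$ in all $L^p$ norms. Second, the boundary layer created by the vortex is very weak at small times.

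We then set up a fixed point in the space of pairs $(\hat\omega_1,\omega_2)$ with norms
\[
\sup_{0<t<T} t^{\beta}\|\hat\omega_1(t)\|_{L^{4/3}} + \sup_{0<t<T}t^{\beta}\|\omega_2(t)\|_{L^{4/3}}\,,
\]
supplemented by $L^1$ bounds. Since $\hat\omega_1$ and $\omega_2$ are small compared with $\bar\omega_1$, the only large term in the nonlinearity is the linear advection by (and of) the Lamb--Oseen vortex. For $\hat\omega_1$ we handle this exactly as in \cite{GW2,GG}. We work in self-similar variables $\xi=(x-z)/\sqrt t$, $\tau=\log t$. There the operator $\cL - \alpha\Lambda$ (Fokker--Planck plus linearization at the Oseen vortex) generates a semigroup that is uniformly bounded in the weighted space $L^2(e^{|\xi|^2/4})$, and in $L^{4/3}$-type norms, independently of $\alpha$, because $\Lambda$ is skew-adjoint in the Gaussian-weighted space. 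This gives linear estimates for $\hat\omega_1$ with no smallness condition on $\alpha$. The remaining terms are contractive for $T$ small, because of the extra factor $t^{1/4-\beta}$:
\begin{itemize}
\item the advection of $\hat\omega_1$ by the image/boundary-induced part of the velocity $\BS[\bar\omega_1]$, which is smooth near $z$ and $O(1)$ rather than $O(t^{-1/2})$;
\item the advection by $\BS[\omega_2]$;
\item the quadratic terms;
\item the cut-off commutators.
\end{itemize}
For $\omega_2$ we use \eqref{Stokes2} for $S(t)$ and $S(t)\nabla$. The sources are the boundary-layer terms $S_0$ applied to $\omega_1$, which are exponentially small, and the terms $(1-\chi)$ times the Lamb--Oseen interaction. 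The velocity $u$ near the wall is bounded by $C\alpha$, not $\alpha/\sqrt t$, so all Duhamel integrals gain a power $t^{1/2}$ and become small for $T\le T(\alpha)$.

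The expected main obstacle is closing the estimates for the cross term $\BS[\bar\omega_1]\cdot\nabla\omega_2$. Its size is $\alpha t^{-1/2}$ near $z$, so it cannot be treated perturbatively if $\omega_2$ has mass near the vortex core. I would first try to show that $\omega_2$ is concentrated near the boundary, using weighted-in-$x_2$ estimates of the kernel $K_0$ and the fact that the sources of $\omega_2$ are supported away from $z$ or exponentially small. The part of $\omega_2$ near $z$ then carries an extra factor $e^{-c/t}$ or $t^{N}$. If that fails, I would move the bad piece $\chi\,\BS[\bar\omega_1]\cdot\nabla\omega_2$ into the $\hat\omega_1$ equation, where the Oseen-vortex linear semigroup absorbs it.

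Once the fixed point is obtained with $\beta=1/8$, the convergence $\omega(t)\weakto\alpha\delta_z$ follows because $\|\omega-\bar\omega_1\|_{L^1}\to0$. The bounds \eqref{mainest} for all $p$ follow by standard parabolic bootstrapping using \eqref{Stokes2}. The velocity bounds then follow from Hardy--Littlewood--Sobolev (Lemma~\ref{lem:HLS}) and interpolation, with $q=\infty$ obtained from the $L^1\cap L^\infty$ bounds.

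For uniqueness, take any solution satisfying \eqref{maincond} with some $\beta<1/4$. Decompose it with the same cut-off, so that the differences of the $\hat\omega_1$ and $\omega_2$ components satisfy the linearized fixed-point equations. The same estimates, now with exponent $\beta<1/4$ so that the factor $t^{1/4-\beta}\to0$, show the difference vanishes on $(0,T)$. This uses the weak continuity at $t=0$ to justify the Duhamel formula from $t_0\to0$, as in \cite{GW2}.
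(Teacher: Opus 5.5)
Your architecture (a cut-off splitting into a whole-plane vortex part and a boundary-layer part, linearization at the Lamb--Oseen vortex in self-similar variables, Duhamel formulas obtained by letting $t_0\to0$, uniqueness from the extra factor $T^{1/4-\beta}$) matches the paper. However, the premise on which you build the boundary-layer estimates is false. The no-slip condition constrains the velocity, which depends nonlocally on $\omega$, so the defect is of order $\alpha$, not $e^{-c/t}$. Indeed $\gamma[\bar\omega_1(t)](x_1)\to\alpha/\bigl(\pi(1+x_1^2)\bigr)$, and by \eqref{newdec} $K_0(x,z,t)=\frac{2}{\sqrt t}\,g(x_2/\sqrt t)\,P(x_1,1)+\cO_{L^1}(\sqrt t)$, so $\|S_0(t)\delta_z\|_{L^1}\to1$. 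A quick check: integrating $\gamma[\omega]=0$ in $x_1$ shows that every $\omega\in L^1_\perp(\R^2_+)$ has zero mean, so the boundary layer carries vorticity $\approx-\alpha$ for all small $t>0$. Two of your steps fail at once. First, $\omega(t)\weakto\alpha\delta_z$ cannot follow from $\|\omega(t)-\bar\omega_1(t)\|_{L^1}\to0$, because this norm is at least $|\alpha|-o(1)$. The convergence holds only because the boundary layer concentrates on $\partial\R^2_+$, where test functions in $C_0(\R^2_+)$ vanish (Lemma~\ref{lem:Mcont}). Second, nothing in your scheme produces $\beta=1/8$: that exponent is exactly the size of the linear boundary layer, $\|\alpha S_2(t)\delta_z\|_{L^{4/3}}\approx\alpha t^{-1/8}$, see \eqref{FP9}.

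More seriously, the crux is left open. The boundary-layer response to the near-vortex nonlinearity, $\int_0^t S_2(t-s)\div\bigl(\chi(u\omega-\bar u_1\bar\omega_1)\bigr)\dd s$, is not exponentially small. The source has linear part of size $\alpha s^{-1/2}\|\hat\omega_1+\omega_2\|_\star$ in $L^1$, and \eqref{Stokes2} only gives a factor $(t-s)^{-3/4}$ into $L^{4/3}$. With that bound alone, this term enters the $\omega_2$-equation with an $\cO(\alpha)$ coefficient in the critical norm. The missing ingredient is the improved bound \eqref{S2est}, coming from \eqref{K2bd2}: for $f$ supported in $\{y_2\ge1/2\}$ one has $\|S_2(t)\nabla f\|_{L^{4/3}}\lesssim t^{-1/8}\|f\|_{L^1}$ instead of $t^{-3/4}$. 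This makes the $\omega_2$-map Lipschitz with constant $\cO(\alpha T^{1/4}+\rho)$ in \emph{both} components, and that is what disposes of the cross term $\chi\bigl(\bar u_1\omega_2+\BS[\omega_2]\bar\omega_1\bigr)$ you flag. That term cannot be absorbed by $\Sigma_\alpha$: it is a source in the $\hat\omega_1$-equation, not part of the linearized operator. Its scale-invariant contribution $K\alpha\|\omega_2\|_\star$ is not made small by any choice of $\beta$, so your balanced norm does not contract. The paper closes this triangular system with the unbalanced norm $\|\hat\omega_1\|_\star+C_0\|\omega_2\|_\star$, $C_0=\max(1,2K\alpha)$, see \eqref{def:cZnorm}.

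Finally, you overstate the linear theory. No $\alpha$-independent bound on the linearized semigroup in $L^{4/3}$ is available, and the paper notes that even uniform boundedness in $L^1$ is unknown for large $|\alpha|$. What is proved, by compactness of $\cS_\alpha(T)-\cS_0(T)$ and a spectral argument in $L^1$ and in the space $\cX$ of divergences, is growth $e^{\kappa\tau}$, i.e.\ the factor $(t/s)^\kappa$ in \eqref{SigGamBd1}--\eqref{SigGamBd2}. This suffices for $\kappa<1/8$, and for $\kappa<1/4-\beta$ when letting $t_0\to0$ in the uniqueness part.
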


\begin{rem}\label{rem:2D}
In the whole plane $\R^2$, the Lamb--Oseen vortex $\alpha G_t(x)$ is known
to be the unique solution of the Navier-Stokes equations for which the vorticity
is uniformly bounded in $L^1(\R^2)$ and converges weakly to $\alpha\delta_0$ as
$t \to 0$, see \cite{GW2,GG}. In other words, the second assumption in
\eqref{maincond} can be dispensed with in that case. The situation is less
clear in the half-plane $\R^2_+$, and it is quite possible that \eqref{maincond}
is not optimal as far as uniqueness is concerned. We also observe that
\eqref{maincond} implies the vorticity estimate in \eqref{mainest} for $p = 1$
and $p = 4/3$, hence for all all $p \in [1,4/3]$ by interpolation. The
velocity estimate for $q \in (2,4]$ follows by Lemma~\ref{lem:HLS}.
\end{rem}

Theorem~\ref{thm:main} holds for a small time $T > 0$ which is not precisely
quantified, but this smallness condition can easily be relaxed. Indeed, we
know that the vorticity $\omega(t)$ belongs to $L^1_\perp(\R^2_+)$ as soon as
$t > 0$, so that the corresponding measure $\mu(t) = \omega(t)\dd x$ has no
atomic part when $t > 0$. This implies that our solution can be extended to
the whole time interval $(0,+\infty)$ by using, for instance, the result of Abe
\cite{Abe}, which holds for initial data with small atomic part. In particular,
the statement of Theorem~\ref{thm:main} holds for
any fixed time $T > 0$.

Alternatively, it is possible to extend our solution by invoking classical
results on the 2D Navier-Stokes equations in velocity formulation. Indeed,
using weighted energy estimates, we prove in Proposition~\ref{prop:weight} that
the vorticity $\omega(t)$ in Theorem~\ref{thm:main} is strongly localized in the
vertical direction, in the sense that
\begin{equation}\label{x2weight}
  \sup_{0 < t < T}\,\int_{\R^2_+} (1+x_2)^m|\omega(x,t)|\dd x  \,<\, \infty\,,
  \qquad \forall\, m\in \N\,.
\end{equation}
The properties of the Biot--Savart law \eqref{BSlaw} then imply that the velocity
$u(t) = \BS[\omega(t)]$ belongs to $L^p(\R^2_+)$ for all $p \in (1,2)$, see
Lemma~\ref{lem:BS} below. Since we also know from \eqref{mainest} that
$u(t) \in L^q(\R^2_+)$ for some $q > 2$, we deduce by interpolation that
$u(t) \in L^2(\R^2_+)$. In other words, our solution coincides for $t \in (0,T)$
with a {\em finite energy solution} of the 2D Navier-Stokes equations in the
upper half-plane. Those solutions are globally defined, and a lot is known about
their long-time behavior, see for instance \cite{BM,KO,Han}. We thus
obtain\: 

\begin{prop}\label{prop:timedecay} {\rm (Global existence and long-time
behavior)}\\
The local solution constructed in Theorem~\ref{thm:main} can be extended to a
global solution which satisfies $\omega \in C^0\bigl((0,+\infty), L^1_\perp(\R^2_+)
\cap L^q(\R^2_+)\bigr)$ and $u\in C^0\bigl((0,+\infty),L^q(\R^2_+)\bigr)$
for any $q > 1$. Moreover,
\begin{equation}\label{timedecay}
  \|\omega(t)\|_{L^1} + \|u(t)\|_{L^2} \,=\, \cO\bigl(t^{-1/2}\bigr)\,,
  \qquad \text{as }\, t \to +\infty\,.
\end{equation}
\end{prop}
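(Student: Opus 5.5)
The plan is to fix a small time $t_0\in(0,T)$ and restart the evolution from $\omega(t_0)$, which is a regular, finite-energy datum. We then extend it using the classical global theory for the 2D Navier--Stokes equations in $\R^2_+$ in velocity formulation, and read off the decay rate from the known long-time results \cite{BM,KO,Han}.

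\emph{Step 1: finite energy at time $t_0$.} By Theorem~\ref{thm:main}, $\omega(t_0)\in L^1_\perp\cap L^\infty$, and by \eqref{mainest} we have $u(t_0)\in L^q$ for every $q\in(2,\infty]$. By Proposition~\ref{prop:weight}, i.e.\ the moment bound \eqref{x2weight}, together with Lemma~\ref{lem:BS}, we also get $u(t_0)\in L^p$ for $p\in(1,2)$. Interpolation then gives $u(t_0)\in L^2\cap L^\infty$, with $\div u(t_0)=0$ and $u(t_0)=0$ on $\partial\R^2_+$, since $\omega(t_0)\in L^1_\perp$.

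\emph{Step 2: global extension and matching.} Take the unique global Leray--Hopf solution $v$ on $(t_0,\infty)$ with $v(t_0)=u(t_0)$; it is smooth for positive times \cite{KO}. Its vorticity $\curl v$ is smooth, and by parabolic regularity it satisfies the mild formulation \eqref{IntEq} from any $t_1>t_0$. Uniqueness of mild solutions in $C^0(L^1_\perp\cap L^{4/3})$ over short intervals away from $t=0$ follows from a standard fixed point argument for $L^{4/3}$-data, since these data have no atomic part; Abe's result \cite{Abe} also covers this. This uniqueness shows that $\curl v$ agrees with $\omega$ on $(t_0,T)$, so the glued function is a global solution. Propagation of $\omega(t)\in L^1_\perp\cap L^q$ for all $q>1$ and of $u(t)\in L^q$ follows from the standard $L^p$ bounds on the vorticity. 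The $L^1$ bound is the delicate one in a domain with boundary, because vorticity is created at the wall. We control it on compact time intervals through the Duhamel formula and \eqref{Stokes2}, using $\|u\omega\|_{L^1}\le\|u\|_{L^2}\|\omega\|_{L^2}$ together with the energy inequality.

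\emph{Step 3: decay.} The energy decay $\|u(t)\|_{L^2}\to0$ holds for finite-energy solutions in the half-plane \cite{BM,KO}. To obtain the rate $t^{-1/2}$ we use the extra information $u(t_0)\in L^p$ for some $p<2$ (Step~1). A Fourier-splitting or Stokes-semigroup argument in the spirit of \cite{BM,Han} then gives $\|u(t)\|_{L^2}=\cO(t^{-1/2})$. Equivalently, $u$ behaves like the Stokes flow, since $\|S_{\mathrm{velocity}}(t)u_0\|_{L^2}\lesssim t^{-(1/p-1/2)}\|u_0\|_{L^p}$, and $p$ close to $1$ is admissible. For the vorticity we use $\|\omega(t)\|_{L^1}$. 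Writing $\omega(t)=S(t/2)\omega(t/2)-\int_{t/2}^t S(t-s)\div(u\omega)\dd s$, the linear term is bounded through the velocity-to-vorticity smoothing, roughly $\|\omega(t)\|_{L^1}\lesssim t^{-1/2}\|u(t/2)\|_{L^1}$-type estimates. The nonlinear term is handled using $\|\omega(s)\|_{L^2}\lesssim s^{-1/2}\|u(s/2)\|_{L^2}$. Together these give the combined $\cO(t^{-1/2})$ bound in \eqref{timedecay}.

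\emph{Main obstacle.} The hardest point is the $L^1$ decay of the vorticity, because the boundary keeps generating vorticity and the $L^1$ norm is not monotone. I would rely on the analysis of \cite{Han}, which treats $L^1$ vorticity in the half-plane. As a fallback, I would bootstrap using \eqref{Stokes2} with the energy decay, accepting only logarithmic losses if necessary.
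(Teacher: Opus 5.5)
The gap is in your Step~3: neither argument there produces the sharp rate $t^{-1/2}$. Your Steps~1 and~2 are essentially the paper's argument: finite energy from \eqref{x2weight} and Lemma~\ref{lem:BS}, extension through the finite-energy theory, and no blow-up of $\|\omega(t)\|_{L^1}$ via the vorticity Duhamel formula with $\|u\omega\|_{L^1}\le\|u\|_{L^2}\|\omega\|_{L^2}$. For that last point you need the enstrophy bound $\|\omega(s)\|_{L^2}=\|\nabla u(s)\|_{L^2}\lesssim 1$ for $s\ge t_0$, not just the energy inequality, because $\int_{t_0}^t (t-s)^{-1}\dd s$ diverges.

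Restarting at $t_0$ with $u(t_0)\in L^p$ gives $\|u(t)\|_{L^2}=\cO(t^{-(1/p-1/2)})$. Corollary~\ref{cor:ulp} only provides $p\in(1,2)$, so every admissible $p$ gives a strictly slower rate. Taking ``$p$ close to $1$'' yields $\cO(t^{-1/2+\epsilon})$ for each $\epsilon>0$, which is what \cite[Theorem~1]{BM} delivers, not $\cO(t^{-1/2})$. Your linear vorticity bound $\|S(t/2)\omega(t/2)\|_{L^1}\lesssim t^{-1/2}\|u(t/2)\|_{L^1}$ requires $u(t/2)\in L^1$, which you do not have. Control of the velocity in $L^p$ with $p>1$ cannot be turned into $L^1$ decay of the vorticity. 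The fallbacks you mention (\cite{Han}, or a bootstrap accepting logarithmic losses) would at best give the rate up to losses, which is not \eqref{timedecay}.

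The missing idea is that the sharp rate comes from the explicit linear term, not from the data at time $t_0$. Keep the Duhamel formula \eqref{IntEqIni} from $t=0$. Its linear part is the Stokes solution $\alpha S(t)\delta_z$, which satisfies $\|S(t)\delta_z\|_{L^1}\lesssim(1+t)^{-1/2}$ by \eqref{StokesLp}, and whose kinetic energy decays like $t^{-1}$ (Appendix~\ref{appB}).

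The lossy rates then suffice for the nonlinear term. These are $\|u(s)\|_{L^2}=\cO(s^{-1/2+\epsilon})$ from \cite{BM} and, by energy estimates, $\|\nabla u(s)\|_{L^2}=\cO(s^{-1+\epsilon})$. The quantity $\|u(s)\omega(s)\|_{L^1}$ is $\lesssim s^{-1/2}$ for small $s$ by \eqref{mainest}, bounded on compact intervals, and $\cO(s^{-3/2+2\epsilon})$ for large $s$. Hence it is integrable on $(0,\infty)$.

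Now split $\int_0^t (t-s)^{-1/2}\|u(s)\omega(s)\|_{L^1}\dd s$ at $s=t/2$. The first part is $\lesssim t^{-1/2}\int_0^\infty\|u\omega\|_{L^1}\dd s$ and the second is $\cO(t^{-1+2\epsilon})$, so $\|\omega(t)\|_{L^1}=\cO(t^{-1/2})$. The same perturbative argument applied to the velocity, with the $\cO(t^{-1/2})$ energy decay of the Stokes solution as the leading term, gives $\|u(t)\|_{L^2}=\cO(t^{-1/2})$.
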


The decay rates in \eqref{timedecay} are optimal, and a direct calculation shows
that they coincide with the decay rates of the Stokes solution $\omega(t) =
S(t)\delta_z$, see Appendix~\ref{appB}. 

Another interesting question is the short-time behavior of the solution of
\eqref{2Dvort}. Under the assumptions of Theorem~\ref{thm:main}, we can prove
that
\begin{equation}\label{firstasym}
  \bigl\|\omega(t) - \alpha S(t)\delta_z \bigr\|_{L^1} + t^{1/4}
  \bigl\|\omega(t) - \alpha S(t)\delta_z \bigr\|_{L^{4/3}} \,=\,
  \cO\bigl(t^{1/8}\bigr)\,, \qquad \text{as }\,t \to 0\,,
\end{equation}
see Section \ref{ssec43} below. In other words, the Stokes solution
$\alpha S(t)\delta_z$ is the leading order term in a short time asymptotic
expansion of the full solution $\omega(t)$.  Since
$\bigl(S(t)\delta_z\bigr)(x) = K(x,z,t)$, an inspection of the formulas
\eqref{Kdecomp}, \eqref{K0def} reveals that $\omega(t)$ is well approximated for
small times by the superposition of a Lamb--Oseen vortex located at point
$z \in \R^2_+$ and a boundary layer correction supported in a neighborhood of
size $\sqrt{t}$ of $\partial \R^2_+$. What the Stokes approximation does not
account for is the motion of the vortex due to the interaction with the boundary
layer. The position of the vortex can be defined by the formula
\begin{equation}\label{def:Z}
  Z(t) \,=\, \frac{1}{\alpha}\int_{\R^2_+} \chi(x)\,x\,\omega(x,t)\dd x\,, \qquad
  t > 0\,,
\end{equation}
where $\chi$ is a smooth function that is equal to $1$ in a small neighborhood
of $z = (0,1)$ and to $0$ outside a larger neighborhood of $z$. The localization
estimates in Proposition~\ref{prop:weight2} below show that this definition is independent
of the choice of the cut-off $\chi$, up to corrections of size $\cO(t^\infty)$ as
$t \to 0$. Our final result is a computation of the velocity $Z'(t)$ for
small times. 

\begin{prop}\label{prop:speed} {\rm (Translation speed at initial time)}\\
Under the assumptions of Theorem~\ref{thm:main}, the velocity of the point
$Z(t)$ defined by \eqref{def:Z} satisfies $Z'(t) = V_\infty + \cO(t^{1/8})$
as $t \to 0$, where
\begin{equation}\label{Vasym}
  V_\infty \,=\, \frac{\alpha}{4\pi}\, \begin{pmatrix} 1 \\ 0\end{pmatrix} \,=\,
  -\frac{\alpha\,z^\perp}{4\pi|z|^2}\,.
\end{equation}
\end{prop}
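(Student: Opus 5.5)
We differentiate the definition of $Z(t)$ and use the vorticity equation in classical form (Remark~\ref{rem:mild}). Since $\chi$ is compactly supported away from $\partial\R^2_+$ and $\div u = 0$, integration by parts gives
\[
  \alpha Z'(t) \,=\, \int_{\R^2_+} \omega\, u\cdot\nabla\bigl(\chi x\bigr)\dd x
  + \int_{\R^2_+} \omega\,\Delta\bigl(\chi x\bigr)\dd x\,.
\]
On the set where $\chi \equiv 1$ we have $\nabla(\chi x) = \mathrm{Id}$ and $\Delta(\chi x) = 0$. Both integrands therefore involve $\omega$ only on the annulus-type region where $\nabla\chi \neq 0$, plus the term $\int \chi\,u\,\omega\dd x$. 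The localization estimates of Proposition~\ref{prop:weight2} show that $\omega(t)$ is $\cO(t^\infty)$ in $L^1$ outside any fixed neighborhood of $z$ that avoids a boundary strip. Combined with the bound $\|u(t)\|_{L^\infty}\lesssim t^{-1/2}$ from \eqref{mainest}, this makes all terms containing derivatives of $\chi$ equal to $\cO(t^\infty)$. We are left with $\alpha Z'(t) = \int \chi\,u\,\omega\dd x + \cO(t^\infty)$.

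Next we split the velocity as $u = \BS[\chi\omega] + \BS[(1-\chi)\omega]$. For the self-interaction part, the half-plane kernel in \eqref{BSlaw} consists of the antisymmetric whole-plane kernel $\frac{(x-y)^\perp}{2\pi|x-y|^2}$ and the reflected kernel. The antisymmetric part gives zero in $\int \chi\omega\,\BS[\chi\omega]\dd x$. The reflected part is smooth near $z$, since $x-y^*$ stays away from $0$. Substituting $\omega \approx \alpha G_t(\cdot - z)$, which holds with error $\cO(t^{1/8})$ in the sense of \eqref{maincond} and \eqref{firstasym}, the reflected kernel evaluated at $x=y=z$ gives
\[
  -\frac{\alpha^2}{2\pi}\,\frac{(z-z^*)^\perp}{|z-z^*|^2} \,=\, -\frac{\alpha^2}{2\pi}\,\frac{(2z)^\perp}{4|z|^2}\,.
\]
After dividing by $\alpha$ this is exactly $V_\infty = -\alpha z^\perp/(4\pi|z|^2)$, i.e.\ the image-vortex velocity.

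The remaining contribution is the velocity at $z$ induced by the boundary-layer vorticity $(1-\chi)\omega$, which is concentrated in a strip of width $\sqrt t$ near $\partial\R^2_+$. This is the step I expect to be the main obstacle: the boundary-layer vorticity has $L^1$ norm of order one, so it is not small. To first order it is $-\alpha S_0(t)\delta_z$, up to an error of size $\cO(t^{1/8})$ in $L^1$ by \eqref{firstasym}. Its velocity near $z$ is smooth and is bounded by the $L^1$ norm of that error, so the error contributes only $\cO(t^{1/8})$. For the leading part, I would compute $\BS[S_0(t)\delta_z](z)$ from \eqref{K0def}. The no-slip construction is designed so that the boundary-layer vorticity exactly cancels the tangential slip velocity of the Dirichlet part on the wall. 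As $t\to0$, $S_0(t)\delta_z$ converges weakly to a vortex sheet on $\partial\R^2_+$ whose density is the slip $\gamma$-type trace of $\delta_z$. A vortex sheet lying on the wall induces zero velocity in the interior under the half-plane Biot--Savart law, because its image cancels it. Hence $\BS[S_0(t)\delta_z](z) \to 0$, and I would aim to get the rate $\cO(t^{1/2})$ by Taylor expanding the kernel in $y_2 \lesssim \sqrt t$. The same smoothness argument handles the replacement of $\omega$ by $\alpha G_t$ in the self-interaction term, through Lipschitz bounds on the reflected kernel together with the $L^1$ and $L^{4/3}$ errors.

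Collecting the terms gives $Z'(t) = V_\infty + \cO(t^{1/8})$.
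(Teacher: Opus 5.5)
Your argument is correct, but it takes a different route from the paper's.

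\textbf{The paper's route.} The paper works with the whole-plane Biot--Savart law. Lemma~\ref{lem:2BS} applies because the total vorticity lies in $L^1_\perp(\R^2_+)$, so one may write $u = \BSS[\widetilde\omega]$. The paper then splits $\omega = \omega_1 + \omega_2$ as in Section~\ref{sec4} and removes the vortex self-interaction entirely by antisymmetry. The whole speed then comes from the whole-plane field $\BSS[-\alpha K_0(\cdot,z,t)]$ that the boundary-layer vortex sheet induces on $\supp\chi$. This needs Corollary~\ref{cor:local}, the splitting \eqref{newdec} from Lemma~\ref{lem:K0decomp}, and the explicit Poisson-kernel/residue computation \eqref{Vexp4}.

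\textbf{Your route.} You use the half-plane law, and the roles are reversed. The reflected kernel acting on the concentrated vortex gives $V_\infty$ directly; your evaluation at $x=y=z$ is right, since the kernel is bounded and Lipschitz on $\supp\chi\times\supp\chi$. The boundary layer drops out because the half-plane kernel vanishes linearly at the wall.

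The step you flagged as the main obstacle is in fact immediate. By \eqref{BSimproved}, for $x\in\supp\chi$ and $y_2<\epsilon$ the kernel is bounded by $C y_2$. Also \eqref{Kbd1} gives $\int_{\R^2_+} y_2\,K_0(y,z,t)\dd y \lesssim \sqrt t$. So the boundary-layer contribution is $\cO(t^{1/2})$ uniformly on $\supp\chi$, with no Taylor expansion or weak-limit argument needed.

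\textbf{Comparison.} Your proof avoids Lemma~\ref{lem:K0decomp}, the residue calculation and Corollary~\ref{cor:local}. It needs only Proposition~\ref{prop:weight2}, \eqref{firstasym}, \eqref{Kbd1} and \eqref{BSimproved}. The paper's route instead makes explicit that the motion is physically driven by the boundary-layer vorticity, with the mirror vortex as an effective description of it. The two computations agree precisely because of Lemma~\ref{lem:2BS}: the image field of the total vorticity vanishes, so the image field of the vortex equals minus that of the boundary layer.

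\textbf{Two small points to tighten.}
\begin{itemize}
\item Replacing $\chi\omega$ by $\alpha\chi G_t(\cdot-z)$ needs $L^1$ closeness on $\supp\chi$. This comes from \eqref{firstasym}, plus the exponential smallness there of $G_t(\cdot-z^*)$ and $K_0(\cdot,z,t)$. It does not come from \eqref{maincond}, whose $L^{4/3}$ bound is not small.
\item $(1-\chi)\omega$ also contains vorticity off the wall, notably on the annulus where $0<\chi<1$. Its velocity on $\supp\chi$ is therefore not controlled by an $L^1$ bound alone. Use Lemma~\ref{lem:HLS} in the form $\|\BS[f]\|_{L^\infty}\lesssim\|f\|_{L^1}^{1/2}\|f\|_{L^\infty}^{1/2}$, together with Proposition~\ref{prop:weight2} and $\|\omega(t)\|_{L^\infty}\lesssim t^{-1}$, as the paper does for the analogous term.
\end{itemize}
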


Note that $V_\infty$ is exactly the speed of a point vortex of circulation
$\alpha$ located at point $z$ under the influence of a ``mirror vortex'' of
circulation $-\alpha$ located at the conjugate point $z^*$. In other words, to
leading order as $t \to 0$, the influence of the viscous boundary layer on the
position of the vortex is correctly described by the action of a fictitious
mirror vortex, as in the inviscid picture. To our knowledge, Proposition~\ref{prop:speed}
is the first rigorous justification of the motion of a concentrated vortex 
for the 2D Navier-Stokes equations in a domain with boundary. 

\subsection{Sketch of the proof of the main result}\label{ss:ideas}

If $\omega(x,t)$ is a solution of \eqref{2Dvort} satisfying \eqref{maincond}
and such that $\omega(t) \weakto \alpha\delta_z$ as $t \to 0$, we first take the limit
$t_0 \to 0$ in \eqref{IntEq} to obtain a representation formula that involves explicitly
the initial data\:
\begin{equation}\label{IntEqIni}
  \omega(t) \,=\, \alpha S(t) \delta_z - \int_0^t S(t-s) \div \bigl(u(s) \omega(s)
  \bigr)\dd s\,, \qquad t \in (0,T)\,.
\end{equation}
The main idea of the proof is then to decompose the solution $\omega(t)$ into
a concentrated vortex $\omega_1(t)$ originating from the Dirac mass in the
initial data, and a correction term $\omega_2(t)$ which takes into acount
the creation of vorticity in the boundary layer. At the linear level,
this corresponds to decomposing the Stokes semigroup as $S(t) = S_1(t) + S_2(t)$,
where
\begin{equation}\label{S12def}
\begin{split}
  \bigl(S_1(t)\omega_0\bigr)(x) \,=\, \int_{\R^2_+} G_t(x-y)\,\omega_0(y)\dd y\,, \quad
  \bigl(S_2(t)\omega_0\bigr)(x) \,=\, \int_{\R^2_+} K_2(x,y,t)\,\omega_0(y)\dd y\,,
\end{split}
\end{equation}
so that $K_2(x,y,t) = -G_t(x-y^*) - K_0(x,y,t)$.  Note that this decomposition
differs from \eqref{Sdecomp1} since the mirror vortex term $-G_t(x-y^*)$ in
\eqref{Kdecomp} is now included in the correction term $S_2(t)$. The
reason for that choice is that we want $S_1(t)$ to coincide with the
heat semigroup $e^{t\Delta}$ in the whole plane $\R^2$, when applied to
initial data that are supported in $\R^2_+$. 

We next define the vortex part $\omega_1(t)$ by the formula
\begin{equation}\label{defom1}
  \omega_1(t) \,=\, \alpha S_1(t) \delta_z - \int_0^t S_1(t-s) \div \bigl(\chi\,
  u(s) \omega(s)\bigr)\dd s\,, \quad t \in (0,T)\,,
\end{equation}
where, as in \eqref{def:Z}, the localization function $\chi$ is compactly
supported in $\R^2_+$ and equal to $1$ in a neighborhood of $z = (0,1)$. The
idea is now to perform a fixed point argument on the pair of integral equations
satisfied by the components $\omega_1$ and $\omega_2 := \omega - \omega_1$. The
advantage of the above decomposition is that equation \eqref{defom1} can be
easily extended to the whole plane $\R^2$, where we can use the techniques
introduced in \cite{GW2,GG} to circumvent the difficulty due to the large Dirac
mass in the initial data. This approach works provided $\omega_1(t)$ is
reasonably close to the Lamb--Oseen vortex $\alpha G_t(\cdot-z)$, and this is the
reason for the second assumption in \eqref{maincond}. On the other hand, the
integral equation for $\omega_2(t)$ involves either the operator $S_2(t)$, which
(according to Lemma~\ref{lem:Stokesbounds} below) satisfies better estimates
than $S_1(t)$ for small times, or the tamed nonlinearity $(1-\chi)u\omega$,
where the largest part of the solution is filtered out by the cut-off.
Summarizing, we can perform a fixed point argument for small times on the pair
of integral equations satisfied by the quantities $\hat\omega_1(t) := \omega_1(t)
- \alpha G_t(\cdot-z)$ and $\omega_2(t)$, and this gives at the same time the
existence and uniqueness claims in Theorem~\ref{thm:main}.

\subsection{Perspectives}\label{ss:perspect}

The present paper should stimulate further investigations of the initial-boundary
value problem for the 2D Navier-Stokes equations in unbounded domains, for
singular initial data including point vortices. In the case of a half-plane, it
is rather straightforward to modify the proof of Theorem~\ref{thm:main} so as to
include a finite sum of Dirac masses as initial vorticity.  Combining point
vortices with more regular structures such as vortex sheets or patches is more
challenging, but should be amenable to analysis using the techniques developed
in \cite{GG}. So there are reasons to hope that the restriction on the size of
the atomic part of the initial vorticity in the general result of Abe \cite{Abe}
will be removed in a near future.

In a different direction, it is natural to ask if our main result,
Theorem~\ref{thm:main}, remains valid in more general domains. The half-plane
is convenient because we have quite explicit formulas for the Biot--Savart
law and the Stokes semigroup, but from a broader perspective the questions
we investigate do not really depend on the shape of the domain. Note that,
in doubly connected domains such as exterior domains, the velocity is
not uniquely determined by the vorticity; in particular, its circulation
at infinity may be chosen arbitrarily. Therefore, what should be understood
by ``a point vortex in an exterior domain'' requires some clarification. 
This problem does not occur in the half-plane, because the boundary
condition \eqref{noslip} implies that the integral of the vorticity
vanishes. 

\subsection{Outline of the paper}\label{ss:organization}

The rest of this paper is organized as follows. In the preliminary
Section~\ref{sec2}, we prove accurate estimates on the kernel of the Stokes
semigroup $S(t)$, and we study the Biot--Savart formula \eqref{BSlaw} which
defines the velocity in terms of the vorticity. Part of this material can be
found in earlier works such as \cite{Abe}. Section~\ref{sec3} is devoted to the
linearized equation at the Lamb--Oseen vortex, which can be studied in the whole
plane $\R^2$ using self-similar variables. Here we follow quite closely the approach
introduced in \cite{GW1,GG}, but the main result
(Proposition~\ref{prop:semigroup}) is not contained in previous references.  The
core of the paper is Section~\ref{sec4}, where we formulate integral equations
for the quantities $\hat\omega_1(t) = \omega_1(t) - \alpha S_1(t)\delta_z$ and
$\omega_2(t)$, and subsequently solve them by a fixed point argument to obtain
the main conclusion of Theorem~\ref{thm:main}. Qualitative properties of the solution
are established in Section~\ref{sec5}, including scale invariant bounds and
localization in the vertical direction. We also show that the solution has
finite energy for any $t > 0$, and is concentrated for small times near the
initial position $z$ of the point vortex, or near the boundary. This allows us
to derive the formula \eqref{Vasym} for the speed of the vortex center near
initial time. Finally, Appendix~\ref{appA} presents a short derivation of the
explicit formula for the integral kernel of the Stokes semigroup,
Appendix~\ref{appB} collects a few properties of the Stokes solution
$\omega(t) = S(t)\delta_z$, and Appendix~\ref{appC} states and proves
an elementary Gr\"onwall-type lemma.

\bigskip\noindent{\bf Acknowledgements.}
This work was supported by the BOURGEONS project, grant ANR-23-CE40-0014-01 of
the French National Research Agency (ANR). Both authors acknowledge the support
of the Institut Universitaire de France.

%%%%%%%%%%%%%%%%%%%%%%%%%%%%%%%%%%%%%%%%%%%%%%%%%%%%%%%%%%%%%%%%%%%%%%%%%%%
%%%%%%%%%%%%%%%%%%%%%%%%%%%%%%%%%%%%%%%%%%%%%%%%%%%%%%%%%%%%%%%%%%%%%%%%%%%

\section{The Stokes semigroup and the Biot--Savart law}\label{sec2}

This section is mainly devoted to the proof of Proposition~\ref{prop:Stokes}.
We first obtain accurate estimates on the boundary term $K_0$ defined by
\eqref{K0def}, and we use them to derive bounds on the semigroups $S_1(t)$, 
$S_2(t)$ defined in \eqref{S12def}. We also prove estimates for the
Biot--Savart law in the half-space. 

\subsection{Estimates of the boundary term}\label{ss:boundary}

\begin{lem}\label{lem:K0bounds}
There exists a constant $C > 0$ such that, for all $x,y \in \R^2_+$ and
all $t > 0$, 
\begin{align}\label{Kbd1}
  0 \,< \, K_0(x,y,t) \,&\le\, \frac{C}{\sqrt{t}}\,e^{-x_2^2/(5t)}\,
  \frac{y_2}{(x_1-y_1)^2 + (y_2 + \sqrt{t})^2}\,,\\ \label{Kbd2}
  |\nabla_y K_0(x,y,t)| \,&\le\, \frac{C}{\sqrt{t}}\,e^{-x_2^2/(5t)}\,
  \frac{1}{(x_1-y_1)^2 + (y_2 + \sqrt{t})^2}\,.
\end{align}
\end{lem}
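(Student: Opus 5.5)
The plan is to work directly from the integral formula \eqref{K0def}, carrying out the $x_2$-derivative under the integral and then estimating the resulting Gaussian--Poisson convolution by hand. For $z = (z_1,z_2)$ with $0<z_2<y_2$ we have $x - z^* = (x_1-z_1,\,x_2+z_2)$. Hence
\[
  -2\partial_{x_2}G_t(x-z^*) \,=\, \frac{x_2+z_2}{t}\,G_t(x-z^*) \,>\, 0,
\]
and $P(y-z)>0$ because $y_2-z_2>0$. This gives positivity immediately:
\[
  K_0(x,y,t) \,=\, \int_\R\int_0^{y_2}\frac{x_2+z_2}{t}\,G_t(x-z^*)\,P(y-z)\dd z_2\dd z_1 \,>\, 0.
\]

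For the upper bound in \eqref{Kbd1}, factor $G_t(x-z^*) = g_t(x_1-z_1)\,(4\pi t)^{-1/2}e^{-(x_2+z_2)^2/(4t)}$, where $g_t$ is the normalized one-dimensional heat kernel. The elementary inequality $s\,e^{-s^2/(4t)} \lesssim \sqrt{t}\,e^{-s^2/(5t)}$, together with $(x_2+z_2)^2 \ge x_2^2+z_2^2$, bounds the vertical factor by $C t^{-1}e^{-x_2^2/(5t)}e^{-z_2^2/(5t)}$. The $z_1$-integral is then a convolution of $g_t$ with the Poisson kernel at height $h = y_2-z_2$. I will use the standard comparison
\[
  g_t * P(\cdot,h)(X) \,\lesssim\, \frac{h+\sqrt t}{X^2+(h+\sqrt t)^2}, \qquad X = x_1-y_1,
\]
which follows by splitting according to $|X|\lessgtr h+\sqrt t$. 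It remains to prove
\[
  \int_0^{y_2} e^{-z_2^2/(5t)}\,\frac{y_2-z_2+\sqrt t}{X^2+(y_2-z_2+\sqrt t)^2}\dd z_2 \,\lesssim\, \frac{\sqrt t\,y_2}{X^2+(y_2+\sqrt t)^2},
\]
and this $z_2$-integral is where the real work lies. I split into three cases.
\begin{itemize}
\item If $y_2\le\sqrt t$, then $h+\sqrt t\sim y_2+\sqrt t$ throughout, and the integral is at most $y_2$ times the kernel evaluated at height $y_2+\sqrt t$.
\item If $y_2>\sqrt t$, on the range $z_2<y_2/2$ the same comparability holds. The Gaussian weight then integrates to $O(\sqrt t)$, and $\sqrt t\,(y_2+\sqrt t)\lesssim \sqrt t\, y_2$ after restoring the prefactor.
\item If $y_2>\sqrt t$, on the range $z_2>y_2/2$ the factor $e^{-y_2^2/(20t)}$ is available. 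The loss in replacing $X^2+(h+\sqrt t)^2$ by $X^2+(y_2+\sqrt t)^2$ is at most a factor $(y_2+\sqrt t)^2/t$, and this polynomial loss is absorbed by the Gaussian.
\end{itemize}
The last range, where $h$ is small and the Poisson kernel is sharply peaked, is the only delicate point.

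For \eqref{Kbd2}, I first substitute $z = y-w$, so that
\[
  K_0 \,=\, -2\partial_{x_2}\int_\R\int_0^{y_2} G_t\bigl(x-(y-w)^*\bigr)P(w)\dd w_2\dd w_1 .
\]
In this form $y$ appears only in the Gaussian and in the upper limit of integration.
\begin{itemize}
\item The $y$-derivative falling on the Gaussian costs a factor $t^{-1/2}$, up to a slight widening of the exponent. Repeating the argument above, but now without gaining the factor $y_2$ from the length of the $z_2$-interval, gives $t^{-1/2}e^{-x_2^2/(5t)}\,(X^2+(y_2+\sqrt t)^2)^{-1}$. The $z_2$-integral is now bounded via $\int_0^{y_2} e^{-z_2^2/(5t)}(h+\sqrt t)^{-1}\dots$ rather than by the length $y_2$.
\item The derivative $\partial_{y_2}$ hitting the upper limit produces the trace of $P(\cdot,h)$ as $h\to0$, which is $\delta$ in the horizontal variable. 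This contributes $-2\partial_{x_2}G_t(x_1-y_1,\,x_2+y_2)$, and that term is bounded by $C t^{-3/2}e^{-(x_2+y_2)^2/(5t)}e^{-X^2/(5t)}$. It is easily dominated by the right-hand side of \eqref{Kbd2}, since $t^{-1}e^{-(X^2+y_2^2)/(5t)}\lesssim (X^2+(y_2+\sqrt t)^2)^{-1}$.
\end{itemize}
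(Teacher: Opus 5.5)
Your argument for \eqref{Kbd1} is correct, and it differs from the paper's only in technique. You prove the Voigt-type bound $(g_t*P(\cdot,h))(X)\lesssim (h+\sqrt t)/(X^2+(h+\sqrt t)^2)$ by elementary splitting instead of through the Faddeeva function. You then treat the $z_2$-integral by a three-case analysis, whereas the paper extracts an extra factor $(1+z_2^2)^{-1}$ from the vertical Gaussian and uses $(1+z_2)(1+y_2-z_2)\ge 1+y_2$. The gap is in \eqref{Kbd2}.

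\textbf{The Gaussian-derivative term is too large.} In your variables $w=y-z$, you bound the $y$-derivative of the Gaussian in absolute value by $t^{-1/2}$ times a wider Gaussian. Up to constants, this gives $t^{-1/2}$ times the integrand of the first part, namely $t^{-3/2}e^{-x_2^2/(5t)}e^{-z_2^2/(5t)}(h+\sqrt t)/(X^2+(h+\sqrt t)^2)$. When $y_2\gg\sqrt t$, your estimate $\sqrt t\,y_2/(X^2+(y_2+\sqrt t)^2)$ for the $z_2$-integral of this is sharp: the range $z_2\lesssim\sqrt t$, where $h\approx y_2$, already produces it. You therefore only get $t^{-1}e^{-x_2^2/(5t)}\,y_2/(X^2+(y_2+\sqrt t)^2)$, which exceeds the target by a factor $y_2/\sqrt t$. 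For instance, at $t=1$, $X=0$, $x_2=1$ you get $1/y_2$ instead of $1/y_2^2$. No reorganisation of the $z_2$-integral can fix this, because the absolute-valued integrand really has that size.

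\textbf{The boundary term is misidentified.} In the $w$-variables the upper limit $w_2=y_2$ corresponds to $z_2=0$, not to $h=0$. It therefore contributes $-2\partial_{x_2}\int_\R G_t(x_1-y_1+w_1,x_2)\,P(w_1,y_2)\dd w_1$, which is again of size $t^{-1}y_2/(X^2+y_2^2)$ when $x_2\sim\sqrt t$. The trace term $-2\partial_{x_2}G_t(x_1-y_1,x_2+y_2)$ you wrote arises only in the original $z$-variables, where the remaining derivative falls on $P(y-z)$ rather than on the Gaussian.

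\textbf{The missing idea.} For $\partial_{y_2}$, the two oversized terms cancel after an integration by parts in $w_2$. For $\partial_{y_1}$, the loss disappears once the derivative is moved onto $P$ by integrating by parts in $w_1$. In both cases the point is to let the $y$-derivative act on the Poisson side, which is what the paper does:
\begin{enumerate}
\item Convolve first in the horizontal variable to obtain $V_t(X,h)=(g_t*P(\cdot,h))(X)$, which is smooth up to $h=0$ with $V_t(\cdot,0)=g_t$.
\item Differentiate $V_t(x_1-y_1,y_2-z_2)$ together with the upper limit $z_2=y_2$. Only the upper limit produces your trace term.
\item Use $|\nabla V_t(X,h)|\lesssim (X^2+(h+\sqrt t)^2)^{-1}$, the rescaled second bound in \eqref{Voigtbd}, which the paper derives from the Faddeeva function.
\end{enumerate}
This bound gains a factor $(h+\sqrt t)^{-1}$ relative to $V_t$ instead of losing $t^{-1/2}$. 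With it, your three-case analysis goes through with numerator $1$ in place of $h+\sqrt t$. Proving the gradient bound requires placing the derivative on $P$, not on $g_t$, when $h\gtrsim\sqrt t$.
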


\begin{rem}\label{rem:Kbd}
Estimates \eqref{Kbd1}, \eqref{Kbd2} are similar to some of the bounds
established in \cite{Abe}, see in particular Lemma 3.4 therein. But they
are not stated in the same form in \cite{Abe}, nor in \cite{Ukai}. Since
our strategy heavily relies on the structure of the right-hand sides
of \eqref{Kbd1} and \eqref{Kbd2}, we provide here a complete proof of
these estimates.
\end{rem}

\begin{proof}
Starting from the definition \eqref{K0def}, it is easy to verify that the
kernel $K_0(x,y,t)$ satisfies the following {\em scaling invariance}\:
\[
  t\,K_0\bigl(x\sqrt{t},y\sqrt{t},t\bigr) \,=\, K_0(x,y,1)\,, \qquad
  \forall\, x,y \in \R^2_+\,, \quad \forall\, t > 0\,.
\]
As a consequence, it is sufficient to establish \eqref{Kbd1}, \eqref{Kbd2}
for $t = 1$. To do so, we denote
\begin{equation}\label{GamP}
  g(x_1) \,=\, \frac{1}{\sqrt{4\pi}}\,e^{-x_1^2/4}\,, \qquad
  P(x_1,x_2) \,=\, \frac{1}{\pi}\,\frac{x_2}{x_1^2 + x_2^2}\,, \qquad
  \forall\,x_1 \in \R\,, \quad \forall\, x_2 > 0\,, 
\end{equation}
and we observe that 
\begin{align}\nonumber
  K_0(x,y,1) \,&=\, -2\partial_{x_2}\int_{\R}\int_0^{y_2} g(x_1 - z_1) 
  g(x_2 + z_2)\,P(y-z)\dd z_2 \dd z_1 \\ \label{K0rep}
 \,&=\, -2\int_0^{y_2} g'(x_2 + z_2) V(x_1-y_1,y_2-z_2)\dd z_2\,,
\end{align}
where $V : \R^2_+ \to \R_+$ is the {\em Voigt function} defined by
\begin{equation}\label{Voigt}
  V(x) \,=\, \int_\R g(x_1-z_1)P(z_1,x_2)\dd z_1 \,=\, \frac{1}{\sqrt{4\pi}}
  \,\Re \Bigl(w\Bigl(\frac{x_1 + i x_2}{2}\Bigr)\Bigr)\,, \qquad x \in \R^2_+\,.
\end{equation}
The last member of \eqref{Voigt} involves the {\em Fadeeva function}
\begin{equation}\label{Fadeeva}
  w(z) \,=\, \frac{i}{\pi} \int_\R \frac{e^{-t^2}}{z-t}\,\dd t \,=\, e^{-z^2}
  \mathrm{erfc}(-i z)\,, \qquad z \in \C\,, \quad \Im(z) > 0\,,
\end{equation}
the properties of which are described in \cite[Section~7.1]{AS}. It follows
from \eqref{Voigt}, \eqref{Fadeeva} that the Voigt function extends to a smooth
map $V : \R^2 \to \R$, and since $1 - \mathrm{erfc}(-iz) \in i\R$ when $z \in \R$
we have $V(x_1,0) = g(x_1)$ for all $x_1 \in \R$. On the other hand, using
the relation $w'(z) = -2z w(z)+ 2i/\sqrt{\pi}$ and the asymptotic expansion of
the complementary error function given in \cite[Formula~7.1.23]{AS}, it is easy
to verify that
\begin{equation}\label{Fadexp}
  w(z) \,=\, \frac{i}{\sqrt{\pi}z}\Bigl(1 + \frac{1}{2z^2} + \cO\Bigl(
  \frac{1}{|z|^4}\Bigr)\Bigr)\,, \qquad
  w'(z) \,=\, \frac{i}{\sqrt{\pi}}\Bigl(-\frac{1}{z^2} + \cO\Bigl(
  \frac{1}{|z|^4}\Bigr)\Bigr)\,,
\end{equation}
as $|z| \to \infty$ with $\Im(z) \ge 0$. In view of \eqref{Voigt}, this in turn
implies that
\begin{equation}\label{Voigtbd}
  0 \,\le\, V(x) \,\lesssim\, \frac{x_2+1}{x_1^2 + (x_2+1)^2}\,, \qquad 
  |\nabla V(x)| \,\lesssim\, \frac{1}{x_1^2 + (x_2+1)^2}\,, \qquad 
  \forall\, x \in \overline{\R^2_+}\,.
\end{equation}

We are now ready to prove the bounds \eqref{Kbd1}, \eqref{Kbd2} for
$t = 1$. We use the inequality
\begin{equation}\label{Gamp}
	-g''(x_2+z_2) \,=\, \frac{x_2+z_2}{4\sqrt{\pi}}\,e^{-(x_2+z_2)^2/4}
	\,\lesssim\, \frac{e^{-x_2^2/5}\,e^{-z_2^2/5}}{(1+x_2^2)(1+z_2^2)}
	\,\leq\, \frac{e^{-x_2^2/5}\,e^{-z_2^2/5}}{1+z_2^2}\,,
\end{equation}
in which the factor $1/5$ in the exponential has been chosen somewhat
arbitrarily and could be any number stricly less than $1/4$.  Using the
first estimate in \eqref{Voigtbd} together with \eqref{Gamp}, we deduce from
\eqref{K0rep} that
\[
 0 \,<\, K_0(x,y,1) \lesssim \, e^{-x_2^2/5} \int_0^{y_2}
  \frac{e^{-z_2^2/5}}{1+z_2^2}\,\frac{y_2-z_2+1}{(x_1-y_1)^2 + (y_2-z_2+1)^2}
  \dd z_2  .
\]
Noting that for $z_2\in [0, y_2]$,
\begin{align*}
  (1+z_2)^2\bigl((x_1-y_1)^2 + (y_2-z_2+1)^2\bigr) \,&\ge\, (x_1-y_1)^2 + (1+z_2)^2
  (y_2-z_2+1)^2\\ \,&\ge\,  (x_1-y_1)^2 + (1+y_2)^2\,,
\end{align*}
we infer that
\[
  K_0(x,y,1) \,\lesssim\, e^{-x_2^2/5} \int_0^{y_2}
  \frac{e^{-z_2^2/5}\,(y_2+1)}{(x_1-y_1)^2 + (y_2+1)^2}\dd z_2 
  \,\lesssim\, \frac{e^{-x_2^2/5} \,y_2}{(x_1-y_1)^2 + (y_2+1)^2}\,,
\]
for all $x,y \in \R^2_+$, which is the desired result.

Similarly, we deduce from \eqref{K0rep} that
\[
  |\nabla_y K_0(x,y,1)| \,\lesssim\, g(x_1-y_1)\,|g'(x_2+y_2)| + 
  \int_0^{y_2} |g'(x_2 + z_2)| \,|\nabla V(x_1-y_1,y_2-z_2)|\dd z_2\,.
\]
Using the second estimate in \eqref{Voigtbd} together with \eqref{Gamp},
we obtain
\begin{align*}
  |\nabla_y K_0(x,y,1)| \,&\lesssim\, e^{-(x_1-y_1)^2/4}\, e^{-(x_2^2+y_2^2)/5}
  + \int_0^{y_2} \frac{e^{-(x_2^2+z_2^2)/5}}{1+z_2^2}\,\frac{1}{(x_1-y_1)^2
  + (y_2-z_2+1)^2}\dd z_2 \\ \,&\lesssim\, \frac{e^{-x_2^2/5}}{(x_1-y_1)^2
  + (y_2+1)^2}\,, \qquad \forall\, x,y \in \R^2_+\,,
\end{align*}
which concludes the proof of \eqref{Kbd2}. 
\end{proof}

Estimate \eqref{Kbd1} shows in particular that the contribution of $K_0(x,y,t)$
to the semigroup kernel \eqref{Kdecomp} is concentrated for small times in a
neighborhood of size $\cO(\sqrt{t})$ of the boundary $x_2 = 0$. As is easily
verified, the same property holds for the ``mirror term'' $G_t(x-y^*)$, with
the slight difference that the latter does not vanish at $y_2 = 0$. As a
consequence, if we consider the integral kernel $K_2(x,y,t) = -G_t(x-y^*)
- K_0(x,y,t)$ introduced in \eqref{S12def}, the following estimates hold\:

\begin{cor}\label{cor:K1bounds}
There exists a constant $C > 0$ such that, for all $x,y \in \R^2_+$ and
all $t > 0$, 
\begin{align}\label{K2bd1}
    |K_2(x,y,t)| \,&\le\, \frac{C}{\sqrt{t}}\,e^{-x_2^2/(5t)}\,
  \frac{y_2 + \sqrt{t}}{(x_1-y_1)^2 + (y_2 + \sqrt{t})^2}\,,\\ \label{K2bd2}
  |\nabla_y K_2(x,y,t)| \,&\le\, \frac{C}{\sqrt{t}}\,e^{-x_2^2/(5t)}\,
  \frac{1}{(x_1-y_1)^2 + (y_2 + \sqrt{t})^2}\,.
\end{align}
\end{cor}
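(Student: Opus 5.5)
Since $K_2 = -G_t(\cdot-y^*) - K_0$, the plan is to combine Lemma~\ref{lem:K0bounds} with an elementary bound on the mirror Gaussian $G_t(x-y^*)$ and its $y$-gradient. The triangle inequality then gives both estimates. Note that the right-hand side of \eqref{Kbd1} is already dominated by that of \eqref{K2bd1}, since $y_2 \le y_2+\sqrt{t}$. Also, \eqref{Kbd2} coincides with \eqref{K2bd2}. So it suffices to show that $G_t(x-y^*)$ and $|\nabla_y G_t(x-y^*)|$ satisfy the same bounds.

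As in the proof of Lemma~\ref{lem:K0bounds}, I would use scaling. The function $G_t(x-y^*)$ satisfies $t\,G_t(x\sqrt t - (y\sqrt t)^*) = G_1(x-y^*)$, and the right-hand sides of \eqref{K2bd1}, \eqref{K2bd2} scale in the same way. (The gradient gains a factor $t^{-1/2}$ on both sides.) Hence it is enough to take $t=1$.

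For $t=1$ we have $|x-y^*|^2 = (x_1-y_1)^2 + (x_2+y_2)^2$, and I would use $(x_2+y_2)^2 \ge x_2^2 + y_2^2$, valid since $x_2,y_2\ge 0$. This gives
\[
  G_1(x-y^*) \,\le\, \frac{1}{4\pi}\,e^{-x_2^2/5}\,e^{-x_2^2/20}\,e^{-y_2^2/4}\,e^{-(x_1-y_1)^2/4}\,.
\]
The product $e^{-(x_1-y_1)^2/4}e^{-y_2^2/4}$ is bounded by $C/\bigl((x_1-y_1)^2 + (y_2+1)^2\bigr)$, because the Gaussian decays faster than any inverse polynomial in $(x_1-y_1, y_2)$. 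Since $y_2+1\ge 1$, this is in turn at most $C(y_2+1)/\bigl((x_1-y_1)^2+(y_2+1)^2\bigr)$, which is \eqref{K2bd1} at $t=1$.

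For the gradient, $\nabla_y G_1(x-y^*)$ equals $G_1(x-y^*)$ multiplied by a factor of size $\lesssim |x_1-y_1| + x_2 + y_2$. I would absorb this polynomial factor by sacrificing a small part of the exponents: split $e^{-(x_2+y_2)^2/4}$ so that $e^{-x_2^2/5}$ survives, and let the leftover Gaussian in $(x_1-y_1, x_2+y_2)$ control both the linear factor and the $1/\bigl((x_1-y_1)^2+(y_2+1)^2\bigr)$ decay. This gives \eqref{K2bd2}.

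The only mildly delicate point is bookkeeping the exponents. The key requirement is that $1/5 < 1/4$ leaves room to absorb the polynomial weights, and this is routine.
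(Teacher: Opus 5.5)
Your proposal is correct and follows the paper's own argument. The paper also applies the triangle inequality to $K_2 = -G_t(x-y^*) - K_0$, uses Lemma~\ref{lem:K0bounds} for $K_0$, and treats the mirror Gaussian, which does not vanish at $y_2=0$ (hence the numerator $y_2+\sqrt{t}$), as ``easily verified''; your scaling reduction to $t=1$ and the exponent bookkeeping, with $(x_2+y_2)^2 \ge x_2^2+y_2^2$ and the margin between $1/5$ and $1/4$, supply exactly those omitted details.
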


\begin{rem}\label{rem:dummy}
In contrast, the kernel $K_1(x,y,t) = G_t(x-y)$ is localized for small times
in the region where $|x-y| = \cO(\sqrt{t})$, which is away from the boundary
if $y_2$ is bounded from below. 
\end{rem}

In the spirit of Lemma~\ref{lem:K0bounds}, we now establish a decomposition
of the kernel $K_0(x,y,t)$ which will be used in the proof of 
Proposition~\ref{prop:Stokes}.

\begin{lem}\label{lem:K0decomp}
The kernel $K_0(x,y,t)$ defined in \eqref{K0def} satisfies
\begin{equation}\label{K0decomp}
  K_0(x,y,1) \,=\, 2 g(x_2)\,P(x_1-y_1,y_2) \,+\, \tilde K_0(x,y,1)\,,
  \qquad \forall\, x,y \in \R^2_+\,,
\end{equation}
where $\tilde K_0$ has the property that $\varphi(y_2) \lesssim (1+y_2)^{-1}$
for all $y_2 > 0$, where
\begin{equation}\label{vphidef}
  \varphi(y_2) \,:=\, \sup_{y_1 \in \R} \,\int_{\R^2_+} |\tilde K_0(x,y,1)|
  \dd x\,.
\end{equation}
\end{lem}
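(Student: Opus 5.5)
The plan is to start from the representation \eqref{K0rep}, $K_0(x,y,1) = -2\int_0^{y_2} g'(x_2+z_2)\,V(x_1-y_1,y_2-z_2)\dd z_2$, and isolate the leading term. Since $-2g'(x_2+z_2) = -2\partial_{z_2} g(x_2+z_2)$, we integrate by parts in $z_2$:
\[
  K_0(x,y,1) \,=\, 2g(x_2)\,V(x_1-y_1,y_2) - 2g(x_2+y_2)\,V(x_1-y_1,0)
  - 2\int_0^{y_2} g(x_2+z_2)\,\partial_2 V(x_1-y_1,y_2-z_2)\dd z_2\,.
\]
Recall that $V(x_1,0)=g(x_1)$. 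We then write $V(x_1-y_1,y_2) = P(x_1-y_1,y_2) + \bigl(V-P\bigr)(x_1-y_1,y_2)$, which gives the decomposition \eqref{K0decomp}. The remainder $\tilde K_0$ is the sum of three terms:
\[
  T_1 = 2g(x_2)(V-P)(x_1-y_1,y_2)\,,\qquad T_2 = -2g(x_2+y_2)g(x_1-y_1)\,,\qquad T_3 = -2\int_0^{y_2}\cdots\,.
\]
It then suffices to bound $\int_{\R^2_+}|T_i|\dd x$ uniformly in $y_1$ by $C(1+y_2)^{-1}$ for each $i$.

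For $T_2$, integrating in $x$ gives at most $C\int_0^\infty g(x_2+y_2)\dd x_2 \lesssim e^{-y_2^2/8}$, which is more than enough.

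For $T_1$, we need $\int_\R |V(s,y_2)-P(s,y_2)|\dd s \lesssim (1+y_2)^{-1}$. Since $V(\cdot,y_2)=g*P(\cdot,y_2)$ and $g$ is even with unit mass, a second-order Taylor expansion gives $|V-P|(s,y_2)\lesssim \sup|\partial_1^2P|$ on a suitable neighbourhood. Its $L^1$ norm in $s$ scales like $y_2^{-2}$ for $y_2\ge1$. For $y_2\le 1$ we simply use $\|V\|_{L^1}+\|P\|_{L^1}\le 2$. An alternative is to use \eqref{Fadexp}: $V - P$ corresponds to the $\frac{1}{2z^2}$ correction, which is $\cO(|z|^{-3})$ and integrable with the right decay. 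Either route gives $\lesssim (1+y_2)^{-2}$, which is better than required.

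The term $T_3$ is the main obstacle. By the second bound in \eqref{Voigtbd}, $|\partial_2V(s,y_2-z_2)|\lesssim (s^2+(y_2-z_2+1)^2)^{-1}$, and integrating in $s$ gives $\lesssim (1+y_2-z_2)^{-1}$. Integrating $g(x_2+z_2)$ in $x_2$ gives $\lesssim e^{-z_2^2/8}$. Hence
\[
  \int|T_3|\dd x \,\lesssim\, \int_0^{y_2} \frac{e^{-z_2^2/8}}{1+y_2-z_2}\dd z_2\,.
\]
To conclude, we split the integral at $z_2=y_2/2$. On $z_2\le y_2/2$ the denominator is $\ge 1+y_2/2$, which gives $(1+y_2)^{-1}$. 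On $z_2\ge y_2/2$ the Gaussian factor gives $e^{-y_2^2/32}$. Summing the three bounds yields $\varphi(y_2)\lesssim(1+y_2)^{-1}$.

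The point needing care is the sign and justification of the integration by parts. Both $g$ and $V$ are smooth up to $z_2=y_2$, since $V$ extends smoothly to $\R^2$, so the integration by parts is legitimate.
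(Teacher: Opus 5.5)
Your proposal is correct and follows the paper's proof essentially step by step. The shared steps are: the same integration by parts in \eqref{K0rep} using $V(x_1,0)=g(x_1)$; the same three remainder terms; the expansion \eqref{Fadexp} (or your Taylor argument) for $V-P$; the explicit Gaussian-tail bound for the mirror term; and the gradient bound in \eqref{Voigtbd} followed by an estimate of $\int_0^{y_2} e^{-z_2^2/8}(1+y_2-z_2)^{-1}\dd z_2$, which you justify by splitting at $y_2/2$ where the paper simply states the result. If you keep the Taylor route, phrase it at the level of $L^1$ norms, using the integral remainder plus Minkowski's inequality to get $\|V(\cdot,y_2)-P(\cdot,y_2)\|_{L^1}\lesssim \|\partial_1^2P(\cdot,y_2)\|_{L^1}\lesssim y_2^{-2}$; a pointwise sup over a neighbourhood does not account for the unbounded support of $g$.
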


\begin{proof}
Integrating by parts in the formula \eqref{K0rep} and recalling that
$V(x_1-y_1,0) = g(x_1-y_1)$, we obtain
\begin{align*}
  K_0(x,y,1) \,&=\, 2g(x_2)V(x_1-y_1,y_2) - 2g(x_2+y_2)g(x_1-y_1)\\
  &\quad\, -2\int_0^{y_2} g(x_2 + z_2) \partial_{y_2}V(x_1-y_1,y_2-z_2)\dd z_2\,.
\end{align*}
We thus have the decomposition \eqref{K0decomp} with $\tilde K_0(x,y,1) = \sum_{j=1}^3
\tilde K_{0j}(x,y,1)$, where
\begin{equation}\label{tK0dec}
\begin{split}
  \tilde K_{01}(x,y,1) \,&=\, 2g(x_2)\Bigl(V(x_1-y_1,y_2) - P(x_1-y_1,y_2)\Bigr)\,,\\
  \tilde K_{02}(x,y,1) \,&=\, -2g(x_2+y_2)g(x_1-y_1) \,=\, -2G_1(x-y^*)\,,\\
  \tilde K_{03}(x,y,1) \,&=\, -2\int_0^{y_2} g(x_2 + z_2) \partial_{y_2}
  V(x_1-y_1,y_2-z_2)\dd z_2\,.
\end{split}
\end{equation}
In analogy with \eqref{vphidef} we define, for $j = 1,2,3$, 
\[
  \varphi_j(y_2) \,=\, \sup_{y_1 \in \R} \,\int_{\R^2_+} |\tilde K_{0j}(x,y,1)|
  \dd x\,, \qquad \forall\, y_2 > 0\,.
\]

Since $\int_\R V(x_1,x_2)\dd x_1 = \int_\R P(x_1,x_2)\dd x_1 = 1$ and
$\int_0^\infty g(x_2)\dd x_2 = 1/2$, it is clear that $\varphi_1(y_2) \le 1$
for all $y_2 > 0$. It also follows from \eqref{Voigt}, \eqref{Fadexp} that
\[
  \Bigl|V(x_1-y_1,y_2) - P(x_1-y_1,y_2)\Bigr| \,\lesssim\, \frac{1}{\bigl((x_1-y_1)^2 +
  y_2^2\bigr)^{3/2}}\,, \qquad \forall\, y_2 \ge 1\,,
\]
hence
\[
  \varphi_1(y_2) \,\lesssim\, \int_\R \frac{1}{\bigl(x_1^2 + y_2^2\bigr)^{3/2}}
  \dd x_1 \,=\, \cO\Bigl(\frac{1}{y_2^2}\Bigr)\,, \qquad \text{as }\,
  y_2 \to +\infty\,.
\]
On the other hand, we have
\[
  \varphi_2(y_2) \,=\, 2\int_0^\infty g(x_2+y_2)\dd x_2 \,=\,
  \mathrm{erfc}\bigl(y_2/2\bigr)\,, \qquad \forall\,y_2 > 0\,.
\]
Finally, using the second estimate in \eqref{Voigtbd}, we find
\[
  |\tilde K_{03}(x,y,1)| \,\lesssim\, \int_0^{y_2} \frac{g(x_2 + z_2)}{
  (x_1-y_1)^2 + (y_2-z_2+1)^2}\dd z_2\,,
\]
so that
\[
  \varphi_3(y_2) \,\lesssim\, \int_0^{y_2} \frac{\mathrm{erfc}\bigl(z_2/2\bigr)}{
  y_2 - z_2 + 1}\dd z_2 \,\lesssim\, \frac{y_2}{(y_2+1)^2}\,, \qquad \forall\,y_2 > 0\,.
\]
Since $\varphi(y_2) \le \sum_{j=1}^3 \varphi_j(y_2)$, the conclusion follows. 
\end{proof}

\subsection{Smoothing properties}\label{ss:smoothing}

The action of the operator $S_1(t)$ defined in \eqref{S12def} is
especially easy to describe\: for all initial data $\omega_0 \in L^1(\R^2_+)$
and all $t > 0$, we have
\begin{equation}\label{S1action}
  S_1(t)\omega_0 \,=\, \Bigl(G_t \,\star\, \overline{\omega}_0\Bigr)
  \,\Big|_{\R^2_+}\,,  
\end{equation}
where $G_t$ is the Gaussian function \eqref{GPdef}, $\star$ is the convolution
product in $\R^2$, and $\overline{\omega}_0$ is the extension by zero of
$\omega_0$ to the whole plane $\R^2$. Thus, using the standard $L^1$--$L^p$
estimates for the heat kernel (and its gradient) on $\R^2$, we immediately
obtain\:

\begin{lem}\label{lem:S1}
There exists a constant $C > 0$ such that, for all $p \in [1,\infty]$
and all $\omega_0 \in L^1(\R^2_+)$,
\begin{equation}\label{S1est}
  \|S_1(t)\omega_0\|_{L^p} \,+\, t^{1/2}\|S_1(t)\nabla \omega_0\|_{L^p}
  \,\le\, C\,\frac{\|\omega_0\|_{L^1}}{t^{1-1/p}}\,, \qquad \forall\, t > 0\,.
\end{equation}
\end{lem}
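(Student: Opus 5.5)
The plan is to reduce everything to the whole-plane heat kernel via \eqref{S1action}. Let $\overline{\omega}_0$ denote the extension of $\omega_0$ by zero to $\R^2$; then $\|\overline{\omega}_0\|_{L^1(\R^2)} = \|\omega_0\|_{L^1(\R^2_+)}$. Moreover, restricting a function on $\R^2$ to $\R^2_+$ can only decrease its $L^p$ norm. Hence
\[
  \|S_1(t)\omega_0\|_{L^p(\R^2_+)} \,\le\, \|G_t \star \overline{\omega}_0\|_{L^p(\R^2)} \,\le\, \|G_t\|_{L^p(\R^2)}\,\|\overline{\omega}_0\|_{L^1(\R^2)}\,,
\]
by Young's inequality. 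A direct scaling computation gives $\|G_t\|_{L^p} = \|G_1\|_{L^p}\,t^{-(1-1/p)}$, with $\|G_1\|_{L^p}\le \max(1,\|G_1\|_{L^\infty}) = 1$ uniformly in $p\in[1,\infty]$ (by interpolation between $\|G_1\|_{L^1}=1$ and $\|G_1\|_{L^\infty}=1/(4\pi)$). This settles the first term.

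For the derivative term, I would first fix the meaning of $S_1(t)\nabla\omega_0$ in analogy with \eqref{StokesDer}, namely
\[
  \bigl(S_1(t)\nabla\omega_0\bigr)(x) \,=\, -\int_{\R^2_+}\nabla_y G_t(x-y)\,\omega_0(y)\dd y \,=\, \bigl((\nabla G_t)\star\overline{\omega}_0\bigr)(x)\,,
\]
using $\nabla_y[G_t(x-y)] = -(\nabla G_t)(x-y)$. Note that no boundary term arises, since the operator is defined by the kernel formula. Young's inequality then gives a bound by $\|\nabla G_t\|_{L^p}\|\omega_0\|_{L^1}$. By scaling, $\nabla G_t(x) = t^{-3/2}(\nabla G_1)(x/\sqrt t)$, so $\|\nabla G_t\|_{L^p} = t^{-1/2}\,t^{-(1-1/p)}\|\nabla G_1\|_{L^p}$. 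Again $\|\nabla G_1\|_{L^p}$ is bounded uniformly in $p$ by interpolating its $L^1$ and $L^\infty$ norms. Multiplying by $t^{1/2}$ yields the claim with a $p$-independent constant $C$.

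There is essentially no obstacle here. The only points requiring care are the uniformity of $C$ in $p$, which is handled by the interpolation remark above, and the correct sign and definition of $S_1(t)\nabla$ as a kernel operator.
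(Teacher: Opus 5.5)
Your proposal is correct and follows the same route as the paper, which likewise reduces $S_1(t)$ via \eqref{S1action} to the whole-plane heat kernel and invokes the standard $L^1$--$L^p$ estimates for $G_t$ and $\nabla G_t$. You simply write out the details the paper leaves implicit: Young's inequality and scaling, the kernel definition of $S_1(t)\nabla$ by analogy with \eqref{StokesDer}, and the uniformity of the constant in $p$.
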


In view of Corollary~\ref{cor:K1bounds}, the correction term $S_2(t)$
actually satisfies a {\em stronger estimate} when $p > 1$ and when $\omega_0$
is supported away from the boundary. 

\begin{lem}\label{lem:Stokesbounds}
There exists $C > 0$ such that, for all $p \in [1,\infty]$, all $\omega_0 \in
L^1(\R^2_+)$, and all $t > 0$, 
\begin{equation}\label{S2est}
  \|S_2(t)\omega_0\|_{L^p} \,\le\, \int_{\R^2_+} \frac{C\,|\omega_0(y)|}{(t+y_2
    \sqrt{t})^{1-1/p}}\dd y\,, \quad
  \|S_2(t)\nabla\omega_0\|_{L^p} \,\le\, \int_{\R^2_+} \frac{C\,\sqrt{t}\,
    |\omega_0(y)|}{(t+y_2\sqrt{t})^{2-1/p}}\dd y\,.
\end{equation}
\end{lem}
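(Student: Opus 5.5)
By Minkowski's integral inequality it suffices to bound, for each fixed $y \in \R^2_+$, the $L^p$ norm in $x$ of the kernels $K_2(\cdot,y,t)$ and $\nabla_y K_2(\cdot,y,t)$. For $p=1$ the second estimate is a statement about $S_2(t)\nabla\omega_0$, defined as in \eqref{StokesDer} with $K$ replaced by $K_2$. Concretely, we aim to prove
\[
  \|K_2(\cdot,y,t)\|_{L^p} \,\lesssim\, (t+y_2\sqrt{t})^{-(1-1/p)}\,,\qquad
  \|\nabla_y K_2(\cdot,y,t)\|_{L^p} \,\lesssim\, \sqrt{t}\,(t+y_2\sqrt{t})^{-(2-1/p)}\,,
\]
uniformly in $y_1$. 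The estimates \eqref{S2est} then follow by integrating against $|\omega_0(y)|$.

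We use the pointwise bounds of Corollary~\ref{cor:K1bounds}. Set $a = y_2+\sqrt{t}$ and $h(x) = e^{-x_2^2/(5t)}$. The $x$-dependence then factorizes into a function of $x_2$ times a function of $x_1-y_1$. The $x_2$-factor has $L^p(0,\infty)$ norm
\[
  \|e^{-x_2^2/(5t)}\|_{L^p(dx_2)} \,\lesssim\, t^{1/(2p)}\,.
\]
For the $x_1$-factor in \eqref{K2bd1}, the function $x_1 \mapsto a/(x_1^2+a^2)$ has $L^p(\R)$ norm $\lesssim a^{-1+1/p}$, by scaling $x_1 = a s$. Hence
\[
  \|K_2(\cdot,y,t)\|_{L^p} \,\lesssim\, t^{-1/2}\,t^{1/(2p)}\,a^{-1+1/p} \,=\, \bigl(\sqrt{t}\,a\bigr)^{-(1-1/p)}\,.
\]
Since $\sqrt{t}\,a = t + y_2\sqrt{t}$, this is exactly the first bound. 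For \eqref{K2bd2}, the function $x_1\mapsto 1/(x_1^2+a^2)$ has $L^p$ norm $\lesssim a^{-2+1/p}$, which gives
\[
  \|\nabla_y K_2(\cdot,y,t)\|_{L^p} \,\lesssim\, t^{-1/2+1/(2p)}\,a^{-2+1/p} \,=\, \sqrt{t}\,(\sqrt{t}\,a)^{-(2-1/p)}\,,
\]
after checking that the powers of $t$ match: $-1/2+1/(2p) = 1/2 - (2-1/p)/2$. The endpoint $p=\infty$ is read off directly from the pointwise bounds, since $\sup_{x_1} a/(x_1^2+a^2) = 1/a$.

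The only genuine input is therefore Corollary~\ref{cor:K1bounds}, which the paper states without proof. If that corollary is to be justified here, this is the step I expect to be the main obstacle, in particular the mirror Gaussian term. Estimate \eqref{Kbd1} gives $K_0 \lesssim t^{-1/2}h\,y_2/(\cdots) \le t^{-1/2}h\,a/(\cdots)$, and \eqref{Kbd2} handles $\nabla_y K_0$. For $G_t(x-y^*)$ we use scaling and reduce to $t=1$. Write $G_1(x-y^*) = g(x_1-y_1)\,g(x_2+y_2)$. Then $g(x_2+y_2) \lesssim e^{-x_2^2/5}e^{-y_2^2/5}$, and $e^{-(x_1-y_1)^2/4}e^{-y_2^2/5} \lesssim (1+y_2)/((x_1-y_1)^2+(1+y_2)^2)$, because Gaussians are dominated by any rational decay. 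The same argument bounds $\nabla_y G_1(x-y^*)$ by $e^{-x_2^2/5}/((x_1-y_1)^2+(1+y_2)^2)$. Undoing the scaling yields \eqref{K2bd1} and \eqref{K2bd2}, and the $L^p$ computation above completes the proof.
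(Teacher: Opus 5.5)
Your proposal is correct and follows essentially the paper's proof: Minkowski's integral inequality reduces both estimates to $L^p$ bounds in $x$ on $K_2(\cdot,y,t)$ and $\nabla_y K_2(\cdot,y,t)$, which come from the pointwise bounds of Corollary~\ref{cor:K1bounds}. The only cosmetic difference is that the paper computes the $L^1$ and $L^\infty$ norms of these kernels and interpolates, which makes the uniformity of $C$ in $p$ automatic, whereas you compute the $L^p$ norm directly from the factorized bound (your constants are also uniform in $p$, though you do not say so); your extra sketch for the mirror-Gaussian part of the corollary is fine.
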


\begin{proof}
According to \eqref{K2bd1} there exists a constant $C > 0$ such that,
for all $y \in \R^2_+$ and all $t > 0$, 
\begin{equation}\label{K2est}
  \int_{\R^2_+}|K_2(x,y,t)|\dd x \,\le\, C\,, \qquad \text{and}\qquad
  \sup_{x \in \R^2_+}|K_2(x,y,t)| \,\le\, \frac{C}{t + y_2\sqrt{t}}\,. 
\end{equation}
We fix $t > 0$, $p \in [1,+\infty]$, and we consider the weight function
$w_{p,t} : \R^2_+ \to (0,+\infty)$ defined by 
\begin{equation}\label{wpt}
  w_{p,t}(y,t) \,=\, \|K_2(\cdot,y,t)\|_{L^p(\R^2_+)} \,\le\, \frac{C}{
 (t + y_2\sqrt{t})^{1-1/p}}\,, \qquad \forall\,y \in \R_2^+\,,
\end{equation}
where the inequality follows from \eqref{K2est} by interpolation.
If $\omega_0 \in L^1(\R^2_+)$, we have by \eqref{S12def}
\[
  \bigl(S_2(t)\omega_0\bigr)(x) \,=\, \int_{\R^2_+} K_2(x,y,t)\omega_0(y)\dd y
  \,=\, \int_{\R^2_+} \frac{K_2(x,y,t)}{w_{p,t}(y)}\,w_{p,t}(y)\omega_0(y)\dd y\,,
  \qquad \forall x \in \R^2_+\,.
\]
Applying Minkowski's integral inequality, we deduce that
\begin{equation}\label{Minko}
  \|S_2(t)\omega_0\|_{L^p} \,\le\, \int_{\R^2_+} |w_{p,t}(y)\omega_0(y)|\dd y\,,
  \qquad\text{because}\qquad \biggl\|\frac{K_2(\cdot,y,t)}{w_{p,t}(y)}
  \biggr\|_{L^p} \,=\, 1\,. 
\end{equation}
The estimate \eqref{S2est} for $S_2(t)\omega_0$ is now a direct consequence
of \eqref{wpt}, \eqref{Minko}. Since
\[
  \int_{\R^2_+}|\nabla_y K_2(x,y,t)|\dd x \,\le\, \frac{C}{y_2 + \sqrt{t}}\,, \qquad 
  \sup_{x \in \R^2_+}|\nabla_y K_2(x,y,t)| \,\le\, \frac{C}{\sqrt{t}\,(y_2 + \sqrt{t})^2}\,,
\]
the bound \eqref{S2est} on $S_2(t)\nabla\omega_0$ is obtained by a similar argument.
\end{proof}

\begin{rem}\label{rem:Stokes2} Estimate \eqref{Stokes2} for $S(t) = S_1(t) + S_2(t)$
follows immediately from \eqref{S1est}, \eqref{S2est}. More generally, Schur's bound
for integral operators shows that
\begin{equation}\label{Schur}
  \|S(t)\omega_0\|_{L^q} \,\le\, \sup_{x \in \R^2_+}\|K(x,\cdot,t)\|_{L^p}^{1-p/q}
  \,\sup_{y \in \R^2_+}\|K(\cdot,y,t)\|_{L^p}^{p/q} ~\|\omega_0\|_{L^r}\,,
\end{equation}
whenever $p,q,r \in [1,\infty]$ and $1 + 1/q = 1/p + 1/r$. However, as can be seen
from \eqref{Kbd1}, the quantity $\|K(x,\cdot,t)\|_{L^p}$ is finite if and
only if $p > 2$. So, besides the case $r = 1$ considered in \eqref{Stokes2}, 
estimate \eqref{Schur} shows that $S(t)$ maps $L^r(\R^2_+)$ into $L^q(\R^2_+)$
if $1 < r < 2$ and $1/q < 1/r - 1/2$, see also \cite[Remark~4.2]{Abe}. A similar
remark applies to the operator $S(t)\nabla$, with the important difference that
$\|\nabla_y K(x,\cdot,t)\|_{L^p}$ is finite if and only if $p > 1$. Thus $S(t)\nabla$
maps $L^r(\R^2_+)$ into $L^q(\R^2_+)$ if $q > r$. 
\end{rem}

\subsection{Continuity properties}\label{ss:continuity}

The following result is the second key step in the proof of Proposition~\ref{prop:Stokes}.
We recall that $L^1_\perp(\R^2_+)$ is the closed subspace of $L^1(\R^2_+)$
defined by \eqref{L1perp}. 

\begin{lem}\label{lem:L1cont}
If $\omega_0 \in L^1_\perp(\R^2_+)$, then $\|S(t)\omega_0 - \omega_0\|_{L^1} \to 0$
as $t \to 0$.
\end{lem}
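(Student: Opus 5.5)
Since the operators $S(t)$ are uniformly bounded on $L^1(\R^2_+)$ by \eqref{Stokes2}, I will use a density argument. It suffices to prove $\|S(t)\omega_0-\omega_0\|_{L^1}\to0$ for $\omega_0$ in a subset $D$ that is dense in $L^1_\perp(\R^2_+)$. I decompose
\[
  S(t)\omega_0-\omega_0 \,=\, \bigl(e^{t\Delta_D}\omega_0-\omega_0\bigr) - S_0(t)\omega_0\,.
\]
The Dirichlet heat semigroup is strongly continuous on $L^1(\R^2_+)$. This can be checked directly from the kernel $G_t(x-y)-G_t(x-y^*)$, using the corresponding result for $G_t\star$ in $\R^2$ and the fact that $\int_{\R^2_+}G_t(x-y^*)\dd x\to0$ for each fixed $y\in\R^2_+$, combined with dominated convergence. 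So everything reduces to showing $\|S_0(t)\omega_0\|_{L^1}\to0$ for $\omega_0\in D$.

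For the boundary term I use the scaling invariance $K_0(x,y,t)=t^{-1}K_0(x/\sqrt t,y/\sqrt t,1)$ together with Lemma~\ref{lem:K0decomp}. This gives
\[
  K_0(x,y,t) \,=\, \frac{2}{\sqrt t}\,g\bigl(x_2/\sqrt t\bigr)\,P(x_1-y_1,y_2) + t^{-1}\tilde K_0\bigl(x/\sqrt t,y/\sqrt t,1\bigr)\,,
\]
using the homogeneity $P(\lambda x)=\lambda^{-1}P(x)$. The first term integrated against $\omega_0$ equals $\frac{2}{\sqrt t}g(x_2/\sqrt t)\,\gamma[\omega_0](x_1)$, which vanishes identically when $\gamma[\omega_0]=0$. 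This is exactly where the hypothesis $\omega_0\in L^1_\perp$ enters. For the remainder, the $L^1$ norm in $x$ is scale invariant, so
\[
  \|S_0(t)\omega_0\|_{L^1} \,\le\, \int_{\R^2_+}\varphi\bigl(y_2/\sqrt t\bigr)\,|\omega_0(y)|\dd y
  \,\lesssim\, \int_{\R^2_+}\frac{\sqrt t\,|\omega_0(y)|}{\sqrt t+y_2}\dd y\,.
\]
By dominated convergence the right-hand side tends to $0$ for every $\omega_0\in L^1(\R^2_+)$, since the integrand is bounded by $|\omega_0|$ and tends to $0$ pointwise. So in fact no density argument is needed for the $S_0$ term.

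Putting the pieces together, for any $\omega_0\in L^1_\perp$ we get
\[
  \|S(t)\omega_0-\omega_0\|_{L^1}\,\le\, \|e^{t\Delta_D}\omega_0-\omega_0\|_{L^1}+\|S_0(t)\omega_0\|_{L^1}\,\to\,0\,,
\]
and the density step becomes unnecessary. The main obstacle is identifying the leading part of $K_0$ and checking that its contribution reproduces exactly the trace $\gamma[\omega_0]$, with the normalization $P$ versus $\frac1\pi\cdot$ matching definition \eqref{gammadef}. This matching is what makes the cancellation exact. Lemma~\ref{lem:K0decomp} provides this decomposition, and the remaining care is in checking the scaling of $P$ and of $\tilde K_0$ correctly.
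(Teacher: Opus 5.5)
Your proof is correct and coincides with the paper's. Both arguments split $S(t)=e^{t\Delta_D}-S_0(t)$ and use the rescaled form of Lemma~\ref{lem:K0decomp}, so that the leading term reproduces $\gamma[\omega_0]$ and vanishes; both then conclude by dominated convergence from $\|S_0(t)\omega_0\|_{L^1}\le\int_{\R^2_+}\varphi(y_2/\sqrt{t})\,|\omega_0(y)|\dd y$ with $\varphi(s)\lesssim(1+s)^{-1}$. As you note yourself, the initial density step is superfluous, and your direct kernel check of the strong continuity of $e^{t\Delta_D}$ simply fills in what the paper takes as well known.
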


\begin{proof}
As in the proof of Lemma~\ref{lem:Skernel} we write $S(t) = e^{t\Delta_D} - S_0(t)$,
where $e^{t\Delta_D}$ is the heat semigroup in $\R^2$ with Dirichlet boundary
condition, and $S_0(t)$ is the operator defined by \eqref{SOdef}.
It is well known that $e^{t\Delta_D}$ is a $C_0$-semigroup in $L^p(\R^2_+)$
for any $p \in [1,+\infty)$. In particular, for any $\omega_0 \in L^1(\R^2_+)$,
we have $\|e^{t\Delta_D}\omega_0 - \omega_0\|_{L^1} \to 0$ as $t \to 0$. To prove
Lemma~\ref{lem:L1cont}, it remains to verify that $\|S_0(t)\omega_0\|_{L^1} \to 0$
as $t \to 0$ if we assume in addition that $\omega_0 \in L^1_\perp(\R^2_+)$. 

Using Lemma~\ref{lem:K0decomp} and the scaling invariance, we decompose
the kernel \eqref{K0def} as
\begin{equation}\label{newdec}
  K_0(x,y,t) \,=\, \frac{2}{\sqrt{t}}\,g\Bigl(\frac{x_2}{\sqrt{t}}\Bigr)
  \,P(x_1-y_1,y_2) \,+\, \frac{1}{t}\, \widetilde{K}_0\Bigl(\frac{x}{\sqrt{t}}\,,
  \frac{y}{\sqrt{t}}\,,1\Bigr)\,. 
\end{equation}
If $\omega_0 \in L^1_\perp(\R^2_+)$, the definition \eqref{gammadef} shows that
\[
  \bigl(\gamma[\omega_0]\bigr)(x_1) \,=\, \int_{\R^2_+} P(x_1-y_1,y_2) \omega_0(y)
  \dd y \,=\, 0\,,
\]
for almost all $x_1 \in \R$. We can thus drop the first term in
the right-hand side of \eqref{newdec}, and we obtain the simpler
representation
\[
  \bigl(S_0(t)\omega_0\bigr)(x) \,=\, \frac{1}{t} \int_{\R^2_+} \widetilde{K}_0
  \Bigl(\frac{x}{\sqrt{t}}\,,\frac{y}{\sqrt{t}}\,,1\Bigr)\omega_0(y)\dd y\,,
  \qquad \forall\, x \in \R^2_+\,, \quad \forall\, t > 0\,.
\]
In particular, using the definition \eqref{vphidef}, we find
\[
  \|S_0(t)\omega_0\|_{L^1} \,\le\, \int_{\R^2_+} \Bigl\|\widetilde{K}_0\Bigl(\cdot\,,
  \frac{y}{\sqrt{t}},1\Bigr)\Bigr\|_{L^1} |\omega_0(y)|\dd y \,\le\, \int_{\R^2_+}
  \varphi\Bigl(\frac{y_2}{\sqrt{t}}\Bigr)|\omega_0(y)|\dd y\,, 
\]
for all $t > 0$. Since $\varphi(y_2) \lesssim (1+y_2)^{-1}$, Lebesgue's
dominated convergence theorem shows that $\|S_0(t)\omega_0\|_{L^1} \to 0$ as
$t\to 0$.
\end{proof}

\begin{proof}[\bf End of the proof of Proposition~\ref{prop:Stokes}]
By \eqref{Stokes2} the Stokes operator $S(t)$ defined by the integral formula
\eqref{Stokes} is uniformly bounded in $L^1(\R^2_+)$. Moreover, by construction,
$S(t)$ maps $L^1(\R^2_+)$ into the closed subspace $L^1_\perp(\R^2_+)$ for $t > 0$,
see Appendix~\ref{appA}, and the restriction of $S(t)$ to that subspace defines
a $C_0$-semigroup by Lemma~\ref{lem:L1cont}. This concludes the proof of item 1) in
Proposition~\ref{prop:Stokes}

On the other hand, as was observed in Remark~\ref{rem:Stokes2}, estimate
\eqref{Stokes2} follows from \eqref{S1est} and \eqref{S2est}. To prove that
$t \mapsto S(t)\omega_0$ belongs to $C^0((0,+\infty),L^p(\R^2_+))$ for any $p > 1$,
we fix $t_0 > 0$ and we observe that, for any $\tau > 0$,
\begin{align*}
  \bigl\|S(t_0+\tau)\omega_0 - S(t_0)\omega_0\bigr\|_{L^p} \,&=\,
  \bigl\|S(t_0/2)(S(\tau) - \1)S(t_0/2)\omega_0\bigr\|_{L^p} \\ \,&\le\,
  \frac{C}{t_0^{1-1/p}}\bigl\|(S(\tau) - \1\bigr)S(t_0/2)\omega_0\bigr\|_{L^1}
  \,\xrightarrow[\tau \to 0]{}\, 0\,,
\end{align*}
because $S(t_0/2)\omega_0 \in L^1_\perp(\R^2_+)$. This shows that $S(t)\omega_0$
is continuous to the right at $t = t_0$, and a similar argument gives
the continuity to the left too, as well as the continuity of $S(t)\nabla\omega_0$.
This concludes the proof of Proposition~\ref{prop:Stokes}.
\end{proof}

As explained in \cite{Abe}, Lemma~\ref{lem:L1cont} can be extended to initial data
$\omega_0 \in \mathcal M_\perp$, where $\mathcal M_\perp$ denotes the space of finite
measures on $\R^2_+$ satisfying the analogue of the condition $\gamma[\omega_0] = 0$. 
Weaker continuity properties can be established for initial measures which
do not satisfy that condition\:

\begin{lem}\label{lem:Mcont}
Let $z=(0,1)$. Then $S(t)\delta_z\rightharpoonup \delta_z$ in $\mathcal M(\R^2_+)$
as $t \to 0$. 
\end{lem}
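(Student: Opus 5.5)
We must show that for every $\phi \in C_0(\R^2_+)$,
\[
  \langle S(t)\delta_z,\phi\rangle \,=\, \int_{\R^2_+} K(x,z,t)\,\phi(x)\dd x
  \,\longrightarrow\, \phi(z) \quad \text{as } t \to 0\,.
\]
We use the decomposition \eqref{Kdecomp}, $K = G_t(x-y) - G_t(x-y^*) - K_0$, and treat the three terms separately.

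The first term, $\int G_t(x-z)\phi(x)\dd x$, converges to $\phi(z)$ because $z_2 = 1 > 0$. Indeed, the Gaussian mass of $G_t(\cdot-z)$ lying outside $\R^2_+$ is exponentially small, and $\phi$ is bounded and continuous at $z$ (extend $\phi$ by zero to $\R^2$; it is continuous there since $\phi$ vanishes on the boundary). The mirror term satisfies
\[
  \int_{\R^2_+} G_t(x-z^*)\,|\phi(x)|\dd x \,\le\, \|\phi\|_{L^\infty}\int_{x_2>0} G_t(x-z^*)\dd x \,\lesssim\, e^{-1/(4t)}\,,
\]
which tends to zero.

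The main work is the boundary-layer term $\int K_0(x,z,t)\phi(x)\dd x$. Its $L^1$ norm does \emph{not} vanish: by \eqref{newdec} its leading part is $\frac{2}{\sqrt t}g(x_2/\sqrt t)P(x_1,1)$, of mass $\int_\R P(x_1,1)\dd x_1 = 1$. This is why only weak convergence holds. We will show that this mass concentrates at the boundary, where $\phi$ vanishes.

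To do so we use \eqref{Kbd1} with $y = z$:
\[
  0 \,<\, K_0(x,z,t) \,\le\, \frac{C}{\sqrt t}\,e^{-x_2^2/(5t)}\,\frac{1}{x_1^2+(1+\sqrt t)^2}
  \,\le\, \frac{C}{\sqrt t}\,e^{-x_2^2/(5t)}\,\frac{1}{1+x_1^2}\,.
\]
Given $\varepsilon > 0$, choose $\delta > 0$ such that $|\phi(x)| \le \varepsilon$ whenever $x_2 \le \delta$; this is possible by uniform continuity of $\phi$ near the boundary, since $\phi \in C_0(\R^2_+)$. Split the integral into the regions $x_2 \le \delta$ and $x_2 > \delta$.
\begin{itemize}
  \item On $x_2 \le \delta$, the contribution is at most
  \[
    \varepsilon\int_{\R^2_+} \frac{C}{\sqrt t}\,e^{-x_2^2/(5t)}\,\frac{\dd x}{1+x_1^2} \,\lesssim\, \varepsilon\,,
  \]
  uniformly in $t$.
  \item On $x_2 > \delta$, the contribution is at most
  \[
    \|\phi\|_{L^\infty}\,\frac{C}{\sqrt t}\int_\delta^\infty e^{-x_2^2/(5t)}\dd x_2 \,\lesssim\, \operatorname{erfc}\bigl(\delta/\sqrt{5t}\bigr) \,\to\, 0
  \]
  as $t \to 0$.
\end{itemize}
Hence $\limsup_{t\to0}\bigl|\int K_0(x,z,t)\phi(x)\dd x\bigr| \lesssim \varepsilon$ for every $\varepsilon > 0$, so this term tends to zero. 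Combining the three terms gives $\langle S(t)\delta_z,\phi\rangle \to \phi(z)$, which proves the lemma.
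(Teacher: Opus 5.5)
Your proposal is correct and follows essentially the same approach as the paper. Both arguments use the decomposition \eqref{Kdecomp}, dispose of the Gaussian and mirror terms by classical convolution arguments, and control the boundary-layer term through \eqref{Kbd1} at $y=z$. The only difference is the final limit: you split explicitly into the regions $x_2\le\delta$ and $x_2>\delta$, whereas the paper rescales $x_2=\sqrt{t}\,X_2$ and applies dominated convergence using $\phi(x_1,0)=0$.
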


\begin{proof}
Let $\phi\in C_0(\R^2_+)$. We extend $\phi$ by zero on the lower half-plane;
the extension, still denoted by $\phi$, is a continuous function in $\R^2$,
vanishing at infinity. Then
\begin{align*}
\langle (S(t) \delta_z - \delta_z, \phi\rangle\; &=\; \int_{\R^2_+} K(x,z,t) \phi(x)\dd x - \phi(z)\\
&=\; G_t\star \phi(z) - \phi(z) - G_t\star \phi (z^*) - \int_{\R^2_+} K_0(x,z,t) \phi(x)\dd x\,.
\end{align*}
Classical convolution results ensure that $G_t\star \phi(z) - \phi(z)\to 0$,
$G_t\star \phi (z^*)\to \phi(z^*)=0$ as $t\to 0$.  As for the last term, we use
the bound \eqref{Kbd1} and we obtain, for $t\leq 1$,
\begin{align*}
  \left| \int_{\R^2_+} K_0(x,z,t) \phi(x)\dd x\right| \; &\lesssim \; \frac{1}{\sqrt{t}}
  \int_{\R^2_+} e^{-x_2^2/(5t)} \frac{1}{x_1^2 + 1} |\phi(x_1,x_2)|\dd x\\
  &\lesssim \; \int_{\R^2_+} e^{-X_2^2/5} \frac{1}{x_1^2 + 1} |\phi(x_1,\sqrt{t}X_2)|
  \dd x_1\dd X_2\,,
\end{align*}
and the right-hand side vanishes as $t\to 0$ since $\phi(x_1,0)=0$ for all $x_1\in \R$
by assumption.
\end{proof}

\begin{rem}
We bring the reader's attention to the fact that the notion of weak
convergence used here is different from the one in the paper of Abe
\cite{Abe}: in the latter, the space of measures is the dual space of
functions vanishing at infinity, but not necessarily on the boundary. 
As a consequence, the statement of Lemma \ref{lem:Mcont} is different from
Lemma 4.1 in \cite{Abe}.
\end{rem}

\subsection{Estimates on the Biot--Savart law}\label{ss:BS}

Assume that $u = \BS[\omega]$ as in \eqref{BSlaw}. Since $|x-y| \le |x-y^*|$
for all $x,y \in \R^2_+$, it is clear that
\begin{equation}\label{BSR2}
  |u(x)| \,\le\, \frac{1}{\pi}\int_{\R^2_+}\frac{1}{|x-y|}\,|\omega(y)|
  \dd y\,, \qquad \forall\, x \in \R^2_+\,.
\end{equation}
We thus have the following classical estimates\:

\begin{lem}\label{lem:HLS} {\rm (Biot--Savart estimates)}
\begin{enumerate}[leftmargin=15pt,itemsep=2pt,topsep=1pt]

\item Assume that $1 < p < 2 < q < \infty$ and $1/q = 1/p - 1/2$. If
$\omega \in L^p(\R^2_+)$, then $u \in L^q(\R^2_+)$ and there exists
a constant $C > 0$ such that $\|u\|_{L^q} \le C \|\omega\|_{L^p}$.

\item If $\omega \in L^p(\R^2_+) \cap L^q(\R^2_+)$ for some $p \in [1,2)$
and some $q = (2,+\infty]$, then $u \in L^\infty(\R^2_+)$ and there exists
a constant $C > 0$ such that $\|u\|_{L^\infty} \le C \|\omega\|_{L^p}^\theta
\|\omega\|_{L^q}^{1-\theta}$ where $1/2 = \theta/p + (1-\theta)/q$. 

\end{enumerate}
\end{lem}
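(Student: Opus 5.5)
The plan is to reduce both items to classical whole-plane estimates via the pointwise bound \eqref{BSR2}. Write $\bar\omega$ for the extension of $\omega$ by zero to $\R^2$. Then \eqref{BSR2} gives, for all $x \in \R^2_+$,
\[
  |u(x)| \,\le\, \frac{1}{\pi}\bigl(|\cdot|^{-1} \star |\bar\omega|\bigr)(x) \,=\, \frac{1}{\pi}\, I_1|\bar\omega|(x)\,,
\]
where $I_1$ is the Riesz potential of order one in $\R^2$, up to a constant. Since $\|\bar\omega\|_{L^p(\R^2)} = \|\omega\|_{L^p(\R^2_+)}$, it suffices to bound $I_1|\bar\omega|$ in $L^q(\R^2)$ and to restrict to the half-plane afterwards. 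Before that, one checks that the integral \eqref{BSlaw} converges absolutely for a.e.\ $x$. This follows once $I_1|\bar\omega|$ is shown to be finite a.e., which the estimates below provide.

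For item 1, apply the Hardy--Littlewood--Sobolev inequality in dimension $n=2$ with kernel $|x|^{-(n-\lambda)}$, $\lambda = 1$. It states that $\|I_1 f\|_{L^q(\R^2)} \le C\|f\|_{L^p(\R^2)}$ whenever $1 < p < q < \infty$ and $1/q = 1/p - 1/2$. This forces $p < 2 < q$, which matches the hypotheses exactly. So $\|u\|_{L^q(\R^2_+)} \le C\|\omega\|_{L^p(\R^2_+)}$, and no boundary-specific input is needed.

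For item 2, split the kernel at a radius $R > 0$:
\[
  \int \frac{|\bar\omega(y)|}{|x-y|}\dd y \,=\, \int_{|x-y| < R} + \int_{|x-y| \ge R}\,.
\]
On the near region use H\"older with $\omega \in L^q$. The function $|y|^{-1}$ lies in $L^{q'}(B_R)$ because $q' < 2$ (since $q > 2$), with norm $\simeq R^{2/q'-1} = R^{1-2/q}$. If $q = \infty$ this reads $\int_{B_R}|y|^{-1}\dd y \simeq R$. On the far region use H\"older with $\omega \in L^p$. Here $|y|^{-1} \in L^{p'}(\{|y| \ge R\})$ because $p' > 2$ (since $p < 2$), with norm $\simeq R^{2/p'-1} = R^{1-2/p}$. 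If $p = 1$ this is simply $R^{-1}$. This gives
\[
  |u(x)| \,\lesssim\, R^{1-2/q}\|\omega\|_{L^q} + R^{1-2/p}\|\omega\|_{L^p}\,,
\]
uniformly in $x$. Now optimize in $R$. The two terms balance when
\[
  R^{2/p-2/q} \,=\, \frac{\|\omega\|_{L^p}}{\|\omega\|_{L^q}}\,,
\]
and plugging this back in gives a bound $\|\omega\|_{L^p}^{\theta}\|\omega\|_{L^q}^{1-\theta}$. The exponent $\theta$ is dictated by scaling. Under $\omega \mapsto \omega(\lambda\cdot)$ the velocity scales like $\lambda^{-1}$ in $L^\infty$, while $\|\omega\|_{L^r}$ scales like $\lambda^{-2/r}$. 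Matching gives $1 = 2\theta/p + 2(1-\theta)/q$, which is exactly $1/2 = \theta/p + (1-\theta)/q$. The constant depends only on $p$ and $q$.

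None of these steps is a real obstacle, since the estimate is classical once the comparison $|x-y| \le |x-y^*|$ is used. The only points that need care are the endpoint cases $p = 1$ and $q = \infty$ in the H\"older step, which are handled directly as indicated above.
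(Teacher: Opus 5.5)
Your proposal is correct and follows essentially the same route as the paper: extend $\omega$ by zero, use the pointwise bound \eqref{BSR2} with the Hardy--Littlewood--Sobolev inequality for item 1, then split at radius $R$ and apply H\"older's inequality on each piece before optimizing in $R$ for item 2. Your exponents $R^{1-2/q}$ and $R^{1-2/p}$, including the endpoint cases $p=1$ and $q=\infty$, and the resulting $\theta$ all check out.
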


\begin{proof}
Extending $\omega$ by zero outside $\R^2_+$, we can integrate over $y \in \R^2$
in \eqref{BSR2}, and using the classical Hardy-Littlewood-Sobolev inequality, we
obtain the estimate $\|u\|_{L^q} \le C \|\omega\|_{L^p}$ when $1/q = 1/p - 1/2$.
To prove the second part, we fix $x \in \R^2_+$, and given $R > 0$ we split the
integral in \eqref{BSR2} into two parts, according to whether $|x-y| \le R$ or
$|x-y| > R$.  We easily obtain the estimate $|u(x)| \le CR^{1-2/q}\|\omega\|_{L^q}
+ CR^{1-2/p}\|\omega\|_{L^p}$, and optimizing over $R > 0$ gives the desired result.
\end{proof}

The next statement takes into account some cancellations
between the contributions of the point $y$ and the mirror image
$y^*$ in the Biot--Savart kernel.

\begin{lem}\label{lem:BS}
If $u = \BS[\omega]$, there exists a constant $C > 0$ such that
\begin{equation}\label{BSimproved}
  |u(x)| \,\le\, C\int_{\R^2_+}\frac{y_2}{|x-y|\,|x-y^*|}\,|\omega(y)|
  \dd y\,, \qquad \forall\, x \in \R^2_+\,.
\end{equation}
In particular, for any $p \in (1,2)$, there exists a constant $C_p > 0$ such that
\begin{equation}\label{uLp}
  \|u\|_{L^p} \,\le\, C_p \int_{\R^2_+} y_2^{\theta_p} \,|\omega(y)|\dd y\,,
  \qquad \text{where}\quad \theta_p \,=\, \frac{2}{p}-1\,.
\end{equation}
\end{lem}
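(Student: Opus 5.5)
The plan has two parts: a pointwise bound on the kernel difference for \eqref{BSimproved}, then an $L^p$ estimate of the resulting kernel for \eqref{uLp}.

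\emph{Pointwise bound.} For $x,y \in \R^2_+$, set $a = x-y$ and $b = x-y^*$. Then $b - a = y - y^* = (0,2y_2)$, so $|b-a| = 2y_2$. The Biot--Savart kernel in \eqref{BSlaw} is $F(a)-F(b)$ with $F(v) = v^\perp/|v|^2$. By the elementary identity
\[
  \Bigl|\frac{a}{|a|^2} - \frac{b}{|b|^2}\Bigr| \,=\, \frac{|a-b|}{|a|\,|b|}\,,
\]
which follows by squaring both sides, we get
\[
  |F(a)-F(b)| \,=\, \frac{2y_2}{|x-y|\,|x-y^*|}\,.
\]
Integrating against $|\omega|$ gives \eqref{BSimproved} with $C = 1/\pi$.

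\emph{$L^p$ bound.} By Minkowski's integral inequality it suffices to show that, for fixed $y$,
\[
  \Bigl\|\frac{y_2}{|\cdot-y|\,|\cdot-y^*|}\Bigr\|_{L^p(\R^2_+)} \,\lesssim\, y_2^{2/p-1}\,.
\]
By translation in $x_1$ and the scaling $x = y_2 X$, the left-hand side equals $y_2\cdot y_2^{-2}\cdot y_2^{2/p}\,\|k\|_{L^p}$, where $k(X) = 1/(|X-e|\,|X+e|)$ and $e = (0,1)$. This gives exactly the power $y_2^{2/p-1} = y_2^{\theta_p}$.

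It remains to check that $\|k\|_{L^p(\R^2_+)} < \infty$ for $p \in (1,2)$. Near $X = e$ we have $k \sim 1/|X-e|$, which is in $L^p_{\rm loc}$ since $p < 2$; the point $-e$ lies outside $\R^2_+$. At infinity $k \sim |X|^{-2}$, which is in $L^p$ away from the origin since $2p > 2$, i.e.\ $p > 1$.

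The main point is this integrability check, which is where the restriction $p \in (1,2)$ enters; the rest is routine.
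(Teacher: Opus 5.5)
Your proof is correct and has the same two-step structure as the paper's: a pointwise bound on the half-plane Biot--Savart kernel, then Minkowski's integral inequality combined with a scaling computation giving $\bigl\|y_2/(|\cdot-y|\,|\cdot-y^*|)\bigr\|_{L^p(\R^2_+)} = C_p\,y_2^{2/p-1}$ with $C_p<\infty$ for $1<p<2$. The differences are only in execution. For the pointwise bound, the paper writes the kernel in components and uses $|x_1-y_1|,|x_2-y_2|\le|x-y|\le|x-y^*|$ and $x_2\le|x-y^*|$, whereas your inversion identity shows directly that the kernel has modulus exactly $y_2/(\pi|x-y|\,|x-y^*|)$. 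For the $L^p$ norm, the paper uses $|x-y^*|\ge x_2+y_2$ and integrates first in $x_1$, then in $x_2$, whereas you rescale to a fixed profile $k$ and check that it is integrable near $e$ and at infinity.
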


\begin{proof}
It is straightforward to verify that the Biot--Savart kernel in $\R^2_+$
satisfies
\[
  \frac{1}{2\pi}\biggl(\frac{(x-y)^\perp}{|x-y|^2} - \frac{(x-y^*)^\perp}{
  |x-y^*|^2}\biggr)  \,=\, \frac{1}{\pi}\,\frac{y_2}{|x-y|^2 |x-y^*|^2}
  \begin{pmatrix} (x_1 - y_1)^2 - (x_2^2 - y_2^2) \\  2 x_2(x_1-y_1)
  \end{pmatrix}\,.
\]
Since $\max(|x_1-y_1|,|x_2-y_2|) \le |x-y| \le |x-y^*|$ and $x_2 \le x_2+y_2
\le |x-y^*|$, we immediately obtain the estimate \eqref{BSimproved}.

Next we fix $p \in (1,2)$ and, for any $y \in \R^2_+$, we consider the function
$\phi_y$ defined by
\[
  \phi_y(x) \,=\, \frac{y_2^{1-\theta_p}}{|x-y|\,|x-y^*|} \,\le\, \frac{y_2^{1-\theta_p}}{
  |x-y|\,(x_2+y_2)}\,, \qquad \forall\,x \in \R^2_+\,,\quad x \neq y\,.
\]
A straightforward calculation shows that, for all $y \in \R^2_+$,  
\begin{align*}
  \int_{\R^2_+}\phi_y(x)^p\dd x \,&\le\, \int_0^\infty \frac{y_2^{2p-2}}{(x_2+y_2)^p}
  \,\biggl\{\int_\R \frac{1}{|x-y|^p}\dd x_1\biggr\} \dd x_2 \\
  \,&=\, \int_0^\infty \frac{\,y_2^{2p-2}}{(x_2+y_2)^p}\,\frac{C_p}{|x_2-y_2|^{p-1}}\dd x_2
   \,=\, C_p\,,
\end{align*}
where $C_p$ denotes a generic positive constant depending only on $p$. 
This shows that the family $(\phi_y)_{y \in \R^2_+}$ is uniformly bounded in $L^p(\R^2_+)$.
Now we deduce from \eqref{BSimproved} that
\[
  |u(x)| \,\le\, C\int_{\R^2_+} \phi_y(x)\,y_2^{\theta_p}\,|\omega(y)|\dd y\,, \qquad
  \forall\,x \in \R^2_+\,,
\]
and applying Minkowski's integral inequality we obtain
\[
  \|u\|_{L^p} \,\le\, C \Bigl\{\,\sup_{y \in \R^2_+}\|\phi_y\|_{L^p}\Bigr\}
  \int_{\R^2_+} y_2^{\theta_p}\,|\omega(y)|\dd y \,\le\,  C_p
  \int_{\R^2_+} y_2^{\theta_p}\,|\omega(y)|\dd y\,,
\]
which is \eqref{uLp}. 
\end{proof}

Finally, we observe that, for vorticities satisfying the no-slip
condition \eqref{noslip}, the Biot--Savart formula \eqref{BSlaw}
actually reduces to the usual formula \eqref{BS2d} in the whole space
$\R^2$.

\begin{lem}\label{lem:2BS}
Assume that $\omega \in L^1_\perp(\R^2_+) \cap L^{4/3}(\R^2_+)$ and let $u = \BS[\omega]$.
Then
\begin{equation}\label{BSid}
  u(x) \,=\, \frac{1}{2\pi}\int_{\R^2_+} \frac{(x-y)^\perp}{|x-y|^2}
  \,\omega(y)\dd y\,, \qquad \text{for almost all } x \in \R^2_+\,. 
\end{equation}
In particular $\DS\int_{\R^2_+} \! u(x)\,\omega(x)\dd x = 0$. 
\end{lem}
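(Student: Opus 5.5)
The identity \eqref{BSid} says that the mirror part of the Biot--Savart kernel contributes nothing when $\omega \in L^1_\perp(\R^2_+)$. The plan is therefore to show that
\[
  w(x) \,:=\, \frac{1}{2\pi}\int_{\R^2_+}\frac{(x-y^*)^\perp}{|x-y^*|^2}\,\omega(y)\dd y
\]
vanishes for every $x \in \R^2_+$.

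\textbf{Step 1: reduce to a harmonic extension.} For fixed $y$, the map $x \mapsto (x-y^*)^\perp/|x-y^*|^2$ is smooth and harmonic in $\R^2_+$, because its singularity $y^*$ lies in the lower half-plane. Hence $w$ is the velocity field induced in $\R^2_+$ by the reflected vorticity $-\omega(x^*)$, which is supported in the lower half-plane. By the same computation as in Lemma~\ref{lem:BS}, $w$ is divergence free and curl free in $\R^2_+$.

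Writing $w^\perp$ in complex notation, we get that $w_2 + i w_1$, or a similar combination, equals $\frac{1}{2\pi i}\int \omega(y)\,(\bar x - \bar y^*)^{-1}\dd y$ up to constants. This is an antiholomorphic function of $x$, given by a Cauchy transform of the measure $\omega(y^*)\dd y$ placed in the lower half-plane.

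\textbf{Step 2: identify the boundary values.} On the boundary $x_2 = 0$, the mirror identity gives that the horizontal component of $w$ equals half the horizontal component of the full kernel. Concretely,
\[
  \frac{(x-y^*)^\perp_1}{|x-y^*|^2} \,=\, -\frac{x_2+y_2}{|x-y^*|^2},
\]
which at $x_2=0$ equals $-\pi P(x_1-y_1,y_2)$. Hence the boundary trace of $w_1$ is $-\tfrac{1}{2}\gamma[\omega](x_1) = 0$.

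More directly, $w_1(x_1,x_2)$ equals $-\frac{1}{2}\int P(x_1-y_1,x_2+y_2)\,\omega(y)\dd y$. Since $x_2+y_2 > 0$, the semigroup property of the Poisson kernel gives $P(\cdot,x_2+y_2) = P(\cdot,x_2) \star P(\cdot,y_2)$ in $x_1$. Therefore
\[
  w_1(\cdot,x_2) \,=\, -\frac{1}{2}\,P(\cdot,x_2)\star\gamma[\omega] \,=\, 0.
\]
This is the key, clean step, and Fubini is justified since $\omega \in L^1$.

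\textbf{Step 3: the vertical component.} Since $w$ is curl free and divergence free in $\R^2_+$ and $w_1 \equiv 0$, we get $\partial_1 w_2 = \partial_2 w_1 = 0$ and $\partial_2 w_2 = -\partial_1 w_1 = 0$, so $w_2$ is constant. Alternatively, $w_2$ is given by the conjugate Poisson kernel, $w_2(\cdot,x_2) = c\,Q(\cdot,x_2)\star\gamma[\omega] = 0$ by the same semigroup argument. The constant also vanishes because $w_2 \to 0$ as $x_2 \to \infty$, by dominated convergence for $\omega \in L^1$.

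This yields \eqref{BSid} pointwise. The $L^{4/3}$ hypothesis only guarantees that both integrals converge absolutely almost everywhere, via Hardy--Littlewood--Sobolev as in Lemma~\ref{lem:HLS}.

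\textbf{Step 4: the corollary.} With \eqref{BSid}, $u\omega$ is integrable, since $u \in L^4$ and $\omega \in L^{4/3}$. We then get
\[
  \int u\,\omega\dd x \,=\, \frac{1}{2\pi}\iint \frac{(x-y)^\perp}{|x-y|^2}\,\omega(x)\,\omega(y)\dd x\dd y,
\]
which is antisymmetric under $x \leftrightarrow y$ and hence vanishes. Fubini applies because $\iint |x-y|^{-1}|\omega(x)||\omega(y)| < \infty$ by HLS with exponents $4/3,4/3$.

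The main obstacle is justifying the Fubini and convolution manipulations in Step~2. These are handled by absolute integrability of the Poisson kernels against $\omega \in L^1$.
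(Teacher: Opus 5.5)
Your proof is correct, but it follows a different route from the paper's.

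\textbf{The paper's route.} The paper extends $u$ and $\omega$ by zero to $\R^2$. It uses the fact that $u$ vanishes on $\partial\R^2_+$ to get $\div\widetilde u=0$ and $\curl\widetilde u=\widetilde\omega$ in $\R^2$. It then concludes $\widetilde u=\BSS[\widetilde\omega]$ by uniqueness for this div--curl system; implicitly this is a Liouville argument in $L^4$, and that is where $\omega\in L^{4/3}$ enters.

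\textbf{Your route.} You show by hand that the image-vortex field $w$ vanishes identically. The semigroup identity $P(\cdot,x_2+y_2)=P(\cdot,x_2)\star P(\cdot,y_2)$ writes $w_1(\cdot,x_2)$ as $-\tfrac12P(\cdot,x_2)\star\gamma[\omega]$. For $w_2$ you use either the analogous identity $Q(\cdot,x_2+y_2)=Q(\cdot,x_2)\star P(\cdot,y_2)$ for the conjugate Poisson kernel, or the div--curl-plus-decay argument. Both versions check out. Fubini is justified since $|P(\cdot,x_2)|,|Q(\cdot,x_2)|\le C/x_2$ and $P(\cdot,y_2)$ has unit mass.

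\textbf{Comparison.} The paper's proof is shorter and more conceptual. However, it tacitly relies on giving meaning to the boundary trace of $u$ (essentially the equivalence between the no-slip condition and $\gamma[\omega]=0$) and on a Liouville theorem. Yours is elementary and self-contained. It gives $w\equiv0$ at every point of $\R^2_+$, not just almost everywhere. It also makes clear that \eqref{BSid} only needs $\omega\in L^1_\perp(\R^2_+)$: for $\omega\in L^1$ the whole-plane integral already converges absolutely almost everywhere, so $L^{4/3}$ is needed only for the integrability of $u\,\omega$ in the corollary.

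\textbf{Cosmetic points.} $w$ is the whole-plane velocity of $+\omega(x^*)$ placed in the lower half-plane, so it is $-w$, not $w$, that corresponds to the image vortex $-\omega(x^*)$. The complex-notation remark in Step~1 and the boundary-trace discussion at the start of Step~2 can be dropped, since the ``more directly'' argument does all the work.
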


\begin{proof}
Let $\widetilde u : \R^2 \to \R^2$ be the extension of $u$ by zero to the
whole plane $\R^2$, and $\widetilde\omega : \R^2 \to \R$ the extension by
zero of $\omega$. Since by assumption $\widetilde u$ vanishes on the
boundary $\partial\R^2_+$, we have $\div\widetilde u = 0$ and
$\curl \widetilde u = \widetilde \omega$ in $\R^2$. We also know
that $\widetilde\omega \in L^1(\R^2) \cap L^{4/3}(\R^2)$, so we conclude
that $\widetilde u = \BSS[\widetilde \omega]$, which gives \eqref{BSid} after
restriction  to $\R^2_+$. 

Now, since $\omega \in L^{4/3}(\R^2_+)$, we have $u \in L^4(\R^2_+)$ by
Lemma~\ref{lem:HLS}, hence $u\,\omega \in L^1(\R^2_+)$, and Fubini's
theorem implies that 
\[
  \int_{\R^2_+} u(x)\,\omega(x)\dd x \,=\, \frac{1}{2\pi} \int_{\R^2_+\times \R^2_+}
  \frac{(x-y)^\perp}{|x-y|^2}\,\omega(x)\, \omega(y)\dd x\dd y \,=\, 0\,,
\]
because the last integrand is an antisymmetric function of its arguments
$x,y$. 
\end{proof}

%%%%%%%%%%%%%%%%%%%%%%%%%%%%%%%%%%%%%%%%%%%%%%%%%%%%%%%%%%%%%%%%%%%%%%%%%%%
%%%%%%%%%%%%%%%%%%%%%%%%%%%%%%%%%%%%%%%%%%%%%%%%%%%%%%%%%%%%%%%%%%%%%%%%%%%

\section{The linearized evolution at the Lamb--Oseen vortex}\label{sec3}

As is explained in the introduction, our strategy for proving Theorem~\ref{thm:main}
relies crucially on a decomposition of the solution into an approximation of
the Lamb--Oseen vortex and a correction term that is concentrated near the boundary.
It is thus natural to consider the linearized equation at the Lamb--Oseen vortex,
which was already studied in previous works, including \cite{GG,GW1,GW2}. In
this section, we extend the existing results and derive long-time estimates
for the solutions of the linearized equation in the space $L^1(\R^2)$. 

Given $\alpha \in \R$ and $z = (0,1) \in \R^2_+$, we denote by $\bar\omega_1(x,t)$
and $\bar u_1(x,t)$ the vorticity and the velocity field of the Lamb--Oseen
vortex with circulation $\alpha$ centered at point $z$, namely
\begin{equation}\label{def:Oseen}
  \bar\omega_1(x,t) \,=\, \frac{\alpha}{t}\,G\Bigl(\frac{x-z}{\sqrt{t}}\Bigr)\,, \qquad
  \bar u_1(x,t) \,=\, \frac{\alpha}{\sqrt{t}}\,v^G\Bigl(\frac{x-z}{\sqrt{t}}\Bigr)\,,
  \qquad x \in \R^2\,, \quad t > 0\,. 
\end{equation}
Here the vorticity profile $G$ and the velocity profile $v^G$ are given by
\begin{equation}\label{GvGdef}
  G(\xi) \,=\, G_1(\xi) \,=\, \frac{1}{4\pi}\,e^{-|\xi|^2/4}\,, \qquad
  v^G(\xi) \,=\, \frac{1}{2\pi}\,\frac{\xi^\perp}{|\xi|^2}\Bigl(
  1 - e^{-|\xi|^2/4}\Bigr)\,, \qquad \xi \in \R^2\,.
\end{equation}
For later use we observe that $v^G = \BSS[G]$, where $\BSS$ denotes the Biot--Savart
operator in the whole space $\R^2$\:
\begin{equation}\label{BS2d}
  \BSS[w](\xi) \,=\, \frac{1}{2\pi}\int_{\R^2} \frac{(\xi-\eta)^\perp}{
  |\xi-\eta|^2}\,w(\eta)\dd \eta\,, \qquad \xi \in \R^2\,.
\end{equation}
Note that \eqref{BS2d} differs from the Biot--Savart formula \eqref{BSlaw},
which holds in the half-plane $\R^2_+$. 

In what follows we consider initial data $\omega_0 \in L^1(\R^2)$ at time
$t_0 > 0$, and we analyze the Cauchy problem for the linear non-autonomous equation
\begin{equation}\label{LinGam1}
  \partial_t \omega(t) + \bar u_1(t)\cdot\nabla\omega(t) 
  + u(t) \cdot\nabla\bar\omega_1(t) \,=\, \Delta\omega(t) \,,
  \qquad x \in \R^2\,, \quad t \ge t_0\,,
\end{equation}
with initial condition $\omega(t_0) = \omega_0$. We emphasize that \eqref{LinGam1}
is to be solved {\em in the whole space} $\R^2$, and that the velocity field
$u = \BSS[\omega]$ is given by the Biot--Savart formula \eqref{BS2d}. Since
$\bar u_1(t)$ and $\bar\omega_1(t)$ are smooth and bounded for all $t \ge t_0$,
classical results for linear parabolic equations assert that \eqref{LinGam1} has
a unique solution $\omega \in C^0([t_0,+\infty),L^1(\R^2))$. We write
\begin{equation}\label{Sigalphdef}
  \omega(t) \,=\, \Sigma_\alpha(t,t_0)\omega_0\,, \qquad \forall\,t \ge t_0\,,
\end{equation}
where the operator $\Sigma_\alpha(t,t_0)$ is, by definition, the two-parameter
semigroup associated with the linear equation \eqref{LinGam1}.

Following \cite{GW1,GW2}, we introduce the self-similar variables
\begin{equation}\label{ssvar}
  \xi \,=\, \frac{x-z}{\sqrt{t}}\,, \qquad \tau \,=\, \log\frac{t}{t_0}\,.
\end{equation}
The vorticity and the velocity are transformed according to
\begin{equation}\label{sschange}
  \omega(x,t) \,=\, \frac{1}{t}\,w\Bigl(\frac{x-z}{\sqrt{t}}\,,\,\log
  \frac{t}{t_0}\Bigr)\,, \qquad
  u(x,t) \,=\, \frac{1}{\sqrt{t}}\,v\Bigl(\frac{x-z}{\sqrt{t}}\,,\,\log
  \frac{t}{t_0}\Bigr)\,.  
\end{equation}
Since $\bar \omega_1$ and $\bar u_1$ have the self-similar form given
by \eqref{def:Oseen}, it is easy to verify that the rescaled vorticity
$w(\xi,\tau)$ and velocity $v(\xi,\tau)$ satisfy the relation $v = \BSS[w]$ and
evolve according to the autonomous equation
\begin{equation}\label{LinGam2}
  \partial_\tau w(\xi,\tau) + \alpha\Bigl(v^G \cdot \nabla w +
  v \cdot \nabla G\Bigr)(\xi,\tau) \,=\, \Bigl(\Delta w +
  \frac12\, \xi \cdot\nabla w + w\Bigr)(\xi,\tau)\,,
\end{equation}
with initial condition $w(\xi,0) = t_0 \omega_0\bigl(z + \sqrt{t_0}\xi\bigr)$. 
Here it is understood that the differential operators $\Delta$ and $\nabla$
act on the new space variable $\xi$. Introducing the diffusion operator $\cL$
and the (nonlocal) advection operator $\Lambda$ defined by
\begin{equation}\label{LLamdef}
  \cL \,=\, \Delta_\xi + \frac12\,\xi\cdot\nabla_\xi + 1\,, \qquad
  \Lambda w \,=\, v^G \cdot \nabla w + \BSS[w] \cdot \nabla G\,,
\end{equation}
we can write \eqref{LinGam2} in the compact form $\partial_\tau w = (\cL -
\alpha\Lambda)w$, which is convenient for analysis.

In the rest of this section we first derive accurate estimates on the semigroup
$\cS_\alpha(\tau)$ associated with the evolution equation \eqref{LinGam2},
which are collected in Proposition~\ref{prop:semigroup}. Then, returning to
the original variables, we deduce the corresponding estimates for the
two-parameter semigroup $\Sigma_\alpha(t,t_0)$, see
Corollary~\ref{cor:Sigma-alpha}.

\begin{prop}\label{prop:semigroup}
For any $\alpha \in \R$, the linear operator $\cL - \alpha\Lambda$
generates a strongly continuous semigroup $\cS_\alpha(\tau)$ in
$L^1(\R^2)$ which satisfies the following estimates. For any $\kappa > 0$
and any $p \in [1,2)$, there exists a constant $C_0 = C_0(\alpha,\kappa,p)$
such that, for all $\tau > 0$ and all $w_0 \in L^1(\R^2)$, 
\begin{equation}\label{sgest}
  \|\cS_\alpha(\tau)w_0\|_{L^p} \,\le\, \frac{C_0\,e^{\kappa \tau}}{
    a(\tau)^{1-1/p}}\,\|w_0\|_{L^1}\,, \qquad
  \|\cS_\alpha(\tau)\nabla w_0\|_{L^p} \,\le\, \frac{C_0\,e^{\left(\kappa
  -\frac{1}{2}\right) \tau}}{a(\tau)^{3/2-1/p}}\,\|w_0\|_{L^1}\,,
\end{equation}
where $a(\tau) = 1 - e^{-\tau}$. Moreover estimates \eqref{sgest} hold with
$\kappa = 0$ if $|\alpha|$ is small enough.  
\end{prop}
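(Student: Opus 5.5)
The plan is to treat $\alpha\Lambda$ as a perturbation of $\cL$ and to obtain the $L^1$ estimates by combining a compactness/spectral argument in a weighted space with the known explicit smoothing of $e^{\tau\cL}$. First recall that $\cL$ generates a $C_0$-semigroup in $L^1(\R^2)$ given explicitly by
\[
  (e^{\tau\cL}w_0)(\xi) \,=\, \frac{e^{\tau}}{4\pi a(\tau)}\int_{\R^2} e^{-|\xi-\eta e^{-\tau/2}|^2/(4a(\tau))}\,w_0(\eta)\dd\eta\,,
\]
from which one reads off directly $\|e^{\tau\cL}w_0\|_{L^p}\le C a(\tau)^{-(1-1/p)}\|w_0\|_{L^1}$ and $\|e^{\tau\cL}\nabla w_0\|_{L^p}\le C e^{-\tau/2}a(\tau)^{-(3/2-1/p)}\|w_0\|_{L^1}$ (the factor $e^{-\tau/2}$ coming from the chain rule $\nabla_\eta = e^{-\tau/2}\nabla$). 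Since $v^G$ is bounded and divergence-free and $\nabla G$ is Gaussian, for $p\in[1,2)$ the term $\Lambda w = \div(v^G w) + \BSS[w]\cdot\nabla G$ is controlled by $\|w\|_{L^q}$ with $q\in(1,2)$ via Lemma~\ref{lem:HLS}. The Duhamel formula $\cS_\alpha(\tau) = e^{\tau\cL} - \alpha\int_0^\tau e^{(\tau-s)\cL}\Lambda\,\cS_\alpha(s)\dd s$, solved by a fixed point on a short interval $[0,\tau_0]$ in the norm $\sup a(\tau)^{1-1/p}\|w(\tau)\|_{L^p}$ (the singularities $(\tau-s)^{-1/2}$ and $s^{-(1-1/p)}$ being integrable), gives generation and the estimates \eqref{sgest} for $\tau\le\tau_0$, with some constant. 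The gradient estimate for short times follows the same way, using that $\cS_\alpha(\tau)\nabla$ is handled by the identical Duhamel expansion with initial term $e^{\tau\cL}\nabla w_0$.

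For large times, by the semigroup property it suffices to show $\|\cS_\alpha(\tau)\|_{L^1\to L^1}\le C e^{\kappa\tau}$, and, for the gradient estimate, a corresponding bound with $e^{(\kappa-1/2)\tau}$; indeed writing $\cS_\alpha(\tau)=\cS_\alpha(1)\cS_\alpha(\tau-1)$, the $L^1\to L^p$ smoothing at times $\ge1$ follows from the short-time result. For the $L^1$ growth bound I would use the known spectral theory from \cite{GW1,GG}: in the weighted space $L^2(m)$, $m>1$, the operator $\cL-\alpha\Lambda$ has essential spectrum in $\{\Re\lambda\le (1-m)/2\}$, and its discrete spectrum is contained in $\{\Re\lambda\le0\}$ (with eigenvalues $0$, $-1/2$, \dots for $\Lambda$ skew-symmetric in $L^2(G^{-1})$ and vanishing on the relevant eigenfunctions). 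Choosing $m$ large, one gets $\|\cS_\alpha(\tau)\|_{L^2(m)\to L^2(m)}\le Ce^{\kappa\tau}$ for any $\kappa>0$ (the $\kappa$ absorbing possible Jordan blocks / non-normality), and for small $|\alpha|$ one can take $\kappa=0$ by perturbation of the $\alpha=0$ case. To pass from $L^2(m)$ to $L^1$, I would split $w_0 = w_0\1_{|\xi|\le R}+w_0\1_{|\xi|>R}$ or, better, follow the approach of \cite{GG}: decompose $\cS_\alpha(\tau)w_0 = w_1+w_2$ where $w_2$ solves $\partial_\tau w_2 = \cL w_2 - \alpha\Lambda w_2$ restricted to "far field" part with the advection by $v^G$ only (which preserves $L^1$ norms and is dominated by $e^{\tau\cL}$ since $v^G\cdot\nabla$ is a transport by a divergence-free field), while the nonlocal term $\BSS[w]\cdot\nabla G$ is Gaussian-localized, hence lands in $L^2(m)$ after one unit of time by smoothing, where the weighted estimate applies. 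Gluing by Duhamel gives the $L^1$ bound $Ce^{\kappa\tau}$.

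The gradient estimate for large $\tau$ is the main obstacle: the extra factor $e^{-\tau/2}$ reflects that $\nabla w_0$ has zero mean, so $\cS_\alpha(\tau)\nabla w_0$ should lie in the invariant subspace of mean-zero functions where the spectral bound improves by $1/2$ (eigenvalue $0$ removed, spectrum in $\Re\lambda\le-1/2$ up to $\kappa$). I would first write $\cS_\alpha(\tau)\nabla w_0 = \cS_\alpha(\tau-1)\bigl(\cS_\alpha(1)\nabla w_0\bigr)$, note $\cS_\alpha(1)\nabla w_0$ has zero integral and is bounded in $L^1$ by the short-time result, and then rerun the above $L^2(m)\to L^1$ argument on the mean-zero invariant subspace, where the semigroup bound is $Ce^{(\kappa-1/2)\tau}$; the delicate point is making the far-field decomposition respect the zero-mean constraint, which I would handle by estimating the transport part through $e^{\tau\cL}\nabla$ directly (giving the $e^{-\tau/2}$ factor) rather than through a mean-zero projection.
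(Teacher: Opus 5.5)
Your short-time argument is essentially the paper's Step~2. Your route to the $L^1$ growth bound $\|\cS_\alpha(\tau)\|_{L^1\to L^1}\le Ce^{\kappa\tau}$ is a legitimate alternative to the paper's. The paper proves directly, via the Riesz criterion, that $\cS_\alpha(T)-\cS_0(T)$ is compact in $L^1$. It then rules out eigenvalues outside the unit disk because eigenfunctions are Gaussian-localized, hence lie in $\cY$, where $\cL\le 0$ and $\Lambda$ is skew. You instead import the Gallay--Wayne spectral theory in $L^2(m)$ and glue it to $L^1$ through
$\cS_\alpha(\tau)=\cU(\tau)-\alpha\int_0^\tau \cS_\alpha(\tau-s)\,B\,\cU(s)\dd s$, where $\cU$ is generated by $\cL-\alpha v^G\cdot\nabla$ and $Bw=\BSS[w]\cdot\nabla G$. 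This works under two conditions. First, Duhamel must be ordered this way, so that $\cS_\alpha$ only acts on the Gaussian-localized source. Second, you must justify $\|\cU(s)\|_{L^1\to L^p}\lesssim a(s)^{-(1-1/p)}$ uniformly in $s$, which holds because $\div v^G=0$ (the $L^p$ energy identity does not see the drift).

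The genuine gap is the second estimate in \eqref{sgest} for large $\tau$.

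\emph{Zero mean does not produce the factor $e^{-\tau/2}$.} Mean-zero functions in $L^1(\R^2)$ have no spectral gap, even for $\alpha=0$. For $w_0=\phi(\cdot-\eta_0)-\phi(\cdot+\eta_0)$ with $|\eta_0|=R$, the norm $\|e^{\tau\cL}w_0\|_{L^1}$ stays close to $2\|\phi\|_{L^1}$ as long as $R\,e^{-\tau/2}\gg1$. The gap $-1/2$ you invoke holds in the mean-zero subspace of $L^2(m)$, $m>2$. It therefore controls the localized source term, but not the piece $\cU(\tau-1)\bigl(\cS_\alpha(1)\nabla w_0\bigr)$, whose datum you only know to be a mean-zero $L^1$ function.

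\emph{Nor can that piece be ``read off from $e^{\tau\cL}\nabla$''.} The drift $\alpha v^G\cdot\nabla w=\alpha\div(v^Gw)$ is neither small nor localized, since $|v^G(\xi)|\sim1/(2\pi|\xi|)$. Treating it perturbatively around $e^{\tau\cL}$, in a norm that keeps the divergence structure, leads to a Gr\"onwall inequality with kernel $a(\tau-s)^{-1/2}$. That gives a loss $e^{c(\alpha)\tau}$ with $c(\alpha)$ fixed by $|\alpha|\,\|v^G\|_{L^\infty}$, not an arbitrary $\kappa>0$. Yet arbitrarily small $\kappa$ is what is claimed and what is used later ($\kappa<1/8$ in Lemma~\ref{lem:Phibounds}).

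\emph{What is missing.} You need a space that is preserved by the full evolution, encodes the divergence form, and carries the gap $1/2$. The paper uses $\cX=\{\div f\,;\,f\in L^1(\R^2)^2\}$, on which $\|\cS_0(\tau)\|_{\cX\to\cX}\le Ce^{-\tau/2}$. The auxiliary equation \eqref{eq:auxiliaire} gives the intertwining $\cS_\alpha(\tau)\div f_0=\div(\cT_\alpha(\tau)f_0)$. The difference $\cS_\alpha(T)-\cS_0(T)$ is compact in $\cX$, which is where $|v^G(\eta)|\le C/R$ for $|\eta|\ge R$ enters. Eigenvalues outside $D_{e^{-T/2}}$ are excluded because eigenfunctions lie in $\cY_0$, where $\cL\le-1/2$ and $\Lambda$ is skew. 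To stay within your framework, you would have to keep $\nabla w_0$ itself as the datum and split $v^G$ into two parts. The far-field part, of size $O(1/R)$, is handled perturbatively in the $\cX$-norm with a loss that vanishes as $R\to\infty$. The compactly supported part is added to the mean-zero localized source treated in $L^2(m)$.
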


\noindent
The proof of Proposition~\ref{prop:semigroup} is quite involved, and
we divide it into four steps. 

\medskip\noindent{\bf Step 1\:} {\em The case where $\alpha = 0$.}\\
The diffusion operator $\cL$ is the generator of a strongly continuous
semigroup $\cS_0$ in $L^1(\R^2)$, given by the explicit representation
formula
\begin{equation}\label{sg0rep}
  \bigl(\cS_0(\tau)w_0\bigr)(\xi) \,=\, \frac{e^\tau}{4\pi a(\tau)}
  \int_{\R^2} e^{-\frac{|\xi-\eta|^2}{4a(\tau)}} w_0\bigl(\eta \,e^{\tau/2}\bigr)
  \dd \eta\,, \qquad \xi \in \R^2\,, \quad \tau > 0\,,
\end{equation}
where $a(\tau) = 1 - e^{-\tau}$, see \cite{GW1,GW2}. It follows that, 
for any $p \in [1,+\infty]$, 
\begin{equation}\label{sg0est1}
  \bigl\|\cS_0(\tau)w_0\bigr\|_{L^p} \,\le\, \frac{1}{\bigl(4\pi a(\tau)
  \bigr)^{1-1/p}}\,\|w_0\|_{L^1}\,, \qquad \tau > 0\,.
\end{equation}
Indeed, estimate \eqref{sg0est1} is a direct consequence of \eqref{sg0rep}
if $p = 1$ or $p = \infty$, and the general case follows by interpolation. 
Similarly, differentiating \eqref{sg0rep} with respect to $\xi$ and
observing that $\nabla \cS_0(t) = e^{\tau/2}\cS_0(\tau)\nabla$ , we obtain
the estimate
\begin{equation}\label{sg0est2}
  \bigl\|\nabla\cS_0(\tau)w_0\bigr\|_{L^p}  \,=\, e^{\tau/2}\bigl\|\cS_0(\tau)\nabla w_0
  \bigr\|_{L^p} \,\le\, \frac{C\,\|w_0\|_{L^1}}{a(\tau)^{3/2-1/p}}\,, \qquad \tau > 0\,.
\end{equation}
Altogether this proves \eqref{sgest} for $\alpha = 0$, with $\kappa = 0$. 

\medskip\noindent{\bf Step 2\:} {\em Short time estimates.}\\
We next construct a solution of \eqref{LinGam2} for small times.
Given $w_0 \in L^1(\R^2)$, $\alpha \in \R$, and $T > 0$, we consider the integral
equation
\begin{equation}\label{st0}
  w(\tau) \,=\, \cS_0(\tau)w_0 - \alpha \cF[w](\tau)\,, \qquad \tau \in [0,T]\,,  
\end{equation}
where $\cF$ is the linear map defined by
\begin{equation}\label{st1}
  \cF[w](\tau) \,=\, \int_0^\tau {\cS_0}(\tau-s)\,\Lambda w(s)\dd s 
  \,=\, \int_0^\tau {\cS_0}(\tau-s)\div\bigl(v^G w(s) + v(s) G\bigr)\dd s\,.
\end{equation}
We recall that $G, v^G$ are given by \eqref{GvGdef} and that $v(s) = \BSS[w(s)]$ as
in \eqref{BS2d}. For later use, we observe that 
\begin{align}\label{st2}
  \|v^G w(s) + v(s) G\|_{L^1} \,&\le\,  C\|w(s)\|_{L^1}\,, \\ \label{st3}
  \|v^G\cdot \nabla w(s) + v(s)\cdot \nabla G\|_{L^1} \,&\le\,  C\bigl(
  \|\nabla w(s)\|_{L^1}{+\| w(s)\|_{L^1}}\bigr)\,.
\end{align}
Indeed the terms $v^G w$ and $v^G\cdot \nabla w$ are straightforward to
bound since $v^G\in L^\infty(\R^2)$. On the other hand, we have the useful
estimate
\[
  \bigl\|\BSS[w]\bigr\|_{L^1(\R^2) + L^\infty(\R^2)} \,\le\, C\|w\|_{L^1(\R^2)}\,,
  \qquad \forall\, w \in L^1(\R^2)\,,
\]
which is easily obtained by decomposing the integral in \eqref{BS2d} in
two parts according to whether $|\eta-\xi| \le 1$ or $|\eta-\xi| > 1$.
Since both $G$ and $\nabla G$ belong to $L^1(\R^2) \cap L^\infty(\R^2)$,
we obtain the desired result. 

We now consider the map $\cF$ as acting on the space $X_T = C^0([0,T],L^1(\R^2))$
equipped with the norm
\[
  \|w\|_{1,T} \,=\, \sup_{\tau \in [0,T]} \|w(\tau)\|_{L^1}\,.
\]
From \eqref{sg0est1} we have $\| \cS_0(\cdot ) w_0\|_{1,T} \leq \| w_0\|_{L^1}$.
Using \eqref{sg0est2} and \eqref{st2} together with the monotonicity of
$s\mapsto a(s)$, we also find, for $\tau \in [0,T]$,
\begin{equation}\label{cFest}
\begin{split}
  \|\cF[w](\tau)\|_{L^1} \,&\le\, C \int_0^\tau \frac{e^{-(\tau-s)/2}}{a(\tau-s)^{1/2}}
  \,\bigl\|v^G w(s) + v(s) G\bigr\|_{L^1}\dd s \\ \,&\le\,
  C \,\|w\|_{1,T} \int_0^\tau \frac{e^{-s/2}}{a(s)^{1/2}}\dd s \,\le\, C_1\,a(\tau)^{1/2}
  \,\|w\|_{1,T}\,,
\end{split}
\end{equation}
so that $\|\cF[w]\|_{1,T} \le C_1\,a(T)^{1/2}\,\|w\|_{1,T}$ for some universal constant
$C_1 > 0$. If $T > 0$ is small enough so that $C_1 |\alpha| \,a(T)^{1/2} \le 1/2$, the integral
equation \eqref{st0} has a unique solution $w \in X_T$ which satisfies $\|w\|_{1,T} \le
2 \|w_0\|_{L^1}$. By construction this solution depends linearly on the initial vorticity
$w_0 \in L^1(\R^2)$, and will be denoted $w(\tau) = \cS_\alpha(\tau) w_0$. Note that $T > 0$
can be taken arbitrarily large if $C_1|\alpha| \le 1/2$. 

\begin{lem}\label{lem:short} {\em (Short time estimates)}\\
For all $\alpha \in \R$, there exists a time $T > 0$ such that the following
estimates hold. The solution $w(\tau) = \cS_\alpha(\tau) w_0$ of the integral equation
\eqref{st0} satisfies, for any $p \in [1,2)$,
\begin{equation}\label{shortest1}
  \|\cS_\alpha(\tau) w_0\|_{L^p} \,\le\, \frac{C_2}{a(\tau)^{1-1/p}}\,\|w_0\|_{L^1}\,,
  \qquad \forall\,\tau \in (0,T]\,,
\end{equation}
where the constant $C_2 > 0$ may depend on $\alpha$ and $p$. Similarly,
\begin{equation}\label{shortest2}
  \|\nabla \cS_\alpha(\tau)w_0\|_{L^1} + \|\cS_\alpha(\tau) \nabla w_0\|_{L^1}
  \,\le\, \frac{C_2}{a(\tau)^{1/2}}\,\|w_0\|_{L^1}\,, \qquad
  \forall\,\tau \in (0,T]\,.
\end{equation}
\end{lem}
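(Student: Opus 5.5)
We will prove both estimates by a bootstrap on the integral equation \eqref{st0}. We use weighted norms adapted to the singularity of $\cS_0$ at $\tau = 0$ and keep $T$ small so that the factor $|\alpha| a(T)^{\theta}$ can be absorbed. Throughout, $w(\tau) = \cS_\alpha(\tau)w_0$ is the fixed point constructed above, with $\|w\|_{1,T} \le 2\|w_0\|_{L^1}$. On $(0,T]$ we have $a(\tau) \approx \tau$.

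\emph{Step 1: the $L^p$ bound \eqref{shortest1}.} Fix $p \in (1,2)$. By \eqref{sg0est1}, $\|\cS_0(\tau)w_0\|_{L^p} \le C a(\tau)^{-(1-1/p)}\|w_0\|_{L^1}$. For the Duhamel term we use \eqref{sg0est2} (the $L^1\to L^p$ gradient bound, via $\cS_0(\tau-s)\div = e^{-(\tau-s)/2}\div\,\cS_0(\tau-s)$) together with \eqref{st2}. This gives
\[
  \|\cF[w](\tau)\|_{L^p} \,\le\, C\int_0^\tau \frac{\|w(s)\|_{L^1}}{a(\tau-s)^{3/2-1/p}}\dd s \,\le\, C\,a(\tau)^{1/p-1/2}\,\|w\|_{1,T}\,,
\]
which is integrable precisely because $p<2$. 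Since $a(\tau)^{1/p-1/2} \le a(\tau)^{-(1-1/p)}$ for $\tau \le T$, \eqref{shortest1} follows immediately, without any bootstrap. The case $p=1$ is already contained in $\|w\|_{1,T}\le 2\|w_0\|_{L^1}$.

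\emph{Step 2: the bound on $\nabla\cS_\alpha(\tau)w_0$.} Introduce the norm
\[
  N(w) \,=\, \sup_{0<\tau\le T} a(\tau)^{1/2}\|\nabla w(\tau)\|_{L^1}.
\]
The linear part satisfies $a^{1/2}\|\nabla\cS_0(\tau)w_0\|_{L^1}\le C\|w_0\|_{L^1}$ by \eqref{sg0est2}. For the Duhamel term we differentiate and write $\nabla\cS_0(\tau-s)\Lambda w(s) = e^{(\tau-s)/2}\cS_0(\tau-s)\nabla\,\Lambda w(s)$. Rather than putting both derivatives on the semigroup, which would give a nonintegrable factor $a^{-1}$, we apply $\nabla\cS_0(\tau-s)$ to the function $\Lambda w(s) = v^G\cdot\nabla w + v\cdot\nabla G$, whose $L^1$ norm is controlled by \eqref{st3}. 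This yields
\[
  \|\nabla\cF[w](\tau)\|_{L^1} \,\le\, C\int_0^\tau \frac{\|\nabla w(s)\|_{L^1}+\|w(s)\|_{L^1}}{a(\tau-s)^{1/2}}\dd s \,\le\, C\bigl(N(w)+\|w\|_{1,T}\bigr)\int_0^\tau \frac{\dd s}{a(\tau-s)^{1/2}a(s)^{1/2}}\,.
\]
The last integral is bounded by a constant $B$. Multiplying by $a(\tau)^{1/2}$ gains a factor $a(T)^{1/2}$, so we get
\[
  N(w) \,\le\, C\|w_0\|_{L^1} + C|\alpha|\,a(T)^{1/2}\bigl(N(w)+\|w_0\|_{L^1}\bigr).
\]
For $T$ small the $N(w)$ term is absorbed. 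To make this rigorous rather than formal, we run the fixed point argument of Step 2 directly in the space defined by the norm $\|w\|_{1,T}+N(w)$. Both contraction estimates hold there, and by uniqueness in $X_T$ the solution coincides with $\cS_\alpha(\tau)w_0$.

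\emph{Step 3: the bound on $\cS_\alpha(\tau)\nabla w_0$.} This is the main obstacle: the initial datum $\nabla w_0$ is only a distribution in $W^{-1,1}$, so the bound must be obtained by regularizing or by duality. The first approach is to set $\tilde w(\tau)=\cS_\alpha(\tau)\nabla w_0$ as the solution of \eqref{st0} with linear term $\cS_0(\tau)\nabla w_0$. By \eqref{sg0est2}, $\|\cS_0(\tau)\nabla w_0\|_{L^1}\le C a^{-1/2}\|w_0\|_{L^1}$. We then run the fixed point in the weighted norm $\sup a(\tau)^{1/2}\|\tilde w(\tau)\|_{L^1}$. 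Using \eqref{sg0est2} and \eqref{st2},
\[
  \|\cF[\tilde w](\tau)\|_{L^1}\le C\int_0^\tau a(\tau-s)^{-1/2}a(s)^{-1/2}\dd s\,\sup_s a(s)^{1/2}\|\tilde w(s)\|_{L^1}\,,
\]
and the prefactor is again bounded by a constant. The weight $a^{1/2}$ then gives contraction after multiplying by $a(\tau)^{1/2}$ and taking $T$ small. This defines $\cS_\alpha(\tau)\nabla w_0$ consistently; consistency is checked on smooth $w_0$, where it agrees with $\cS_\alpha(\tau)$ applied to $\nabla w_0$ by linearity and uniqueness, and then extended by density. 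It also gives the second half of \eqref{shortest2}, with $C_2$ depending on $\alpha$ through the smallness requirement on $T$.

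If the weighted contraction fails to close, the fallback is duality. We would pass to the adjoint equation, which has the same structure, and use the $L^\infty$-to-$W^{1,\infty}$ smoothing of the adjoint semigroup, obtained from the analogue of Step 2.
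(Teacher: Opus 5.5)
Your proposal is correct and follows essentially the paper's proof. The $L^p$ bound comes directly from \eqref{st0} via \eqref{sg0est1}, \eqref{sg0est2}, \eqref{st2}, with the time integral finite because $p<2$. The bound on $\cS_\alpha(\tau)\nabla w_0$ comes from the modified integral equation with linear term $\cS_0(\tau)\partial_j w_0$, solved by contraction in the weighted norm $\sup a(\tau)^{1/2}\|\cdot\|_{L^1}$. The bound on $\nabla\cS_\alpha(\tau)w_0$ comes from differentiating \eqref{st0} and combining \eqref{sg0est2} (with $p=1$), \eqref{st3} and $\int_0^\tau a(\tau-s)^{-1/2}a(s)^{-1/2}\dd s\lesssim 1$. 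That weighted contraction does close, so the duality fallback you mention is never needed.
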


\begin{proof}
Proceeding as in \eqref{cFest} we find, for $p \in [1,2)$ and $\tau \in [0,T]$,
\begin{align*}
  \|\cF[w](\tau)\|_{L^p} \,&\le\, \int_0^\tau \frac{e^{-(\tau-s)/2}}{a(\tau-s)^{3/2-1/p}}
  \,\|v^G w(s) + v(s) G\|_{L^1}\dd s \\ \,&\le\,
  C \,\|w\|_{1,T} \int_0^\tau \frac{e^{-s/2}}{a(s)^{3/2-1/p}}\dd s \,\le\,
  C\,a(T)^{1/p-1/2}\,\|w_0\|_{{L^1}}\,.
\end{align*}
Using in addition \eqref{sg0est1}, we can bound the right-hand side of \eqref{st0}
in $L^p$, and this gives \eqref{shortest1}. 

To obtain the estimate on $\cS_\alpha(\tau)\nabla w_0$ in \eqref{shortest2}, we
consider the modified integral equation $w(\tau) = \cS_0(\tau)\partial_j w_0 - \alpha
\cF[w](\tau)$, where $j \in \{1,2\}$. The exponential factor $e^{\tau/2}$ in front
of $\cS_0(\tau)\nabla w_0$ in \eqref{sg0est2} is irrelevant for short times, and
we neglect it. Repeating the arguments above and observing that
\[
  \int_0^\tau \frac{1}{a(\tau-s)^{1/2} a(s)^{1/2}}\dd s \,\lesssim\, \int_0^\tau
  \frac{1}{(t-s)^{1/2} s^{1/2}}\dd s \,\lesssim\, 1\,, \qquad \forall\, \tau \le 1\,,
\]
we can solve this equation by a fixed point argument in the function space
\[
  X_T' \,=\, \bigl\{w \in C^0\bigl((0,T],L^1(\R^2)\bigr)\,;\, \|w\|_{1,T}' < \infty\bigr\}\,,
  \qquad \|w\|_{1,T}' \,=\, \sup_{\tau \in (0,T]} a(\tau)^{1/2} \|w(\tau)\|_{L^1}\,,
\]
provided $T >0$ is sufficiently small, depending on $|\alpha|$. The calculations
are straightforward and can be left to the reader. 

Finally, to prove the estimate on $\nabla\cS_\alpha(\tau)w_0$ in \eqref{shortest2},
we differentiate both sides of \eqref{st0} with respect to $\xi$. This gives
an integral equation for $\nabla w(\tau)$ if we observe that
\[
  \nabla\cF[w](\tau) \,=\, \int_0^\tau \nabla \cS_0(\tau-s)\bigl(v^G \cdot \nabla w(s)
  + v(s) \cdot\nabla G\bigr)\dd s\,.
\]
Using \eqref{sg0est2} and \eqref{st3} we obtain in particular the estimate
\[
  \|\nabla \cF[w](\tau)\|_{L^1} \,\le\, \int_0^\tau \frac{C}{a(\tau-s)^{1/2}}
  \,\bigl(\|\nabla w(s)\|_{L^1} + \|w(s)\|_{L^1}\bigr)\dd s\,, 
\]
which implies that the integral equation has a unique solution $\nabla w \in X_T'$,
provided $T > 0$ is sufficiently small. Again we skip the details. This concludes
the proof of estimate \eqref{shortest2}. 
\end{proof}

\medskip\noindent{\bf Step 3\:} {\em Long time estimates.}\\
By construction, the linear map $\cS_\alpha(\tau)$ defined in the previous step
for $\tau \in [0,T]$ is strongly continuous and satisfies the semigroup property
$\cS_\alpha(\tau_1 + \tau_2) = \cS_\alpha(\tau_1)\cS_\alpha(\tau_2)$ whenever
$\tau_1 + \tau_2 \le T$. It can therefore be extended, in a unique way, to a $C_0$
semigroup $\bigl(\cS_\alpha(\tau)\bigr)_{\tau \ge 0}$ in $L^1(\R^2)$. When $|\alpha|$
is large, it is not known whether this semigroup is uniformly bounded in $L^1(\R^2)$,
but the following estimates will be useful.

\begin{lem}\label{lem:long} {\em (Long time estimates)}\\
For all $\alpha\in \R$ and all $\kappa>0$, there exists $C_3 > 0$ such that, 
for all $w_0\in L^1(\R^2)$ and all $\tau \ge T$,
\begin{equation}\label{longest}
  \bigl\| \cS_\alpha(\tau) w_0\bigr\|_{L^1} \,\le\, C_3\,e^{\kappa \tau} \|w_0\|_{L^1}\,, \qquad
  \bigl\| \cS_\alpha(\tau) \nabla w_0\bigr\|_{L^1} \,\le\, C_3\,e^{(\kappa-\frac{1}{2}) \tau}
  \|w_0\|_{L^1}\,.
\end{equation}
\end{lem}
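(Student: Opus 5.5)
The plan is to compare $\cL-\alpha\Lambda$ with its action in a weighted space where the spectrum is known, and to treat the $L^1$ tail by an explicit Duhamel decomposition. For $m>1$ let $L^2(m)$ be the space with norm $\|(1+|\xi|^2)^{m/2}w\|_{L^2}$. By Cauchy--Schwarz $L^2(m)\hookrightarrow L^1(\R^2)$. The facts used from \cite{GW1,GW2} are the following.
\begin{enumerate}[leftmargin=15pt,itemsep=2pt,topsep=1pt]
\item The spectrum of $\cL$ in $L^2(m)$ is $\{\Re\lambda\le (1-m)/2\}\cup\{-k/2\,;\,k\in\N\}$.
\item $\Lambda$ is $\cL$-compact there. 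By a Voigt-type theorem on compact perturbations of semigroups, the essential growth bound of $\cL-\alpha\Lambda$ is still $(1-m)/2$.
\item Every eigenfunction of $\cL-\alpha\Lambda$ with $\Re\lambda>(1-m)/2$ lies in the Gaussian space $L^2(G^{-1})$. There $\cL$ is self-adjoint and nonpositive and $\Lambda$ is skew-symmetric, so these eigenvalues have $\Re\lambda\le 0$.
\item On the invariant subspace $\{\int w\dd\xi=0\}$, with $m>2$, the same argument gives $\Re\lambda\le-1/2$. The only eigenvalues in $[-1/2,0]$ come from $G$ and $\partial_iG$, which persist for all $\alpha$.
\end{enumerate}
Since the growth bound of a $C_0$ semigroup is the maximum of its essential growth bound and its spectral bound, we get $\|\cS_\alpha(\tau)\|_{L^2(m)\to L^2(m)}\le C_\kappa e^{\kappa\tau}$ in general, and the bound $C_\kappa e^{(\kappa-1/2)\tau}$ on mean-zero data with $m>2$.

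To reach general $L^1$ data, I split $\Lambda w=v^G\cdot\nabla w+\BSS[w]\cdot\nabla G$. Let $\cT(\tau)$ be the semigroup of $\cL-\alpha v^G\cdot\nabla$. Because $\div v^G=0$, $\cT(\tau)$ preserves positivity and mass, so it is an $L^1$ contraction. It also has the smoothing estimates of \eqref{sg0est1} for short times, by the argument of Lemma~\ref{lem:short}. Writing $w=w_1+w_2$ with $w_1(\tau)=\cT(\tau)w_0$, the remainder solves
\[
  \partial_\tau w_2 \,=\, (\cL-\alpha\Lambda)w_2 - \alpha\,\BSS[w_1]\cdot\nabla G\,,\qquad w_2(0)=0\,.
\]
The source is Gaussian-localized. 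Since $\|w_1(s)\|_{L^{4/3}}\lesssim a(s)^{-1/4}\|w_0\|_{L^1}$, Lemma~\ref{lem:HLS} gives $\BSS[w_1(s)]\in L^4$, hence the source lies in $L^2(m)$ with norm $\lesssim a(s)^{-1/4}\|w_0\|_{L^1}$. Duhamel's formula and the $L^2(m)$ bound then give
\[
  \|w_2(\tau)\|_{L^1}\,\lesssim\, \int_0^\tau e^{\kappa(\tau-s)}a(s)^{-1/4}\dd s\,\|w_0\|_{L^1}\,\lesssim\, e^{\kappa\tau}\|w_0\|_{L^1}\,.
\]
Together with $\|w_1(\tau)\|_{L^1}\le\|w_0\|_{L^1}$, this proves the first inequality in \eqref{longest}.

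The main obstacle is the second inequality, which needs the extra factor $e^{-\tau/2}$. Data of the form $\partial_j w_0$ has zero mass, so $w_2$ can be handled exactly as above using the mean-zero bound $e^{(\kappa-1/2)\tau}$. However, the contraction $\cT(\tau)$ does not obviously damp $\cT(\tau)\partial_jw_0$ at rate $e^{-\tau/2}$, unlike $\cS_0$ in \eqref{sg0est2}. What I would try first is to reduce to $\cS_0$ by a further localization of $w_0$.
\begin{itemize}[leftmargin=15pt,itemsep=2pt,topsep=1pt]
\item Cut $w_0=\1_{|\xi|\le R}w_0+\1_{|\xi|>R}w_0$. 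The near part, after the unit-time smoothing of Lemma~\ref{lem:short}, lies in $L^2(m)$ with $m>2$ and has zero mass, so the weighted-space bound applies.
\item For the far part, compare with $\cS_0(\tau)\partial_jw_0$, which decays like $e^{-\tau/2}$ by \eqref{sg0est2}. In self-similar variables this mass is transported inward like $Re^{-s/2}$, where $|v^G|\lesssim e^{s/2}/R$. The accumulated error in Duhamel's formula is therefore small until the time $\tau\approx 2\log R$, at which the mass reaches the core.
\end{itemize}
Choosing $R$ depending on $\kappa$, and iterating this argument over time blocks of length comparable to $\log R$ with the semigroup property, should absorb the $e^{C|\alpha|}$-type losses into $e^{\kappa\tau}$. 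I expect this bookkeeping to be the delicate point.
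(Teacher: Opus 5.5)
Your proof of the first inequality in \eqref{longest} is correct, granted the Gallay--Wayne weighted-space results you cite, and it takes a genuinely different route from the paper. The paper proves directly that $\cS_\alpha(T)-\cS_0(T)$ is compact on $L^1(\R^2)$ (Riesz criterion), so only isolated eigenvalues can lie outside the unit disk. It then excludes them because eigenfunctions have Gaussian decay, hence lie in $\cY$, where $\cL\le 0$ and $\Lambda$ is skew-adjoint. You instead import the spectral theory of $\cL-\alpha\Lambda$ in $L^2(m)$ and transfer it to arbitrary $L^1$ data by peeling off the passive-scalar part $\cT(\tau)$, which is an $L^1$ contraction by positivity and mass conservation. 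The remaining source $-\alpha\BSS[w_1]\cdot\nabla G$ is Gaussian-localized and lies in $L^2(m)$ even though $w_0$ need not have any moments. This is a neat device: it trades the $L^1$ compactness argument for more of the weighted-space theory.

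The second inequality, however, is not proved, and it is the real content of the lemma. Your splitting only relocates the difficulty. Both $\|w_1(\tau)\|_{L^1}$ and the $L^{4/3}$ norm feeding the source of $w_2$ require $\|\cT(\tau)\partial_j w_0\|\lesssim e^{(\kappa-1/2)\tau}\|w_0\|_{L^1}$, which is a statement of the same type as the lemma. It is not cheap: by duality it is a gradient bound $\|\nabla\cT(\tau)^*\phi\|_{L^\infty}\lesssim e^{(\kappa-1/2)\tau}\|\phi\|_{L^\infty}$ for the drift $-\frac12\xi+\alpha v^G$. Since $\nabla v^G$ has a symmetric part of order one, a direct Bernstein-type estimate only gives the rate $-\frac12+C|\alpha|$.

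The localization scheme does not close as described, for two reasons. First, after one time block, $\cS_\alpha(\tau_1)\partial_j w_0$ is a divergence $\div f(\tau_1)$, but you have no bound on $\|f(\tau_1)\|_{L^1}$. Cutting the vorticity itself into near and far parts destroys the zero mass your mean-zero $L^2(m)$ bound needs, while cutting $f$ instead requires exactly the decay of $\|f(\tau_1)\|_{L^1}$ you are trying to prove. Second, the Duhamel error in the comparison with $\cS_0$ takes one of two forms. If it is $\int \cS_\alpha(\tau-s)\div(\cdots)\dd s$, its $L^1$ control is again the estimate sought. If it is $\int\cS_0(\tau-s)\Lambda\cS_\alpha(s)(\cdots)\dd s$, it can be bootstrapped only while the mass sits where $|v^G|$ is small, not after it reaches the core, where $|\alpha v^G|$ is of order $|\alpha|$.

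The missing idea is to work in the space $\cX=\{\div f\,;\,f\in L^1(\R^2)^2\}$ with its norm $\|\cdot\|_{\cX}$.
\begin{itemize}
\item By \eqref{sg0est2}, $\|\cS_0(\tau)\|_{\cX\to\cX}\le Ce^{-\tau/2}$.
\item The intertwining \eqref{eq:intertwin} with the vector semigroup $\cT_\alpha$ of \eqref{eq:auxiliaire2} makes $\cS_\alpha$ a $C_0$ semigroup on $\cX$.
\item The argument of Lemma~\ref{lem:compact}, applied to $\cT_\alpha(T)-\cT_0(T)$, shows that $\cS_\alpha(T)-\cS_0(T)$ is compact on $\cX$.
\item Hence the spectrum of $\cS_\alpha(T)$ outside $D_{e^{-T/2}}$ consists of eigenvalues whose eigenfunctions have Gaussian decay and zero mean. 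Since $\cL\le-\frac12$ on $\cY_0$, these satisfy $\Re\lambda\le-\frac12$.
\end{itemize}
This yields $\|\cS_\alpha(\tau)\|_{\cX\to\cX}\le C_\kappa e^{(\kappa-1/2)\tau}$. The second estimate then follows from $\|\cS_\alpha(\tau)\nabla w_0\|_{L^1}\le\|\cS_\alpha(\tau-T)\|_{\cX\to\cX}\|\cS_\alpha(T)\nabla w_0\|_{\cX}$ together with \eqref{eq:Salphext}, with no block bookkeeping. The same $\cX$ argument applies to your $\cT(\tau)$, after which your Duhamel estimate for $w_2$ with the mean-zero $L^2(m)$ bound does close. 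But at that point nothing is gained over applying it to $\cS_\alpha$ directly.
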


The proof of Lemma \ref{lem:long} relies on the spectral analysis of the
operator $\cS_\alpha(\tau)$ in the spaces $L^1(\R^2)$ and $\mathcal X$, the latter being defined
as
 \[
   \mathcal X \,=\, \Bigl\{w\in L^1(\R^2)\,;\, w = \div f\,,\, f = (f_1,f_2) \in L^1(\R^2)^2\Bigr\}
   \,\subset\, L^1(\R^2)\,.
\]
It is easy to verify that $\mathcal X$ is a Banach space when equipped with the norm
\[
  \|w\|_{\mathcal X}\,=\, \|w\|_{L^1} + \inf\Bigl\{\|f_1\|_{L^1} + \|f_2\|_{L^1}\,;\, 
  f = (f_1,f_2) \in L^1(\R^2)^2\,,\, \div f = w\Bigr\}\,.
\]

We first observe that the restriction of $\cS_\alpha$ to $\mathcal X \subset L^1(\R^2)$
defines a strongly continuous semigroup in $\mathcal X$. Indeed, following \cite[Section~6.2]{GG},
we introduce the auxiliary equation
\begin{equation}\label{eq:auxiliaire}
  \partial_\tau f + \alpha v^G \div f + \alpha G\; \BSS [\div f ] \,=\,
  \Bigl(\cL - \frac{1}{2}\Bigr) f\,,
\end{equation}
which defines the evolution of a vector field $f=(f_1,f_2)$. Given initial data
$f_0 \in L^1(\R^2)^2$, the associated integral equation takes the form
\begin{equation}\label{eq:auxiliaire2}
  f(\tau) \,=\, e^{-\tau/2}\cS_0(\tau)f_0 - \alpha\int_0^\tau e^{-(\tau-s)/2}\cS_0(\tau-s)
  \Bigl(v^G \div f(s) + G\; \BSS [\div f(s)]\Bigr)\dd s\,.
\end{equation}
Arguing as in Step~2, on can show that \eqref{eq:auxiliaire2} has a unique solution in the
space defined by the norm $\sup_{\tau \in [0,T]}\|f(\tau)\|_{L^1} + \sup_{\tau \in (0,T]}
a(\tau)^{1/2} \|\div f(\tau)\|_{L^1}$, provided $T > 0$ is sufficiently small depending
on $|\alpha|$. This means that \eqref{eq:auxiliaire2} defines a strongly continuous
semigroup $\cT_\alpha$ in $L^1(\R^2)^2$ such that $f(\tau) = \cT_\alpha(\tau)f_0$. 
Assume now that $w_0\in \mathcal X$, so that $w_0=\div f_0$ for some $f_0 \in L^1(\R^2)^2$. 
If $f \in C^0([0,T],L^1(\R^2)^2)$ is the solution of \eqref{eq:auxiliaire2}, it
is straightforward to verify that the function $w(\tau) := \div f(\tau)$ satisfies
precisely the integral equation \eqref{st0} with $\sup_{\tau \in (0,T)} a(\tau)^{1/2}
\| w(\tau)\|_{L^1}<+\infty$. From there, we infer that $\sup_{\tau \in (0,T)}
\|w(\tau)\|_{L^1}<+\infty$, and by uniqueness in the space $X_T$, we deduce that
\begin{equation}\label{eq:intertwin}
  \cS_\alpha(\tau) w_0 \,=\, \cS_\alpha(\tau)\div f_0 \,=\, \div \bigl(\cT_\alpha(\tau)f_0
  \bigr)\,, \qquad \forall\,\tau \in [0,T]\,.
\end{equation}
We conclude in particular that $\cS_\alpha$ is a strongly continuous semigroup in $\mathcal X$.

For later use we also observe that the formula \eqref{eq:intertwin} can serve as
a definition of the quantity $S_\alpha(\tau)w_0$ when we only assume that 
$w_0 = \div f_0$ and $f_0 \in L^1(\R^2)^2$. In particular, if $w_0 \in L^1(\R^2)$,
the quantity $S_\alpha(\tau)\nabla w_0$ defined in this way belongs to $\mathcal X$
and satisfies
\begin{equation}\label{eq:Salphext}
  \|\cS_\alpha(\tau)\nabla w_0\|_{\mathcal X} \,\le\, \frac{C}{a(\tau)^{1/2}}\,\|w_0\|_{L^1}\,,
  \qquad \forall\,\tau \in (0,T]\,.
\end{equation}

\smallskip
We next show that the operator $\cS_\alpha(T)$ is a compact perturbation of
$\cS_0(T)$, which implies that the essential spectra of both operators coincide. 

\begin{lem}\label{lem:compact}
Let $T>0$ be given by Lemma \ref{lem:short}. The linear operator $\cK$ defined by
\begin{equation}\label{cTdef}
  \cK w_0 \,=\, \cS_\alpha(T)w_0 - \cS_0(T)w_0 \,=\, -\alpha \int_0^T \cS_0(T-\tau)
  \,\Lambda \bigl(\cS_\alpha(\tau) w_0\bigr)\dd \tau\,
\end{equation}
is compact in $L^1(\R^2)$ and in $\mathcal X$.
\end{lem}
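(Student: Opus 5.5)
We need to prove that $\cK w_0 = -\alpha \int_0^T \cS_0(T-\tau)\Lambda(\cS_\alpha(\tau)w_0)\dd\tau$ is compact in $L^1(\R^2)$ and in $\mathcal X$. The plan is to show that $\cK$ maps bounded sets of $L^1(\R^2)$ into sets that are bounded in a space compactly embedded in $L^1$, namely sets with uniform $W^{1,1}$-type regularity and uniform tightness. For compactness in $L^1(\R^2)$ we will use the Kolmogorov--Riesz criterion: a bounded family $\cB\subset L^1(\R^2)$ is relatively compact if (i) $\sup_{w\in\cB}\|w(\cdot+h)-w\|_{L^1}\to 0$ as $h\to0$, and (ii) $\sup_{w\in\cB}\int_{|\xi|>R}|w|\dd\xi\to 0$ as $R\to\infty$.

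\textbf{Equicontinuity.} We write $\Lambda w = \div(v^G w + v G)$ and use \eqref{st2} together with Lemma~\ref{lem:short}, which gives $\|\cS_\alpha(\tau)w_0\|_{L^1}\le C\|w_0\|_{L^1}$ for $\tau\le T$. Since $\nabla\cS_0(T-\tau)\div$ has $L^1$ norm at most $C a(T-\tau)^{-1}$ by \eqref{sg0est2}, this is not integrable near $\tau=T$. So we split $\cK w_0 = \int_0^{T-\epsilon}+\int_{T-\epsilon}^T$. The tail term has $L^1$ norm $\le C\epsilon^{1/2}\|w_0\|_{L^1}$ by \eqref{cFest}. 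The main term lies in $W^{1,1}$ with norm $\le C_\epsilon\|w_0\|_{L^1}$, and $W^{1,1}$ bounds give equicontinuity of translations. Letting $\epsilon\to0$ yields (i), since a uniform approximation by equicontinuous families suffices.

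\textbf{Tightness.} This is the main obstacle, because $v^G$ and $v=\BSS[w]$ decay only like $|\xi|^{-1}$. For the term $\cS_0(T-\tau)\div(vG)$ the source is Gaussian localized, and $\|vG\|_{L^1(|\xi|>R)}\lesssim e^{-R^2/8}\|w\|_{L^1}$ using $\|v\|_{L^1+L^\infty}\lesssim\|w\|_{L^1}$. For $\cS_0\div(v^G w)$, tightness has to come from tightness of $\cS_\alpha(\tau)w_0$, which fails for general $w_0$, so we argue differently. Since $\cK$ is bounded on $L^1$ with norm uniform in nothing finer, it suffices to prove compactness on a dense subset with operator norm control: compact operators form a closed set. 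Concretely, write $w_0 = w_0\1_{|\xi|\le R_0} + w_0\1_{|\xi|>R_0}$. We cannot make the second piece small uniformly, so instead we exploit that $v^G$ is smooth and $|v^G|\le C/(1+|\xi|)$. We will show $\cK$ is a norm limit of the compact operators $\cK_R$ obtained by replacing $v^G$ with $\chi_R v^G$, where $\chi_R$ is a cutoff to $|\xi|\le R$. The difference is controlled by $\|(1-\chi_R)v^G\|_{L^\infty}\lesssim R^{-1}$ times $\int_0^T a(T-\tau)^{-1/2}\dd\tau$ (combined with the differences of the corresponding $\cS_\alpha$'s via Gronwall in Step 2 form), so $\|\cK-\cK_R\|\to 0$. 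For $\cK_R$ all sources are localized in $|\xi|\le R$ up to Gaussian tails. Since $\cS_0$ maps functions supported in a ball into functions with Gaussian tails, uniformly for $\tau\le T$, we obtain (ii).

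\textbf{Space $\mathcal X$.} We repeat the argument using the intertwining \eqref{eq:intertwin}: $\cK\div f_0 = \div(\cT_\alpha(T)f_0 - e^{-T/2}\cS_0(T)f_0)$. The same analysis applied to the auxiliary semigroup $\cT_\alpha$ on $L^1(\R^2)^2$ shows that $f_0\mapsto\cT_\alpha(T)f_0-e^{-T/2}\cS_0(T)f_0$ is compact in $L^1(\R^2)^2$. Combined with compactness of $\cK$ in $L^1$, this gives compactness in the $\mathcal X$ norm.
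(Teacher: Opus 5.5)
Your argument is correct, but it takes a different route from the paper in both halves of the Riesz compactness criterion. The treatment of $\mathcal X$ is the same.

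\textbf{Equicontinuity.} The paper does not split the time integral. Since $\cK = \cS_\alpha(T) - \cS_0(T)$, it bounds $\|\nabla \cK w_0\|_{L^1}$ directly by \eqref{shortest2} and \eqref{sg0est2}, which gives a uniform $W^{1,1}$ bound at once. Your splitting at $T-\epsilon$ costs an extra approximation step. In exchange, it avoids the gradient estimate \eqref{shortest2}, whose proof is only sketched, and uses only the $L^1$ bound $\|\cS_\alpha(\tau)w_0\|_{L^1}\le 2\|w_0\|_{L^1}$ from Step~2.

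\textbf{Tightness.} Your claim that tightness ``has to come from tightness of $\cS_\alpha(\tau)w_0$'' is mistaken. The bound $|v^G(\eta)|\le C/R$ for $|\eta|\ge R$ already gives $\|v^G w\,\1_{\{|\eta|\ge R\}}\|_{L^1}\le (C/R)\|w\|_{L^1}$ for an arbitrary $w$. This is exactly what the paper uses in the pointwise kernel bound \eqref{comp3}. It yields $\int_{|\xi|\ge 2R}|\cK w_0|\dd\xi \le C|\alpha|/R$ uniformly on the unit ball, without touching the semigroup $\cS_\alpha$.

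Your operator-norm approximation by $\cK_R$, together with norm-closedness of the compact operators, rests on the same $1/R$ decay and is valid. However, truncating $v^G$ inside $\cS_\alpha$, with the resulting contraction/Gr\"onwall comparison of two semigroups, is unnecessary. Truncating only the source in the outer Duhamel integral gives $\|\cK-\cK_R\|\le C|\alpha|R^{-1}\int_0^T a(T-\tau)^{-1/2}\dd\tau$ directly. Also drop the sentence about ``compactness on a dense subset''; what you actually use, correctly, is norm-closedness.

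\textbf{The space $\mathcal X$.} Here you follow the paper: the intertwining \eqref{eq:intertwin}, compactness of $\cT_\alpha(T)-\cT_0(T)$ in $L^1(\R^2)^2$, and $L^1$-compactness of $\cK$. Combining these two compactness statements along a common subsequence is exactly what controls both parts of the $\mathcal X$ norm.
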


\begin{proof}
We can focus on the compactness in $L^1(\R^2)$, because the compactness in $\mathcal X$
will follow easily. Fix $w_0 \in L^1(\R^2)$ such that $\|w_0\|_{L^1} \le 1$. In view
of \eqref{sg0est2} and \eqref{shortest2}, we have
\begin{equation}\label{comp1}
  \|\nabla \cK w_0\|_{L^1} \,\le\, \|\nabla\cS_\alpha(T)w_0\|_{L^1} +
  \|\nabla\cS_0(T)w_0\|_{L^1} \,\le\, \frac{C}{a(T)^{1/2}}\,\|w_0\|_{L^1}
  \,\le\, C'\,,
\end{equation}
where the constant $C'$ depends on $|\alpha|$ but not on $w_0$. On the other hand,
denoting $w(\tau) = \cS_\alpha(\tau)w_0$ and $v(\tau) = \BSS[w(\tau)]$ for $\tau \in [0,T]$, 
we deduce from \eqref{cTdef} and \eqref{sg0rep} that
\begin{equation}\label{repcT}
  \bigl(\cK w_0\bigr)(\xi) \,=\, \alpha \int_0^T \int_{\R^2} K(\xi,\eta,T-\tau)
  \bigl(v^G(\eta) w(\eta,\tau) + v(\eta,\tau) G(\eta)\bigr)\dd \eta\dd \tau\,,
\end{equation}
for all $\xi \in \R^2$ and all $\tau \in [0,T]$, where
\[
  K(\xi,\eta,s) \,:=\, \frac{e^{-s/2}}{8\pi a(s)^2}\,\bigl(\xi - \eta\,e^{-s/2}\bigr)\,
  \exp\Bigl(-\frac{|\xi-\eta\,e^{-s/2}|^2}{4a(s)}\Bigr)\,, \qquad
  \xi,\eta \in \R^2\,, \quad s \ge 0\,.
\]
In particular there exists a constant $C > 0$ such that
\begin{equation}\label{comp2}
  |K(\xi,\eta,s)| \,\le\, \frac{C}{a(s)^{3/2}}\,\exp\Bigl(-\frac{|\xi-\eta
  \,e^{-s/2}|^2}{5a(s)}\Bigr)\,, \qquad \xi,\eta \in \R^2\,, \quad s \ge 0\,.
\end{equation}
We now fix a large $R > 0$ and we assume that $|\xi| \ge 2R$. We claim that
\begin{equation}\label{comp3}
  |K(\xi,\eta,s)| \bigl(|v^G(\eta)| + G(\eta)^{1/2}\bigr) \,\le\,
  \widetilde{K}(\xi,\eta,s) \,:=\, \frac{C}{R}\,\frac{1}{a(s)^{3/2}}
  \,\exp\Bigl(-\frac{|\xi-\eta\,e^{-s/2}|^2}{6a(s)}\Bigr)\,.
\end{equation}
Indeed, if $|\eta| \le R$, then $|\xi - \eta\,e^{-s/2}|
\ge |\xi| - |\eta| \ge |\xi|/2 \ge R$, so that
\[
  \exp\Bigl(-\frac{|\xi-\eta\,e^{-s/2}|^2}{5 a(s)}\Bigr) \,\le\, 
  \exp\Bigl(-\frac{R^2}{30 a(s)}\Bigr)\,\exp\Bigl(-\frac{|\xi-\eta\,e^{-s/2}|^2}{6 a(s)}
  \Bigr)\,,
\]
and \eqref{comp3} follows since $a(s) \le a(T){\leq 1}$. If $|\eta| \ge R$, then \eqref{comp3}
is a direct consequence of \eqref{comp2} because $|v^G(\eta)| \le C/R$ and
$|G(\eta)|\le C/R^2$. Altogether we obtain 
\begin{align}\nonumber
  \int_{|\xi|\ge 2R} \bigl|\cK w_0(\xi)\bigr|\dd\xi \,&\le\, 
 | \alpha| \int_0^T \int_{\R^2}\int_{\R^2} \widetilde{K}(\xi,\eta,T-\tau)
  \bigl(|w(\eta,\tau)| + |v(\eta,\tau)|G(\eta)^{1/2}\bigr)\dd\eta \dd\xi \dd\tau \\
  \,&\le\, \nonumber
  \frac{|\alpha|}{R}\int_0^T \frac{C}{a(T{-}\tau)^{1/2}}\int_{\R^2}
  \bigl(|w(\eta,\tau)| + |v(\eta,\tau)|G(\eta)^{1/2}\bigr)\dd\eta \dd\tau \\ \label{comp4} 
  \,&\le\, \frac{|\alpha|}{R}\int_0^T \frac{C}{a(T{-}\tau)^{1/2}}\,
  \|\cS_\alpha(\tau)w_0\|_{L^1}\dd \tau \,\le\, \frac{C|\alpha|}{R}\,,
\end{align}
where the constant $C$ is independent of $R$ and $w_0$. Now, in view of
\eqref{comp1}, \eqref{comp4}, the Riesz criterion \cite[Theorem~XIII.66]{RSIV}
asserts that the image of the unit ball in $L^1(\R^2)$ by the operator $\cK$ is
a compact subset of $L^1(\R^2)$, which means that $\cK$ is a compact operator. 

The same arguments also apply to the operator $\cT_\alpha(T)$ associated with
the integral equation \eqref{eq:auxiliaire2}, and show that the difference
$\cT_\alpha(T)-\cT_0(T)$ is a compact operator in $L^1(\R^2)^2$. Now for any
$w = \div f \in \mathcal X$, it follows from \eqref{eq:intertwin} that
\[
  \cK w \,=\, \bigl(\cS_\alpha(T) - \cS_0(T)\bigr)\div f \,=\,
  \div\Bigl(\bigl(\cT_\alpha(T) - \cT_0(T)\bigr)f\Bigr)\,.
\]
This identity readily implies that the operator $\cK$ is compact in $\mathcal X$. 
\end{proof}

To conclude the proof of Lemma~\ref{lem:long}, it remains to show that the 
operator $\cS_\alpha(T) = \cS_0(T) + \cK$ acting on $L^1(\R^2)$ (resp. on $\cX$)
has no spectrum outside the unit disk $D_1 := \{\mu \in \C\,;\, |\mu| \le 1\}$
(resp. outside the disk $D_{e^{-T/2}}$). Using the estimates \eqref{sg0est1}
and \eqref{sg0est2}, it is easy to verify that
\[
  \|\cS_0(\tau )\|_{L^1\to L^1} \,\le\, 1\,, \quad \text{and}\quad
  \|\cS_0(\tau) \|_{\cX\to \cX} \,\le\, C_0\,e^{-\tau/2}\,, \quad \forall\, \tau \ge 0\,,
\]
for some positive constant $C_0$. It then follows from standard properties of
semigroups (see \cite[Chapter IV, Proposition 2.2]{EN}) that the spectrum of
$\cS_0(T)$ in $L^1$ (resp. in $\mathcal X$) is entirely contained in $D_1$ (resp. in
$D_{e^{-T/2}}$). As $\cK$ is a compact operator by Lemma~\ref{lem:compact}, the
spectrum of $\cS_\alpha(T)$ outside $D_1$ (resp. outside $D_{e^{-T/2}}$)
consists of isolated eigenvalues with finite multiplicity. Our goal is
to verify that such eigenvalues do not exist.

We first consider the $L^1$ case. Assume on the contrary that
$\mu \in \C \setminus D_1$ is an eigenvalue of $\cS_\alpha(T)$ in $L^1(\R^2)$. By
the spectral mapping theorem for point spectra \cite[Section IV.3.7]{EN}, there
exists an eigenvalue $\lambda \in \C$ such that $\mu = e^{\lambda T}$ and a
nonzero eigenfunction $\phi \in D(\cL)$ satisfying
\begin{equation}\label{eigeneq}
  \bigl(\cL - \alpha \Lambda\bigr)\phi \,=\, \lambda \phi\,.
\end{equation}
We now repeat the arguments in \cite[Section~4.1]{GW2}. Using polar coordinates
in $\R^2$ and observing that both operators $\cL$ and $\Lambda$ are invariant
under rotations about the origin, we can assume that $\phi(r,\theta) = \psi(r)
e^{im\theta}$ for some $m \in \Z$. The eigenvalue equation \eqref{eigeneq}
can thus be reduced to an ODE for the radial profile $\psi(r)$, and the
assumption that $\phi \in L^1(\R^2)$ implies that $\psi(r) = \cO\bigl(r^k
e^{-r^2/4}\bigr)$ as $r \to +\infty$, for some $k \in \N$. In particular,
we have $\phi \in \cY$ where
\[
  \cY \,=\, \Bigl\{w \in L^1(\R^2)\,;\, \|w\|_\cY^2 := \int_{\R^2}|w(\xi)|^2\,e^{|\xi|^2/4}
  \dd \xi < \infty\Bigr\}\,.
\]
But one of the main observations in \cite{GW2} is that the operator $\cL$
is self-adjoint and negative in the Hilbert space $\cY$, whereas $\Lambda$
is skew-adjoint. In particular, taking the scalar product of \eqref{eigeneq}
with $\phi$ yields
\[
  \Re(\lambda)\,\|\phi\|_\cY^2 \,=\, \Re\langle (\cL - \alpha \Lambda)\phi\,,\phi\rangle
  \,=\, \Re\langle \cL \phi\,,\phi\rangle \,\le\, 0\,.
\]
Thus $\Re(\lambda) \le 0$, hence $|\mu| = e^{T\Re(\lambda)} \le 1$, which contradicts
the assumption that $\mu \notin D_1$. It follows that the spectrum of
the operator $\cS_\alpha(T)$ in $L^1$ is entirely contained in the unit disk.
From \cite[Chapter IV, Proposition 2.2]{EN} we infer that, for any $\kappa>0$,
the exists a constant $c_\kappa > 0$ such that
\begin{equation}\label{specest1}
  \|\cS_\alpha(\tau)\|_{L^1\to L^1} \,\le\, C_\kappa e^{\kappa\tau}\,, \quad
  \forall\,\tau \ge 0\,.
\end{equation}

The spectral analysis it quite similar in the space $\mathcal X$, the main difference
being that all elements of $\mathcal X$ are integrable functions with zero mean.
As a consequence, using the same notation as above, we now have $\phi\in \cY_0$
where
\[
  \cY_0 \,=\, \Bigl\{w\in \cY\,;\, \int_{\R^2} w(\xi)\dd \xi = 0\Bigr\}\,.
\]
Since $\cL \le -1/2$ on $\cY_0$ by \cite[Proposition 4.1]{GW2}, it follows
that $\Re(\lambda)\le -1/2$, which shows that the spectrum of $\cS_\alpha(T)$
is entirely contained in $D_{e^{-T/2}}$. We deduce that, that for all $\kappa>0$,
there exists a constant $C_\kappa$ such that
\begin{equation}\label{specest2}
  \|\cS_\alpha(\tau)\|_{\mathcal X\to \mathcal X} \,\le\, C_\kappa e^{(\kappa-\frac{1}{2})\tau}\,,
  \quad\forall\,\tau \ge 0\,.
\end{equation}

The proof of Lemma~\ref{lem:long} is now easily concluded. The first estimate in
\eqref{longest} is implied by \eqref{specest1}. To prove the second one, we
assume that $\tau \ge T$ and we observe that
\[
  \|\cS_\alpha(\tau)\nabla w_0\|_{L^1} \,\le\, \|\cS_\alpha(\tau)\nabla w_0\|_{\mathcal X} \,\le\, 
  \|\cS_\alpha(\tau-T)\|_{X \to X} \|\cS_\alpha(T)\nabla w_0\|_{\mathcal X}\,\le\,
  C\,e^{(\kappa-\frac12)\tau}\|w_0\|_{L^1}\,,
\]
where the last inequality follows from \eqref{eq:Salphext} and \eqref{specest2}. \QED

\medskip\noindent{\bf Step 4\:} {\em End of the proof of Proposition~\ref{prop:semigroup}.}\\
If $\tau \in (0,2T)$, $p \in [1,2)$, and $j \in \{0,1\}$, it follows from \eqref{shortest1} and
\eqref{shortest2} that
\[
  \|\cS_\alpha(\tau)\nabla^j\|_{L^1\to L^p} \,\le\,  \|\cS_\alpha(\tau/2)\|_{L^1\to L^p}
  \,\|\cS_\alpha(\tau/2)\nabla^j\|_{L^1\to L^1} \,\le\, \frac{C}{a(\tau)^{1+j/2-1/p}}\,,
\]
where we used the fact that $a(\tau)/2 \le a(\tau/2) \le a(\tau)$ for all
$\tau \ge 0$. This proves \eqref{sgest} for short times. If $\tau \ge 2T$, we observe
that
\[
  \|\cS_\alpha(\tau)\nabla^j\|_{L^1\to L^p} \,\le\, \|\cS_\alpha(T)\|_{L^1\to L^p}
  \,\|\cS_\alpha(\tau-T)\nabla^j\|_{L^1\to L^1} \,\le\, C\,e^{(\kappa-\frac{j}{2})\tau}\,,
\]
where the last inequality follows from \eqref{shortest1} and \eqref{longest}.
This proves \eqref{sgest} for long times.\QED

\medskip
To conclude this section, we deduce from Proposition~\ref{prop:semigroup} the following
estimates on the two-parameter semigroup $\Sigma_\alpha(t,t_0)$ associated with
equation \eqref{LinGam1}\:

\begin{cor}\label{cor:Sigma-alpha}
Fix $\alpha \in \R$, $\kappa > 0$, and $p \in [1,2)$. There exists
a constant $C = C(\alpha,\kappa,p) > 0$ such that, for all $t > t_0 > 0$
and all $\omega_0 \in L^1(\R^2)$, 
\begin{align}\label{SigGamBd1}
  \bigl\|\Sigma_\alpha(t,t_0) \omega_0\bigr\|_{L^p} \,\le\, &\frac{C}{(t-t_0)^{1-1/p}}\,
  \Bigl(\frac{t}{t_0}\Bigr)^{\kappa}\,\|\omega_0\|_{L^1}\,\\ \label{SigGamBd2}
  \bigl\|\Sigma_\alpha(t,t_0)\nabla \omega_0\bigr\|_{L^p} \,\le\, &\frac{C}{(t-t_0)^{3/2-1/p}}\,
  \Bigl(\frac{t}{t_0}\Bigr)^{\kappa}\,\|\omega_0\|_{L^1}\,. 
\end{align}
\end{cor}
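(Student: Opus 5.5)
The plan is to transfer the estimates of Proposition~\ref{prop:semigroup} back to the original variables through the change of variables \eqref{ssvar}--\eqref{sschange}. By construction, if $w(\tau) = \cS_\alpha(\tau)w_0$ with $w_0(\xi) = t_0\,\omega_0(z+\sqrt{t_0}\,\xi)$, then
\[
  \bigl(\Sigma_\alpha(t,t_0)\omega_0\bigr)(x) \,=\, \frac{1}{t}\,w\Bigl(\frac{x-z}{\sqrt{t}}\,,\log\frac{t}{t_0}\Bigr)\,.
\]
I first record the scaling identities $\|\omega(t)\|_{L^p} = t^{-(1-1/p)}\|w(\tau)\|_{L^p}$ and $\|w_0\|_{L^1} = \|\omega_0\|_{L^1}$. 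For the derivative estimate, I note that if $\omega_0$ is replaced by $\partial_j\omega_0$, the rescaled initial datum becomes $t_0\,(\partial_j\omega_0)(z+\sqrt{t_0}\xi) = \sqrt{t_0}\,\partial_{\xi_j}\tilde w_0(\xi)$, where $\tilde w_0(\xi) = t_0\,\omega_0(z+\sqrt{t_0}\xi)$ satisfies $\|\tilde w_0\|_{L^1} = \|\omega_0\|_{L^1}$. By linearity, $\Sigma_\alpha(t,t_0)\nabla\omega_0$ therefore corresponds to $t_0^{-1/2}\cS_\alpha(\tau)\nabla\tilde w_0$. Here $\Sigma_\alpha(t,t_0)\nabla$ is understood through the extension \eqref{eq:intertwin}, or equivalently by density.

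Applying \eqref{sgest} with $\tau = \log(t/t_0)$ gives $e^{\kappa\tau} = (t/t_0)^\kappa$ and $a(\tau) = 1 - t_0/t = (t-t_0)/t$. For the first bound,
\[
  \|\Sigma_\alpha(t,t_0)\omega_0\|_{L^p} \,\le\, \frac{C_0}{t^{1-1/p}}\Bigl(\frac{t}{t_0}\Bigr)^{\kappa}\frac{t^{1-1/p}}{(t-t_0)^{1-1/p}}\,\|\omega_0\|_{L^1}\,,
\]
which is exactly \eqref{SigGamBd1}. For the second bound, $e^{(\kappa-1/2)\tau} = (t/t_0)^\kappa (t_0/t)^{1/2}$, so
\[
  \|\Sigma_\alpha(t,t_0)\nabla\omega_0\|_{L^p} \,\le\, \frac{C_0}{t^{1-1/p}}\,\frac{1}{\sqrt{t_0}}\Bigl(\frac{t}{t_0}\Bigr)^{\kappa}\Bigl(\frac{t_0}{t}\Bigr)^{1/2}\frac{t^{3/2-1/p}}{(t-t_0)^{3/2-1/p}}\,\|\omega_0\|_{L^1}\,.
\]
The powers of $t$ and $t_0$ cancel, leaving $C(t/t_0)^\kappa (t-t_0)^{-(3/2-1/p)}$, which is \eqref{SigGamBd2}.

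The computation is routine. The only point needing care is the bookkeeping of the extra factors $t_0^{-1/2}$ from rescaling the derivative and $e^{-\tau/2}$ from \eqref{sgest}: these must cancel exactly against each other, and the powers of $a(\tau)$ must cancel against the Jacobian factor $t^{-(1-1/p)}$. I also need to check that the semigroup $\cS_\alpha$ indeed conjugates to $\Sigma_\alpha$. This follows because $\bar\omega_1$ and $\bar u_1$ are self-similar and centered at $z$, so that equation \eqref{LinGam1} becomes the autonomous equation \eqref{LinGam2}, and by uniqueness of solutions of \eqref{LinGam1} in $C^0([t_0,\infty),L^1(\R^2))$.
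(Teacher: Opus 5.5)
Your proposal is correct and is essentially the paper's proof: express $\Sigma_\alpha(t,t_0)$ through $\cS_\alpha(\log(t/t_0))$ via the self-similar rescaling \eqref{SigGam}, apply \eqref{sgest}, and check that all powers of $t$ and $t_0$ cancel, exactly as you do. One small slip: the rescaled datum for $\partial_j\omega_0$ is $t_0^{-1/2}\,\partial_{\xi_j}\tilde w_0$, not $\sqrt{t_0}\,\partial_{\xi_j}\tilde w_0$; your next line and your final computation already use the correct factor $t_0^{-1/2}$.
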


\begin{proof}
According to \eqref{sschange} and Proposition~\ref{prop:semigroup}, the
non-autonomous evolution equation \eqref{LinGam1} generates a
two-parameter semigroup $\Sigma_\alpha(t,t_0)$ given by the formula
\begin{equation}\label{SigGam}
  \Bigl(\Sigma_\alpha(t,t_0)\omega_0\Bigr)(x) \,=\, \frac{1}{t}
  \Bigl(\cS_\alpha\Bigl(\log\frac{t}{t_0}\Bigr)w_0\Bigr)
  \Bigl(\frac{x-z}{\sqrt{t}}\Bigr)\,, \qquad x \in \R^2\,, \quad t > t_0 > 0\,,
\end{equation}
where $w_0(\xi) = t_0 \omega_0\bigl(z + \sqrt{t_0}\xi\bigr)$.
Applying the first inequality in \eqref{sgest} we thus find
\begin{align*}
  \bigl\|\Sigma_\alpha(t,t_0)\omega_0\bigr\|_{L^p} \,&=\, \frac{1}{t^{1 - 1/p}}\,
  \Bigl\|\cS_\alpha\Bigl(\log\frac{t}{t_0}\Bigr)w_0\Bigr\|_{L^p} \\
  \,&\le\, \frac{C_0}{t^{1-1/p}}\,\Bigl(\frac{t}{t_0}\Bigr)^\kappa
  \Bigl(\frac{t}{t-t_0}\Bigr)^{1-1/p}\|w_0\|_{L^1}
  \,=\, \frac{C_0}{(t-t_0)^{1-1/p}}\,\Bigl(\frac{t}{t_0}\Bigr)^\kappa
  \|\omega_0\|_{L^1}\,.
\end{align*}
Similarly, translating the second inequality in \eqref{sgest} in the original
variables, we obtain \eqref{SigGamBd2}. 
\end{proof}

%%%%%%%%%%%%%%%%%%%%%%%%%%%%%%%%%%%%%%%%%%%%%%%%%%%%%%%%%%%%%%%%%%%%%%%%%%%
%%%%%%%%%%%%%%%%%%%%%%%%%%%%%%%%%%%%%%%%%%%%%%%%%%%%%%%%%%%%%%%%%%%%%%%%%%%

\section{Existence and uniqueness theory}\label{sec4}

In this section we prove the existence and uniqueness claims in 
Theorem~\ref{thm:main}. We first derive the integral equation \eqref{IntEqIni}
which is the starting point of our analysis. Then we decompose the vorticity
$\omega(x,t)$ into a principal part $\omega_1(x,t)$, which originates from the
Dirac mass at initial time, and a correction term $\omega_2(x,t)$, which
describes the boundary layer. These quantities satisfy a coupled
system of integral equations which can be solved by a fixed point argument
to obtain existence and uniqueness of the solution of \eqref{IntEqIni} under
the assumptions \eqref{maincond}.

\subsection{Integral equation and decomposition of the solution}
\label{ssec41}

The integral equation \eqref{IntEq} in Definition~\ref{def:sol} is difficult
to exploit directly, because we do not have precise information on the
solution $\omega(t_0)$ at ``initial time'' $t_0 > 0$. This problem can
be eliminated by taking the limit $t_0 \to 0$. 

\begin{lem}\label{lem:inteq}
Let $\omega\in C^0((0,T), L^1_\perp(\R^2_+) \cap L^{4/3}(\R^2_+))$ be a
solution of \eqref{2Dvort} in the sense of Definition~\ref{def:sol}. 
Assume furthermore that
\begin{equation}\label{InteqHyp}
  \|\omega\|_\star \,:=\, \sup_{t\in (0,T)}t^{1/4} \|\omega(t)\|_{L^{4/3}} \,<\, +\infty\,,
  \qquad \text{and} \quad \omega(t) \,\xrightharpoonup[t\to 0]{}\,\alpha\,\delta_z\,,   
\end{equation}
where $z = (0,1)$ and $\alpha \in \R$. Then the integral equation \eqref{IntEqIni}
holds for all $t \in (0,T)$. 
\end{lem}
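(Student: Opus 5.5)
We fix $t \in (0,T)$ and pass to the limit $t_0 \to 0$ in \eqref{IntEq}. There are two terms to handle: the linear term $S(t-t_0)\omega(t_0)$, which should converge to $\alpha S(t)\delta_z$, and the Duhamel integral over $(t_0,t)$, which should converge to the integral over $(0,t)$. All limits will be taken in the sense of distributions, say tested against $\phi \in C_c^\infty(\R^2_+)$. Since $\omega(t)$ is a fixed function, convergence of the right-hand side in any such sense is enough.

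\textbf{The integral term.} By Remark~\ref{rem:mild} with $p=1$ and the assumption \eqref{InteqHyp}, we have
\[
  \bigl\|S(t-s)\div\bigl(u(s)\omega(s)\bigr)\bigr\|_{L^1} \,\lesssim\, \frac{\|\omega(s)\|_{L^{4/3}}^2}{(t-s)^{1/2}} \,\le\, \frac{\|\omega\|_\star^2}{s^{1/2}(t-s)^{1/2}}\,.
\]
The right-hand side is integrable on $(0,t)$. Hence $\int_{t_0}^t$ converges in $L^1$ to $\int_0^t$ as $t_0\to0$, by dominated convergence, and this step is routine.

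\textbf{The linear term.} This is the main obstacle. Since $\|\omega(t_0)\|_{L^1}$ is not assumed bounded, I first derive an $L^1$ bound.
\begin{itemize}
\item Interpolation gives no $L^1$ control from $L^{4/3}$. Instead I use the fact that $\omega(t_0)$ converges weakly to a measure, so by the uniform boundedness principle $\|\omega(t_0)\|_{\cM}=\|\omega(t_0)\|_{L^1}$ stays bounded as $t_0\to0$. Weak convergence here means convergence against $C_0(\R^2_+)$, and $C_0(\R^2_+)$ is a Banach space whose dual contains $L^1$ isometrically, so the principle applies.
\item With $M:=\sup_{t_0}\|\omega(t_0)\|_{L^1}<\infty$, write
\[
  S(t-t_0)\omega(t_0) - \alpha S(t)\delta_z \,=\, \bigl[S(t-t_0)-S(t)\bigr]\omega(t_0) \,+\, S(t)\bigl[\omega(t_0)-\alpha\delta_z\bigr]\,.
\]
\item For the second piece, $\bigl(S(t)\mu\bigr)(x)=\langle\mu,K(x,\cdot,t)\rangle$ with $K(x,\cdot,t)\in C_0(\R^2_+)$. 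So it converges to $0$ pointwise in $x$, and it is bounded in $L^\infty$ by $CM/t$ by \eqref{Stokes3}. Tested against $\phi\in C_c^\infty$, dominated convergence gives convergence to $0$.
\item For the first piece, I would use that $y\mapsto K(\cdot,y,t-t_0)-K(\cdot,y,t)$ tends to $0$ in $L^1(\R^2_+)$, uniformly in $y$, as $t_0\to0$. Combined with the bound $M$, this gives convergence to $0$ in $L^1$.
\end{itemize}

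\textbf{Justifying the kernel continuity.} The uniform-in-$y$ continuity of $K$ in $t$ would follow from the scaling invariance and the explicit representations \eqref{Kdecomp}, \eqref{K0rep}. The estimates \eqref{Kbd1}, \eqref{K2bd1} provide uniform tails, while the time derivative of the kernel is controlled for $t$ bounded away from $0$.

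If that uniform continuity proves awkward, I would instead use the semigroup property $S(t-t_0)=S(t/2)\,S(t/2-t_0)$. Arguing against test functions, the problem then reduces to the convergence of $S(t/2-t_0)\omega(t_0)$. Either route handles the linear term, and equating limits of both sides of \eqref{IntEq} yields \eqref{IntEqIni}.
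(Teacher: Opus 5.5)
Your proof is correct, but it treats the linear term by a different route than the paper.

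\textbf{What the paper does.} The paper never compares $S(t-t_0)$ with $S(t)$, and never needs a bound on $\|\omega(t_0)\|_{L^1}$. Once $\omega_{NL}(t,t_0)=\int_{t_0}^t S(t-s)\div(u\omega)\dd s$ converges in $L^1$, the identity $S(t-t_0)\omega(t_0)=\omega(t)+\omega_{NL}(t,t_0)$ shows that the linear term itself converges in $L^1$ to some $\bar\omega\in L^1_\perp(\R^2_+)$. The paper then applies $S(t_0)$. The uniform $L^1$ bound on $S(t_0)$ and strong continuity on $L^1_\perp(\R^2_+)$ (Proposition~\ref{prop:Stokes}) give $S(t)\omega(t_0)=S(t_0)S(t-t_0)\omega(t_0)\to\bar\omega$ in $L^1$. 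The pointwise limit $\langle\omega(t_0),K(x,\cdot,t)\rangle\to\alpha K(x,z,t)$ then identifies $\bar\omega=\alpha S(t)\delta_z$. So the paper lets the equation itself produce the convergence, and uses only semigroup facts already established; this is where the $L^1_\perp$ structure matters.

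\textbf{What you do instead.} Your route ignores the $L^1_\perp$ structure. The uniform boundedness principle, correctly applied along arbitrary sequences $t_n\to0$, gives $M<\infty$. Everything then rests on the kernel estimate $\sup_y\|K(\cdot,y,s)-K(\cdot,y,t)\|_{L^1}\to0$ as $s\to t>0$. The paper does not contain this estimate, and you only gesture at it.

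\textbf{The missing estimate.} It is true and easy to supply. Since $\partial_sK=\Delta_xK$, it suffices to show $\sup_y\|\Delta_xK(\cdot,y,s)\|_{L^1}\lesssim 1/s$. The Gaussian parts are standard. For $K_0$, scaling reduces to $s=1$, and in \eqref{K0rep}:
\begin{itemize}
\item the $x_2$-derivatives fall on $g'$, contributing at most $2\int_0^\infty r|g'''(r)|\dd r$, using $\int_\R V(x_1,x_2)\dd x_1=1$;
\item the $x_1$-derivatives fall on $V(\cdot,x_2)=g*P(\cdot,x_2)$, with $\|\partial_1^2V(\cdot,x_2)\|_{L^1}\le\|g''\|_{L^1}$ and $\int_0^\infty|g'(x_2+z_2)|\dd x_2=g(z_2)$, which gives a bound $\|g''\|_{L^1}$.
\end{itemize}
You should write this out rather than appeal to \eqref{Kbd1} and \eqref{K2bd1}. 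Those are tail bounds and say nothing about time regularity. Alternatively, the pointwise version $\sup_y|K(x,y,s)-K(x,y,t)|\to0$ suffices, since you only test against $C_c^\infty$ functions; it follows from the same computation in sup norm.

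\textbf{Your fallback.} The fallback $S(t-t_0)=S(t/2)S(t/2-t_0)$ does not bypass anything. Convergence of $S(t/2-t_0)\omega(t_0)$ in a sense preserved by $S(t/2)$ is the same continuity question in dual form. The genuinely kernel-free alternative is the paper's trick of composing with $S(t_0)$.
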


\begin{proof}
For $0 < t_0 < t < T$, we write equation \eqref{IntEq} in the form  $\omega(t) \,=\,
\omega_L(t,t_0) - \omega_{NL}(t,t_0)$, where $\omega_L(t,t_0) = S(t-t_0)\omega(t_0)$
and $\omega_{NL}(t,t_0) = \int_{t_0}^tS(t-s)\div\bigl(u(s)\omega(s)\bigr)\dd s$. Proceeding
as in Remark~\ref{rem:mild}, we observe that
\[
  \int_{t_0}^t \bigl\|S(t-s)\div\bigl(u(s)\omega(s)\bigr)\bigr\|_{L^1}\dd s \,\lesssim\,
  \int_{t_0}^t \frac{\|\omega(s)\|_{L^{4/3}}^2}{(t-s)^{1/2}}\dd s \,\lesssim\, 
  \int_{t_0}^t \frac{\|\omega\|_\star^2}{(t-s)^{1/2}s^{1/2}}\dd s\,,
\]
where the last integral is bounded by $\pi\|\omega\|_\star^2$.  If $t > 0$ is
fixed and $t_0 \to 0$, this shows that $\omega_{NL}(t,t_0)$ converges in $L^1(\R^2_+)$
to a limit that we denote $\int_0^t S(t-s)\div\bigl(u(s) \omega(s)\bigr)\dd s$.
Since $\omega_{NL}(t,t_0)\in L^1_\perp(\R^2_+)$ for all $t_0>0$, the limit also belongs
to  $L^1_\perp(\R^2_+)$. It follows that $\omega_L(t,t_0) \equiv \omega(t) + \omega_{NL}(t,t_0)$
also converges in $L^1_\perp(\R^2_+)$ to some limit, which we denote $\bar\omega$. Since
\[
  S(t_0)\omega_L(t,t_0) - \bar\omega \,=\, S(t_0)\bigl(\omega_L(t,t_0) - \bar\omega\bigr)
  + \bigl(S(t_0) -  \1\bigr)\bar\omega\,,
\]
we deduce from Proposition~\ref{prop:Stokes} that $S(t_0)\omega_L(t,t_0) \equiv
S(t)\omega(t_0)$ converges to $\bar\omega$ in $L^1_\perp(\R^2_+)$ as $t_0 \to 0$.
But in view of \eqref{Stokes} we have, for any $x \in \R^2_+$, 
\[
  \bigl(S(t)\omega(t_0)\bigr)(x) \,=\, \int_{\R^2_+} K(x,y,t)\omega(y,t_0)\dd y
  \,\xrightarrow[t_0\to 0]{}\,\alpha\,K(x,z,t)\,,   
\]
where we used the second assumption in \eqref{InteqHyp} and the fact that the function
$y \mapsto K(x,y,t)$ belongs to $C_0(\R^2_+)$ for any $x \in \R^2_+$ and any $t > 0$.
This shows that $\bar\omega = \alpha S(t)\delta_z$, so that \eqref{IntEqIni} follows
from \eqref{IntEq} by taking the limit $t_0 \to 0$. 
\end{proof}

Assume that $\omega \in C^0\bigl((0,T),L^1_\perp(\R^2_+) \cap L^{4/3}(\R^2_+)\bigr)$
is a solution of \eqref{IntEqIni} satisfying \eqref{maincond} and such that
$\omega(t) \weakto 0$ as $t \to 0$. If the Reynolds number $|\alpha|$ is small
enough, it is possible to perform a fixed point argument directly on the
integral equation \eqref{IntEqIni} to prove that the solution is unique, and
existence can be established by a similar argument; see \cite{Abe} for a careful
study of the Cauchy problem for \eqref{2Dvort} in the perturbative regime. If
$|\alpha|$ is large, this approach does not work, but it is possible to
circumvent the problem by a suitable decomposition of the solution, which we now
describe.

Let $\zeta : \R_+ \to [0,1]$ be a smooth function satisfying
$\zeta(r) = 1$ for $r \le 1/2$ and $\zeta(r) = 0$ for $r \ge 1$. Given
$r_0 \in (0,1)$, we introduce the cut-off function $\chi : \R^2 \to [0,1]$
defined by
\begin{equation}\label{chidef}
  \chi(x) \,=\, \zeta\Bigl(\frac{|x-z|}{r_0}\Bigr)\,, \qquad \forall x \in \R^2\,,
\end{equation}
where $z = (0,1) \in \R^2_+$. In other words the function $\chi$ is radially symmetric
with respect to the initial position $z$ of the vortex, equal to $1$ inside the ball
$B(z,r_0/2)$ and to zero outside $B(z,r_0)$. The parameter $r_0$ will not play any role
in this Section, but we will take advantage of this extra degree of freedom in Section \ref{sec5}.

In view of \eqref{S12def}, the Stokes semigroup satisfies
$S(t) = S_1(t) + S_2(t)$, where the main term $S_1(t)$ is the restriction to $\R^2_+$
of the heat semigroup in the whole plane, in the sense of \eqref{S1action}, and
$S_2(t)$ takes into account the boundary condition.  At the nonlinear level,
we introduce a similar decomposition $\omega(t) = \omega_1(t) + \omega_2(t)$, where the partial
vorticities $\omega_1, \omega_2$ are defined by the integral formulas
\begin{align}
  \omega_1(t) \,&=\, \alpha S_1(t)\delta_z - \int_0^t S_1(t{-}s)\div \bigl(\chi
  u(s) \omega(s)\bigr)\dd s\,, \label{om1int} \\
  \omega_2(t) \,&=\, \alpha S_2(t)\delta_z - \int_0^t S_2(t{-}s)\div \bigl(\chi
  u(s) \omega(s)\bigr)\dd s 
  - \int_0^t S(t{-}s)\div \bigl((1{-}\chi)u(s) \omega(s)\bigr)\dd s\,.\label{om2int}
\end{align}

\begin{rem}\label{rem:domains}
It is extremely important to note that $\omega_1(t)$ and $\omega_2(t)$ {\em do
not satisfy the integral condition} \eqref{noslip}, although the total
vorticity $\omega(t)$ does.  A related observation is that the right-hand side
of \eqref{om1int} has a natural extension to the whole plane $\R^2$, which is
obtained by replacing $S_1(t)$ with the heat semigroup $e^{t\Delta}$ in $\R^2$
and by extending the nonlinearity $\chi u(s) \omega(s)$ by zero outside
$\R^2_+$. This point of view turns out to be quite useful, so we consider
from now on that $\omega_1(t)$ is defined on the whole plane $\R^2$. Of course,
the restriction of $\omega_1(t)$ to $\R^2_+$ is all we need to reconstruct
the original vorticity $\omega(t)$.
\end{rem}

The nonlinear term in \eqref{om1int} is still too large for a straightforward fixed point
argument, but this can be cured by decomposing further $\omega_1(t) = \bar\omega_1(t)
+ \hat\omega_1(t)$, where $\bar\omega_1(t) := \alpha e^{t\Delta} \delta_z$ is 
the self-similar Lamb--Oseen vortex with circulation parameter $\alpha$ centered at
$z = (0,1)$. As for the velocity field, we define $\bar u_1(t) = \BSS[\bar\omega_1(t)]$
and $\hat u_1(t) = \BSS[\hat\omega_1](t)$, which is quite natural since $\bar\omega_1(t)$
and $\hat\omega_1(t)$ are defined in the whole plane $\R^2$. In contrast, we 
set $u_2(t) = \BS[\omega_2(t)]$ where $\BS$ is the Biot--Savart operator \eqref{BSlaw}
in the half-plane $\R^2_+$. Summarizing, we have the following decompositions of
the vorticity $\omega(t)$ and the velocity field $u(t) = \BS[\omega(t)]$\:
\begin{equation}\label{omudecomp}
\begin{split}
  \omega(t) \,&=\, \hspace{20pt}\bar\omega_1(t) \hspace{15pt} + \hspace{19pt}\hat\omega_1(t)
  \hspace{16pt} \,+\, \omega_2(t)\,, \\ 
  u(t) \,&=\, \bar u_1(t) + \bar v_1(t) + \hat u_1(t) + \hat v_1(t) 
  \,+\, u_2(t)\,,
\end{split}
\end{equation}
where $\bar v_1(t)$ and $\hat v_1(t)$ are correction terms which take into account
the difference between the Biot--Savart formulas \eqref{BSlaw} and \eqref{BS2d}\:
\begin{equation}\label{vcordef}
   \bar v_1(t) \,=\, \BS[\bar\omega_1(t)] - \BSS[\bar\omega_1(t)]\,, \qquad
   \hat v_1(t) \,=\, \BS[\hat\omega_1(t)] - \BSS[\hat\omega_1(t)]\,. 
\end{equation}
In agreement with Remark~\ref{rem:domains}, the quantities $\bar\omega_1$, $\hat\omega_1$,
$\bar u_1$, and $\hat u_1$ are defined in the whole plane $\R^2$, while $\omega_2$, $u_2$,
$\bar v_1$, and $\hat v_1$ are defined in the upper half-plane only.  

\begin{lem}\label{lem:decomp}
Let $\omega\in C^0((0,T), L^1_\perp(\R^2_+) \cap L^{4/3}(\R^2_+))$ be a 
solution of \eqref{IntEqIni} satisfying \eqref{maincond}, and assume that
$\omega$ is decomposed as in \eqref{om1int}--\eqref{vcordef} with $\bar\omega_1(t)
= \alpha e^{t\Delta} \delta_z$. Then the vorticity components $\hat\omega_1(t)$ and
$\omega_2(t)$ satisfy the integral equations 
\begin{align}\label{def:hatom1}
  \hat \omega_1(t) \,&=\, \int_0^t \Sigma_\alpha (t,s) \div\bigl(F_1(s) - F_2(s)\bigr)\dd s\,,\\
  \omega_2(t) \,&=\, \alpha S_2(t) \delta_z - \int_0^t S_2(t-s)\div F_3(s)\dd s
  - \int_0^t S(t-s)\div F_4(s)\dd s\,, \label{def:om2}
\end{align}
where $\Sigma_\alpha(t,s)$ is the linear evolution operator introduced in
\eqref{Sigalphdef}, and 
\begin{equation}\label{Fjdef}
\begin{split}
  F_1 \,&=\, (1-\chi)\bigl(\bar u_1\,\hat\omega_1 + \hat u_1\,\bar\omega_1\bigr)\,,
  \qquad
  F_2 \,=\, \chi\bigl(u\,\omega - \bar u_1\,\bar\omega_1 - \bar u_1\,\hat\omega_1 -
  \hat u_1\,\bar\omega_1\bigr)\,, \\
  F_3 \,&=\, \chi\bigl(u\,\omega - \bar u_1\,\bar\omega_1\bigr)\,, \hspace{59pt}
  F_4 \,=\, (1-\chi)\bigl(u\,\omega - \bar u_1\,\bar\omega_1\bigr)\,.
\end{split}
\end{equation}
\end{lem}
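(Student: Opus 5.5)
The equation \eqref{def:om2} for $\omega_2$ is just a rewriting of \eqref{om2int}. Since $\bar\omega_1(s)=\alpha e^{s\Delta}\delta_z$ and $\bar u_1(s)=\BSS[\bar\omega_1(s)]$ are radially symmetric about $z$, the vector field $\bar u_1\bar\omega_1$ is divergence free in $\R^2$: $\bar u_1$ is azimuthal and $\bar\omega_1$ is radial. Also $\chi$ is radial about $z$, so $\div(\chi\bar u_1\bar\omega_1)=0$ and $\div((1-\chi)\bar u_1\bar\omega_1)=0$. Hence we may subtract $\chi\bar u_1\bar\omega_1$ and $(1-\chi)\bar u_1\bar\omega_1$ inside the respective integrals of \eqref{om2int}, and we obtain \eqref{def:om2} with $F_3,F_4$ as in \eqref{Fjdef}. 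The same subtraction is legitimate in \eqref{om1int}.

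For $\hat\omega_1$ I plan to work in the whole plane, as in Remark~\ref{rem:domains}. I subtract $\bar\omega_1(t)=\alpha e^{t\Delta}\delta_z$ from \eqref{om1int} and use $\div(\chi\bar u_1\bar\omega_1)=0$ to get
\[
  \hat\omega_1(t) \,=\, -\int_0^t e^{(t-s)\Delta}\div\bigl(\chi(u\omega - \bar u_1\bar\omega_1)\bigr)(s)\dd s\,.
\]
Next I split $\chi(u\omega-\bar u_1\bar\omega_1) = F_2 + \chi(\bar u_1\hat\omega_1+\hat u_1\bar\omega_1)$. I write the last term as $(\bar u_1\hat\omega_1+\hat u_1\bar\omega_1) - F_1$, where all quantities are extended to $\R^2$ and $\hat u_1=\BSS[\hat\omega_1]$. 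This gives
\[
  \hat\omega_1(t) \,=\, \int_0^t e^{(t-s)\Delta}\div(F_1-F_2)(s)\dd s \,-\, \int_0^t e^{(t-s)\Delta}\div\bigl(\bar u_1\hat\omega_1+\hat u_1\bar\omega_1\bigr)(s)\dd s\,,
\]
so $\hat\omega_1$ is a mild solution of the inhomogeneous linearized equation $\partial_t\hat\omega_1+\bar u_1\cdot\nabla\hat\omega_1+\hat u_1\cdot\nabla\bar\omega_1-\Delta\hat\omega_1=\div(F_1-F_2)$ with zero initial data. Duhamel's principle for the two-parameter semigroup $\Sigma_\alpha$ then yields \eqref{def:hatom1}.

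The main obstacle is to justify this variation-of-constants step rigorously, because the data are singular at $s=0$. My plan is to fix $0<t_0<t$ and apply the equivalence between the heat-semigroup Duhamel formula and the $\Sigma_\alpha$ Duhamel formula on $[t_0,t]$. On that interval $\bar u_1,\bar\omega_1$ are smooth and bounded, so the standard perturbation argument in $C^0([t_0,t],L^1)$ applies: both formulas define unique fixed points of the same equation. This gives
\[
  \hat\omega_1(t)=\Sigma_\alpha(t,t_0)\hat\omega_1(t_0)+\int_{t_0}^t\Sigma_\alpha(t,s)\div(F_1-F_2)(s)\dd s\,.
\]
I then let $t_0\to0$, which requires two limits.

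First, $\|\hat\omega_1(t_0)\|_{L^1}\to0$. I expect this from \eqref{maincond} together with $\|\omega_2(t_0)\|$ being controlled in the interior. Alternatively, the first displayed formula for $\hat\omega_1$ gives $\|\hat\omega_1(t_0)\|_{L^1}\lesssim\int_0^{t_0}(t_0-s)^{-1/2}\|u\omega-\bar u_1\bar\omega_1\|_{L^1}\dd s$. Here $\|u\omega\|_{L^1}\lesssim s^{-1/2}$ by \eqref{maincond}, but $\bar u_1\bar\omega_1$ is also of size $s^{-1/2}$ in $L^1$, so the bound is only $O(1)$. I therefore need the second condition in \eqref{maincond}, which makes $u\omega-\bar u_1\bar\omega_1$ of size $s^{-1/2+\beta'}$ with $\beta' = 1/4-\beta>0$ after Hölder with $L^4\times L^{4/3}$. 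Hence $\|\hat\omega_1(t_0)\|_{L^1}=O(t_0^{\beta'})$.

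Second, I need to control the factor $(t/t_0)^\kappa$ in Corollary~\ref{cor:Sigma-alpha}. I choose $\kappa<\beta'$, so that $\|\Sigma_\alpha(t,t_0)\hat\omega_1(t_0)\|_{L^1}\lesssim (t/t_0)^\kappa t_0^{\beta'}\to0$. The integral converges absolutely near $s=0$ by \eqref{SigGamBd2}, using $\|F_1-F_2\|_{L^1}\lesssim s^{-1/2+\beta'}$ and $(t/s)^\kappa$ with $\kappa<1/2$. Passing to the limit then gives \eqref{def:hatom1}.
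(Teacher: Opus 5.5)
Your proposal is correct and follows essentially the same route as the paper: you use the symmetry identity $\div(\chi\bar u_1\bar\omega_1)=0$ and the algebraic splitting of $\chi u\omega$ into $F_2$, $F_1$ and the linearized terms. You then apply Duhamel for $\Sigma_\alpha$ on $[t_0,t]$ and pass to the limit $t_0\to0$ using $\|\hat\omega_1(t_0)\|_{L^1}=\cO(t_0^{1/4-\beta})$ (from $\hat\omega_1=-\int_0^t S_1(t-s)\div F_3\dd s$) together with $\kappa<1/4-\beta$. The only differences are cosmetic: you justify the restart on $[t_0,t]$ via uniqueness of mild solutions rather than by integrating \eqref{hatR2}, and you assert absolute convergence of the Duhamel integral, which the paper does not need since the limit of $\int_{t_0}^t$ is forced by the other two terms.
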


\begin{rem}
The idea behind the decomposition \eqref{def:hatom1}, \eqref{def:om2} is the
following.  First, thanks to the estimates in Lemma~\ref{lem:Stokesbounds},
the terms involving $S_2$ in \eqref{def:om2} are small for short times when
measured in critical norms.  Moreover, the terms $(1-\chi)\bar u_1$ and
$(1-\chi)\bar \omega_1$ in $F_1$ and $F_4$ are uniformly bounded thanks to the
cut-off.  Eventually, in $F_2$, the potentially large terms
$\chi\bar u_1\,\omega_2$ and $\chi u_2\,\bar\omega_1$ are ``off-diagonal'',
and should therefore be harmless. Therefore, for any fixed value of
$\alpha \in \R$, the integral equation in \eqref{def:hatom1}-\eqref{def:om2} will
be amenable to a fixed point argument.
\end{rem}

\begin{proof}
As was observed in Remark~\ref{rem:domains}, the relation \eqref{om1int} makes
sense in the whole plane $\R^2$ and defines a function $\omega_1 : \R^2 \times (0,T)
\to \R$ which satisfies the equation
\begin{equation}\label{R2vort}
  \partial_t \omega_1(x,t) + \div\bigl(\chi(x)\,u(x,t)\,\omega(x,t)\bigr) \,=\,
  \Delta \omega_1(x,t)\,, \qquad x \in \R^2\,, \quad t \in (0,T)\,.
\end{equation}
Following \eqref{omudecomp}, we use the decomposition $\omega_1(t) =
\bar\omega_1(t) + \hat\omega_1(t)$, where the Lamb--Oseen vortex $\bar\omega_1(t)
= \alpha e^{t\Delta} \delta_z$ solves the heat equation $\partial_t \bar\omega_1 =
\Delta\bar\omega_1$. We also observe that
\begin{equation}\label{F12id}
  \div(\chi\,u\,\omega) \,=\, \div F_3 \,=\, \div\bigl(\bar u_1\,\hat\omega_1
  + \hat u_1\,\bar\omega_1\bigr) - \div(F_1-F_2)\,,
\end{equation}
where the first equality holds because $\div(\chi\,\bar u_1\,\bar\omega_1) = 0$
by symmetry, and the second one follows immediately from the definitions
\eqref{Fjdef}. We conclude that the correction term $\hat\omega_1(t)$ satisfies
\begin{equation}\label{hatR2}
  \partial_t \hat\omega_1(t) + \bar u_1(t)\cdot\nabla\hat\omega_1(t) 
  + \hat u_1(t)\cdot\nabla\bar\omega_1(t) \,=\, \Delta \hat\omega_1(t)
  + \div\bigl(F_1(t)-F_2(t)\bigr)\,, \quad t \in (0,T)\,.
\end{equation}
Integrating this equation over the time interval $[t_0,t]$ with $0 < t_0 < t < T$ 
and using the notation \eqref{Sigalphdef} for the two-parameter semigroup
defined by the linear equation \eqref{LinGam1}, we obtain the Duhamel
representation formula
\begin{equation}\label{R3vort}
  \hat \omega_1(t) \,=\, \Sigma_\alpha (t,t_0)\hat\omega_1(t_0) + 
  \int_{t_0}^t \Sigma_\alpha (t,s) \div\bigl(F_1(s) - F_2(s)\bigr)\dd s\,,
  \qquad 0 < t_0 < t < T\,.
\end{equation}

It remains to take the limit $t_0 \to 0$ to obtain \eqref{def:hatom1}.
Unfortunately, we cannot proceed here as in Lemma~\ref{lem:inteq} because the
right-hand side of \eqref{SigGamBd1} diverges as $t_0 \to 0$.  As a consequence,
identifying the limit of $\Sigma_\alpha (t,t_0)\hat\omega_1(t_0) $ is not
obvious.  To solve this problem, we rewrite \eqref{om1int} in the form
$\hat\omega_1(t) = -\int_0^t S_1(t-s)\div F_3(s)\dd s$, and we use the following
bound that will be established in the proof of Lemma~\ref{lem:estF} below\:
\begin{equation}\label{F3bound}
  \|F_3(t)\|_{L^1} \,\lesssim\, \alpha^2 + |\alpha| t^{-1/4} \|\omega(t) -
  \bar\omega_1(t)\|_{L^{4/3}} + \|\omega(t) - \bar\omega_1(t)\|_{L^{4/3}}^2\,. 
\end{equation}
Now, our assumption \eqref{maincond} asserts that $\|\omega(t) -
\bar\omega_1(t)\|_{L^{4/3}} \le C t^{-\beta}$ for some $\beta < 1/4$.
So using \eqref{F3bound} and estimate \eqref{S1est} with $p = 1$,
we easily obtain
\[
  \|\hat\omega_1(t)\|_{L^1} \,\le\, \int_0^t \frac{C}{(t-s)^{1/2}}\,
  \|F_3(s)\|_{L^1}\dd s \,\le\, C\Bigl(\alpha^2 t^{1/2} + |\alpha|
  t^{1/4-\beta} + C t^{1/2-2\beta}\Bigr)\,,
\]
hence $\|\hat\omega_1(t)\|_{L^1} = \cO\bigl(t^{1/4-\beta}\bigr)$ as $t \to
0$. So, if we assume that $0 < \kappa < 1/4 - \beta$ in
Corollary~\ref{cor:Sigma-alpha}, we conclude that
$\Sigma_\alpha (t,t_0) \hat\omega_1(t_0) \to 0$ in $L^1(\R^2)$ as $t_0 \to
0$. Therefore we obtain \eqref{def:hatom1} by taking the limit $t_0 \to 0$ in
\eqref{R3vort}, and \eqref{def:om2} is a simple rewriting of \eqref{om2int},
recalling that $\div(\chi u \omega)= \div F_3$ and $\div((1-\chi) u \omega)=
\div F_4$. 
\end{proof}
	
%%%%%%%%%%%%%%%%%

\subsection{Solving \eqref{def:hatom1}--\eqref{def:om2} by a fixed point argument}
\label{ssec42}

This section is the core of the proof of Theorem~\ref{thm:main}. Without loss
of generality, we assume henceforth that $\alpha > 0$.  We focus on the pair
of integral equations \eqref{def:hatom1}--\eqref{def:om2}, and we prove by
a fixed point argument that they have a unique solution in a small ball of a
suitable function space. More precisely, given a time $T > 0$ that will be taken
sufficiently small, we introduce the Banach space
\begin{equation}\label{cZdef}
  \cZ_T \,=\, \Big\{(\hat \omega_1,\omega_2) \in C^0\bigl((0,T],L^{4/3}(\R^2)
  \times L^{4/3}(\R^2_+)\bigr)\,;\, \|\hat \omega_1\|_\star + \|\omega_2\|_\star <
  \infty\Big\}\,,  
\end{equation}
where, with a slight abuse of notation, we define
\begin{equation}\label{starnorm}
  \|\hat\omega_1\|_\star \,:=\, \sup_{t\in (0,T)} t^{1/4} \|\hat\omega_1(t)\|_{
  L^{4/3}(\R^2)}\,, \qquad
  \|\omega_2\|_\star \,:=\, \sup_{t\in (0,T)} t^{1/4} \|\omega_2(t)\|_{
  L^{4/3}(\R^2_+)}\,.
\end{equation}

Note that the pair $(\hat \omega_1, \omega_2)$ denotes here \emph{any} element
of $\cZ_T$, and not necessarily a solution of \eqref{def:hatom1}--\eqref{def:om2}.
Given any $(\hat \omega_1, \omega_2)\in \cZ_T$, we define $(\omega, u)$ by
\eqref{omudecomp}--\eqref{vcordef}, with $\bar \omega_1(t) = \alpha e^{t\Delta} \delta_z$.
Next, we define the associated functions $F_j$ by \eqref{Fjdef}, which we estimate
in terms of $(\hat \omega_1, \omega_2)\in \cZ_T$ in Lemma~\ref{lem:estF} below.
Eventually, we denote by $\Phi_1(\hat\omega_1, \omega_2)$ and $\Phi_2(\hat\omega_1,
\omega_2)$ the right-hand sides of \eqref{def:hatom1} and \eqref{def:om2} respectively,
and we prove that $\Phi = (\Phi_1,\Phi_2)$ has a fixed point in an appropriate
subset of $\cZ_T$.

\begin{lem}\label{lem:estF}
Given any $(\hat \omega_1,\omega_2) \in \cZ_T$, define the vorticity $\omega$ and
the velocity $u$ by \eqref{omudecomp}, with $\bar\omega_1$ as in \eqref{def:Oseen},
and the quadratic terms $F_j$ by \eqref{Fjdef}. Then, for all $t\in (0,T)$, the
following estimates hold\: 
\begin{equation}\label{est:Fj}
\begin{split}
  \|F_1(t)\|_{L^1(\R^2)} \,&\lesssim\, \alpha t^{-1/4} \|\hat\omega_1\|_\star\,,\\
  \|F_2(t)\|_{L^1(\R^2)} \,&\lesssim\, \alpha^2 + \alpha t^{-1/4} \|\hat\omega_1\|_\star
    + \alpha t^{-1/2} \|\omega_2\|_\star + t^{-1/2}\bigl(\|\hat\omega_1\|_\star^2
    + \|\omega_2\|_\star^2\bigr)\,,\\
  \|F_3(t)\|_{L^1(\R^2_+)} \,&\lesssim\, \alpha^2 + \alpha t^{-1/2} \|\hat\omega_1
    + \omega_2\|_\star  + t^{-1/2} \|\hat\omega_1 + \omega_2\|_\star^2\,,\\
  \|F_4(t)\|_{L^1(\R^2_+)} \,&\lesssim\, \alpha^2 e^{-c/t} + \alpha t^{-1/4} \|\hat\omega_1
    + \omega_2\|_\star  + t^{-1/2} \|\hat\omega_1 + \omega_2\|_\star^2\,,
  \end{split}
\end{equation}
where $c > 0$ is a positive constant depending on the radius $r_0$ in \eqref{chidef}. 
\end{lem}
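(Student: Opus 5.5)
The plan is to expand each $F_j$ into bilinear products of the pieces in \eqref{omudecomp}. Each product is then estimated by H\"older's inequality in the form $\|a\,b\|_{L^1}\le \|a\|_{L^4}\|b\|_{L^{4/3}}$, or $\|a\,b\|_{L^1}\le\|a\|_{L^\infty}\|b\|_{L^1}$ when a cut-off makes a factor bounded. The velocities are handled by Lemma~\ref{lem:HLS}: $\|\hat u_1\|_{L^4}\lesssim\|\hat\omega_1\|_{L^{4/3}}$ in $\R^2$ by HLS, and $\|u_2\|_{L^4}\lesssim \|\omega_2\|_{L^{4/3}}$ by \eqref{BSR2}. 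For the correction terms \eqref{vcordef} I use that $\bar v_1,\hat v_1$ are given by the mirror kernel $-\frac{1}{2\pi}\frac{(x-y^*)^\perp}{|x-y^*|^2}$, which is also bounded by $|x-y|^{-1}$ after extending $\omega$ by zero. Hence $\|\hat v_1\|_{L^4(\R^2_+)}\lesssim\|\hat\omega_1\|_{L^{4/3}}$.

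The explicit Oseen quantities give the scalings $\|\bar\omega_1(t)\|_{L^{4/3}}\approx\alpha t^{-1/4}$, $\|\bar u_1\|_{L^4}\approx \alpha t^{-1/4}$ and $\|\bar u_1\|_{L^\infty}\approx\alpha t^{-1/2}$. The mirror velocity $\bar v_1$ is smooth and bounded near $z$ and in $\R^2_+$, with $\|\bar v_1\|_{L^\infty(\R^2_+)}\lesssim\alpha$, because the image point $z^*$ sits at distance $\ge 1$ from $\R^2_+$. Away from $z$, on $\supp(1-\chi)$ where $|x-z|\ge r_0/2$, we have $|\bar\omega_1|\lesssim \alpha t^{-1}e^{-r_0^2/(16t)}$, and $\bar u_1$ is bounded by $\alpha/r_0$. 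Moreover $\bar u_1\in L^4(\{|x-z|>r_0/2\})$ uniformly in $t$, so on $\supp(1-\chi)$ the Oseen factors are $O(\alpha)$ in $L^4$ (velocity) and $O(\alpha e^{-c/t})$ in every $L^p$ (vorticity).

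The four bounds then follow term by term, writing $\omega-\bar\omega_1=\hat\omega_1+\omega_2$ on $\R^2_+$.
\begin{itemize}
\item[(i)] For $F_1$, use $\|(1-\chi)\bar u_1\|_{L^4}\lesssim\alpha$ and $\|(1-\chi)\bar\omega_1\|_{L^{4/3}}\lesssim \alpha$. This gives $\alpha t^{-1/4}\|\hat\omega_1\|_\star$.
\item[(ii)] For $F_3$, expand $u\omega-\bar u_1\bar\omega_1=\bar v_1\bar\omega_1+(u-\bar u_1)\bar\omega_1+u(\hat\omega_1+\omega_2)$. The first term is $\lesssim\alpha\cdot\alpha$ in $L^1$. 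The second term has $u-\bar u_1=\bar v_1+\hat u_1+\hat v_1+u_2$, whose $L^4$ norm is $\lesssim \alpha+t^{-1/4}\|\hat\omega_1+\omega_2\|_\star$. Multiplying by $\|\bar\omega_1\|_{L^{4/3}}\lesssim\alpha t^{-1/4}$ gives $\alpha^2 t^{-1/4}+\alpha t^{-1/2}\|\cdot\|_\star$, and $\alpha^2t^{-1/4}$ is absorbed after Young's inequality or kept. The third term is bounded by $(\|\bar u_1\|_{L^4}+\|u-\bar u_1\|_{L^4})\|\hat\omega_1+\omega_2\|_{L^{4/3}}$, which produces $\alpha t^{-1/2}\|\cdot\|_\star+t^{-1/2}\|\cdot\|_\star^2$.
\item[(iii)] $F_4$ is handled in the same way, but now $(1-\chi)\bar\omega_1$ is exponentially small and $(1-\chi)\bar u_1$ is $O(\alpha)$. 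This yields the $\alpha^2e^{-c/t}$ term and the improved exponent $\alpha t^{-1/4}$.
\item[(iv)] For $F_2$, subtract the bilinear terms: $\chi\bigl(\bar v_1\bar\omega_1+\bar u_1\omega_2+(u_2+\bar v_1+\hat v_1)\bar\omega_1+\dots\bigr)$ plus quadratic pieces. Here $\chi\bar v_1\bar\omega_1$ and $\chi\bar v_1\hat\omega_1$ give $\alpha^2$ and $\alpha t^{-1/4}\|\hat\omega_1\|_\star$, while $\chi\hat v_1\bar\omega_1$ gives $\alpha t^{-1/4}\|\hat\omega_1\|_\star$ using $\|\hat v_1\|_{L^\infty(B(z,r_0))}$. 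The cross terms $\chi\bar u_1\omega_2$ and $\chi u_2\bar\omega_1$ give $\alpha t^{-1/2}\|\omega_2\|_\star$, and the quadratic terms give $t^{-1/2}(\|\hat\omega_1\|_\star^2+\|\omega_2\|_\star^2)$.
\end{itemize}

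The main obstacle is obtaining the better rate $\alpha t^{-1/4}\|\hat\omega_1\|_\star$, rather than $t^{-1/2}$, for the terms involving $\hat v_1$ in $F_2$. This needs a bound on $\hat v_1$ in $L^\infty(B(z,r_0))$, or in $L^4$ with a weight, that is independent of $t$. Since $y^*$ lies in the lower half-plane while $x\in B(z,r_0)$ has $x_2\ge 1-r_0$, the mirror kernel is only singular when $y$ is near the boundary. I would split $\hat\omega_1$ into its part near $z$, where the kernel is smooth, and its part near the boundary, bounding the latter in $L^{4/3}$ with $|x-y^*|^{-1}\le|x-y|^{-1}$. Possibly $\|\hat\omega_1\|_{L^1}$ is also needed there; if only $L^{4/3}$ is available, I would settle for the worse $t^{-1/2}$ and check that it does not harm the fixed point.
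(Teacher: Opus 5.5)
Your overall strategy (bilinear expansion of each $F_j$, H\"older in $L^4\times L^{4/3}$ or $L^\infty\times L^1$, Lemma~\ref{lem:HLS}, and the cut-off) is the paper's. However, the step you single out as the main obstacle is left unresolved, and your fallback does not work.

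To get $\|\chi\hat v_1\bar\omega_1\|_{L^1}\lesssim\alpha t^{-1/4}\|\hat\omega_1\|_\star$ in $F_2$, you need $\|\chi\hat v_1(t)\|_{L^\infty}\lesssim\|\hat\omega_1(t)\|_{L^{4/3}}$ uniformly in $t$. The norm $\|\hat\omega_1\|_{L^1}$ is not available, since $\cZ_T$ only controls $L^{4/3}$. Accepting $\alpha t^{-1/2}\|\hat\omega_1\|_\star$ instead does not prove the stated bound. It also breaks the contraction argument of Section~\ref{ssec42}, because it puts a term $K'\alpha\|\hat\omega_1\|_\star$ with no small factor into \eqref{Phibd1}. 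That is a large \emph{diagonal} coupling $\hat\omega_1\mapsto\Phi_1$, and no reweighting as in \eqref{def:cZnorm} can compensate it; the unbalanced norm only absorbs the off-diagonal coupling $\omega_2\mapsto\Phi_1$.

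The missing observation is that nothing is singular there. Since $\hat\omega_1$ lives on all of $\R^2$ (Remark~\ref{rem:domains}), $\hat v_1$ is not just the mirror kernel applied to $\hat\omega_1$. The analogue of \eqref{corrvort} is
\[
  \hat v_1(x,t) \,=\, -\frac{1}{2\pi}\int_{\R^2_-}\frac{(x-y)^\perp}{|x-y|^2}\bigl(\hat\omega_1(y,t)+\hat\omega_1(y^*,t)\bigr)\dd y\,.
\]
For $x\in\supp\chi$ we have $x_2\ge 1-r_0$, hence $|x-y|\ge 1-r_0$ for every $y\in\R^2_-$. Equivalently, $|x-y^*|\ge x_2+y_2\ge 1-r_0$ for $y\in\R^2_+$, so contrary to what you suggest the mirror kernel is not singular for $y$ near the boundary. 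Therefore $\sup_{x\in\supp\chi}\bigl\|\,|x-\cdot|^{-1}\bigr\|_{L^4(\R^2_-)}\le\bigl(\int_{|w|\ge 1-r_0}|w|^{-4}\dd w\bigr)^{1/4}<\infty$. H\"older's inequality then gives $\|\chi\hat v_1(t)\|_{L^\infty}\lesssim\|\hat\omega_1(t)\|_{L^{4/3}(\R^2)}\le t^{-1/4}\|\hat\omega_1\|_\star$. Consequently $\|\chi\hat v_1\bar\omega_1\|_{L^1}\le\|\chi\hat v_1\|_{L^\infty}\|\bar\omega_1\|_{L^1}\lesssim\alpha t^{-1/4}\|\hat\omega_1\|_\star$, with no splitting of $\hat\omega_1$ needed.

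There is also a slip in (ii). Your identity $u\omega-\bar u_1\bar\omega_1=\bar v_1\bar\omega_1+(u-\bar u_1)\bar\omega_1+u(\hat\omega_1+\omega_2)$ counts $\bar v_1\bar\omega_1$ twice, and your list for $F_2$ does the same. The resulting $\alpha^2t^{-1/4}$ cannot be absorbed by Young's inequality into the stated right-hand side, whose other terms vanish when $\hat\omega_1+\omega_2=0$.

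The correct splitting is $u\omega-\bar u_1\bar\omega_1=\bar v_1\bar\omega_1+\BS[\hat\omega_1+\omega_2]\,\bar\omega_1+u\,(\hat\omega_1+\omega_2)$. Bound the first term only through $\|\chi\bar v_1\|_{L^\infty}\|\bar\omega_1\|_{L^1}\lesssim\alpha^2$; this gives exactly the stated $F_3$ bound.

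For $F_4$ you also need $\|\bar v_1(t)\|_{L^4(\R^2_+)}\lesssim\alpha$, not just the $L^\infty$ bound, because $\supp(1-\chi)$ is unbounded. This follows from \eqref{corrvort2}, $|v^G(\xi)|\lesssim|\xi|^{-1}$, and $|x-z^*|\ge 1$ on $\R^2_+$.
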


\begin{rem}
It is readily verified that the Lamb--Oseen vortex
$\bar\omega_1(t) = \alpha e^{t\Delta}\delta_z$ satisfies
$\|\bar\omega_1\|_\star = C\alpha$ for some constant $C > 0$. As a consequence,
if the vorticity $\omega$ and the associated velocity $u$ are defined as in
\eqref{omudecomp}, we have
\[
  \|u(t)\|_{L^4} \,\lesssim\, \|\omega(t)\|_{L^{4/3}} \,\lesssim\, t^{-1/4}
  \bigl(\alpha + \|\hat\omega_1\|_\star + \|\omega_2\|_\star\bigr)\,, \qquad 0 <
  t < T\,,
\]
hence $\|u(t)\omega(t)\|_{L^1} \lesssim t^{-1/2} \bigl(\alpha + \|\hat\omega_1\|_\star
+ \|\omega_2\|_\star\bigr)^2$.  Unfortunately, when $\alpha$ is large, this
rough bound is not sufficient to solve the system
\eqref{def:hatom1}--\eqref{def:om2} by a fixed point argument, but we claim in
Lemma~\ref{lem:estF} that the quadratic terms $F_j$ actually satisfy better estimates.
More precisely, the largest terms in the right-hand sides of inequalities
\eqref{est:Fj} are $\mathcal O(t^{-1/2})$ as $t \to 0$, as in the crude
estimate above.  Among those, the quadratic quantities
will be harmless, because we will work with solutions that are small in $\cZ_T$,
so that all nonlinearities can be treated perturbatively. The term
$ t^{-1/2} \|\hat\omega_1 + \omega_2\|_\star$ in $F_3$ does not create any
problem either, because by \eqref{def:om2} it appears in connection with the operator
$S_2(t)$ which satisfies better estimates than what is expected from the scaling,
see Lemma~\ref{lem:Stokesbounds}. It follows that the right-hand side of
\eqref{def:om2} will be small in the norm \eqref{starnorm} if $T$ sufficiently
small. This is not the case of \eqref{def:hatom1}, because of the off-diagonal
term $t^{-1/2}\|\omega_2\|_\star$, but a suitable choice of the norm in
$\cZ_T$ eliminates the problem and allows us to close the fixed point argument.
\end{rem}

\begin{proof}
We first estimate the terms $F_1$ and $F_4$, which are supported away from
the center $z$ of the Lamb--Oseen vortex. In view of \eqref{chidef}, if $x$ belongs
to the support of $1-\chi$, then $|x-z| \ge r_0/2 > 0$. Moreover the formulas
\eqref{def:Oseen} show that $|\bar u_1(x,t)| \lesssim \alpha |x-z|^{-1}$ and
$|\bar \omega_1(x,t)| \lesssim \alpha t^{-1}e^{-|x-z|^2/(4t)}$. It follows that 
\begin{equation}\label{uom-away}
  \|(1-\chi)\bar u_1(t)\|_{L^4} \,\lesssim\, \alpha\,, \qquad
  \|(1-\chi)\bar \omega_1(t)\|_{L^{4/3}} \,\lesssim\, \alpha\,e^{-c/t}\,,  
\end{equation}
for some constant $c > 0$. Since $\|\hat u_1(t)\|_{L^4} \lesssim \|\hat \omega_1(t)
\|_{L^{4/3}} \lesssim t^{-1/4}\|\hat\omega_1\|_\star$, we thus obtain
\[
  \|F_1(t)\|_{L^1} \,\le\, \|(1-\chi)\bar u_1(t)\|_{L^4}\|\hat\omega_1(t)\|_{L^{4/3}}
  + \|\hat u_1(t)\|_{L^4}\|(1-\chi)\bar\omega_1(t)\|_{L^{4/3}} \,\lesssim\,
  \alpha t^{-1/4} \|\hat\omega_1\|_\star\,,
\]
which is the first estimate in \eqref{est:Fj}. 

On the other hand, as is easily verified, the quantity $F_4$ can be written in
the form
\begin{equation}\label{F4bis}
  F_4 \,=\, (1-\chi)\Bigl\{\bar v_1 \bar\omega_1 + \BS[\bar\omega_1]
  (\hat\omega_1 + \omega_2) + \BS[\hat\omega_1 + \omega_2]
  (\bar\omega_1 + \hat\omega_1 + \omega_2)\Bigr\}\,,
\end{equation}
where the correction term $\bar v_1$ defined in \eqref{vcordef} satisfies
\begin{equation}\label{corrvort}
  \bar v_1(x,t) \,=\, -\frac{1}{2\pi}\int_{\R^2_-}\frac{(x-y)^\perp}{|x-y|^2}
  \bigl(\bar\omega_1(y,t) + \bar\omega_1(y^*,t)\bigr)\dd y\,,
  \qquad x \in \R^2_+\,, \quad t \in (0,T)\,.
\end{equation}
Since $\bar\omega_1(y,t)$ (resp. $\bar\omega_1(y^*,t)$) is exponentially small when
$y \in \R^2_-$ (resp. $y \in \R^2_+$), we have
\begin{equation}\label{corrvort2}
  \bar v_1(x,t) \,=\, -\frac{\alpha}{\sqrt{t}}\,v^G\Bigl(\frac{x - z^*}{\sqrt{t}}
  \Bigr) \,+\, \cO_{L^4}\bigl(\alpha\,e^{-c/t}\bigr)\,, \qquad \text{hence}\quad
  \|\bar v_1(t)\|_{L^4(\R^2_+)} \,\lesssim\, \alpha\,.
\end{equation}
To bound the other terms in \eqref{F4bis}, we observe that $\BS[\bar\omega_1] =
\bar u_1 + \bar v_1$ and we use again \eqref{uom-away} and \eqref{corrvort2}.
This gives
\begin{align*}
  \|F_4(t)\|_{L^1} \,&\le\, \|\bar v_1\|_{L^4} \|(1{-}\chi)\bar\omega_1\|_{L^{4/3}}
  + \|(1{-}\chi)(\bar u_1 + \bar v_1)\|_{L^4}\|\hat\omega_1 + \omega_2\|_{L^{4/3}} \\[1mm]
  & \quad~ + \|\BS[\hat\omega_1 + \omega_2]\|_{L^4}\|(1{-}\chi)\bar\omega_1\|_{L^{4/3}}
  + \|\BS[\hat\omega_1 + \omega_2]\|_{L^4}\|\hat\omega_1 + \omega_2\|_{L^{4/3}} \\[1mm]
  \,&\lesssim\, \alpha^2 e^{-c/t} + \alpha t^{-1/4} \|\hat\omega_1
    + \omega_2\|_\star  + t^{-1/2} \|\hat\omega_1 + \omega_2\|_\star^2\,,
\end{align*}
which is the last estimate in \eqref{est:Fj}. 

We now consider the terms $F_2$ and $F_3$, which are supported near the center $z$
of the Lamb--Oseen vortex. As in \eqref{F4bis} we can write
\begin{equation}\label{F3bis}
  F_3 \,=\, \chi\Bigl\{\bar v_1 \bar\omega_1 + \BS[\bar\omega_1]
  (\hat\omega_1 + \omega_2) + \BS[\hat\omega_1 + \omega_2]
  (\bar\omega_1 + \hat\omega_1 + \omega_2)\Bigr\}\,.
\end{equation}
In view of \eqref{corrvort} we have $\|\chi\bar v_1(t)\|_{L^\infty} \lesssim
\|\bar\omega_1(t)\|_{L^1} \lesssim \alpha$, hence $\|\chi\bar v_1(t)
\bar\omega_1(t)\|_{L^1} \lesssim \alpha^2$. Unlike in \eqref{uom-away}, 
the cut-off function $\chi$ does not improve the estimates of $\bar\omega_1$ and
$\bar u_1$, so we simply use
\[
  \|\chi\bar u_1(t)\|_{L^4} \,\le\, \|\bar u_1(t)\|_{L^4} \,\lesssim\, \alpha t^{-1/4}\,,
  \qquad
  \|\chi\bar\omega_1(t)\|_{L^{4/3}} \,\le\, \|\bar\omega_1(t)\|_{L^{4/3}} \,\lesssim\,
  \alpha t^{-1/4}\,.
\]
Proceeding as above, we thus find the deteriorated bound
\[
  \|F_3(t)\|_{L^1} \,\lesssim\, \alpha^2 + \alpha t^{-1/4} \|\omega(t) -
  \bar\omega_1(t)\|_{L^{4/3}} + \|\omega(t) - \bar\omega_1(t)\|_{L^{4/3}}^2\,,
\]
which is \eqref{F3bound}, and the third estimate in \eqref{est:Fj} easily
follows. 

Finally, we deduce from \eqref{omudecomp} and \eqref{Fjdef} that
\begin{equation}\label{F2bis}
  F_2 \,=\, \chi\Bigl\{(\bar v_1+\hat v_1)(\bar\omega_1 + \hat\omega_1)
  + \hat u_1 \hat\omega_1 + \BS[\bar\omega_1 + \hat\omega_1]\omega_2 +
  \BS[\omega_2](\bar\omega_1 + \hat\omega_1 + \omega_2)\Bigr\}\,,
\end{equation}
where we recall that $\BS[\bar\omega_1] = \bar u_1 + \bar v_1$. We already know
that $\|\chi\bar v_1\bar\omega_1\|_{L^1} \lesssim \alpha^2$, and for the other
terms involving $\bar v_1$ or $\hat v_1$ we use the bounds
\[
  \|\chi \bar v_1(t)\|_{L^4} \,\lesssim\, \|\chi \bar v_1(t)\|_{L^\infty}
  \,\lesssim\, \alpha\,, \qquad
  \|\chi \hat v_1(t)\|_{L^4} \,\lesssim\, \|\chi \hat v_1(t)\|_{L^\infty}
  \,\lesssim\, t^{-1/4}\|\hat\omega_1\|_\star\,,
\]
which follow easily from the formula \eqref{corrvort} and its analogue for
$\hat v_1$. We thus find
\[
  \|F_2(t)\|_{L^1} \,\lesssim\, \bigl(\alpha + t^{-1/4}\|\hat\omega_1\|_\star\bigr)^2
  + \alpha t^{-1/2}\|\omega_2\|_\star + t^{-1/2}\bigl(\|\hat\omega_1\|_\star^2
    + \|\omega_2\|_\star^2\bigr)\,,
\]
which concludes the proof of \eqref{est:Fj}.   
\end{proof}
	
We now consider the map $\Phi = (\Phi_1, \Phi_2) : \cZ_T \to \cZ_T$
defined by 
\begin{align}\label{FP1}
  \Phi_1(\hat\omega_1,\omega_2)(t) \,&=\, \int_0^t \Sigma_\alpha(t,s)
  \div\bigl(F_1(s) - F_2(s)\bigr)\dd s\,,\\ \label{FP2}
  \Phi_2(\hat\omega_1,\omega_2)(t) \,&=\, \alpha S_2(t)\delta_z - \int_0^t S_2(t{-}s)
  \div F_3(s)\dd s - \int_0^t S(t{-}s)\div F_4(s)\dd s\,,
\end{align}
for $0 < t \le T$, where the nonlinear terms $F_j(\hat\omega_1,\omega_2)$ are
defined in \eqref{Fjdef} for $j = 1,2,3,4$. Combining Lemmas~\ref{lem:S1} and
\ref{lem:Stokesbounds}, Corollary~\ref{cor:Sigma-alpha}, and Lemma~\ref{lem:estF},
we obtain the following useful estimates.
	
\begin{lem}\label{lem:Phibounds}
There exists positive constants $C_1, C_2$ and $K$, with $C_2$ independent of $\alpha$,
such that for all sufficiently small $T > 0$ and all $(\hat \omega_1,\omega_2) \in \cZ_T$,
the following estimates hold\:
\begin{align}\label{Phibd1}
  \|\Phi_1(\hat\omega_1,\omega_2)\|_\star \,&\le\, C_1\alpha^2 T^{1/2} + \alpha
  \bigl(C_1T^{1/4}\|\hat \omega_1\|_\star + K\|\omega_2\|_\star\bigr) + C_1
  \bigl(\|\hat \omega_1\|_\star^2 + \|\omega_2\|_\star^2\bigr)\,, \\[1mm] \label{Phibd2}
  \|\Phi_2(\hat\omega_1,\omega_2)\|_\star \,&\le\, C_2\alpha^2 T^{9/8} +
  C_2\alpha\bigl( T^{1/8} + T^{1/4}  \|\hat\omega_1 + \omega_2\|_\star\bigr) + C_2
  \|\hat\omega_1 + \omega_2\|_\star^2\,.
\end{align}
\end{lem}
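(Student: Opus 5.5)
The plan is to bound each term in \eqref{FP1}--\eqref{FP2} in $L^{4/3}$, insert the bounds of Lemma~\ref{lem:estF}, and then compute the time integrals. All of them reduce to Beta-type integrals $\int_0^t (t-s)^{-a}s^{-b}\dd s = C\,t^{1-a-b}$.

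\textbf{Bound for $\Phi_1$.} I apply \eqref{SigGamBd2} with $p=4/3$, taking $\kappa>0$ small (say $\kappa = 1/16$). This gives
\[
\|\Sigma_\alpha(t,s)\div F\|_{L^{4/3}} \lesssim (t-s)^{-3/4}(t/s)^\kappa\|F(s)\|_{L^1}.
\]
I then insert the bounds on $F_1,F_2$ from \eqref{est:Fj}.
\begin{itemize}
\item The term $\alpha^2$ gives $\int_0^t (t-s)^{-3/4}(t/s)^\kappa \dd s \lesssim t^{1/4}$. After multiplying by $t^{1/4}$, this is $\alpha^2 t^{1/2} \le \alpha^2 T^{1/2}$.
\item The terms $\alpha s^{-1/4}\|\hat\omega_1\|_\star$ give $t^{1/4}\cdot t^{0}$. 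After multiplying by $t^{1/4}$, this is $\alpha T^{1/4}\|\hat\omega_1\|_\star$.
\item The terms of order $s^{-1/2}$ produce $\int_0^t (t-s)^{-3/4}s^{-1/2}(t/s)^\kappa\dd s \lesssim t^{-1/4}$, which is exactly critical. They yield $K\alpha\|\omega_2\|_\star$ and $C_1(\|\hat\omega_1\|_\star^2+\|\omega_2\|_\star^2)$, with no small factor.
\end{itemize}
The constants depend on $\alpha$ through $C(\alpha,\kappa,p)$ in Corollary~\ref{cor:Sigma-alpha}. This is why $C_1$ and $K$ may depend on $\alpha$. The integrals converge because $\kappa<1/2$.

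\textbf{Bound for $\Phi_2$.} I use Lemma~\ref{lem:Stokesbounds} with $p=4/3$ for the $S_2$ terms, and \eqref{Stokes2} for $S$.
\begin{itemize}
\item For $\alpha S_2(t)\delta_z$ I get $\alpha(t+\sqrt t)^{-1/4}\lesssim \alpha t^{-1/8}$. Multiplied by $t^{1/4}$, this is $\alpha T^{1/8}$.
\item For $S_2(t-s)\div F_3$, the support of $F_3$ lies in $B(z,r_0)$, so $y_2\ge 1-r_0\gtrsim1$. The kernel bound is then
\[
\sqrt{\tau}\,(\tau+y_2\sqrt\tau)^{-5/4}\lesssim \sqrt\tau\,\tau^{-5/8}=\tau^{-1/8},\qquad \tau=t-s.
\]
\item Integrating $\tau^{-1/8}$ against the $F_3$ bound:
  \begin{itemize}
  \item the $\alpha^2$ part gives $\alpha^2 t^{7/8}$, hence $\alpha^2T^{9/8}$ after the weight $t^{1/4}$;
  \item the $s^{-1/2}$ parts give $t^{3/8}$, hence $t^{5/8}$ after the weight. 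This is better than needed, and it can be absorbed for $T\le1$ into $\alpha T^{1/4}\|\cdot\|_\star$ and the quadratic term.
  \end{itemize}
\item For $S(t-s)\div F_4$, the bound \eqref{Stokes2} gives $(t-s)^{-3/4}\|F_4\|_{L^1}$.
  \begin{itemize}
  \item The term $\alpha^2e^{-c/s}$ is harmless and bounded by $\alpha^2 T^{9/8}$.
  \item The term $\alpha s^{-1/4}$ gives $\alpha T^{1/4}\|\hat\omega_1+\omega_2\|_\star$.
  \item The quadratic term $s^{-1/2}$ is critical and gives $C_2\|\hat\omega_1+\omega_2\|_\star^2$.
  \end{itemize}
\end{itemize}
All these constants are universal, which explains why $C_2$ is independent of $\alpha$.

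\textbf{Main obstacle.} The delicate point is justifying \eqref{SigGamBd2} for a merely $L^1$ flux $F$, when $\div F$ is not in $L^1$. I would use the $\cX$-extension \eqref{eq:Salphext}/\eqref{eq:intertwin} to interpret $\Sigma_\alpha(t,s)\div$ as a bounded operator. A second issue is the extension of $F_j$ by zero to $\R^2$, which is legitimate since $\chi$ is compactly supported in $\R^2_+$. Beyond that, I must check that the $(t/s)^\kappa$ factor keeps the critical integrals finite and scale-invariant, which holds for $\kappa<1/2$. Continuity in time of $\Phi$ with values in $L^{4/3}$ is routine.
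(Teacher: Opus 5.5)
Your proposal is correct and follows the paper's proof step for step. The paper likewise uses \eqref{SigGamBd2} with $p=4/3$ and small $\kappa$ for $\Phi_1$, the kernel bound for $S_2(t)\delta_z$, the improved estimate \eqref{S2est} with $y_2\ge 1-r_0$ to get the factor $(t-s)^{-1/8}$ for the $F_3$ term, and \eqref{Stokes2} for the $F_4$ term, followed by the same Beta-integral computations; it takes $\kappa<1/8$, though as you note any $\kappa<1/2$ suffices for this lemma.
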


\begin{rem}
In the previous inequalities, we have kept the powers of $\alpha$ in the
right-hand side of \eqref{Phibd1} even though $C_1$ and $K$ may depend on
$\alpha$.  We believe that this form will make the computations easier to follow
for the reader, and also preserves the symmetry between $\Phi_1$ and $\Phi_2$.
Note that the specific dependency with respect to $\alpha$ will be irrelevant
for our purposes.
\end{rem}	

\begin{proof}
To bound $\Phi_1$ we use the semigroup estimate \eqref{SigGamBd2} with
$p = 4/3$ and $0 < \kappa < 1/8$, as well as the bounds on $F_1$ and 
$F_2$ in \eqref{est:Fj}. For any $t \in (0,T]$, we thus obtain
\begin{align*}
  \|\Phi_1(&\hat\omega_1,\omega_2)(t)\|_{L^{4/3}} \,\le\, \int_0^t \frac{C}{(t-s)^{3/4}}
  \,\Bigl(\frac{t}{s}\Bigr)^{\kappa}\,\bigl(\|F_1(s)\|_{L^1}+ \|F_2(s)\|_{L^1}\bigr)\dd s \\
  & \,\le\, \int_0^t \frac{C}{(t-s)^{3/4}s^{1/2}}\,\Bigl(\frac{t}{s}\Bigr)^{\kappa}
  \Bigl(\alpha^2s^{1/2} + \alpha s^{1/4} \|\hat\omega_1\|_\star
    + \alpha \|\omega_2\|_\star + \|\hat\omega_1\|_\star^2 + \|\omega_2\|_\star^2
    \Bigr)\dd s\\
  & \,\le \frac{C}{t^{1/4}} \Bigl(\alpha^2 t^{1/2} + \alpha t^{1/4}\|\hat\omega_1\|_\star
    + \alpha \|\omega_2\|_\star + \|\hat \omega_1\|_\star^2 + \|\omega_2\|_\star^2\Bigr)\,,
\end{align*}  
where the constant $C$ depends on $\kappa$ and $\alpha$. Multiplying both sides by
$t^{1/4}$ and taking the supremum for  $t \in (0,T]$, we arrive at
\eqref{Phibd1}.

On the other hand, in view of \eqref{S12def}, we have $\bigl(S_2(t)\delta_z\bigr)(x) =
K_2(x,z,t)$ for all $x \in \R^2_+$, and we deduce from Corollary~\ref{cor:K1bounds} that
\begin{equation}\label{FP9}
  \|\alpha S_2(t)\delta_z\|_{L^{4/3}} \,\le\, \frac{C\alpha}{t^{1/8}}\,, \qquad t > 0\,.
\end{equation}
To bound the second term in the right-hand side of \eqref{FP2}, we recall
that the nonlinear term $F_3$ is supported away from boundary $x_2 = 0$,
thanks to the cut-off function $\chi$. This makes it possible to apply the
improved estimates \eqref{S2est} on the operator $S_2(t)$. Using in addition the
bound on $F_3$ in \eqref{est:Fj} we thus find
\begin{align}\nonumber
  \Bigl\|\int_0^t &S_2(t{-}s)\div F_3(s)\dd s\Bigr\|_{L^{4/3}} \,\le\,
    \int_0^t \frac{C}{(t-s)^{1/8}} \|F_3(s)\|_{L^1}\dd s\\ \nonumber
  \,&\le\, \int_0^t \frac{C}{(t-s)^{1/8}s^{1/2}}\,\Bigl(\alpha^2s^{1/2}
   + \alpha \|\hat\omega_1 + \omega_2\|_\star
   + \|\hat\omega_1 + \omega_2\|_\star^2 \Bigr)\dd s \\[1mm] \label{FP10}
  \,&\le\, C t^{3/8}\Bigl(\alpha^2 t^{1/2} + \alpha\|\hat\omega_1 + \omega_2\|_\star
   + \|\hat\omega_1 + \omega_2\|_\star^2\Bigr)\,.
\end{align}
Finally, using \eqref{S1est} and the bound on $F_4$ in \eqref{est:Fj}, we obtain
\begin{align}\nonumber
  \Bigl\|\int_0^t &S(t{-}s)\div F_4(s)\dd s\Bigr\|_{L^{4/3}} \,\le\,
    \int_0^t \frac{C}{(t-s)^{3/4}} \|F_4(s)\|_{L^1}\dd s\\ \nonumber
  \,&\le\, \int_0^t \frac{C}{(t-s)^{3/4}s^{1/2}}\,\Bigl(\alpha^2 e^{-c/s}
      + \alpha s^{1/4}\|\hat\omega_1 + \omega_2\|_\star + 
      \|\hat\omega_1 + \omega_2\|_\star^2\Bigr)\dd s\\[1mm] \label{FP11}
  \,&\le\, \frac{C}{t^{1/4}} \Bigl(\alpha^2 e^{-c/t} + \alpha t^{1/4}
      \|\hat\omega_1 + \omega_2\|_\star + \|\hat\omega_1 + \omega_2\|_\star^2\Bigr)\,.
\end{align}
In view of \eqref{FP9}, \eqref{FP10}, \eqref{FP11}, we have
\[
   t^{1/4}\|\Phi_2(\hat\omega_1,\omega_2)(t)\|_{L^{4/3}} \,\le\, C\Bigl(\alpha t^{1/8}
   + \alpha^2 t^{9/8} + \alpha t^{1/4}\|\hat\omega_1 + \omega_2\|_\star
   + \|\hat\omega_1 + \omega_2\|_\star^2\Bigr)\,.
\]
Taking the supremum for $t \in (0,T]$, we arrive at \eqref{Phibd2}.
\end{proof}

In view of \eqref{Phibd1} and \eqref{Phibd2}, we equip the space $\cZ_T$ with
the unbalanced norm
\begin{equation}\label{def:cZnorm}
  \|(\hat\omega_1,\omega_2)\|_\cZ \,:=\, \|\hat\omega_1\|_\star + C_0\|\omega_2\|_\star\,,
  \qquad \text{where} \quad C_0 \,=\, \max(1,2K\alpha)\,.
\end{equation}
Given any $\rho > 0$ we consider the ball
\begin{equation}\label{def:Brho}
  \cB_\rho\,:=\, \bigl\{(\hat\omega_1,\omega_2) \in \cZ_T\,;\, \|(\hat\omega_1,\omega_2)\|_\cZ
  \le \rho\bigr\}\,.
\end{equation}
Our goal is to show that, if $\rho$ and $T$ are taken sufficiently small (depending on $\alpha$),
the function $\Phi = (\Phi_1,\Phi_2)$ defined by \eqref{FP1}, \eqref{FP2} maps the ball
$\cB_\rho$ into itself and is a strict contraction in $\cB_\rho$. Indeed, assume
that $\bom := (\hat\omega_1,\omega_2) \in \cB_\rho$ and $\bw := (\hat w_1,w_2)
\in \cB_\rho$. Proceeding exactly as in the proof of Lemma~\ref{lem:Phibounds},
and using the fact that the nonlinear terms \eqref{Fjdef} are quadratic polynomials,
we easily obtain the Lipschitz estimate
\begin{equation}\label{LipPhi}
  \|\Phi(\bom) - \Phi(\bw)\|_\cZ \,\le\, \Bigl(\frac12 + C_1\rho + C_2T^{1/4}\Bigr)
  \|\bom - \bw\|_\cZ\,,
\end{equation}
for some positive constants $C_1,C_2$ depending on $\alpha$. Note that the
constant $C_0$ in the definition \eqref{def:cZnorm} was chosen so as to obtain
the term $\frac12 \|\bom - \bw\|_\cZ$ in the right-hand side of \eqref{LipPhi}.
On the other hand, we see from \eqref{Phibd1} and \eqref{Phibd2} that
$\|\Phi(0)\|_\cZ \le C_3 T^{1/8}$ for some positive constant $C_3$ depending on
$\alpha$. Now if we choose $\rho$ and $T$ sufficiently small so that
\begin{equation}\label{rhoT}
  C_1 \rho \,\le\, \frac18\,, \qquad C_2 T^{1/4} \,\le\, \frac18\,, \qquad
  \text{and}\quad C_3 T^{1/8} \,\le\, \frac{\rho}{4}\,,
\end{equation}
it follows that $\Phi$ maps $\cB_\rho$ into itself and that
$\mathrm{Lip}(\Phi) \le 3/4$ in $\cB_\rho$.  As a consequence, $\Phi$ has a
unique fixed point within $B_\rho$, which is by definition a solution of the
pair of integral equations \eqref{def:hatom1}--\eqref{def:om2}.

\subsection{Existence and uniqueness of mild solutions of \eqref{2Dvort}}
\label{ssec43}

It is now a relatively easy task to conclude the proof of the existence and
uniqueness claims in Theorem~\ref{thm:main}, together with inequality
\eqref{firstasym}.  The proof of the scale invariant estimates \eqref{mainest}
is postponed to Section~\ref{ssec51}.

\medskip\noindent{\bf Existence part\:} Given $\alpha > 0$ and $T > 0$, we
consider the function space $\cZ_T$ defined by \eqref{cZdef} endowed with the
norm \eqref{def:cZnorm}.  According to the previous section, if $T>0$ is chosen
sufficiently small, there exists $\rho>0$ such that the system
\eqref{def:hatom1}-\eqref{def:om2} has a unique solution within the ball
$\cB_\rho$, which we now denote $\bom = (\hat \omega_1, \omega_2)$. By
construction, we have
\[
  \|\bom\|_{\cZ} \,=\, \|\Phi(\bom)\|_{\cZ} \,\le\, \|\Phi(\bom) - \Phi(0)\|_{\cZ}
  + \|\Phi(0)\|_{\cZ} \,\le\, \frac34\,\|\bom\|_{\cZ} + C_3 T^{1/8}\,,
\]
so that 
\[
  \|\bom\|_{\cZ} \,=\, \sup_{t \in (0,T)} t^{1/4} \|\hat\omega_1(t)\|_{L^{4/3}}
  + C_0\,\sup_{t \in (0,T)} t^{1/4} \|\omega_2(t)\|_{L^{4/3}} \,\le\, 4 C_3 T^{1/8}\,.
\]
Since $T > 0$ can be replaced by any smaller time and the constant $C_3$
does not depend on $T$, it follows that 
\begin{equation}\label{bomest}
  \|\hat\omega_1\|_\times \,:=\, \sup_{t \in (0,T)} t^{1/8} \|\hat\omega_1(t)\|_{L^{4/3}}
  < \infty\,, \qquad 
  \|\omega_2\|_\times \,:=\, \sup_{t \in (0,T)} t^{1/8}\|\omega_2(t)\|_{L^{4/3}}
  \,<\, \infty\,.
\end{equation}

We also need to control the $L^1$ norm of our solution. Since $\hat\omega_1 =
\Phi_1(\hat\omega_1,\omega_2)$, we can estimate the right-hand side of \eqref{FP1}
as in the proof of Lemma~\ref{lem:Phibounds}. Using \eqref{bomest} and the bounds 
on $F_1, F_2$ established in Lemma~\ref{lem:estF}, we find
\begin{align*}
  \|\hat\omega_1(t)\|_{L^1} \,&\le\, \int_0^t \frac{C}{(t-s)^{1/2}}
  \bigl(\|F_1(s)\|_{L^1} + \|F_2(s)\|_{L^1}\bigr)\dd s \\
  \,&\le\, \int_0^t \frac{C}{(t-s)^{1/2}}\Bigl(\alpha^2 + \alpha s^{-1/8}
  \|\hat\omega_1\|_\times + \alpha s^{-3/8}\|\omega_2\|_\times + 
  s^{-1/4}\bigl(\|\hat\omega_1\|_\times + \|\omega_2\|_\times\bigr)^2\Bigr)\dd s\,,
\end{align*}
so that $\|\hat\omega_1(t)\|_{L^1} \le Ct^{1/8}$ for some constant depending
on $\alpha$. Similarly,
\begin{align*}
  \|\omega_2(t) - \alpha S_2(t)\delta_z\|_{L^1} \,&\le\, C \int_0^t \Bigl\{\|F_3(s)\|_{L^1}
  + \frac{\|F_4(s)\|_{L^1}}{(t-s)^{1/2}}\Bigr\}\dd s \\
  \,&\le\, C\alpha^2 t + C \alpha t^{3/8}\bigl(\|\hat\omega_1\|_\times + \|\omega_2\|_\times
  \bigr) + C t^{1/4}\bigl(\|\hat\omega_1\|_\times + \|\omega_2\|_\times\bigr)^2\,,
\end{align*}
hence $\|\omega_2(t) - \alpha S_2(t)\delta_z\|_{L^1} \le C t^{1/4}$. Note
that $S_2(t)\delta_z$ is uniformly bounded in $L^1(\R^2_+)$, but does not
converge strongly to zero as $t \to 0$.

According to \eqref{omudecomp}, we define $\omega(t) = \bar\omega_1(t) +
\hat\omega_1(t) + \omega_2(t)$, where $\bar\omega_1(t) = \alpha S_1(t)\delta_z$.
By construction $\omega \in C^0((0,T),L^1(\R^2_+) \cap L^{4/3}(\R^2_+))$, and
we have just shown that $\|\omega(t)\|_{L^1}$ is uniformly bounded for
$t \in (0,T)$, which is the first condition in \eqref{maincond}. Moreover it
follows from \eqref{bomest} that
\[
  \sup_{t \in (0,T)} t^{1/8} \bigl\|\omega(t) - \alpha G_t(\cdot-z)\bigr\|_{L^{4/3}}
  \,\le\, \|\hat\omega_1\|_\times  + \|\omega_2\|_\times  \,<\, \infty\,,
\]
which is the second condition in \eqref{maincond}, with $\beta = 1/8$. 
Finally, in view of Lemma~\ref{lem:Mcont}, we have
\[
  \omega(t) \,=\, \alpha S(t)\delta_z + \hat\omega_1(t) + \bigl(\omega_2(t) 
  - \alpha S_2(t)\delta_z\bigr) \,\weakto\, \alpha \delta_z\,, \qquad
  \text{as }\, t \to 0\,,
\]
because $\hat\omega_1(t)$ and $\omega_2(t) - \alpha S_2(t)\delta_z$
converge strongly to zero in $L^1(\R^2_+)$. Therefore $\omega(t)$ satisfies all conditions
stated in Theorem~\ref{thm:main}, and the calculations above also show that
estimate \eqref{firstasym} holds. 

It remains to verify that $\omega(t)$ is indeed a solution of the vorticity
equation in the upper half-plane, which satisfies the integral condition
\eqref{noslip}. This can be done by undoing the various changes of variables
that were used earlier in this section to make the integral equation
\eqref{IntEqIni} amenable to a fixed point argument. The procedure is rather
classical, and the details can be left to the reader; in particular, we take
here for granted that the solution $\bom(t)$ of \eqref{def:hatom1}--\eqref{def:om2}
is smooth for positive times. Starting from the integral equation
\eqref{def:hatom1} and recalling the definition of the two-parameter semi-group
$\Sigma_\alpha$, it is straightforward to verify that $\hat \omega_1$ satisfies
the linearized equation \eqref{hatR2} which, according to \eqref{F12id}, can be
written in the equivalent form
\[
  \partial_t \hat \omega_1(x,t) \,=\, \Delta \hat \omega_1 (x,t)- \div (\chi(x)\,
  u (x,t)\,\omega(x,t)),\qquad x\in \R^2\,,\quad t\in (0,T)\,. 
\]
Now defining $\omega_1 := \bar{\omega}_1 + \hat{\omega}_1$, we deduce that
$\omega_1$ is a solution of the evolution equation \eqref{R2vort} in the whole
plane $\R^2$, which implies that the restriction of $\omega_1$ to the upper
half-plane satisfies the integral equation \eqref{om1int}. Finally, 
summing up the integral relations \eqref{om1int} and \eqref{om2int}, we
conclude that $\omega := \omega_1+ \omega_2$ satisfies the integral equation
\eqref{IntEqIni} for all $ \in (0,T)$, and is therefore a mild solution of
\eqref{2Dvort} in $L^1_\perp(\R^2_+)$. 
 
\medskip\noindent{\bf Uniqueness part\:}
Conversely, let $\omega\in C^0((0,T), L^1_\perp(\R^2_+) \cap L^{4/3}(\R^2_+))$
be a solution of \eqref{2Dvort} in the sense of Definition~\ref{def:sol}, and
assume that $\omega$ satisfies the hypotheses of Theorem~\ref{thm:main}.  Using
Lemma~\ref{lem:inteq}, we decompose $\omega = \bar \omega_1 + \hat \omega_1 +
\omega_2$ as in \eqref{omudecomp}, and we know from Lemma~\ref{lem:decomp}
that the pair $\bom := (\hat\omega_1,\omega_2)$ satisfies the integral
equations \eqref{def:hatom1}--\eqref{def:om2}.  It follows from the second assumption
in \eqref{maincond} that the pair $\bom$ belongs to the space $\cZ_T$ defined
in \eqref{cZdef}, and by construction $\bom$ is a fixed point of the map $\Phi$
given by \eqref{FP1}, \eqref{FP2}.

Now, the second assumption in \eqref{maincond} implies that 
\[
  \|\hat\omega_1 + \omega_2\|_\star \,\equiv\, \sup_{t\in (0,T)} t^{1/4}
  \|\omega(t) -\bar\omega_1(t)\|_{L^{4/3}} \,\lesssim\, T^{1/4-\beta}\,,
\]
so that $\|\hat\omega_1 + \omega_2\|_\star$ can be made arbitrarily small by taking
$T$ small enough. In addition, since $\omega_2 = \Phi_2(\hat\omega_1,\omega_2)$, 
it follows from \eqref{Phibd2} that
\[
  \|\omega_2\|_\star \,\lesssim\, \alpha^2 T^{9/8} + \alpha \bigl(T^{1/8}
  + T^{1/4} \|\hat\omega_1 + \omega_2\|_\star\bigr) + \|\hat\omega_1 + \omega_2\|_\star^2\,,
\]
which means that $\|\omega_2\|_\star$, hence also $\|\hat\omega_1\|_\star$,
can be made arbitrarily small by taking $T$ small enough. So, if $T > 0$
is sufficiently small, the solution $\bom = (\hat\omega_1,\omega_2)$ is
contained in the ball $\cB_\rho$ in which the map $\Phi$ is a strict
contraction. Summarizing, we have shown that any solution of \eqref{IntEq}
satisfying the assumptions of Theorem~\ref{thm:main} coincides, for sufficiently
small times, with the particular solution constructed by our fixed point
argument. This proves uniqueness of the solution for short times, hence
for all times because for any $t > 0$ the solution belongs to the
space $L^1_\perp(\R^2_+)$ where local uniqueness is known to hold,
see \cite{Abe}. \QED

%%%%%%%%%%%%%%%%%%%%%%%%%%%%%%%%%%%%%%%%%%%%%%%%%%%%%%%%%%%%%%%%%%%%%%%%%%%
%%%%%%%%%%%%%%%%%%%%%%%%%%%%%%%%%%%%%%%%%%%%%%%%%%%%%%%%%%%%%%%%%%%%%%%%%%%

\section{Additional properties of the solution}
\label{sec5}

In this final section we establish a few additional properties of the
solution of \eqref{IntEqIni} whose existence and uniqueness is established
in Section~\ref{sec4}. In particular, we conclude the proof of
Theorem~\ref{thm:main} in Section~\ref{ssec51} and of Proposition~\ref{prop:timedecay}
in Section~\ref{ssec52}. The proof of Proposition~\ref{prop:speed}
is a relatively straightforward calculation that is presented in Section~\ref{ssec53}. 

\subsection{Scale invariant estimates and short time behavior}\label{ssec51}

We first show that the solution of \eqref{IntEqIni} satisfies the
usual scale invariant estimates not only in $L^1(\R^2_+)$ and $L^{4/3}(\R^2_+)$,
but in all Lebesgue spaces. 

\begin{lem}\label{lem:genprop}
Under the assumptions of Theorem~\ref{thm:main}, the solution of
\eqref{IntEqIni} satisfies
\begin{equation}\label{genom}
  \Omega_p \,:=\, \sup_{t \in (0,T)} t^{1-1/p}\|\omega(t)\|_{L^p} \,<\, \infty\,, \qquad
 U_q \,:=\, \sup_{t \in (0,T)} t^{1/2-1/q}\|u(t)\|_{L^q} \,<\, \infty\,,
\end{equation}
for all $p \in [1,\infty]$ and all $q \in (2,\infty]$.
%In addition $\|\omega(t) - \alpha S(t)\delta_z\|_{L^1} \to 0$ as $t \to 0$. 
\end{lem}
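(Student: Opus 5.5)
We already know from \eqref{maincond} that $\Omega_1<\infty$ and $\Omega_{4/3}<\infty$, hence $\Omega_p<\infty$ for $p\in[1,4/3]$ by interpolation. The plan is a standard bootstrap on the integral equation \eqref{IntEqIni}, using the semigroup splitting property to avoid the singularity at $s=0$. Fix $t\in(0,T)$ and apply Definition~\ref{def:sol} with $t_0=t/2$:
\[
  \omega(t) \,=\, S(t/2)\omega(t/2) - \int_{t/2}^t S(t-s)\div\bigl(u(s)\omega(s)\bigr)\dd s\,.
\]
By \eqref{Stokes2}, the linear term is bounded by $C t^{-(1-1/p)}\|\omega(t/2)\|_{L^1}\le C\Omega_1 t^{-(1-1/p)}$ for every $p\in[1,\infty]$. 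The nonlinear term only involves times $s\in[t/2,t]$, where all quantities are comparable to their values at time $t$. Hence the self-similar weights can be pulled out, and only the singularity at $s=t$ remains.

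For the nonlinear term we need $L^r\to L^p$ bounds for $S(t-s)\div$, not just $L^1\to L^p$. Remark~\ref{rem:Stokes2} (Schur's bound applied with the kernel $\nabla_y K$) gives $\|S(\tau)\div f\|_{L^p}\lesssim \tau^{-1/2-(1/r-1/p)}\|f\|_{L^r}$ for $1<r\le p$. The endpoint $r=1$ also holds by \eqref{Stokes2}. We iterate as follows.

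\emph{Step 1.} From $\Omega_{4/3}$ and Lemma~\ref{lem:HLS} we get $\|u(s)\|_{L^4}\lesssim s^{-1/4}$, so $\|u\omega(s)\|_{L^r}\lesssim s^{-(3/2-1/r)}$ for $1/r=1/4+1/q$ with $q\le 4/3$, using Hölder together with the known $\Omega_q$.

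\emph{Step 2.} Choose $p$ with $1/r-1/p<1/2$, so that the time integral $\int_{t/2}^t (t-s)^{-1/2-(1/r-1/p)}\dd s$ converges. This gives $\Omega_p<\infty$ for a range of $p$ strictly beyond $4/3$, for instance any $p<4$ after the first step.

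\emph{Step 3.} Repeat. Once $\Omega_p<\infty$ for some $p>2$, Lemma~\ref{lem:HLS}(2) with the pair $(1,p)$ gives $\|u(s)\|_{L^\infty}\lesssim s^{-1/2}$, that is, $U_\infty<\infty$. Then take $f=u\omega\in L^p$ with $\|u\omega(s)\|_{L^p}\lesssim s^{-1/2}s^{-(1-1/p)}$, and use $S(t-s)\div:L^p\to L^\infty$ with the integrable singularity $(t-s)^{-1/2-1/p}$ for $p>2$. This yields $\Omega_\infty<\infty$, and interpolation covers all $p\in[1,\infty]$.

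The velocity bound $U_q$ then follows from Lemma~\ref{lem:HLS}: part 1 with $1/q=1/p-1/2$, $p\in(1,2)$, covers $q\in(2,\infty)$, and part 2 covers $q=\infty$.

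The main obstacle is the unbounded-$\alpha$ issue, which is exactly why the split at $t/2$ is needed. If we integrated from $0$ we would need $\int_0^t(t-s)^{-\gamma}s^{-\delta}\dd s$ with $\delta$ possibly at least $1$. Starting at $t/2$ avoids this, since $\Omega_1$ controls $\|\omega(t/2)\|_{L^1}$ uniformly.

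A secondary technical point is the validity of the $L^r\to L^p$ estimate for $S(t)\nabla$ stated in Remark~\ref{rem:Stokes2}. That estimate needs $\sup_x\|\nabla_yK(x,\cdot,t)\|_{L^{r'}}$ to be finite with the correct scaling $t^{-1/2-(1-1/r')}\cdot$, and this should follow from \eqref{Kbd2}, Lemma~\ref{lem:S1} for the Gaussian part, and scaling. If that is delicate for some exponents, I would instead use the $L^1$ bound \eqref{Stokes2} combined with the splitting $S(t-s)=S((t-s)/2)S((t-s)/2)$. Alternatively, I would estimate $\omega$ in $L^\infty$ via the $L^1\to L^\infty$ bound applied on $[t/2,t]$ with $\|u\omega\|_{L^1}$, iterating exponents until the singularity becomes integrable.
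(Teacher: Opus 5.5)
Your proof is correct and is essentially the paper's bootstrap: the $L^r\to L^p$ bounds for $S(t)\nabla$ from Remark~\ref{rem:Stokes2}, H\"older, and Lemma~\ref{lem:HLS}. The only difference is where the Duhamel formula starts. You restart it at $t_0=t/2$ and control the linear term by $\Omega_1$. The paper instead integrates \eqref{IntEqIni} from $s=0$, bounds $\alpha S(t)\delta_z$ by \eqref{Stokes3}, and chooses $r\in(1,2)$ so that $\|u(s)\omega(s)\|_{L^r}\lesssim s^{-(3/2-1/r)}$ stays integrable at $s=0$; it splits at $t/2$ only for $p=\infty$. So the size of $\alpha$ plays no role in either version, contrary to your remark.

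There is one harmless slip. With only $\Omega_p$ for $p\le 4/3$ available, H\"older forces $r=1$, so your first step gives $p<2$ rather than $p<4$. The next round with $r\in(1,2)$ already reaches every finite $p$, and your Step~3 then closes the argument.
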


\begin{proof}
This is a classical argument that we reproduce here for completeness.
We start from the integral equation \eqref{IntEqIni} which takes the
form $\omega(t) = \alpha S(t)\delta_z - I(t)$, where $I(t)$ denotes
the integral term. We know from \eqref{Stokes3} that $\sup_{t \in (0,T)}
t^{1-1/p}\|S(t)\delta_z\|_{L^p} < \infty$, so we only need to bound
$\|I(t)\|_{L^p}$ for $p \in [1,\infty]$. As was already observed in
Remark~\ref{rem:2D}, the second inequality in \eqref{maincond} implies
that $\Omega_{4/3} < \infty$, so that $U_4 < \infty$ by Lemma~\ref{lem:HLS}. 

We first estimate the integral term $I(t) = \int_0^t S(t-s)\div\bigl(u(s)\omega(s)\bigr)
\dd s$ for $p \in [1,2)$. Using the bound \eqref{Stokes2} on the Stokes
semigroup, we find as in Remark~\ref{rem:mild}\:
\[
  \|I(t)\|_{L^p} \,\lesssim\, \int_0^t \frac{\|u(s)\omega(s)\|_{L^1}}{(t-s)^{3/2-1/p}}
  \dd s \,\lesssim\, \int_0^t \frac{U_4\,\Omega_{4/3}}{(t-s)^{3/2-1/p}s^{1/2}}\dd s
  \,\lesssim\, \frac{\Omega_{4/3}^2}{t^{1-1/p}}\,, \qquad t \in (0,T)\,.
\]
This shows that $\Omega_p < \infty$ for $p \in [1,2)$, and Lemma~\ref{lem:HLS}
implies that $U_q < \infty$ for $q \in (2,\infty)$. 

Next, we assume that $p \in [2,\infty)$ and we take $r \in (1,2)$ such that
$1/r < 1/2 + 1/p$. Using the $L^p$--$L^r$ estimate for the operator $S(t)\nabla$, see
Remark~\ref{rem:Stokes2}, we obtain as before
\[
  \|I(t)\|_{L^p} \,\lesssim\, \int_0^t \frac{\|u(s)\omega(s)\|_{L^r}}{(t-s)^{1/2+1/r-1/p}}
  \dd s \,\lesssim\, \int_0^t \frac{U_q\,\Omega_\theta}{(t-s)^{1/2+1/r-1/p}
  s^{3/2-1/r}}\dd s \,\lesssim\, \frac{\Omega_\theta^2}{t^{1-1/p}}\,,
\]
where we used H\"older's inequality $\|u \omega\|_{L^r} \le \|u\|_{L^q}\|\omega\|_{
L_\theta}$ and the Hardy--Littlewood--Sobolev bound $\|u\|_{L^q} \lesssim \|\omega\|_{L_\theta}$
with
\[
  \frac{1}{q} + \frac{1}{\theta} \,=\, \frac{1}{r} \quad\text{and}\quad
  \frac{1}{q} \,=\, \frac{1}{\theta} - \frac{1}{2}\,, \qquad \text{which leads to}
  \qquad  q \,=\, \frac{4r}{2-r} \quad\text{and}\quad \theta \,=\, \frac{4r}{2+r}\,.
\]
We have thus shown that $\Omega_p < \infty$ for all $p < \infty$, and using
Lemma~\ref{lem:HLS} again we deduce that $U_\infty < \infty$. 

Finally, if $p = \infty$, we observe that 
\[
  \|I(t)\|_{L^\infty} \,\lesssim\, \int_0^{t/2} \frac{\|u(s)\omega(s)\|_{L^1}}{(t-s)^{3/2}}
  \dd s + \int_{t/2}^t \frac{\|u(s)\omega(s)\|_{L^4}}{(t-s)^{3/4}}\dd s
  \,\lesssim\, \frac{1}{t}\bigl(\Omega_{4/3}^2 + U_\infty\Omega_4\bigr)\,,
\]
which shows that $\Omega_\infty < \infty$. The proof is thus complete. 
\end{proof}

\subsection{Localization properties}\label{ssec52}

For $0 < t < T$ the vorticity $\omega(x,t)$ given by Theorem~\ref{thm:main} is a
smooth solution of the advection-diffusion equation \eqref{2Dvort} in $\R^2_+$, which
can be studied using energy estimates. Since $\omega$ does not satisfy any simple
boundary condition, we use spatial weights that vanish at the boundary together
with their first order derivatives. As a first example, we show that the vorticity
is strongly localized in the vertical direction. 

\begin{prop}\label{prop:weight}
Under the assumptions of Theorem~\ref{thm:main}, the solution of
\eqref{IntEqIni} satisfies, for any $m \in \N$, 
\begin{equation}\label{est:weight}
  \sup_{t \in (0,T)} \int_{\R^2_+} x_2^m\,|\omega(x,t)|\dd x \,<\, \infty\,.
\end{equation}
\end{prop}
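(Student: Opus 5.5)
We would prove \eqref{est:weight} by a weighted $L^1$ estimate on the PDE \eqref{2Dvort}, with a weight that vanishes to second order at the boundary so that no boundary terms arise. The alternative is to estimate the moments directly from the integral equation \eqref{IntEqIni}, which we keep as a fallback.

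\emph{Main approach.} Fix $m\ge 2$ (the cases $m=0,1$ then follow, since $x_2\le 1+x_2^2$ and $\|\omega(t)\|_{L^1}$ is bounded). Use the weight $x_2^m$, which vanishes together with its first derivative on $\partial\R^2_+$. Let $\eta_\epsilon$ be a smooth convex approximation of $|\cdot|$, and let $\theta_R(x)=\theta(x/R)$ be a horizontal/vertical cut-off at scale $R$.

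Multiply \eqref{2Dvort} by $x_2^m\theta_R\,\eta_\epsilon'(\omega)$ and integrate over $\R^2_+$. The boundary terms vanish because $u=0$ on $\partial\R^2_+$ and $x_2^m$, $\partial_2(x_2^m)$ vanish at $x_2=0$ for $m\ge2$. The convexity term has a good sign. After letting $\epsilon\to0$ and $R\to\infty$, we obtain
\[
  \frac{d}{dt}\int x_2^m|\omega|\dd x \,\le\, m(m-1)\int x_2^{m-2}|\omega|\dd x + m\int x_2^{m-1}|u_2|\,|\omega|\dd x\,.
\]
To justify the limits we use the parabolic regularity from Remark~\ref{rem:mild} and the a priori finiteness of moments for $t>0$. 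This finiteness can itself be obtained from a version with the truncated weight $\min(x_2,R)^m$, where the Gronwall constants do not depend on $R$.

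\emph{Bounding the right-hand side.} For the transport term we want $|u_2(x)|\lesssim x_2\|u\|_{\dots}$. We would use $|u_2(x)|\le \|u\|_{L^\infty}\lesssim U_\infty t^{-1/2}$ from Lemma~\ref{lem:genprop}, which gives a contribution $\lesssim t^{-1/2}\int x_2^{m-1}|\omega|$. Interpolating $x_2^{m-1}\le 1+x_2^m$ and $x_2^{m-2}\le 1+x_2^m$ yields
\[
  \frac{d}{dt}M_m \,\le\, C(1+t^{-1/2})(M_m+\|\omega\|_{L^1})\,.
\]
Since $t^{-1/2}$ is integrable at $0$, a Gronwall argument (Appendix~\ref{appC}) on $(t_0,t)$ closes the estimate, provided $\limsup_{t_0\to0}M_m(t_0)<\infty$.

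\emph{Main obstacle: the initial time.} The weight is unbounded while $\omega(t)$ only converges weakly to $\alpha\delta_z$. We would control $M_m(t_0)$ for small $t_0$ directly from the decomposition $\omega=\bar\omega_1+\hat\omega_1+\omega_2$ of Section~\ref{sec4}, or equivalently from \eqref{IntEqIni}:
\begin{itemize}
\item $\bar\omega_1$ is Gaussian, so its moments are bounded.
\item For $S(t)\delta_z$, Lemma~\ref{lem:K0bounds} shows that $K_0$ carries the factor $e^{-x_2^2/(5t)}$, so its moments are $O(1)$. The mirror Gaussian term is likewise $O(1)$.
\item For the Duhamel term, the weighted kernel bounds $\int x_2^m|K(x,y,t)|\dd x\lesssim (y_2+\sqrt t)^m$ and the analogue for $\nabla_yK$ reduce the problem to $\int_0^t (t-s)^{-1/2}\int(1+y_2^m)|u\omega|(s)\dd y\dd s$. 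This quantity is in turn controlled by $\sup_s M_m(s)$ times $\|u(s)\|_{L^\infty}\lesssim s^{-1/2}$, multiplied by a small factor $t^{\cdot}$ plus bounded terms.
\end{itemize}
For small $T$ this gives a closed bootstrap for $\sup_{(0,T)}M_m$, first with the truncated weight and then letting $R\to\infty$. We expect this small-time weighted Duhamel bound to be the most delicate step. In particular, the $\nabla_y K$ estimate \eqref{Kbd2} lacks a factor $y_2$, and its $x$-integral against $x_2^m$ must be checked to still produce $(y_2+\sqrt t)^m/(y_2+\sqrt t)$.
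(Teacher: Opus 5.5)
Your energy inequality on $(t_0,t)$ is fine. It is the paper's \eqref{Ediffeq}, and a linear Gronwall argument works as well as the paper's sublinear one. The problem is the step you flag as delicate, $\limsup_{t_0\to0}M_m(t_0)<\infty$: this is the real content of the proposition, and the weighted Duhamel bootstrap you propose for it does not close.

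The trouble is not \eqref{Kbd2}; in fact $\int x_2^m|\nabla_yK_0(x,y,\tau)|\dd x\lesssim\tau^{m/2}(y_2+\sqrt\tau)^{-1}$. It is the Gaussian part of the kernel: for $y_2\gg\sqrt\tau$, $\int x_2^m|\nabla_y K(x,y,\tau)|\dd x$ is of order $\tau^{-1/2}y_2^m$. Using $\|u(s)\|_{L^\infty}\le U_\infty s^{-1/2}$, you then get
\[
  \sup_{(0,T)}M_m \,\le\, C + C\,U_\infty\Bigl(\int_0^t\frac{\dd s}{(t-s)^{1/2}\,s^{1/2}}\Bigr)\sup_{(0,T)}M_m \,=\, C + \pi\,C\,U_\infty\sup_{(0,T)}M_m\,.
\]
This bound is scale invariant, so no positive power of $T$ appears and there is no small factor. $U_\infty$ is at least of order $\alpha$, since already $\|\bar u_1(s)\|_{L^\infty}\sim\alpha s^{-1/2}$. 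Hence the bootstrap closes only for small $\alpha$, whereas Theorem~\ref{thm:main} is about arbitrary $\alpha$. Rescuing it would need a bound on $u(s)$, uniform in $s$, on $\{y_2\ge 2\}$. That is localization information of the type of Proposition~\ref{prop:weight2}, which your argument does not supply. Your separate ``a priori finiteness'' step has the same defect: Gronwall on $(t_0,t)$ only bounds $M_m(t)$ in terms of $M_m(t_0)$.

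You already had the right tool with the truncated weight. The missing idea is to send $t_0\to0$ \emph{before} removing the truncation. Take $\chi_\epsilon=x_2^m/(1+\epsilon x_2^m)$, or a smoothed $\min(x_2,R)^m$. It satisfies $|\nabla\chi_\epsilon|\le C_0\chi_\epsilon^{1-1/m}$ and $\Delta\chi_\epsilon\le C_0\chi_\epsilon^{1-2/m}$ with $C_0$ independent of $\epsilon$, so the Gronwall constants are uniform in $\epsilon$. Because the weight is bounded by $1/\epsilon$, the initial term needs no weighted information on the remainder:
\[
  \int_{\R^2_+}\chi_\epsilon\,|\omega(x,t_0)|\dd x \,\le\, \frac{1}{\epsilon}\,\bigl\|\omega(t_0)-\alpha S(t_0)\delta_z\bigr\|_{L^1} + \alpha\int_{\R^2_+}\chi_\epsilon\,\bigl|\bigl(S(t_0)\delta_z\bigr)(x)\bigr|\dd x\,.
\]
The first term tends to zero by \eqref{firstasym}. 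The second tends to $\alpha\chi_\epsilon(z)\le\alpha$, since the boundary-layer part $K_2(\cdot,z,t_0)$ contributes only $\cO(t_0^{m/2})$ by \eqref{K2bd1}. This gives a bound on $\int\chi_\epsilon|\omega(t)|\dd x$ independent of $\epsilon$ for all $t\in(0,T)$. Monotone convergence as $\epsilon\to0$ then yields \eqref{est:weight}. This is exactly the paper's proof, and it makes your separate finiteness step unnecessary.
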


\begin{proof}
Since $\omega(t)$ is uniformly bounded in $L^1(\R^2_+)$ for $t \in (0,T)$, 
we can assume without loss of generality that $m \ge 2$. Given $\lambda > 0$,
let $\psi_\lambda : \R \to \R_+$ be the function defined by
\[
  \psi_\lambda(\omega) \,=\, \sqrt{\lambda^2 + \omega^2} - \lambda\,, \qquad
  \forall\,\omega \in \R\,.
\]
Clearly $\psi_\lambda$ is strictly convex and $0 \le \psi_\lambda(\omega) \le
|\omega|$ for all $\omega \in \R$. Given $\epsilon > 0$ we also introduce
the spatial weight $\chi_\epsilon : \R^2_+ \to \R_+$ defined by
\begin{equation}\label{defchieps}
  \chi_\epsilon(x) \,=\, \frac{x_2^m}{1 + \epsilon x_2^m}\,, \qquad
  \forall\,x = (x_1,x_2) \in \R^2_+\,.
\end{equation}
Obviously $0 \le \chi_\epsilon(x) \le \min(x_2^m,1/\epsilon)$, and there exists
a constant $C_0 = C_0(m) > 0$ such that
\begin{equation}\label{chiprop}
  |\nabla \chi_\epsilon(x)| \,\le\, C_0\,\chi_\epsilon(x)^{1-1/m}\,, \qquad
  \Delta \chi_\epsilon(x) \,\le\, C_0\,\chi_\epsilon(x)^{1-2/m}\,, \qquad
   \forall\,x \in \R^2_+\,,\quad \forall\,\epsilon > 0\,.
\end{equation}
Our main tool is the energy function defined by 
\begin{equation}\label{def:Eepslam}
  E_{\epsilon,\lambda}(t) \,=\, \int_{\R^2_+} \chi_\epsilon(x)\,\psi_\lambda(
  \omega(x,t))\dd x\,, \qquad t \in (0,T)\,.
\end{equation}

We differentiate \eqref{def:Eepslam} with respect to $t$ and integrate by
parts, using the important observation that the weight $\chi_\epsilon$ and
its gradient $\nabla\chi_\epsilon$ vanish on the boundary. This gives
\begin{align*}
  E_{\epsilon,\lambda}'(t) \,&=\, \int_{\R^2_+} \chi_\epsilon\psi_\lambda'(\omega)
  \partial_t \omega\dd x \,=\, \int_{\R^2_+} \chi_\epsilon\psi_\lambda'(\omega)
  \bigl(\Delta\omega - (u\cdot\nabla)\omega\bigr)\dd x \\
  \,&=\, \int_{\R^2_+} \bigl(\Delta \chi_\epsilon + u\cdot\nabla\chi_\epsilon\bigr)
  \psi_\lambda(\omega)\dd x - \int_{\R^2_+} \chi_\epsilon \psi_\lambda''(\omega)
  |\nabla\omega|^2\dd x \\
  \,&\le\, \int_{\R^2_+} \bigl(\Delta \chi_\epsilon + u\cdot\nabla\chi_\epsilon\bigr)
  \psi_\lambda(\omega)\dd x\,.
\end{align*}
The right-hand side will be estimated in terms of the quantities
\begin{equation}\label{mincond}
  \cM \,:=\, \sup_{t \in (0,T)}\|\omega(t)\|_{L^1}\,, \qquad \text{and}\qquad
  \cU \,:=\, \sup_{t \in (0,T)}t^{1/2}\|u(t)\|_{L^\infty}\,.
\end{equation}
Using \eqref{chiprop}, H\"older's inequality, and the bound $\psi_\lambda(\omega) \le
|\omega|$, we easily find
\begin{align*}
  \int_{\R^2_+} \Delta \chi_\epsilon\,\psi_\lambda(\omega)\dd x \,&\le\,
  C_0 \int_{\R^2_+} \chi_\epsilon^{1-2/m}\psi_\lambda(\omega)\dd x \,\le\,
  C_0\,E_{\epsilon,\lambda}^{1-2/m}\cM^{2/m}\,,\\
  \int_{\R^2_+} |u\cdot\nabla \chi_\epsilon|\,\psi_\lambda(\omega)\dd x \,&\le\,
  C_0\,\|u\|_{L^\infty}\int_{\R^2_+} \chi_\epsilon^{1-1/m}\psi_\lambda(\omega)\dd x \,\le\,
  C_0\,\|u\|_{L^\infty}\,E_{\epsilon,\lambda}^{1-1/m}\cM^{1/m}\,.
\end{align*}
Since $\|u(t)\|_{L^\infty} \le \cU t^{-1/2}$, we arrive at the differential
inequality
\begin{equation}\label{Ediffeq}
  E_{\epsilon,\lambda}'(t) \,\le\, C_0\Bigl(E_{\epsilon,\lambda}(t)^{1-2/m}\cM^{2/m}
  + \frac{\cU}{\sqrt{t}}\,E_{\epsilon,\lambda}(t)^{1-1/m}\cM^{1/m}\Bigr)\,,
  \qquad t \in (0,T)\,.
\end{equation}

To simplify the analysis, we define $f(t) = E_{\epsilon,\lambda}(t)^{2/m}\cM^{-2/m}$
and we observe that
\[
  f'(t) \,\le\, \frac{2C_0}{m}\Bigl(1 + \frac{\cU}{\sqrt{t}}\,f(t)^{1/2}\Bigr)
  \,\le\, \frac{C_1}{\sqrt{t}}\bigl(1 + f(t)\bigr)^{1/2}\,, \qquad t \in (0,T)\,,
\]
where $C_1 = (4C_0/m) \max\bigl(\sqrt{T},\cU\bigr)$. Integrating over
the time interval $(t_0,t)$, we obtain
\[
  \bigl(1 + f(t)\bigr)^{1/2} \,\le\, \bigl(1 + f(t_0)\bigr)^{1/2} + C_1\bigl(\sqrt{t}
  - \sqrt{t_0}\bigr)\,, \qquad 0 < t_0 < t < T\,,
\]
and returning to the energy \eqref{eq:Eepslam} we arrive at
\begin{equation}\label{eq:Eepslam}
  \bigl(\cM^{2/m} + E_{\epsilon,\lambda}(t)^{2/m}\bigr)^{1/2} \,\le\,
  \bigl(\cM^{2/m} + E_{\epsilon,\lambda}(t_0)^{2/m}\bigr)^{1/2} 
  + C_1\,\cM^{1/m}\bigl(\sqrt{t} - \sqrt{t_0}\bigr)\,.
\end{equation}

It remains to derive \eqref{est:weight} from \eqref{eq:Eepslam} by sending
the parameters $t_0$, $\lambda$, and $\epsilon$ to zero. We first observe that
\begin{equation}\label{eq:Eepslam2}
  E_{\epsilon,\lambda}(t_0) \,\le\, \int_{\R^2_+} \chi_\epsilon |\omega(t_0)|\dd x
  \,\le\, \int_{\R^2_+} \chi_\epsilon |\omega(t_0) - \alpha S(t_0)\delta_z|\dd x
  + \alpha \int_{\R^2_+} \chi_\epsilon |S(t_0)\delta_z|\dd x\,.
\end{equation}
The first integral in the right-hand side converges to zero as $t_0 \to 0$
because $\chi_\epsilon$ is bounded and we know from the short-time estimate
\eqref{firstasym} that $\|\omega(t_0) - \alpha S(t_0)\delta_z\|_{L^1} \to
0$. Using the expression \eqref{Kdecomp}, the limit of the second integral can
be computed explicitly and is found to be equal to
$\chi_\epsilon(z) = (1+\epsilon)^{-1}$. Note that, in view of \eqref{K2bd1} and
\eqref{defchieps}, the contribution of the boundary term $K_2(\cdot,z,t_0)$ to
this calculation is $\cO(t_0^{m/2})$ as $t_0 \to 0$. We thus obtain
\begin{equation}\label{eq:Eepsbd}
  \bigl(\cM^{2/m} + E_{\epsilon,\lambda}(t)^{2/m}\bigr)^{1/2} \,\le\, \bigl(\cM^{2/m}
  + \alpha\bigr)^{1/2} + C_1\,\cM^{1/m}t^{1/2}\,, \qquad 0 < t < T\,.
\end{equation}
In a second step we invoke the monotone convergence theorem to take the limits
$\lambda \to 0$, and then $\epsilon \to 0$. Since
\[
  E_{\epsilon,\lambda}(t) \,\xrightarrow[\genfrac{}{}{0pt}{}{\lambda \to 0}{\epsilon \to 0}]{}\,
  \int_{\R^2_+}x_2^m |\omega(x,t)|\dd x\,, \qquad 0 < t < T\,,
\]
we deduce \eqref{est:weight} from \eqref{eq:Eepsbd}.  
\end{proof}

\begin{cor}\label{cor:ulp}
Under the assumptions of Theorem~\ref{thm:main}, the velocity field
satisfies
\begin{equation}\label{ulpbd}
  \sup_{t \in (0,T)}\|u(t)\|_{L^p} \,<\, \infty\,, \qquad \forall\, p \in (1,2)\,.
\end{equation}
As a consequence, the kinetic energy $\|u(t)\|_{L^2}^2$ is finite for all
$t \in (0,T)$. 
\end{cor}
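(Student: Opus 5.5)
The plan is to combine the weighted moment bound of Proposition~\ref{prop:weight} with the improved Biot--Savart estimate \eqref{uLp} of Lemma~\ref{lem:BS}. Fix $p \in (1,2)$ and set $\theta_p = 2/p - 1 \in (0,1)$. For each $t \in (0,T)$, the vorticity $\omega(t)$ belongs to $L^1(\R^2_+)$ and $u(t) = \BS[\omega(t)]$, so \eqref{uLp} gives
\[
  \|u(t)\|_{L^p} \,\le\, C_p \int_{\R^2_+} x_2^{\theta_p}\,|\omega(x,t)|\dd x\,.
\]
Strictly speaking, Lemma~\ref{lem:BS} was derived pointwise from the kernel bound \eqref{BSimproved} and Minkowski's inequality, and this only requires the right-hand side to be finite. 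So no further regularity is needed beyond $\omega(t) \in L^1$ with a finite weighted moment.

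Since $0 < \theta_p < 1$, we have the elementary bound $x_2^{\theta_p} \le 1 + x_2$, so that
\[
  \int_{\R^2_+} x_2^{\theta_p}|\omega(x,t)|\dd x \,\le\, \|\omega(t)\|_{L^1} + \int_{\R^2_+} x_2\,|\omega(x,t)|\dd x\,.
\]
The first term is bounded uniformly for $t\in(0,T)$ by the first condition in \eqref{maincond}. The second is bounded uniformly by \eqref{est:weight} with $m=1$, which is covered by Proposition~\ref{prop:weight}: its proof reduces the case $m=1$ to $m \ge 2$ by interpolation, since $x_2 \le 1 + x_2^2$. Taking the supremum over $t$ yields \eqref{ulpbd}.

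For the energy statement, fix $t \in (0,T)$. By Lemma~\ref{lem:genprop} (or Remark~\ref{rem:2D} for $q \in (2,4]$) we have $u(t) \in L^q(\R^2_+)$ for some $q > 2$, say $q = 4$. Combined with $u(t) \in L^p$ for some $p \in (1,2)$, Hölder interpolation gives $\|u(t)\|_{L^2} \le \|u(t)\|_{L^p}^{\vartheta}\|u(t)\|_{L^q}^{1-\vartheta}$ with $1/2 = \vartheta/p + (1-\vartheta)/q$, hence finite kinetic energy.

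There is no serious obstacle here. The only point needing care is to check that \eqref{uLp} applies to $\omega(t)$, which lies in $L^1 \cap L^{4/3}$ rather than merely in some $L^p$ with $p<2$. This is fine, because \eqref{BSimproved} holds pointwise for any integrable $\omega$ for which the Biot--Savart integral \eqref{BSlaw} converges absolutely, and that convergence is guaranteed by $\omega(t) \in L^1 \cap L^{4/3}$.
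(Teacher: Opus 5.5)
Your proposal is correct and follows the paper's proof: you apply \eqref{uLp}, bound $x_2^{\theta_p}\le 1+x_2$, and invoke Proposition~\ref{prop:weight} for the uniform bound. You then interpolate with the $L^q$ bound ($q>2$) to get finite kinetic energy. Your extra remarks on the case $m=1$ and on the applicability of \eqref{uLp} are harmless additions that the paper leaves implicit.
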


\begin{proof}
For any $p \in (1,2)$, it follows from \eqref{uLp} that
\[
  \|u(t)\|_{L^p} \,\lesssim\, \int_{\R^2_+} x_2^{2/p-1} \,|\omega(x,t)|\dd x
  \,\lesssim\, \int_{\R^2_+} (1+x_2)\,|\omega(x,t)|\dd x\,,
\]
and we know from Proposition~\ref{prop:weight} that the right-hand side is uniformly
bounded for $t \in (0,T)$, which gives \eqref{ulpbd}. Since we already know
that $u(t) \in L^q(\R^2_+)$ for all $q > 2$, we conclude that $u(t) \in L^2(\R^2_+)$
by interpolation. 
\end{proof}

\begin{rem}\label{rem:energy}
It is not difficult to prove that $\|u(t)\|_{L^2}^2 = \cO(|\log t|)$ as
$t \to 0$, which is in contrast with the uniform bound \eqref{ulpbd} for
$p \in (1,2)$. Indeed, a direct calculation in Appendix~\ref{appB} shows that
the kinetic energy of the Stokes solution $\alpha S(t)\delta_z$ diverges
logarithmically as $t \to 0$, and using the short time estimate
\eqref{firstasym} it is easy to verify that the kinetic energy of the nonlinear
correction $\omega(t) - \alpha S(t)\delta_z$ vanishes as $t \to 0$.
\end{rem}

\begin{proof}[\bf Proof of Proposition~\ref{prop:timedecay}]
Since the Cauchy problem for \eqref{2Dvort} is locally well-posed in
$L^1_\perp(\R^2_+)$, see \cite{Abe}, we can consider the maximal extension 
$\omega \in C^0\bigl((0,T_*),L^1_\perp(\R^2_+)\cap L^{4/3}(\R^2_+)\bigr)$
of the solution given by Theorem~\ref{thm:main}, where $T_* \in [T,+\infty]$
is the maximal existence time. We know from Corollary~\ref{cor:ulp}
that the associated velocity $u(t) = \BS[\omega(t)]$ belongs to $L^2(\R^2_+)$ for all
$t \in (0,T_*)$, see also Remark~\ref{rem:energy}. Given any $t_0 \in (0,T)$,
this means that $u(t)$ coincides for $t \ge t_0/2$ with the unique finite-energy
solution $\bar u(t)$ of the Navier-Stokes equations in $L^2(\R^2_+)$ such that
$\bar u(t_0/2) = u(t_0/2)$. This solution $\bar u$ is defined on the time interval
$[t_0/2,+\infty)$, and both the energy $\|\bar u(t)\|_{L^2}$ and the enstrophy
$\|\nabla \bar u(t)\|_{L^2}$ are uniformly bounded for $t \ge t_0$; see, for instance,
\cite[Chapter~9]{CF}. Returning to the solution of \eqref{IntEqIni}, we observe
that, for $p \in [1,2)$ and $t \in (0,T_*)$, 
\begin{equation}\label{omglobal}
  \|\omega(t)\|_{L^p} \,\lesssim\, \alpha\|S(t)\delta_z\|_{L^p} +
  \int_0^{t_0}\frac{\|u(s)\|_{L^4}\|\omega(s)\|_{L^{4/3}}}{(t-s)^{3/2-1/p}}\dd s + 
  \int_{t_0}^t\frac{\|\bar u(s)\|_{L^2}\|\nabla\bar u(s)\|_{L^2}}{(t-s)^{3/2-1/p}}\dd s\,.
\end{equation}
This inequality shows in particular that the norms $\|\omega(s)\|_{L^1}$ and
$\|\omega(s)\|_{L^{4/3}}$ cannot blow up in finite time, which implies that
$T_* = +\infty$. The local solution in Theorem~\ref{thm:main} can therefore be
extended to a global solution. 

We also know from Corollary~\ref{cor:ulp} that $\bar u(t_0) \equiv u(t_0) \in
L^p(\R^2_+)$ for any $p > 1$. It thus follows from \cite[Theorem~1]{BM}
that $\|\bar u(t)\|_{L^2} = \cO(t^{-1/2+\epsilon})$ as $t \to +\infty$, for any
$\epsilon > 0$, and classical energy estimates then imply that
$\|\nabla \bar u(t)\|_{L^2}= \cO(t^{-1+\epsilon})$ as $t \to +\infty$.
Returning to \eqref{omglobal} with $p = 1$ and using the estimate
\eqref{StokesLp} for the first term in the right-hand side, we see that
$\|\omega(t)\|_{L^1} = \cO(t^{-1/2})$ as $t \to +\infty$. Since the kinetic
energy of the Stokes solution $\|\alpha S(t)\delta_z\|_{L^2}^2$ decays like $t^{-1}$ as
$t \to +\infty$, as computed in Appendix~\ref{appB}, similar arguments show
that $\|\BS[\omega(t)]\|_{L^2} = \cO(t^{-1/2})$ as $t\to +\infty$. This concludes
the proof of \eqref{timedecay}.
\end{proof}

Using the same techniques as in Proposition~\ref{prop:weight}, we
next show that the solution of \eqref{IntEqIni} is concentrated
for small times in a tiny neighborhood of the point $z = (0,1)$ or
of the boundary. 

\begin{prop}\label{prop:weight2}
Under the assumptions of Theorem~\ref{thm:main}, the solution of
\eqref{IntEqIni} satisfies, for any $\epsilon > 0$ and any $m \in \N$,
\begin{equation}\label{est:weight2}
  \int_{\Omega_\epsilon} \,|\omega(x,t)|\dd x \,=\, \cO(t^{m/2})\,,
  \qquad \text{as }t \to 0\,,
\end{equation}
where $\Omega_\epsilon = \bigl\{x \in \R^2_+\,;\, |x-z| \ge \epsilon
\text{ and } x_2 \ge \epsilon\bigr\}$. 
\end{prop}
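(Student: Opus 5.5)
Following the proof of Proposition~\ref{prop:weight}, I would use a weighted energy method with a weight that vanishes, together with its gradient, both near $z$ and near the boundary. Fix $\epsilon>0$ and $m\in\N$. Let $\chi$ be smooth on $\overline{\R^2_+}$ with $0\le\chi\le 1$, $\chi=1$ on $\Omega_\epsilon$, and $\chi=0$ on $\{|x-z|\le\epsilon/2\}\cup\{x_2\le\epsilon/2\}$. Since $\chi$ is not compactly supported, I would first multiply it by the cutoff $(1+\delta|x|^2)^{-1}$ and remove this factor at the end by monotone convergence, exactly as $\chi_\epsilon$ was used earlier.

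The weight will be $\chi^{k}$, with $k$ large. It satisfies
\[
|\nabla\chi^k|\le C\chi^{k-1}, \qquad |\Delta\chi^k|\le C\chi^{k-2},
\]
and both $\chi^k$ and $\nabla\chi^k$ vanish on $\partial\R^2_+$ and near $z$. I would introduce the energies $E_k(t)=\int\chi^k\psi_\lambda(\omega)\dd x$ for $k=0,1,\dots,N$. The same computation as before gives
\[
E_k'\le C\bigl(E_{k-2}+\|u\|_{L^\infty}E_{k-1}\bigr) \le C\Bigl(E_{k-2}+\frac{\cU}{\sqrt t}\,E_{k-1}\Bigr),
\]
with $E_j\le\cM$ whenever $j\le 0$.

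Next I need the initial values $E_k(t_0)\to 0$ as $t_0\to 0$. By \eqref{firstasym}, $\|\omega(t_0)-\alpha S(t_0)\delta_z\|_{L^1}\to 0$. Moreover $S(t_0)\delta_z=K(\cdot,z,t_0)$ is exponentially small in $L^1$ on the support of $\chi$: the Gaussian terms are localized within $\cO(\sqrt{t_0})$ of $z$ (respectively of $z^*$), and by \eqref{Kbd1} the term $K_0$ carries a factor $e^{-x_2^2/(5t_0)}$ with $x_2\ge\epsilon/2$. Hence $E_k(t_0)\to 0$.

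Induction on $k$ then gives $E_k(t)\lesssim t^{k/2}$. Indeed, if $E_{k-1}\lesssim s^{(k-1)/2}$ and $E_{k-2}\lesssim s^{(k-2)/2}$, integrating from $0$ gives
\[
E_k(t)\lesssim\int_0^t\bigl(s^{(k-2)/2}+s^{-1/2}s^{(k-1)/2}\bigr)\dd s\lesssim t^{k/2},
\]
and the base cases $E_0,E_{-1}\le\cM$ give $E_1\lesssim t^{1/2}$ and $E_2\lesssim t$. Taking $k=m$ (the base step is handled because $t\le T$) and letting $\lambda\to0$ yields \eqref{est:weight2}, since $\chi^m=1$ on $\Omega_\epsilon$.

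The step I expect to be the main obstacle is the justification of the integrations by parts on the unbounded domain for positive times. This needs enough decay of $\omega$ and $\nabla\omega$ at infinity, together with the $\delta$-regularization and the monotone convergence argument. The integration of the weighted energy inequality down to $t_0\to0$ also needs care, since it relies on the exponential smallness of the Stokes kernel away from $z$ and the boundary, via Lemma~\ref{lem:K0bounds}.
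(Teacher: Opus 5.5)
Your proof is correct, and it differs from the paper's in how the weighted differential inequality is closed.

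\textbf{The paper's route.} The paper uses a single weight $\chi_\epsilon(x)=\eta(x_2/\epsilon)\,\eta(|x-z|/\epsilon)$, with $\eta\in C^{m-1,1}$ chosen so that $|\eta'|\lesssim\eta^{1-1/m}$ and $|\eta''|\lesssim\eta^{1-2/m}$. It bounds $\int\chi_\epsilon^{1-j/m}\psi_\lambda(\omega)\dd x\le E^{1-j/m}\cM^{j/m}$ by H\"older, which gives the closed nonlinear inequality \eqref{Ediffeq} for one energy. After the substitution $f=(E/\cM)^{2/m}$, with $f(0)=0$, it invokes the Gr\"onwall-type Lemma~\ref{lem:Gron} to get $E(t)\le\cM(C_3t)^{m/2}$.

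\textbf{Your route.} You keep the whole ladder of weights $\chi^k$, $0\le k\le m$. The derivative of each rung is controlled linearly by the two rungs below it. So you need neither H\"older interpolation nor a nonlinear Gr\"onwall argument: a plain induction gives $E_k\lesssim t^{k/2}$. The initial-time step, via \eqref{firstasym} and the exponential smallness of $K(\cdot,z,t_0)$ on the support of the weight, is the same as in the paper.

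\textbf{Comparison.} Your argument is more elementary. Lemma~\ref{lem:Gron} requires an iteration plus an $\omega$-limit-set argument, whereas you only need a smooth cutoff and an induction. The paper's argument reuses verbatim the inequality from Proposition~\ref{prop:weight} and gets an explicit constant from a single scalar inequality. The two weights are really the same object: $\chi^m$ itself satisfies \eqref{chiprop}. H\"older between $E_m$ and $E_0$ is exactly what replaces your intermediate rungs.

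\textbf{Two small remarks.} First, the factor $(1+\delta|x|^2)^{-1}$ is unnecessary, since $\chi$ is bounded. If you keep it, apply it to every rung; this works because its gradient and Laplacian are bounded by multiples of itself. Second, for $k=1$ the term $\int|\Delta\chi|\,\psi_\lambda(\omega)\dd x$ should simply be bounded by $C\cM$, rather than written as $E_{-1}$.
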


\begin{proof}
We proceed as in Proposition~\ref{prop:weight}, with the difference that
$\epsilon > 0$ is now a fixed parameter, that we take sufficiently small. Given
$m \in \N$ with $m \ge 2$, we choose a localization function $\eta : [0,+\infty) \to [0,1]$
of class $C^{m-1,1}$ such that $\eta(r) = 0$ for $r \le 1/2$, $\eta(r) = 1$ for
$r \ge 1$, and
\begin{equation}\label{etaweight}
  |\eta'(r)| \,\le\, C\,\eta(r)^{1-1/m}\,, \qquad |\eta''(r)| \,\le\,
  C\,\eta(r)^{1-2/m}\,, \qquad \forall\, r \ge 0\,,
\end{equation}
for some positive constant $C$. Such a function exists, and one can take
$\eta(r) = (r-1/2)_+^m$ in a neighborhood of $r = 1/2$. Our weight function
$\chi_\epsilon : \R^2_+ \to [0,1]$ is defined by
\[
  \chi_\epsilon(x) \,=\, \eta\Bigl(\frac{x_2}{\epsilon}\Bigr)\,
  \eta\Bigl(\frac{|x-z|}{\epsilon}\Bigr)\,, \qquad \forall x \in \R^2_+\,.
\]
By construction $\chi_\epsilon$ is equal to $1$ on $\Omega_\epsilon$, and
vanishes identically in a neighborhood of size $\epsilon/2$ of the boundary
and in a similar neighborhood of the point $z = (0,1)$. Moreover, in view of
\eqref{etaweight}, $\chi_\epsilon$ satisfies \eqref{chiprop} for some positive
constant $C_0 = C_0(m,\epsilon)$. Therefore, the energy function defined by
\eqref{def:Eepslam} again satisfies the differential inequality \eqref{Ediffeq}.

The main difference is that the energy function $E_{\epsilon,\lambda}(t_0)$ now
converges to zero as $t_0 \to 0$. Indeed, in the right-hand side of
\eqref{eq:Eepslam2}, the first term is $\cO(t_0^{1/8})$ thanks to
\eqref{firstasym} and the last term converges to zero by direct calculation,
because the Stokes solution $S(t_0)\delta_z$ is asymptotically concentrated near
the point $z$ or near the boundary, where the weight $\chi_\epsilon$ vanishes
identically. As a consequence, the function $f(t) = E_{\epsilon,\lambda}
(t)^{2/m}\cM^{-2/m}$ vanishes at the origin and satisfies
\[
  f'(t) \,\le\, C_2\Bigl(1 + \frac{\cU}{\sqrt{t}}\,f(t)^{1/2}\Bigr)\,, \qquad
  t \in (0,T)\,, 
\]
where $C_2 = 2C_0/m$. Using Lemma~\ref{lem:Gron} below, we deduce that
$f(t) \le C_3t$ for all $t \in [0,T]$, where the constant satisfies 
$C_3 = C_2\bigl(1+\cU C_3^{1/2}\bigr)$, and this means that 
$E_{\epsilon,\lambda}(t) \le \cM\bigl(C_3t\bigr)^{m/2}$. Finally,
recalling that $\chi_\epsilon = 1$ on $\Omega_\epsilon$, we conclude
that 
\[
  \int_{\Omega_\epsilon} \,|\omega(x,t)|\dd x \,\le\, \int_{\R^2_+}
  \chi_\epsilon(x)\,|\omega(x,t)|\dd x \,=\, \lim_{\lambda \to 0}
  E_{\epsilon,\lambda}(t) \,\le\, \cM\bigl(C_3t\bigr)^{m/2}\,,
  \qquad t \in (0,T)\,,
\]
which is the desired result. 
\end{proof}

The localization estimate \eqref{est:weight2} shows that, up to
corrections of size $\cO(t^\infty)$, the solution of \eqref{IntEqIni}
can be decomposed for small times into a vortex that is concentrated
near the initial position $z$, and a boundary layer term. The next
statement makes a link with the decomposition of the vorticity
that was used in Section~\ref{sec4}. 

\begin{cor}\label{cor:local}
If the solution of \eqref{IntEqIni} is decomposed as $\omega(t) =
\omega_1(t) + \omega_2(t)$, where $\omega_1(t)$ is defined in 
\eqref{om1int} and $\omega_2(t)$ in \eqref{om2int}, the following
estimate holds for any $\epsilon > 0$ and any $m \in \N$\:
\begin{equation}\label{est:om1om2}
  \int_{|x-z| \ge \epsilon} \,|\omega_1(x,t)|\dd x +
  \int_{x_2 \ge \epsilon} \,|\omega_2(x,t)|\dd x \,=\, \cO(t^{m/2})\,,
  \qquad \text{as }t \to 0\,.
\end{equation}
\end{cor}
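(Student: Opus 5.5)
The plan is to combine Proposition~\ref{prop:weight2} with the linear structure of the integral equations \eqref{om1int}--\eqref{om2int}. Since $\omega = \omega_1 + \omega_2$, it is enough to prove the bound for $\omega_1$ on $\{|x-z|\ge\epsilon\}$. Indeed, on $\{x_2\ge\epsilon\}$ we can write $\omega_2 = \omega - \omega_1$. The set $\{x_2 \ge \epsilon\}$ splits into $\Omega_\epsilon$, where Proposition~\ref{prop:weight2} controls $\omega$, and the disk region $\{|x-z|<\epsilon\}$. So I also need to show that $\omega_2$ is $\cO(t^{m/2})$ in $L^1$ near $z$. For that I would work directly with \eqref{om2int}. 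We may take $\epsilon$ small, say $\epsilon<r_0/4$.

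\textbf{Step 1: $\omega_1$ away from $z$.} I write $\omega_1 = \bar\omega_1 + \hat\omega_1$. The Lamb--Oseen part is Gaussian, so its mass on $\{|x-z|\ge\epsilon\}$ is $\cO(e^{-c/t})$. For the rest, $\omega_1$ solves \eqref{R2vort} in $\R^2$ with source $\div(\chi u\omega)$, and $\supp\chi\subset B(z,r_0)$. I split $\chi u\omega = \chi u\omega\,\1_{|x-z|<\epsilon/2} + \chi u\omega\,\1_{|x-z|\ge\epsilon/2}$, using a smooth partition.

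The far part is small. By Proposition~\ref{prop:weight2} (applied with $\epsilon/2$; note $\supp\chi$ lies in $\{x_2\ge 1-r_0\}$), $\|\omega\|_{L^1}$ there is $\cO(t^{M})$ for any $M$. Also $\|u\|_{L^\infty}\lesssim t^{-1/2}$ by \eqref{mainest}. Hence the Duhamel contribution through $S_1$, estimated with \eqref{S1est}, is $\cO(t^{M})$.

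The near part is supported in $B(z,\epsilon/2)$. Its Duhamel integral is $\int_0^t\int \nabla G_{t-s}(x-y)\cdot f(y,s)\,dy\,ds$, and I evaluate it at $|x-z|\ge\epsilon$, where $|x-y|\ge\epsilon/2$. The heat kernel gradient then gives the bound $(t-s)^{-3/2}e^{-\epsilon^2/(20(t-s))}$ times a Gaussian tail in $x$. Integrating in $x$ and using $\|f(s)\|_{L^1}\lesssim s^{-1/2}$ yields $\cO(e^{-c/t})$.

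\textbf{Step 2: $\omega_2$ near $z$.} In \eqref{om2int}, the term $\alpha S_2(t)\delta_z = \alpha K_2(\cdot,z,t)$ carries the factor $e^{-x_2^2/(5t)}$ from \eqref{K2bd1}. It is therefore exponentially small in $L^1(\{x_2\ge\epsilon\})$. The same holds for $\int_0^t S_2(t-s)\div F_3$: integrating the kernel bound \eqref{K2bd2} over $x_2\ge\epsilon$ gives a factor $e^{-c/(t-s)}$, while $\|F_3(s)\|_{L^1}\lesssim s^{-1/2}$.

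The last term is $\int_0^t S(t-s)\div((1-\chi)u\omega)\,ds$, and I need it only on $B(z,\epsilon)$. Here $1-\chi$ vanishes on $B(z,r_0/2)$, so I split the source as follows. Where $x_2\ge\epsilon/2$, the source is supported in $\Omega_{\epsilon/2}$, and Proposition~\ref{prop:weight2} gives an $L^1$ bound of $\cO(t^{M})$. The remaining piece is supported in $\{x_2<\epsilon/2\}$. For it I use $S = S_1 + S_2$. The $S_1$ part moves mass from $\{y_2<\epsilon/2\}$ to $B(z,\epsilon)$, a distance $\ge 1/2$, and so is exponentially small. The $S_2$ part is exponentially small by the $e^{-x_2^2/(5t)}$ factor as before.

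\textbf{Main obstacle.} The delicate point is that the $S_2$ kernels are not spatially localized in the $y$ variable. They decay only algebraically in $x_1-y_1$. So I must make sure that every smallness comes from the $x_2$-Gaussian factor or from Proposition~\ref{prop:weight2}, and not from separation in $x_1$. I should also check that the Gaussian tail in Step 1 survives integration over the unbounded region. Both checks look routine.
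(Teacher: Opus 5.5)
Your proposal is correct and follows essentially the paper's argument. Splitting $\chi u\omega$ into a piece supported near $z$ and a remainder controlled by Proposition~\ref{prop:weight2} is the same as the paper's device of replacing $r_0$ by $\epsilon/2$ at a cost of $\cO(t^\infty)$, and the Gaussian bound on $\nabla G_{t-s}$ together with the factor $e^{-x_2^2/(5t)}$ in \eqref{K2bd1}--\eqref{K2bd2} are exactly the ingredients the paper uses for $\omega_1$ and for \eqref{om2int}. The only variation is organizational: you treat $\omega_2$ on $\Omega_\epsilon$ through $\omega_2=\omega-\omega_1$, so you need to estimate \eqref{om2int} directly only on the bounded set $B(z,\epsilon)$, whereas the paper does it on all of $\{x_2\ge\epsilon\}$.
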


\begin{proof}
The cut-off function \eqref{chidef} depends on a parameter $r_0 \in (0,1)$ that
was fixed arbitrarily, but Proposition~\ref{prop:weight2} shows that the choice
of $r_0$ is irrelevant for the short-time asymptotics. Indeed, if $\chi_1$
denotes the same function as in \eqref{chidef} with $r_0$ replaced by $r_1 \neq r_0$,
then
\[
  \Bigl\|\int_0^t S_1(t{-}s)\div \bigl((\chi{-}\chi_1) u(s) \omega(s)\bigr)\dd s
  \Bigr\|_{L^1} \,\lesssim\, U_\infty\int_0^t \frac{\|(\chi{-}\chi_1)\omega(s)\|_{L^1}
  }{(t-s)^{1/2}s^{1/2}}\dd s \,=\, \cO(t^\infty)\,,
\]
as $t \to 0$, because the function $\chi-\chi_1$ is supported away from the
point $z$ and of the boundary. The above estimate implies that the difference
between two possible functions $\omega_1$ associated with different choices of
the parameter $r_0$ remains $\cO(t^{\infty})$ for small times. In particular,
given any small $\epsilon > 0$, one can use the freedom above to take
$r_0 = \epsilon/2$. Now the integral equation \eqref{om1int} can be written
in the form
\[
  \omega_1(x,t) \,=\, \alpha G_t (x-z) - \int_0^t\int_{\R^2} \chi(y)\,
  \omega_1(y,t)\,u(y,t) \cdot \nabla G_{t-s}(x-y) \dd y \dd s\,.
\]
If $|x-z| \ge \epsilon$, then $|x-y| \geq \epsilon/2$ for all
$y\in \mathrm{supp}\; \chi_1$, and it is then straightforward to check that both
terms in the right-hand side are bounded by $C e^{-c|x-z|^2/t}$ for some
positive constants $C$ and $c$. This proves the estimate on $\omega_1$ in
\eqref{est:om1om2}. The same strategy can be applied to $\omega_2$ in
\eqref{om2int}, where the term involving $(1-\chi)$ can be localized in an
$(\epsilon/2)$-neighborhood of the boundary. Using in addition the properties of
the operator $S_2(t)$, one finds that the $L^1$ norm of $\omega_2(t)$ outside an
$\epsilon$-neighborhood of the boundary is negligible, as asserted in in
\eqref{est:om1om2}.
\end{proof}

\subsection{Translation speed for short times}\label{ssec53}

This section is devoted to the proof of Proposition~\ref{prop:speed}. 
Differentiating \eqref{def:Z} with respect to time, we find
\[
  Z'(t) \,=\, V(t) + \frac{1}{\alpha} \int_{\R^2_+} \omega
  \Bigl\{x\,\bigl(\Delta\chi + u\cdot\nabla\chi\bigr) + 2\nabla\chi\Bigr\}\dd x\,,
  \qquad V(t) \,=\, \frac{1}{\alpha} \int_{\R^2_+} \chi\,u\,\omega\dd x\,.
\]
Using Proposition~\ref{prop:weight2} and the rough estimate $\|u(t)\|_{L^\infty}
\le C t^{-1/2}$, we see that the terms involving derivatives of $\chi$ are
$\cO(t^\infty)$ as $t \to 0$, so that we only need to compute $V(t)$. We decompose
$\omega(t) = \omega_1(t) + \omega_2(t)$ as in Section~\ref{sec4}, and applying
Corollary~\ref{cor:local} we obtain
\[
  V(t) \,=\, \frac{1}{\alpha}\int_{\R^2_+} \chi(x)\,u(x,t)\,\omega_1(x,t)\dd x +
  \cO(t^\infty)\,,
\]
because by \eqref{est:om1om2} the $L^1$ norm of $\chi \omega_2(t)$ is extremely
small as $t \to 0$. Next, applying Lemma~\ref{lem:2BS}, we further decompose
$u = \BSS[\widetilde{\omega}_1] + \BSS[\widetilde{\omega}_2]$, where $\BSS$
denotes the Biot--Savart operator in the whole space $\R^2$ and
$\widetilde{\omega}_i$ the extension by zero of $\omega_i$ to the whole
space. Note that the difference between $\widetilde \omega_1 $ and $\omega_1$ is
supported in $\R^2_-$ and of size $\cO(t^\infty)$ thanks to Corollary \ref{cor:local}.
We observe that
\[
  \int_{\R^2_+} \chi \BSS[\widetilde{\omega}_1]\,\widetilde{\omega}_1\dd x \,=\, \int_{\R^2}
  \BSS[\widetilde{\omega}_1]\,\widetilde{\omega}_1\dd x - \int_{\R^2} (1-\chi)
  \BSS[\widetilde{\omega}_1]\,\widetilde{\omega}_1\dd x \,=\, \cO(t^\infty)\,,
\]
because the first integral in the right-hand side vanishes identically thanks to
Lemma~\ref{lem:2BS}, whereas the second one is small by Corollary~\ref{cor:local}.
We thus arrive at the convenient expression
\begin{equation}\label{Vexp1}
  V(t) \,=\, \frac{1}{\alpha}\int_{\R^2_+} \chi(x)\,\BSS[{\omega}_2(t)](x)
  \,\omega_1(x,t)\dd x \,+\, \cO(t^\infty)\,,
\end{equation}
which is the starting point of our analysis. 

The next step is to replace $\omega_1(t)$ by $\alpha S_1(t)\delta_z$ and
$\omega_2(t)$ by $\alpha S_2(t)\delta_z$ in the right-hand side of
\eqref{Vexp1}, to obtain a more explicit formula. We take $\epsilon > 0$
small enough so that $\epsilon < 1-r_0$, and defining $\hat\omega_2(t)
= \omega_2(t) - \alpha S_2(t)\delta_z$ we observe that
\begin{align*}
  \bigl\|\chi \BSS[\hat\omega_2(t)]\bigr\|_{L^\infty} \,&\le\,
  \bigl\|\chi \BSS[\hat\omega_2(t) \1_{\{x_2 \le \epsilon\}}]\bigr\|_{L^\infty}
  +   \bigl\|\chi \BSS[\hat\omega_2(t) \1_{\{x_2 > \epsilon\}}]\bigr\|_{L^\infty} \\
  \,&\lesssim\, \bigl\|\hat\omega_2(t) \1_{\{x_2 \le \epsilon\}}\bigr\|_{L^1}
  +  \bigl\|\hat\omega_2(t) \1_{\{x_2 > \epsilon\}}\|_{L^1}^{1/2}\,
  \bigl\|\hat\omega_2(t)\|_{L^\infty}^{1/2}\,\lesssim\, t^{1/8}\,.
\end{align*}
Here, to bound the term involving $\1_{\{x_2 \le \epsilon\}}$, we used the fact
that the Biot--Savart kernel is uniformly bounded when the first argument lies
in the support of $\chi$ and the second one in an $\epsilon$-neighborhood of the
boundary; this allows us to bound the $L^\infty$ norm of the velocity in terms
of the $L^1$ norm of the vorticity, the latter one being controlled by the
short-time estimate \eqref{firstasym}.  For the term involving
$\1_{\{x_2 > \epsilon\}}$, a crude estimate is sufficient since the quantity
$\bigl\|\hat\omega_2(t) \1_{\{x_2 > \epsilon\}} \|_{L^1}$ is $\cO(t^\infty)$ by
Corollary~\ref{cor:local}.  Since $\|\omega_1(t)\|_{L^1}$ is uniformly bounded,
we deduce from \eqref{Vexp1} that
\[
  V(t) \,=\, \int_{\R^2_+} \chi(x)\,\BSS[S_2(t)\delta_z](x)\,\omega_1(x,t)\dd x
  + \cO(t^{1/8})\,.
\]
A similar argument shows that $\|\chi\BSS[S_2(t)\delta_z]\|_{L^\infty}$ is
uniformly bounded, and since we know from Corollary~\ref{cor:local} that
$\|\omega_1(t) - \alpha S_1(t)\delta_z\|_{L^1} = \cO(t^{1/8})$ we conclude that
\begin{equation}\label{Vexp2}
  V(t) \,=\, \alpha\int_{\R^2_+} \chi \,\BSS[S_2(t)\delta_z]
  \,S_1(t)\delta_z\dd x + \cO(t^{1/8})\,.
\end{equation}

From now on the proof is essentially a direct calculation based on the explicit
expressions given in Section~\ref{sec2}. The most complicated task is to compute
the leading term of the velocity field created by the boundary layer term
$S_2(t)\delta_z$. We start from the formula
\[
  \bigl(S_2(t)\delta_z\bigr)(y) \,=\, K_2(y,z,t) \,=\, -G_t(y-z^*) -
  K_0(y,z,t)\,,
\]
see \eqref{Kdecomp} and \eqref{S12def}. The mirror vortex term $G_t(y-z^*)$ is
completely negligible for small times, so we concentrate on the contribution of
the boundary layer term $K_0(y,z,t)$. As in the argument before, when using the
Biot--Savart formula, we can restrict the integration domain to an
$\epsilon$-neighborhood of the boundary, because the vorticity is extremely
small outside that neighborhood. Therefore, for $x$ in the support of $\chi$,
we have the asymptotic formula
\begin{equation}\label{Vexp3}
  \bigl(\BSS[S_2(t)\delta_z]\bigr)(x) \,=\, W(x,t) + \cO(t^\infty)\,,
\end{equation}
where
\[
  W(x,t) \,=\, \frac{1}{2\pi}\int_\R\int_0^\epsilon
  \frac{1}{|x-y|^2}\,\begin{pmatrix} x_2 - y_2 \\ y_1 - x_1\end{pmatrix}
  \,K_0(y,z,t)\dd y_2\dd y_1\,.
\]
We recall that $K_0(x,y,t)$ can be decomposed as in \eqref{newdec},
where the correction term $\widetilde{K}_0$ is defined in Lemma~\ref{lem:K0decomp}.
The contribution of $\widetilde{K}_0$ to the speed $W(x,t)$ can be estimated
as follows:
\[
  \bigl|\widetilde W(x,t)\bigr| \,\lesssim\, \int_\R\int_0^\epsilon \frac{1}{|x-y|}
  \,\frac{1}{t}\,\Bigl|\widetilde{K}_0\Bigl(\frac{y}{\sqrt{t}},\frac{z}{\sqrt{t}}
  ,1\Bigr)\Bigr|\dd y_2\dd y_1 \,\lesssim\, \varphi(t^{-1/2}) \,=\,
  \cO(t^{1/2})\,,
\]
where $\varphi$ is defined in \eqref{vphidef}. Here and in what follows,
we use the observation that $|x-y|$ is bounded away from zero when
$x \in \supp(\chi)$ and $y_2 \le \epsilon$. Evaluating the contribution
of the explicit term in \eqref{newdec}, we thus find
\begin{align*}
  W(x,t) \,&=\, \frac{1}{\pi^2}\int_\R\int_0^\epsilon
  \frac{1}{|x-y|^2}\,\begin{pmatrix} x_2 - y_2 \\ y_1 - x_1\end{pmatrix}
  \,\frac{1}{\sqrt{t}}\,g\Bigl(\frac{y_2}{\sqrt{t}}\Bigr)
  \,\frac{1}{1+y_1^2}\dd y_2\dd y_1 + \cO(t^{1/2}) \\ 
  \,&=\, \frac{1}{\pi^2}\int_\R\int_0^{\epsilon t^{-1/2}}
  \frac{1}{(x_1-y_1)^2 + (x_2-y_2\sqrt{t})^2}\,\begin{pmatrix} x_2 - y_2\sqrt{t}
  \\ y_1 - x_1\end{pmatrix} \,\frac{g(y_2)}{1+y_1^2}\dd y_2\dd y_1 + \cO(t^{1/2})\,.
\end{align*}
For $x \in \supp(\chi)$, we expand the integrand in powers of $\sqrt{t}$
to extract the leading order term. Recalling that $\int_0^\infty g(y_2)\dd y_2 = 1/2$,
we obtain $W(x,t) = \overline{W}(x) + \cO(t^{1/2})$, where
\begin{equation}\label{Vexp4}
  \overline{W}(x) \,=\, \frac{1}{2\pi^2}\int_\R \frac{1}{(x_1{-}y_1)^2 + x_2^2}
  \,\begin{pmatrix} x_2 \\ y_1 {-} x_1\end{pmatrix} \,\frac{1}{1+y_1^2}\dd y_1
  \,=\, \frac{1}{2\pi } \frac{1}{x_1^2 + (1+x_2)^2}\,\begin{pmatrix}
    1+x_2 \cr -x_1 \end{pmatrix}\,. 
\end{equation}
Here the last equality can be established using the residue theorem, or by taking
the gradient with respect to $x$ of the following scalar identity which can be
found in \cite[formula~4.296.2]{GR}\:
\[
  \frac{1}{\pi}\int_{\R} \log\bigl((x_1{-}y_1)^2 + x_2^2\bigr) \,\frac{1}{1+y_1^2}\dd y_1
  \,=\, \log\bigl(x_1^2 + (1+x_2)^2\bigr)\,, \qquad \forall\,x \in \R^2_+\,.
\]

Since $|W(x,t) - \overline{W}(z)| \le |W(x,t) - \overline{W}(x)| + |
\overline{W}(x) - \overline{W}(z)| \lesssim |x-z| + \sqrt{t}$ on $\supp(\chi)$,
it follows from \eqref{Vexp2}--\eqref{Vexp4} that 
\[
  V(t) \,=\, \alpha \overline{W}(z) + \alpha \int_{\R^2_+} \chi(x)\bigl(
  W(x,t) - \overline{W}(z)\bigr) S_1(t)\delta_z\dd x + \cO(t^{1/8})
  \,=\, \frac{\alpha}{4\pi} \begin{pmatrix} 1 \\ 0 \end{pmatrix} +
  \cO(t^{1/8})\,,
\]  
because the integral in the right-hand side is $\cO(t^{1/2})$ due to
the explicit form of the Lamb--Oseen vortex $S_1(t)\delta_z$. This
concludes the proof of \eqref{Vasym}. \QED

%%%%%%%%%%%%%%%%%%%%%%%%%%%%%%%%%%%%%%%%%%%%%%%%%%%%%%%%%%%%%%%%%%%%%%%%%%%
%%%%%%%%%%%%%%%%%%%%%%%%%%%%%%%%%%%%%%%%%%%%%%%%%%%%%%%%%%%%%%%%%%%%%%%%%%%

\appendix

\section{Derivation of the Stokes semigroup formula}\label{appA}

In this section we give a short and self-contained derivation of the semigroup
formula \eqref{Stokes}. In contrast with \cite{Abe}, we do not use any information
on the Stokes semigroup in velocity formulation. Instead, following Ukai \cite{Ukai},
we introduce a scalar quantity which has the same scaling as the velocity and
satisfies also the Dirichlet boundary condition. Here and in the sequel, we denote
by $|\partial_1|$ the nonlocal differential operator on $\R$ corresponding to the
multiplication by $|k|$ at the level of the Fourier transform. 

\begin{lem}\label{lem:wdef}
For any $\omega \in L^1(\R^2_+)$, the nonlocal equation $\bigl(\partial_2 -|\partial_1|\bigr)
v(x_1,x_2) = \omega(x_1,x_2)$ in $\R^2_+$ has a unique solution which is a bounded and
continuous function of $x_2 \in [0,+\infty)$ with values in the space $L^1(\R,\D x_1)$.
Moreover $v(\cdot,0) + \gamma[\omega] = 0$, where $\gamma$ is the boundary trace operator
defined in \eqref{gammadef}. 
\end{lem}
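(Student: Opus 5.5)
We treat the equation in the horizontal Fourier variable. Writing $\hat v(k,x_2)$ and $\hat\omega(k,x_2)$ for the partial Fourier transforms in $x_1$, the equation becomes the family of ODEs $\partial_2\hat v - |k|\hat v = \hat\omega$ on $(0,\infty)$. The homogeneous solutions $e^{|k|x_2}$ grow, so the only candidate that stays bounded as $x_2\to\infty$ is
\[
  \hat v(k,x_2) \,=\, -\int_{x_2}^\infty e^{-|k|(y_2-x_2)}\,\hat\omega(k,y_2)\dd y_2\,.
\]
In physical space, $e^{-|k|s}$ is the Fourier transform of the Poisson kernel $P(\cdot,s)$ defined in \eqref{GPdef}, with $s = y_2 - x_2 > 0$. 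This gives the explicit formula
\[
  v(x_1,x_2) \,=\, -\int_{x_2}^\infty\int_\R P(x_1-y_1,\,y_2-x_2)\,\omega(y_1,y_2)\dd y_1\dd y_2\,.
\]
We take this formula as the definition of the solution.

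\textbf{Existence and regularity.} Since $\|P(\cdot,s)\|_{L^1(\R)} = 1$ for every $s>0$, Minkowski's inequality gives $\|v(\cdot,x_2)\|_{L^1(\R)} \le \int_{x_2}^\infty \|\omega(\cdot,y_2)\|_{L^1}\dd y_2 \le \|\omega\|_{L^1}$. So $v$ is bounded with values in $L^1(\R)$, and $\|v(\cdot,x_2)\|_{L^1}\to 0$ as $x_2\to\infty$.

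For continuity in $x_2$, compare $v(\cdot,x_2)$ and $v(\cdot,x_2')$. The part of the integral over $y_2$ between $x_2$ and $x_2'$ is bounded by $\int_{x_2}^{x_2'}\|\omega(\cdot,y_2)\|_{L^1}\dd y_2$, which tends to zero by absolute continuity of the integral. The remaining part is controlled by $\int \|P(\cdot,s)-P(\cdot,s')\|_{L^1}\,\|\omega(\cdot,y_2)\|_{L^1}\dd y_2$, with $|s-s'| = |x_2-x_2'|$. We show this tends to zero using dominated convergence together with the strong continuity of the Poisson semigroup on $L^1(\R)$, including at $s=0$, where $P(\cdot,s)$ acts as an approximate identity.

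The equation holds in the distributional sense: differentiating the Fourier representation in $x_2$, the boundary term at $y_2 = x_2$ produces $-(-\hat\omega)=\hat\omega$, and differentiating inside the integral produces $|k|\hat v$.

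\textbf{Uniqueness.} The difference of two solutions satisfies $\partial_2\hat w = |k|\hat w$, so $\hat w(k,x_2) = e^{|k|x_2}\hat w(k,0)$. Boundedness of $x_2 \mapsto \|w(\cdot,x_2)\|_{L^1}$ gives boundedness of $\hat w(k,x_2)$ in $x_2$ for each fixed $k$. Hence $\hat w(k,0)=0$ for $k\neq0$, and by continuity of $\hat w(\cdot,0)$ (it is the Fourier transform of an $L^1$ function) also at $k=0$. Thus $w\equiv0$.

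\textbf{Boundary trace.} Setting $x_2=0$ in the formula gives $v(x_1,0) = -\int_{\R^2_+} P(x_1-y_1,y_2)\,\omega(y)\dd y = -\gamma[\omega](x_1)$, by \eqref{gammadef}. The formula at $x_2=0$ is legitimate by the continuity up to $x_2=0$ established above.

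The main obstacle is justifying the Fourier manipulations for data that is merely $L^1$. Since $\omega(\cdot,y_2)\in L^1(\R)$ only for almost every $y_2$, we plan to work with the physical-space formula throughout. We verify the equation by pairing against test functions, or by first approximating $\omega$ by smooth compactly supported functions and passing to the limit using the $L^1$ bound above. Uniqueness is carried out at the level of tempered distributions in $x_1$, for almost every $x_2$.
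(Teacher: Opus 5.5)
Your proposal is correct and follows essentially the same route as the paper. Both arguments take the partial Fourier transform in $x_1$ and discard the growing homogeneous modes $A(k)e^{|k|x_2}$. Both then identify $e^{-|k|s}$ with the Poisson kernel to obtain $v(x)=\frac{1}{\pi}\int_{\R^2_+}\frac{x_2-y_2}{|x-y|^2}\,\mathbf{1}_{\{y_2\ge x_2\}}\,\omega(y)\dd y$, and set $x_2=0$ to read off $v(\cdot,0)=-\gamma[\omega]$. The paper calls boundedness and continuity in $L^1$ straightforward; you justify them explicitly, using Minkowski's inequality with $\|P(\cdot,s)\|_{L^1}=1$ for the bound and dominated convergence for continuity, and you give the uniqueness step in more detail than the paper does.
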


\begin{proof}
We denote by $\hat\omega(k,x_2)$ the partial Fourier transform of $\omega \in L^1(\R^2_+)$ 
with respect to the horizontal variable $x_1$, namely
\begin{equation}\label{ParFour}
  \hat\omega(k,x_2) \,=\, \int_\R \omega(x_1,x_2)\,e^{-ikx_1}\dd x_1\,, \qquad
  k \in \R\,, \quad x_2 > 0\,.
\end{equation}
In the new variables $(k,x_2) \in \R^2_+$, the equation we want to solve is
$\bigl(\partial_2 -|k|\bigr)\hat v(k,x_2) =\hat\omega(k,x_2)$, and the general
solution takes the form
\begin{equation}\label{gensol}
  \hat v(k,x_2) \,=\, -\int_{x_2}^\infty e^{|k|(x_2-y_2)}\,\hat\omega(k,y_2)\dd y_2
  \,+\, A(k)\,e^{|k|x_2}\,, \qquad k \in \R\,, \quad x_2 > 0\,,
\end{equation}
for some function $A : \R \to \C$.  Since
$\hat\omega\in L^\infty(\R, L^1(\R_+))$, the first term belongs to
$L^\infty(\R^2_+)$. Clearly, we have to take $A(k) = 0$ for all $k \in \R$,
otherwise the right-hand side of \eqref{gensol} is not the Fourier transform of
a bounded function of $x_2 > 0$ with values in $L^1(\R,\D x_1)$.  Returning to
the original variables $x = (x_1,x_2) \in \R^2_+$, we obtain the representation
formula
\begin{equation}\label{vrep}
  v(x) \,=\, \frac{1}{\pi}\int_{\R^2_+} \frac{x_2 - y_2}{|x-y|^2}\,
  \1_{\{y_2 \ge x_2\}}\,\omega(y)\dd y\,, \qquad x \in \R^2_+\,.
\end{equation}
If we assume that $\omega \in L^1(\R^2_+)$, it is straightforward to verify using
\eqref{vrep} that $x_2 \mapsto v(\cdot,x_2)$ is a bounded and continuous
function from $[0,+\infty)$ into $L^1(\R)$. Moreover, setting $x_2 = 0$,
we see that $v(\cdot,0) + \gamma[\omega] = 0$ where $\gamma[\omega]$ is the
boundary trace operator \eqref{gammadef}. 
\end{proof}
  
\begin{df}
We define the Stokes semigroup $\bigl(S(t)\bigr)_{t\ge0}$ by
\begin{equation}\label{Sdef}
  S(t) \,=\, \bigl(\partial_2 - |\partial_1|\bigr)\,e^{t\Delta_D} \bigl(\partial_2 -
  |\partial_1|\bigr)^{-1}\,, \qquad t \ge 0\,,
\end{equation}
where $\bigl(\partial_2 - |\partial_1|\bigr)^{-1}$ is the operator introduced in
Lemma~\ref{lem:wdef} and $e^{t\Delta_D}$ is the heat semigroup in $\R^2_+$ with
Dirichlet boundary condition.
\end{df}

Let us check that this definition coincides with the vorticity formulation of
the Stokes system with no-slip boundary condition. It follows immediately from
the definition~\eqref{Sdef} that $S(0) = \1$ and $S(t_1+t_2) = S(t_1)S(t_2)$ for
all $t_1, t_2 \ge 0$. Moreover, if $\omega_0 \in L^1(\R^2_+)$, it is clear that
$\omega(t) := S(t)\omega_0$ solves the heat equation
$\partial_t\omega(t) = \Delta\omega(t)$ for $t > 0$, because the derivatives
$\partial_t,\Delta$ commute with $\partial_2$ and $|\partial_1|$.  Finally, for
any $t > 0$, we have by Lemma~\ref{lem:wdef}
\[
  \gamma[\omega(t)] \,=\, -\bigl(\partial_2 - |\partial_1|\bigr)^{-1}\omega(t)
  \,\Big|_{x_2 = 0} \,=\, - e^{t\Delta_D}\bigl(\partial_2 - |\partial_1|\bigr)^{-1}
  \omega_0\,\Big|_{x_2 = 0} \,=\, 0\,,
\]
because $e^{t\Delta_D}$ is the heat semigroup in $L^1(\R^2_+)$ with Dirichlet boundary condition.
It remains to compute the integral kernel of the operator $S(t)$ for $t > 0$, which will
allow us to establish the properties stated in Proposition~\ref{prop:Stokes}.  

\begin{lem}\label{lem:Skernel}
For any $t > 0$ and any $\omega_0 \in L^1(\R^2_+)$ we have the integral representation
\begin{equation}\label{Stokesbis}
  \bigl(S(t)\omega_0\bigr)(x) \,=\, \int_{\R^2_+} K(x,y,t)\,\omega_0(y)\dd y\,,
  \qquad x \in \R^2_+\,,\quad t > 0\,,
\end{equation}
where the kernel $K(x,y,t)$ is defined in \eqref{Kdecomp}, \eqref{K0def}. 
\end{lem}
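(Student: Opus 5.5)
The plan is to compute the kernel of the composition \eqref{Sdef} directly, working in the partial Fourier variable $k$ dual to $x_1$ as in \eqref{ParFour}. In these variables $e^{t\Delta_D}$ acts as multiplication by $e^{-tk^2}$ combined with the one-dimensional Dirichlet heat kernel $g_t(x_2-y_2)-g_t(x_2+y_2)$ in the vertical direction, where $g_t(s)=(4\pi t)^{-1/2}e^{-s^2/(4t)}$. The operator $(\partial_2-|\partial_1|)$ becomes $\partial_{x_2}-|k|$, and Lemma~\ref{lem:wdef} gives
\[
  \hat v_0(k,y_2)=-\int_{y_2}^\infty e^{|k|(y_2-s)}\hat\omega_0(k,s)\dd s .
\]
The argument is a sequence of integrations by parts in the vertical variable, followed by an inverse Fourier transform at the end. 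I would first do everything for $\omega_0$ smooth and compactly supported in $\R^2_+$, and conclude by density, using the $L^1$ bounds of Lemma~\ref{lem:K0bounds} on the kernel.

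\textbf{Step 1: moving the derivative onto $\hat v_0$.} Apply $\partial_{x_2}-|k|$ to $e^{-tk^2}\int_0^\infty\bigl(g_t(x_2-y_2)-g_t(x_2+y_2)\bigr)\hat v_0(k,y_2)\dd y_2$. Use the identity $\partial_{x_2}\bigl(g_t(x_2-y_2)-g_t(x_2+y_2)\bigr)=-\partial_{y_2}\bigl(g_t(x_2-y_2)+g_t(x_2+y_2)\bigr)$ and integrate by parts in $y_2$. This produces:
\begin{itemize}
\item a boundary term $2g_t(x_2)\hat v_0(k,0)$;
\item a bulk term containing $\partial_{y_2}\hat v_0=|k|\hat v_0+\hat\omega_0$.
\end{itemize}
The $\hat\omega_0$ part yields $\int\bigl(g_t(x_2-y_2)+g_t(x_2+y_2)\bigr)\hat\omega_0\dd y_2$. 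The $|k|\hat v_0$ terms combine into $2|k|\int_0^\infty g_t(x_2+y_2)\hat v_0\dd y_2$. Hence, before the factor $e^{-tk^2}$ is reinstated, the result is
\[
  \int_0^\infty\bigl(g_t(x_2-y_2)+g_t(x_2+y_2)\bigr)\hat\omega_0\dd y_2+2g_t(x_2)\hat v_0(k,0)+2|k|\int_0^\infty g_t(x_2+y_2)\hat v_0(k,y_2)\dd y_2 .
\]

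\textbf{Step 2: re-expressing everything through $\hat\omega_0$.} Insert the formula for $\hat v_0$ and exchange the order of integration. The last two terms become
\[
  -2\int_0^\infty\hat\omega_0(k,s)\Bigl[g_t(x_2)e^{-|k|s}+|k|\int_0^s g_t(x_2+y_2)e^{|k|(y_2-s)}\dd y_2\Bigr]\dd s .
\]
Integrating by parts once more in $y_2$ (differentiating $g_t$, integrating the exponential) turns the bracket into
\[
  g_t(x_2+s)-\int_0^s g_t'(x_2+y_2)e^{-|k|(s-y_2)}\dd y_2 ,
\]
because the $g_t(x_2)e^{-|k|s}$ terms cancel. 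The $g_t(x_2+s)$ piece doubles the sign of the reflected Gaussian: $+g_t(x_2+y_2)-2g_t(x_2+y_2)=-g_t(x_2+y_2)$. Together with the factor $e^{-tk^2}$, this yields exactly $G_t(x-y)-G_t(x-y^*)$ after inverse Fourier transform.

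\textbf{Step 3: identifying $K_0$.} The remaining term is
\[
  2\int_0^{s}g_t'(x_2+z_2)\,e^{-tk^2}e^{-|k|(s-z_2)}\dd z_2 .
\]
Since $e^{-|k|h}$ is the Fourier transform of the Poisson kernel $P(\cdot,h)$ and $e^{-tk^2}$ that of the Gaussian, the inverse transform is the convolution in $x_1$ of $g_t$ with $P(\cdot,s-z_2)$, i.e.\ the Voigt function \eqref{Voigt} in rescaled form. Comparing with the representation \eqref{K0rep}, extended to general $t$ by the scaling invariance, shows that this term equals $-K_0(x,y,t)$. This proves \eqref{Stokesbis} with $K$ as in \eqref{Kdecomp}.

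\textbf{Main obstacle.} The delicate point is justifying the manipulations rather than the algebra itself:
\begin{itemize}
\item the boundary terms at $y_2=\infty$ must vanish;
\item Fubini must apply when exchanging the $s$ and $y_2$ integrals;
\item $\hat v_0$ contains the growing factor $e^{|k|y_2}$, which is harmless only because it is always paired with $e^{-|k|s}$ for $s\ge y_2$.
\end{itemize}
I would handle this by working with compactly supported smooth $\omega_0$, where everything is absolutely convergent, and then extending by density. I would also track the signs carefully, checking against $K_0>0$ from Lemma~\ref{lem:K0bounds}.
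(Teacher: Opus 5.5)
Your computation is correct. The two integrations by parts, the cancellation of the $g_t(x_2)e^{-|k|s}$ terms, and the final identification all check out. The leftover term is $2\int_0^{y_2} g_t'(x_2+z_2)\,\bigl(g_t\ast P(\cdot,y_2-z_2)\bigr)(x_1-y_1)\dd z_2=-K_0(x,y,t)$. This is \eqref{K0rep} at general $t$, and it follows from \eqref{K0def} directly because $G_t(x-z^*)=g_t(x_1-z_1)g_t(x_2+z_2)$, so you do not need scaling.

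Your route is organized differently from the paper's. The paper works in physical space and writes $S(t)=e^{t\Delta_D}-S_0(t)$ with $S_0(t)=[e^{t\Delta_D},\partial_2-|\partial_1|](\partial_2-|\partial_1|)^{-1}$. Since $e^{t\Delta_D}$ is a horizontal convolution, it commutes with $|\partial_1|$, so the commutator reduces to $[e^{t\Delta_D},\partial_2]$. One integration by parts turns this into $2\partial_{x_2}\int G_t(x-z^*)v_0(z)\dd z$. Inserting the physical-space formula for $v_0$ and applying Fubini then yields $K_0$ directly in the form \eqref{K0def}.

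Your Step 1 is that same integration by parts. However, you apply $\partial_{x_2}-|k|$ to $e^{t\Delta_D}\hat v_0$ before commuting, so $e^{t\Delta_D}\omega_0$ does not appear at once. Instead you get the Neumann kernel acting on $\hat\omega_0$ plus $|k|\hat v_0$ terms. Your whole Step 2 (substituting $\hat v_0$, the second integration by parts, the cancellation) is exactly the work the commutation with $|k|$ saves.

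In exchange, your version ends in the Voigt-function form \eqref{K0rep}, which is the form used for the estimates in Lemma~\ref{lem:K0bounds}. It also makes the justification explicit: compact support in $x_2$ kills the boundary terms at $y_2=\infty$, and every exchange of integrals is absolutely convergent. The paper leaves this to ``a straightforward calculation''.

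For your density step, the kernel bounds only give continuity of the right-hand side. You also need the left-hand side, defined by \eqref{Sdef}, to depend continuously on $\omega_0\in L^1$. This holds for fixed $t>0$. The operator $(\partial_2-|\partial_1|)^{-1}$ maps $L^1$ boundedly into bounded functions of $x_2$ with values in $L^1(\R)$. Both $\partial_2e^{t\Delta_D}$ and $|\partial_1|e^{t\Delta_D}$ are bounded on that space, since $|\partial_1|e^{t\partial_1^2}$ has an integrable kernel.
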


\begin{proof}
In view of \eqref{Sdef} we have the decomposition $S(t) = e^{t\Delta_D} - S_0(t)$, where
\begin{equation}\label{S0def}
  S_0(t) \,=\, \bigl[\,e^{t\Delta_D}\,,\,\partial_2 - |\partial_1|\bigr]\,
  \bigl(\partial_2 - |\partial_1|\bigr)^{-1} \,=\, \bigl[e^{t\Delta_D}\,,\,\partial_2\bigr]\,
  \bigl(\partial_2 - |\partial_1|\bigr)^{-1}\,.
\end{equation}
In the last equality, we used the fact that the semigroup $e^{t\Delta_D}$ is a convolution
operator in the horizontal variable $x_1$, which therefore commutes with the (nonlocal)
differential operator $|\partial_1|$. Since
\begin{equation}\label{HeatDir}
  \Bigl(e^{t\Delta_D}f\Bigr)(x) \,=\, \int_{\R^2_+} \Bigl(G_t(x-y) - G_t(x-y^*)\Bigr)
  f(y) \dd y\,, \qquad x \in \R^2_+\,, \quad t > 0\,, 
\end{equation}
where $G$ is the Gaussian kernel \eqref{GPdef}, it remains to verify that
\begin{equation}\label{Stokester}
  \bigl(S_0(t)\omega_0\bigr)(x) \,=\, \int_{\R^2_+} K_0(x,y,t)\,\omega_0(y)\dd y\,,
  \qquad x \in \R^2_+\,,\quad t > 0\,,
\end{equation}
where $K_0(x,y,t)$ is defined by \eqref{K0def}. Take $\omega_0 \in L^1(\R^2_+)$
and consider $v_0 = \bigl(\partial_2 - |\partial_1|\bigr)^{-1}\omega_0$. According to
Lemma~\ref{lem:wdef} we have
\begin{equation}\label{vform}
  v_0(z) \,=\, -\frac{1}{\pi}\int_{\R^2_+} \frac{y_2 - z_2}{|y-z|^2}\,
  \1_{\{y_2 \ge z_2\}}\,\omega(y)\dd y \,=\, -\int_\R \int_{z_2}^\infty P(y-z)
  \,\omega(y)\dd y_2\dd y_1\,,
\end{equation}
where $P$ is the Poisson kernel defined in \eqref{GPdef}. On the other hand, a straightforward
calculation shows that
\begin{equation}\label{DirCom}
  \Bigl(\bigl[e^{t\Delta_D}\,,\,\partial_2\bigr]v_0\Bigr)(x) \,=\, 2\partial_{x_2}
  \int_{\R^2_+} G_t\bigl(x-z^*\bigr)\,v_0(z)\dd z\,.
\end{equation}
Combining \eqref{vform}, \eqref{DirCom} and using Fubini's theorem, we arrive at the
representation formula \eqref{Stokester} with $K_0(x,y,t)$ defined by \eqref{K0def}. 
\end{proof}

\section{Properties of the Stokes solution}\label{appB}

We collect here, for easy reference, some estimates on the Stokes solution
$\omega(t) = S(t)\delta_z$ where $z = (0,1)$. According to \eqref{Stokes},
\eqref{Kdecomp} we have the decomposition $\omega(t) = w_d(t) - w_0(t)$
where
\[
  w_d(x,t) \,=\, G_t(x-z) - G_t(x-z^*)\,, \qquad w_0(x,t) \,=\, K_0(x,z,t)\,,
  \qquad x\in\R^2_+\,,\quad t > 0\,.
\]
The dipole term $w_d(t)$ can be estimated by direct calculations, which
give (we omit the details)
\[
  \|w_d(t)\|_{L^1} \,\lesssim\, \frac{1}{(1+t)^{1/2}}\,, \qquad 
  \|w_d(t)\|_{L^\infty} \,\lesssim\, \frac{1}{t(1+t)^{1/2}}\,, \qquad
  t > 0\,.
\]
On the other hand, using \eqref{Kbd1}, we can control the boundary layer
term $w_0(t)$ as follows
\[
  \|w_0(t)\|_{L^1} \,\lesssim\, \frac{1}{(1+t)^{1/2}}\,, \qquad 
  \|w_0(t)\|_{L^\infty} \,\lesssim\, \frac{1}{t^{1/2}(1+t)}\,, \qquad
  t > 0\,.
\]
Altogether, we arrive at
\begin{equation}\label{StokesLp}
  \|S(t)\delta_z\|_{L^1} \,\lesssim\, \frac{1}{(1+t)^{1/2}}\,, \qquad 
  \|S(t)\delta_z\|_{L^\infty} \,\lesssim\, \frac{1}{t(1+t)^{1/2}}\,, \qquad
  t > 0\,. 
\end{equation}

We next consider the velocity field $u(t) = \BS[\omega(t)] = v_d(t) - v_0(t)$,
where $v_d(t) = \BS[w_d(t)]$ and $v_0(t) = \BS[w_0(t)]$. The dipole term $v_d(t)$
is given by the explicit formula
\[
  v_d(x,t) \,=\, \frac{1}{\sqrt{t}}\biggl\{v^G\Bigl(\frac{x-z}{\sqrt{t}}\Bigr) -
  v^G\Bigl(\frac{x-z^*}{\sqrt{t}}\Bigr)\biggr\}\,, \qquad x\in\R^2_+\,,\quad t > 0\,,
\]
where $v^G$ is defined in \eqref{GvGdef}. Again, a direct calculation shows
that
\[
  \|v_d(t)\|_{L^2}^2 \,\sim\, \frac{1}{4\pi}\,\log\frac{1}{t} \quad
  \text{as }t\to 0\,, \qquad
  \|v_d(t)\|_{L^2}^2 \,\lesssim\, \frac{1}{t} \quad\text{when }t\ge 1\,.
\]
To estimate the kinetic energy of the boundary layer term $v_0(t)$, a possibility
is to use Lemmas~\ref{lem:HLS}, \ref{lem:BS} and an interpolation argument. 
If $1 < p < 2 < q < \infty$ and $1/q = 1/p - 1/2$, then $\|v_0(t)\|_{L^2} \le
\|v_0(t)\|_{L^p}^{1-2/q} \,\|v_0(t)\|_{L^q}^{2/q}$ and we know that
\begin{align*}
  \|v_0(t)\|_{L^p} \,&\lesssim\, \int_{\R^2_+}x_2^{(2/p)-1}|w_0(x,t)|\dd x
  \,\lesssim\, \frac{t^{1/p-1/2}}{(1+t)^{1/2}}\,, \\[1mm]
  \|v_0(t)\|_{L^q} \,&\lesssim\, \|w_0(t)\|_{L^p} \,\le\, \|w_0(t)\|_{L^1}^{1/p}
  \,\|w_0(t)\|_{L^\infty}^{1-1/p} \,\lesssim\, \frac{t^{1/(2p)-1/2}}{(1+t)^{1-1/(2p)}}\,.
\end{align*}
It follows that
\[
  \|v_0(t)\|_{L^2} \,\lesssim\, \frac{t^\theta}{(1+t)^{1/2 + \theta}}\,,
   \qquad \text{where}\quad \theta \,=\, \frac{1}{q}\Bigl(\frac12 - \frac1q\Bigr) \,>\, 0\,.
\]
We conclude that the total kinetic energy $\|u(t)\|_{L^2}^2$ diverges logarithmically
as $t \to 0$, and decays to zero like $1/t$ as $t \to +\infty$. 

\section{A Gr\"onwall-type lemma}\label{appC}

Here we state and prove an elementary result that is used in section~\ref{ssec52}. 

\begin{lem}\label{lem:Gron}
Let $f : [0,T] \to \R_+$ be a continuous function satisfying
\begin{equation}\label{eqf}
  f(t) \,\le\, \int_0^t\Bigl(a + \frac{b}{\sqrt{s}}\,f(s)^{1/2}\Bigr)\dd s\,,
  \qquad\,\forall t \in [0,T]\,,
\end{equation}
for some positive constants $a$ and $b$. Then $f(t) \le ct$ for all $t \in [0,T]$,
where the constant $c$ is defined by the implicit formula $c = a + b c^{1/2}$. 
\end{lem}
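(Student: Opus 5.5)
The natural approach is a comparison argument with the linear majorant $ct$, where $c$ is the positive root of $c = a + b c^{1/2}$. This root exists and is unique: setting $s = c^{1/2}$, we need $s^2 - bs - a = 0$, so $s = \tfrac12\bigl(b + \sqrt{b^2+4a}\bigr) > 0$. The observation driving the proof is that $g(t) = ct$ satisfies \eqref{eqf} with equality:
\[
  \int_0^t \Bigl(a + \frac{b}{\sqrt{s}}(cs)^{1/2}\Bigr)\dd s \,=\, (a + bc^{1/2})\,t \,=\, ct\,.
\]
So $g$ is a solution of the integral equation, and we want to show that any subsolution $f$ stays below it.

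Because the nonlinearity $f^{1/2}$ is not Lipschitz at $0$, a direct Gronwall argument on $f - g$ is awkward. Instead, I will compare $f$ with the strict supersolutions $g_\delta(t) = (c+\delta)t$ for $\delta > 0$. These satisfy
\[
  \int_0^t\Bigl(a + \frac{b}{\sqrt s}\,g_\delta(s)^{1/2}\Bigr)\dd s \,=\, \bigl(a + b(c+\delta)^{1/2}\bigr)t \,<\, (c+\delta)t \quad \text{for } t>0\,.
\]
The strict inequality holds because $h(x) = x - a - bx^{1/2}$ is increasing for $x \ge c$: indeed $h'(x) = 1 - b/(2\sqrt x) > 0$ whenever $\sqrt x > b/2$, and $\sqrt c \ge b$ since $s \ge b$ from the formula above.

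First I show $f(t) < g_\delta(t)$ for small $t > 0$. Since $f$ is continuous with $f(0) \le 0$, hence $f(0)=0$, it is bounded on $[0,T]$, say by $M$. Then \eqref{eqf} gives $f(t) \le at + 2bM^{1/2}t^{1/2}$. Substituting this back into \eqref{eqf} improves the bound to $f(t) \le a t + b\int_0^t s^{-1/2}(as + 2bM^{1/2}s^{1/2})^{1/2}\dd s \le C t^{3/4}$. Iterating, the exponent of the remainder term goes $1/2 \to 3/4 \to 7/8 \to \dots$, which only approaches $1$ and so does not by itself give $f(t) < (c+\delta)t$. That is the main obstacle. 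To get around it, I will work with $\limsup_{t \to 0} f(t)/t$ directly.

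Set $L = \sup_{0 < s \le \tau} f(s)/s$, which is finite for each $\tau > 0$ by the $t^{1/2}$-type bound from the previous step... except that this bound only gives $f(s)/s \lesssim s^{-1/2}$, which is not finite. I therefore take instead $\Lambda(\tau) = \sup_{0<s\le\tau} f(s)/s \in [0,\infty]$. If it is finite, \eqref{eqf} yields $f(t)/t \le a + b\Lambda(\tau)^{1/2}\cdot 2$... To avoid this bookkeeping altogether, I will prefer the bootstrap with sharp constants: if $f(s) \le K s^{\gamma}$ on $[0,T]$ with $\gamma \le 1$, then $f(t) \le at + \frac{2bK^{1/2}}{\gamma+1} t^{(\gamma+1)/2}$. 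Iterating, $\gamma_n \to 1$ and the constants satisfy $K_{n+1} \approx a\,T^{1-\gamma_{n+1}} + 2bK_n^{1/2}/(\gamma_n + 1)$, which converge to a finite limit solving $K = a + bK^{1/2}$. Passing to the limit gives $f(t) \le ct$.

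The alternative route I would keep in mind is a first-crossing argument: let $t_* = \inf\{t > 0 : f(t) \ge g_\delta(t)\}$ and derive a contradiction from $f(t_*) \le \int_0^{t_*}(\dots g_\delta) < g_\delta(t_*)$. This needs $t_* > 0$, which is exactly what the bootstrap supplies once the limit constant satisfies $\limsup K_n \le c < c + \delta$. Letting $\delta \to 0$ then gives the lemma. The delicate point in either route is controlling the constants $K_n$ uniformly, so that the factor $T^{1-\gamma_n} \to 1$ does not spoil the limit.
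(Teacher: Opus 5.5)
Your main line, the bootstrap $f(t)\le K_n t^{\gamma_n}$ with $\gamma_{n+1}=(1+\gamma_n)/2$ and $K_{n+1}=aT^{1-\gamma_{n+1}}+bK_n^{1/2}/\gamma_{n+1}$, is exactly the route taken in the paper. However, you stop where the real work begins. The claim that the $K_n$ ``converge to a finite limit solving $K=a+bK^{1/2}$'' is asserted, not proved, and you yourself flag it as the delicate point. It is the substance of the lemma. Since you only know $f(t)\le K_n t^{\gamma_n}$ for each $n$, you need $\liminf_n K_n\le c$. Because the maps $\phi_n(x)=aT^{1-\gamma_{n+1}}+bx^{1/2}/\gamma_{n+1}$ vary with $n$ and are not dominated by $\phi(x)=a+bx^{1/2}$ (the factor $1/\gamma_{n+1}>1$), neither boundedness nor convergence of $K_n$ is automatic.

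Your first-crossing alternative does not help either. No finite stage of the bootstrap gives $f(t)<(c+\delta)t$ near $t=0$, since $K_nt^{\gamma_n}$ exceeds $(c+\delta)t$ for small $t$. So $t_*>0$ is only available once $\limsup K_n\le c$ is known, and at that point the lemma is already proved.

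The gap is easy to close in two steps.

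\emph{Boundedness.} Since $\gamma_{n+1}\ge 1/2$, you have $K_{n+1}\le A+2bK_n^{1/2}$ with $A=a\max(1,T)$. Let $\bar M$ be the positive fixed point of $x\mapsto A+2bx^{1/2}$. Induction then gives $K_n\le M:=\max(K_0,\bar M)$ for all $n$; this is essentially the paper's argument.

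\emph{Convergence.} Instead of the paper's $\omega$-limit-set reasoning, you can use that $\phi$ contracts toward $c$. For all $x\ge0$,
\[
  |\phi(x)-c| \,=\, \frac{b\,|x-c|}{x^{1/2}+c^{1/2}} \,\le\, \theta\,|x-c|\,, \qquad \theta \,=\, \frac{b}{c^{1/2}} \,<\, 1\,,
\]
because $c^{1/2}=\frac12\bigl(b+\sqrt{b^2+4a}\bigr)>b$. Hence $|K_{n+1}-c|\le \epsilon_n+\theta|K_n-c|$, where $\epsilon_n=a|T^{2^{-n-1}}-1|+bM^{1/2}(\gamma_{n+1}^{-1}-1)\to0$. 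It follows that $\limsup_n|K_n-c|\le\theta\limsup_n|K_n-c|$, so this finite limsup vanishes.

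Letting $n\to\infty$ in $f(t)\le K_nt^{\gamma_n}$ for fixed $t\in(0,T]$ then gives $f(t)\le ct$.
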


\begin{proof}
We claim that, for all $n \in \N$, there exists a constant $C_n \ge 0$ such
that   
\begin{equation}\label{fupper}
  f(t) \,\le\, C_n\,t^{\gamma_n}\,, \qquad \forall\, t \in [0,T]\,,
\end{equation}
where $\gamma_n = 1 - 2^{-n}$. Indeed, if $n = 0$, we can take
$C_0 = \sup\{f(t)\,;\, t \in [0,T]\}$. Suppose now that $f$ satisfies the bound
\eqref{fupper} for some $n \in \N$. Using \eqref{eqf} and noticing that
$\gamma_{n+1}=(1+ \gamma_n)/2$, we find
\[
  f(t) \,\le\,  \int_0^t \Bigl(a + \frac{b}{\sqrt{s}}\,C_n^{1/2}
  s^{\gamma_n/2}\Bigr)\dd s \,=\, at + \frac{b\,C_n^{1/2}}{\gamma_{n+1}}
  \,t^{\gamma_{n+1}}\,\le\, C_{n+1}\,t^{\gamma_{n+1}}\,,
\]
for all $t \in [0,T]$, where 
\begin{equation}\label{recur}
  C_{n+1} \,=\, a\,T^{1-\gamma_{n+1}} + \frac{b\,C_n^{1/2}}{\gamma_{n+1}} \,=:\, \phi_n(C_n)\,.
\end{equation}
We observe that the function $\phi_n$ is strictly increasing and has a unique
fixed point in $(0,+\infty)$, which we denote by $M_n$. Note also that
$\phi_n\leq \overline{\phi}$ for all $n\in \N$, where
$\overline{\phi}(x) := a\max(T,1) + 2 b x^{1/2}$. Denoting by $\overline{ M}$ the
unique fixed point of $\overline{\phi}$, we deduce that $M_n\leq \overline{ M}$
for all $n\in \N$. Now, let $M=\max (\overline{ M} ,C_0)$; assuming that
$C_n\leq M$, the monotonicity of $\phi_n$ and the inequality $M_n\leq M$ imply
that $C_{n+1}\leq \phi_n(M) \leq M$. By induction, we infer that $C_n\leq M$ for
all $n\in \N$.

It follows that the sequence $(C_n)_{n \in \N}$ converges as $n \to +\infty$ to
its $\omega$-limit set $\cE$, which is a nonempty compact subset of
$[0,M]$. Now, since $\gamma_n \to 1$ as $n \to \infty$, a standard result in
dynamical systems theory asserts that $\phi(\cE) = \cE$ where
$\phi(x) = \lim_{n\to\infty}\phi_n(x) = a + b x^{1/2}$. As the map $\phi$ has a
unique fixed point $c$, which is globally attracting, the only nonempty subset
of $[0,M]$ that is invariant under $\phi$ is $\{c\}$, and this implies that
$C_n \to c$ as $n \to +\infty$. Thus taking the limit $n \to +\infty$ in
\eqref{fupper} we obtain the desired result.
\end{proof}

%%%%%%%%%%%%%%%%%%%%%%%%%%%%%%%%%%%%%%%%%%%%%%%%%%%%%%%%%%%%%%%%%%%%%%%%%%%
%%%%%%%%%%%%%%%%%%%%%%%%%%%%%%%%%%%%%%%%%%%%%%%%%%%%%%%%%%%%%%%%%%%%%%%%%%%

%%%%%%%%%%%%%%%%%%%%%%%%%%%%%%%%%%%%%%%%%%%%%%%%%
% \bibliographystyle{abbrv}
% \bibliography{biblio.bib}
%%%%%%%%%%%%%%%%%%%%%%%%%%%%%%%%%%%%%%%%%%%%%%%%%%

\bigskip\noindent
{\bf Anne-Laure Dalibard}\\
Sorbonne Université, Université Paris Cité,
CNRS, INRIA\\
Laboratoire Jacques-Louis Lions, LJLL, EPC ANGE\\
F-75005 Paris, France\\
Email\: {\tt anne-laure.dalibard@sorbonne-universite.fr}

\bigskip\noindent
{\bf Thierry Gallay}\\
Universit\'e Grenoble Alpes, CNRS, Institut Universitaire de France\\
Institut Fourier, 100 rue des Maths, F-38610 Gi\`eres, France\\
Email\: {\tt thierry.gallay@univ-grenoble-alpes.fr}

\end{document}